\documentclass[11pt]{amsart}
\usepackage{epsf, amsmath, accents, amsfonts, amscd, amssymb, amsthm, latexsym,verbatim,
dsfont, enumerate,url, graphicx,multicol,times,color,comment}
\usepackage[shortlabels]{enumitem}
\usepackage[normalem]{ulem}
\usepackage[all]{xy}  
\usepackage{hyperref}
\usepackage[usenames,dvipsnames]{xcolor}

\def\N{{\mathbb N}}
\def\R{\mathbb{R}}
\def\Z{{\mathbb Z}}

\def\C{{\mathbb C}}
\def\E{{\mathbb E}}
\def\F{{\mathbb F}}
\def\G{{\mathbb G}}
\def\W{{\mathbb W}}
\def\Q{{\mathbb Q}}

\newcommand{\CP}{\mathbb{C}\mathbb{P}}


\def\b{\beta}
\def\d{\delta}
\def\de{\delta}

\newcommand{\eps}{\epsilon}
\def\s{\sigma}

\def\g{\gamma}

\def\s{\sigma}

\def\io{\iota}

\def\l{\lambda}


\def\G{\Gamma}
\def\Si{\Sigma}

\def\Om{\Omega}


\def\cB{{\mathcal B}}
\def\cC{{\mathcal C}}

\def\cE{{\mathcal E}}
\def\cF{{\mathcal F}}
\def\cG{{\mathcal G}}

\def\cJ{{\mathcal J}}
\def\cK{{\mathcal K}}
\def\cL{{\mathcal L}}
\def\cM{{\mathcal M}}

\def\cO{{\mathcal O}}
\def\cP{{\mathcal P}}

\def\cR{{\mathcal R}}

\def\cU{{\mathcal U}}
\def\cV{{\mathcal V}}

\def\cX{{\mathcal X}}
\def\cY{{\mathcal Y}}



\def\ud{{\underline{\delta}}}


\def\rD{{\rm D}}
\def\rT{{\rm T}}
\def\rd{{\rm d}}

\def\dt{{\rm d}t}

\newcommand{\pbar}{\bar{\partial}_J}

\def\bM{{\overline{\mathcal M}}}
\newcommand{\CM}{\overline{\mathcal{M}}}
\newcommand{\CMp}{\overline{\mathcal{M}}\,\!'}




\newcommand{\ti}{\tilde}
\newcommand{\Ti}{\widetilde}
\newcommand\ul{\underline}

\def\la{\langle\,}
\def\ra{\,\rangle}
\def\st{\: \big| \:}


\newcommand\quotient[2]{
        \mathchoice
            {
                \text{\raise1ex\hbox{$#1$}\Big/\lower1ex\hbox{$#2$}}%
            }
            {
                #1\,/\,#2
            }
            {
                #1\,/\,#2
            }
            {
                #1\,/\,#2
            }
    }
    
\newcommand\quo[2]{
                \text{\raise1ex\hbox{$#1\!$}\big/\lower1ex\hbox{$\!#2$}}
  }

\newcommand\qu[2]{
                \text{\raise.8ex\hbox{$\scriptstyle#1$}/\lower.8ex\hbox{$\scriptstyle#2$}}
  }


%

\DeclareMathOperator{\Hom}{Hom}

\DeclareMathOperator{\pr}{pr}

\DeclareMathOperator{\supp}{supp}

\newcommand{\id}{\operatorname{id}}

\newcommand{\Aut}{\operatorname{Aut}}

\newcommand{\Crit}{\operatorname{Crit}}

\newcommand{\ind}{\operatorname{ind}}

\newcommand{\im}{\operatorname{im}}


\newcounter{qcounter}
\newenvironment{enumlist}
   {
      \begin{list}
         {\bf\arabic{qcounter})}
         {
         \usecounter{qcounter}
                     \setlength{\itemsep}{.5ex}
            \setlength{\leftmargin}{1.5em}
         }
   }
   {
      \end{list}
   }

\newenvironment{itemlist}
   { \begin{list} {$\bullet$}
         {  \setlength{\itemsep}{.5ex}
            \setlength{\leftmargin}{1.5em} } }
   { \end{list} }



\newtheorem{theorem}{Theorem}[subsection]

\newtheorem{example}[theorem]{Example}

\newtheorem{lemma}[theorem]{Lemma}
\newtheorem{proposition}[theorem]{Proposition}

\newtheorem{definition}[theorem]{Definition}
\newtheorem{remark}[theorem]{Remark}

\hyphenation{mani-fold}                             
\hyphenation{mani-folds}

\textwidth6in 
\textheight9in 
\voffset=-0.5in
\hoffset=-0.5in

\begin{document}
\title{Polyfolds: A First and Second Look}
\author{Oliver Fabert}
\author{Joel W. Fish}
\author{Roman Golovko}
\author{Katrin Wehrheim}
\begin{abstract}
Polyfold theory was developed by Hofer-Wysocki-Zehnder by finding
  commonalities in the analytic framework for a variety of geometric
  elliptic PDEs, in particular moduli spaces of pseudoholomorphic curves.
It aims to systematically address the common difficulties of
  ``compactification'' and ``transversality'' with a new notion of
  smoothness on Banach spaces, new local models for differential geometry,
  and a nonlinear Fredholm theory in the new context.
We shine meta-mathematical light on the bigger picture and core ideas of
  this theory. In addition, we compiled and condensed the core definitions
  and theorems of polyfold theory into a streamlined exposition, and
  outline their application at the example of Morse theory.

\end{abstract} 

\address{Department of Mathematics, Vrije Universiteit Amsterdam,
1081 HV Amsterdam, Netherlands}
\email{oliver.faber@gmail.com}
\urladdr{www.few.vu.nl/$\sim$fabert}
\vspace{2in}
\address{Department of Mathematics, University of Massachusetts Boston, MA 02125, USA}
\email{joel.fish@umb.edu}
\urladdr{www.joelfish.com}
\address{Departement de Mathematiques, Universite Paris-Sud, Batiment 425
F-91405 Orsay, France}
\email{roman.golovko@math.u-psud.fr}
\address{Department of Mathematics, UC Berkeley, CA 94720, USA} \email{katrin@math.berkeley.edu}
\urladdr{www.math.berkeley.edu/$\sim$katrin/}
\thanks{Research partially supported by NSF grants DMS-0802927 
and DMS-0844188, 
and the Ellentuck Fund 
}
\subjclass[2000]{Primary 32Q65; Secondary 53D99}

\keywords{non-linear functional analysis, fredholm theory, transversality, polyfolds}

\maketitle 

\vspace{-5mm}

\tableofcontents

\markboth{Fabert, Fish, Golovko, Wehrheim}{Polyfolds: A First and
  Second Look}

%
\section{Introduction} 
One of the main tools in symplectic topology is the study of moduli
  spaces of pseudo-holomorphic curves.
Roughly speaking, one thinks of such a moduli space $\mathcal{M}$
  as a set of equivalence classes of smooth maps which satisfy the
  Cauchy-Riemann equation, $\bar{\partial}_Ju:=\frac 12 (du + J\circ
  du\circ j)=0$, where two maps \(u\) and \(v\) are equivalent provided
  there exists a holomorphic automorphism \(\phi\) of the domain such
  that \(u=v\circ \phi\).
Additionally, one may wish to consider one or more standard
  modifications, like considering an inhomogeneous Hamiltonian term,
  Lagrangian boundary conditions, point constraints, or punctures with
  specified asymptotics.
In most applications, one would like to associate to such a moduli
  space a ``compact regularization,'' denoted $\CM\,\!'$, that is a
  compact manifold/orbifold, possibly with boundary and corners, and
  that is unique up to the appropriate notion of cobordism.
Indeed, such a rich geometric structure, in which boundary strata are
  related to lower dimensional components of other moduli spaces, is
  precisely what gives rise to the rich algebraic structures appearing
  in applications such as Floer complexes \cite{floer} and Symplectic
  Field Theory \cite{EGH}.

The current constructions of such regularized moduli spaces $\CM\,\!'$
  all use essentially similar ingredients: The Cauchy-Riemann equation
  is cast as a Fredholm problem, a compactness theorem is proven in
  which the description of convergence to a ``broken'' or ``nodal''
  curve is provided, a gluing theorem is proven in which smooth curves
  are constructed from the broken or nodal curves, and the issue of
  transversality is resolved in order to obtain a smooth structure.
Due to the length and technical complications that arise in such a
  program, very few moduli space constructions in the literature are
  technically complete.
In fact, such completeness is often undesirable since it would lead to
  countless repetitions of ``standard techniques'' in slightly different
  settings, which would hide the main ideas.
On the other hand, subtle problems are easily overlooked when proofs
  merely refer to techniques of other papers which are not complete either.

The polyfold theory, developed by H.~Hofer, K.~Wysocki, and E.~Zehnder,
  aims to provide an analytic framework within which technically complete
  proofs can be given in a compact and instructive way.
Additionally, the theory comes with a collection of ``building block''
  results which allow the theory to rapidly extend from a few model cases
  to a large variety of different setups.
The most important pair of features, perhaps, is the abstract
  perturbation scheme and implicit function theorem which together resolve
  the transversality problem at a completely abstract functional-analytic
  level: Any compact moduli space that admits a description as the zero
  set of a ``Fredholm section of a polyfold bundle'' can be perturbed
  within this ambient space as if it was the zero set of a smooth section
  in a finite dimensional bundle.
Such a perturbation scheme then yields a natural representation of
  the moduli space as a cobordism class of smooth, finite dimensional,
  closed manifolds -- in the case of trivial isotropy and empty boundary.
In the case of nontrivial isotropy, which is analogous to perturbing a
  section of an orbi-bundle, a multi-valued perturbation scheme yields
  a cobordism class of weighted branched orbifolds.\footnote
  {
  Weighted branched orbifolds are a mild generalization of closed
    manifolds in the sense that they still have natural fundamental
    classes, just with rational coefficients.
  }
In cases involving boundary (and corners), polyfold theory offers a
  relative perturbation scheme that allows one to restrict the support of
  perturbations to a neighbourhood of the non-transverse part of the
  moduli space (in practice often a complement of the boundary).
This essentially reduces the challenge of constructing ``coherent
  perturbations''\footnote
  {
  Coherence of perturbations with gluing operations is a core requirement
    for all Floer-type theories arising from moduli spaces (except for
    Gromov-Witten theory).
  The reason is that these theories not only construct algebraic
    structures (e.g.\ a Floer differential $\partial$) from
    moduli spaces, but also deduce their algebraic properties (e.g.\
    $\partial\circ\partial =0$) by identifying the boundary of each moduli
    space with fiber products of other moduli spaces.
  }
  to ensuring that the combinatorics of the gluing operations would
  allow for coherent perturbations if all involved moduli spaces were
  cut out by smooth sections of finite dimensional bundles.

Let us briefly sketch the two core analysis issues for achieving such a
  powerful abstract perturbation scheme, and how polyfold theory arises
  naturally as a means to resolve these issues directly rather than
  circumvent them.
Firstly, the reparametrization action $(\phi,u)\mapsto u\circ\phi$
  by nondiscrete\footnote{ Standard examples are the action of \({\rm
  PSL}(2,\mathbb{C})\) on the space of maps \(u:\mathbb{C}P^1\to X\)
  via reparametrization, or the action of \(\mathbb{R}\) on the space
  of maps \(\gamma:\mathbb{R}\to X\) via reparametrization. } families
  of automorphisms $\phi$ on an infinite dimensional space of maps $u$
  is not classically differentiable in any usual Banach topology.
(See e.g.\ Example~\ref{ex:MorseDiff} and \cite{mcduff-wehrheim} for
  discussions of this phenomenon.)
Hence a moduli space of pseudoholomorphic curves is classically
  described by first giving the space of pseudoholomorphic maps a smooth
  structure by finding an equivariant (!) transverse perturbation, and
  then quotienting this finite dimensional space by the -- then smoothly
  acting -- reparametrizations.
Such perturbations exist in many cases, e.g.\ by variation of the almost
  complex structure $J$, but in general transversality and equivariance
  are contradictory requirements.
These requirments can be achieved simultaneously for pseudoholomorphic
  maps only under significant geometric control of the maps -- usually
  some type of injectivity.

The novel approach of polyfold theory to this issue is to replace
  the classical notion of differentiability with a new notion of scale
  differentiability.
This allows one to give a scale smooth structure to the infinite
  dimensional space of reparametrization-equivalence-classes of maps,
  and it also allows one to express the Cauchy-Riemann operator as a
  section over this space in such a way that the zero set of this section
  is precisely the moduli space.
Perturbations of this section then only need to be scale differentiable
  rather than equivariant.
On the other hand, this yields a new notion of smoothness that is
  sufficiently strong for zero sets of transverse scale smooth sections
  to inherit a smooth structure in the classical sense.

Secondly, almost all moduli spaces of pseudoholomorphic curves with
  regular domains\footnote
    {
    Throughout, we will call the domain of a pseudoholomorphic map or
      curve ``regular'' if it is a smooth, connected Riemann surface.
    Here ``curve'' stands for ``map modulo reparametrization of the
      domain'', see Remark~\ref{rmk:term}.
    We will refer to the corresponding curves as ``non-nodal'' /
      ``unbroken'' or ``smooth'' since regularity for maps or curves
      usually refers to surjectivity of the linearized Cauchy-Riemann
      operator.  
    }
  require a compactification by ``nodal'' or ``broken'' curves, which
  are described as pseudoholomorphic maps from singular domains\footnote
    {
    Throughout, we will call the domain of a pseudoholomorphic map or curve
    ``singular'' if it is not regular.
    For example, the domain of a map representing a ``nodal curve''
      consists of several connected Riemann surfaces together with marked
      points which indicate the nodes at which the pseudoholomorphic
      map is required to satisfy incidence conditions between pairs of
      marked points.
    For a ``broken curve'' the underlying domain is of the same kind,
      but the marked points are considered as punctures at which the map
      generally doesn't extend continuously but has a certain asymptotic
      behaviour, with limits that are required to satisfy incidence
      conditions between pairs of punctures.
    }.  
This precludes any description of the compactified moduli space as a
  subset of a single Banach manifold of maps.
Classically, this compactification is constructed by gluing theorems
  after transversality is achieved.
This raises nontrivial difficulties for each new moduli space problem
  -- in particular, when families of curves must be glued to form the
  boundary of moduli spaces of dimension two or more.
Here the novel notion of an sc-retract or splicing core (which formalize
  the pregluing construction) allows polyfold theory to build ambient
  spaces of (equivalence classes of) maps in which maps with singular
  domains have neighborhoods of maps with both singular and regular
  domains.
In fact, nodal curves in Gromov-Witten theory become smooth interior
  points of an ambient space that consists of nodal and non-nodal
  equivalence classes of maps that may or may not satisfy the PDE.
Then part of the gluing analysis is formalized as a Fredholm condition
  on the Cauchy-Riemann operator at nodal curves, and other parts are
  replaced by an abstract implicit function theorem for Fredholm sections
  over sc-retracts.

Together, these two ideas generate a fundamentally new version of
  nonlinear Fredholm theory, which is stronger than the classical theory
  in that it includes an abstract perturbation scheme in addition to an
  implicit function theorem.
Furthermore, it is more flexible in that it is expected to admit a
  description of any compactified moduli space $\CM$ of pseudoholomorphic
  curves as the zero set of a single ``scale smooth Fredholm section''
  $\ti{\sigma}:\Ti{\cB}\to\Ti{\cE}$ in a ``polyfold bundle''
  $\Ti{\cE}\to\Ti{\cB}$.
Once such a description is given, the abstract transversality package
  is a direct generalization of finite dimensional differential geometry.
More specifically, after verifying that $\tilde{\sigma}^{-1}(0)$ is
  compact, one knows that there exist arbitrarily small perturbations
  $p:\Ti\cB\to\Ti\cE$ such that $\ti\s+p$ is transverse to the zero
  section; the zero set of such a perturbed section $\CM\,\!':=(\ti
  s+p)^{-1}(0)$ is a compact, finite dimensional manifold (or orbifold,
  and possibly with boundary and corners); and the zero sets for any two
  such perturbations are cobordant in the appropriate sense.

Hence one benefit of the polyfold approach is that the perturbation
  theorem sketched above does not depend on specific properties of the
  moduli problem under study, but rather it holds abstractly in the
  category of polyfolds.
Consequently, the resolution of the difficult transversality problem
  for moduli spaces is reduced to the simpler task of showing that the
  moduli problem fits into the polyfold framework.
On the other hand, a drawback of the polyfold approach is that one
  must become at least minimally familiar with the language, the new
  differentiable structures, and the basic results of the theory,
  which are dispersed across many articles and hundreds (if not yet
  thousands) of pages written by H.~Hofer, K.~Wysocki and E.~Zehnder
  (\cite{Hofer0}, \cite{Hofer}, 
  \cite{HWZ_lectures},
  \cite{HoferWysockiZehnder1}, \cite{HoferWysockiZehnder2},
  \cite{HoferWysockiZehnder3}, \cite{HoferWysockiZehnder4},
  \cite{HWZscSmooth}, \cite{HWZ_Integration}, \cite{HWZ_DM},
  \cite{HWZI_applications}, \cite{HWZII_applications},  \cite{hwzbook},
  \cite{HWZ_DetBundles}).

As such, the goal of this paper is to distill the theory down to a
  few essential elements, and to present these core ideas and suggested
  applications to any reader who wonders how a moduli space is constructed
  from a differential equation and who knows what a Banach space is.
Furthermore, this should empower such a  reader to evaluate the benefits
  and applicability of polyfold theory, and provide the basics for dealing
  with this theory.
More specifically, those who do not usually touch a differential operator
  themselves should be enabled to make sense of moduli space constructions
  written in polyfold language.
Readers who are considering applying polyfold theory in their own
  work should obtain a road map, which should allow them to efficiently
  compile details from the large body of work of Hofer--Wysocki--Zehnder --
  henceforth abbreviated by HWZ --  with little additional technical work.
For that purpose this article is divided into the following two parts,
  which are mostly independent of each other and may be of interest to
  different readers.

\smallskip
\noindent {\bf I) Meta-mathematics:} 
This section provides some polyfold philosophy.  
We loosely describe the key elements of the theory, and we compare
  the polyfold approach to other currently used approaches (namely
  ``geometric'' and ``virtual'') by providing a road map for each.

\smallskip
\noindent{\bf II) Mathematics:} 
This section provides the core definitions which are presented in a
  streamlined fashion so that we may state the abstract transversality
  result as quickly as possible.
For several key ideas we present companion examples to illustrate either
  the concept or its necessity in the theory.

\smallskip
For the sake of brevity, we restrict our presentation to the theory of
  M-polyfolds, which deals with the case of the automorphism group acting
  freely (i.e.\ the case of trivial isotropy) 
  and yields solution spaces which have the structure of a manifold.
The most essential new concepts of polyfold theory are already contained
  in this part and are best presented without the algebraic distraction
  of additional discrete group actions (i.e.\ nontrivial isotropy).
In cases of nontrivial discrete stablizers, the ambient space can then
  be described as a polyfold -- a groupoid whose spaces of objects
  and morphisms are M-polyfolds -- and transverse multisections of a
  polyfold bundle give the moduli spaces the structure of a branched
  weighted orbifold.
The latter ideas for dealing with discrete symmetries have already been
  well established in the literature.
The crucial new input is the transversality package for M-polyfolds,
  which can be directly applied to polyfolds; 
  see Remark~\ref{rmk:isotropy}.

The approaches and technical ingredients for moduli space problems
  discussed here build on the shoulders of many researchers, in particular
  Donaldson, Floer, Fukaya, Gromov, Hofer, Joyce, Li, Liu, McDuff, Oh,
  Ohta, Ono, Ruan, Salamon, Siebert, Taubes, Tian, Wysocki, Zehnder.
In order to neither offend nor misrepresent, we have decided to not
  attempt to provide systematic citations except for elements of polyfold
  theory.

\medskip
\noindent
{\bf Acknowledgements:}
These notes grew out of a working group organized by the first three
  authors at MSRI in fall 2009.
We would like to thank this working group as well as Helmut Hofer for
  their great help and stimulating discussions.
Further useful comments were provided by Sonja Hohloch, Urs Fuchs,
  Chris Wendl, participants of the 2014 ``ECH \& Polyfolds'' seminar at
  UC Berkeley, and the referee. 

\part{Traversing Transversality Troubles}

In this meta-mathematical part, we will share our insights on the
  approaches to the regularization of moduli spaces that are currently
  present in the literature.
The main goal here is to clarify the origin and novelty of the polyfold
  approach and show how a different ordering of basic ingredients (implicit
  function theorem, quotient, gluing) results in a more organized and
  automated theory of transversality.
While we will not explicitly discuss any concrete constructions, we
  encourage the readers to interpret all general discussions in their
  favorite specific setting and then make appropriate adjustments to our
  vague formulations.
For instance, the discussion that follows can be adapted to Gromov-Witten
  theory, various versions of Floer homology, various versions of contact
  homology, Symplectic Field Theory, and other moduli space problems
  as well.
In order to maximize accessibility of the discussion that follows,
  we will use Morse theory as a common ground.
Of course, polyfolds are not needed to resolve transversality issues that
  arise in Morse theory, however polyfolds do indeed apply to Morse theory,
  and the simplicity of such an analytic setup will help to illuminate
  the core ideas arising in the polyfold theory.

\begin{example}[\bf Compactified Morse moduli space] \label{ex:Morse} \rm
The Morse moduli space $\cM$ consists of trajectories between any pair
  of critical points of the gradient vector field of a Morse function
  \mbox{$f:X\to\R$} on a Riemannian manifold $(X, g)$.
That is, $\cM$ is made up of gradient flow lines, i.e.\
  maps $\gamma:\R\to X$ satisfying the gradient flow equation
  $\frac\rd{\dt}\gamma - \nabla f=0$ modulo the automorphism group $\R$
  which acts by shifts $(s,\gamma)\mapsto \gamma(s+\cdot)$.
The compactification $\CM$ of this moduli space consists of broken
  trajectories, which are tuples $[\gamma_1],\ldots,[\gamma_k]\in\cM$
  of any length $k\geq 1$ with matching limits
  $\lim_{t\to-\infty}\gamma_{i-1}(t) = \lim_{t\to \infty}\gamma_{i}(t)$.
\end{example}

\begin{remark}[Terminology]  \label{rmk:term}
We will use the following terminology: A {\bf trajectory} \([\gamma]\)
  is an equivalence class of maps  \(\gamma:\mathbb{R}\to X\), where
  \([\gamma_1] = [\gamma_2]\) iff \(\gamma_1(\cdot) = \gamma_2(s_0 +
  \cdot)\) for some \(s_0\in \mathbb{R}\); a {\bf gradient trajectory} or
  a {\bf flow line} is a trajectory for which each representative solves
  the gradient equation \(\frac\rd{\dt}\gamma = \nabla f (\gamma) \).
Similarly, a {\bf curve} is an equivalence class of triples $(\Sigma, j, u)$, where \(u:(\Sigma,
  j)\to X\), and \([u_1] = [u_2]\) iff \(u_2\circ \phi = u_1\) for
  a biholomorphism \(\phi:(\Sigma_1,j_1) \to (\Sigma_2,j_2)\); a {\bf
  pseudoholomorphic curve} with respect to some almost complex structure
  $J$ on $X$ is then a curve such that each representative solves the
  Cauchy-Riemann equation \(\bar{\partial}_J u = 0\).
\end{remark}

\begin{remark}[Conventions]
It will be convenient to distinguish between \(\mathbb{N}=\{1, 2, 3,
  \ldots\}\) and \(\mathbb{N}_0:=\{0\}\cup \mathbb{N}\).
More importantly, $\cC^k(\Omega,\R^N)$ for a Riemannian manifold
  $\Omega$ will always denote the Banach space of $k$-fold continuously
  differentiable maps $u:\Omega\to \R^N$.
In particular, if $\Omega$ is noncompact, then we explicitly require
  any $u\in\cC^k(\Omega,\R^N)$ to have bounded derivatives up to order
  $k$, and we equip this space with the $\cC^k$-norm rather than the
  $\cC^k_{\rm loc}$-topology.
Similarly, for a Riemannian manifold $X$, we denote by $\cC^k(\Omega,X)$
  the Banach manifold of maps $u:\Omega\to X$ whose derivatives are
  bounded up to order $k$, and whose image is precompact.
It is equipped with the $\cC^k$-topology and modeled on the Banach
  space $\cC^k(\Omega,\R^N)$ for $N=\dim X$.
\end{remark}

\section{The essence of polyfolds}
In this section we discuss some of the foundational issues that arise
  in attempts to regularize moduli spaces, and we provide a broad picture
  of polyfold theory via comparison to a finite dimensional regularization
  theorem.
In Sections~\ref{ss:sc} and \ref{ss:ret} we then provide an overview of
  the two fundamentally new concepts of scale calculus and sc-retractions
  on which polyfold theory builds.

\subsection{Some broad strokes}\label{sec: broad strokes}
We begin by comparing the analytic framework of a typical moduli space
  problem to a familiar problem in finite dimensions.
In order to obtain an efficient transversality theory for a given
  moduli space $\cM$, we aim to build an ambient space \(\mathcal{B}\),
  a vector bundle over this space \(\mathcal{E}\to \mathcal{B}\), and
  a section \(\sigma:\mathcal{B}\to\mathcal{E}\) so that the zero set
  $\sigma^{-1}(0)\cong \cM$ represents the moduli space as a subset of
  the ambient space $\mathcal{B}$.

Given such a description, we intuitively expect an implicit function
  theorem to equip $\cM$ with a smooth structure whenever the section
  $\sigma$ is transverse to the zero section of $\mathcal{E}$; and we
  hope to achieve this transversality by some dense set of perturbations
  of $\sigma$, with the resulting regularized moduli space essentially
  independent of this choice.
In finite dimensions, this intuition is in fact valid and it can easily
  be made precise:

\begin{theorem}[\bf Finite dimensional regularization] \label{thm:model}
Let $E\to B$ be a smooth finite dimensional vector bundle, and let
  $s:B\to E$ be a smooth section such that $s^{-1}(0)\subset B$ is compact.
Then there exist arbitrarily small, compactly supported, smooth
  perturbation sections $p:B\to E$ such that $s+p$ is transverse to the
  zero section, and hence $(s+p)^{-1}(0)$ is a smooth manifold.
Moreover, the perturbed zero sets $(s+p')^{-1}(0)$ and $(s+p)^{-1}(0)$
  of any two such perturbations $p,p':B\to E$ are cobordant.
\end{theorem}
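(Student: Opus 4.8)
The plan is to combine a parametric transversality argument with a homotopy construction, both standard in finite-dimensional differential topology. First I would build a finite-dimensional family of perturbations that fills out the relevant fiber directions near the compact zero set $K:=s^{-1}(0)$: choose a relatively compact open neighborhood $U\supset K$ with $\overline U$ compact, cover $\overline U$ by finitely many charts over which $E$ is trivial, pick bump functions subordinate to this cover, and let $P$ be the finite-dimensional vector space spanned by all products of a bump function with a constant section in the corresponding local trivialization. Consider the ``universal'' section $\Sigma:B\times P\to E$, $\Sigma(x,p)=s(x)+p(x)$. Over $U$ this is transverse to the zero section, since at each point the fiber component of $\Sigma$ can be moved in every direction by varying $p$; hence $\mathcal Z:=\Sigma^{-1}(0)\cap(U\times P)$ is a smooth manifold and the projection $\pi:\mathcal Z\to P$ is smooth.

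Next I would apply Sard's theorem to $\pi$: a regular value $p\in P$ of $\pi$ is precisely a perturbation for which $s+p$ is transverse to the zero section over $U$. Choosing $p$ supported in $U$ and small enough that $s+p$ has no zeros on $\overline U\setminus U'$ for a slightly smaller neighborhood $U'\supset K$ — possible since $|s|$ is bounded away from $0$ there, by compactness — forces $(s+p)^{-1}(0)$ into the interior of $\overline U$, so that the implicit function theorem makes it a compact smooth manifold. The set of such regular values is residual, hence dense, and by rescaling the charts to shrink the supports it contains points arbitrarily $C^k$-close to $0$; this yields the ``arbitrarily small'' perturbations.

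For the cobordism statement, given two such small perturbations $p_0,p_1$ I would work over $B\times[0,1]$ with the pulled-back bundle and the section $T(x,t)=s(x)+(1-t)p_0(x)+tp_1(x)+q(x,t)$, where $q$ ranges over the analogous perturbation family multiplied by a cutoff $\beta(t)$ vanishing near $t\in\{0,1\}$. Repeating the parametric transversality argument over a neighborhood of the (compact) zero set of $T$ inside $B\times[0,1]$ produces a small $q$ for which $T$ is transverse to the zero section; then $T^{-1}(0)$ is a compact smooth manifold with boundary, and since $q\equiv 0$ near the ends its boundary is $(s+p_0)^{-1}(0)\times\{0\}\ \sqcup\ (s+p_1)^{-1}(0)\times\{1\}$, the desired cobordism.

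The hard part will be the bookkeeping in the first step: verifying that the bump-function-times-constant-section family genuinely spans all fibers at every point of $\overline U$, so that the universal section is transverse to the zero section, together with the quantitative control that keeps the perturbed zero sets from escaping $U$. Everything else is a routine application of Sard's theorem and the implicit function theorem; if $B$ carries boundary or corners one additionally imposes the usual requirement that the perturbed section be transverse to the zero section along each boundary stratum, which is handled exactly as in classical cobordism theory.
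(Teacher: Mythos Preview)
The paper does not actually prove this theorem; it is stated as a well-known fact from finite-dimensional differential topology and serves only as the motivating model for the polyfold regularization theorem~\ref{thm:PolyfoldRegularization}. There is therefore nothing to compare your argument against.

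That said, your proposal is the standard proof and is essentially correct. Two small remarks on exposition. First, for ``arbitrarily small'' you need not rescale anything: since the regular values of $\pi:\mathcal Z\to P$ are dense in $P$ by Sard, they accumulate at $0\in P$, so you may pick a regular value $p$ of arbitrarily small norm in the finite-dimensional space $P$; this automatically makes $p$ small in any $C^k$-norm because all norms on $P$ are equivalent. Second, the compactness of $(s+p)^{-1}(0)$ deserves one more sentence: it is a closed subset of $B$ (preimage of the closed zero section under a continuous map), and once $p$ is small enough it is contained in the compact set $\overline{U'}$ by your bounded-below argument, hence compact. The same remark applies to the cobordism $T^{-1}(0)$ in $B\times[0,1]$.
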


\begin{remark} \label{rmk:regularize} \rm
At this point we can explain our notions of {\bf regularization} and
  {\bf transversality}.
The latter is a fixed and rigorous mathematical notion, and in this
  case it is the assertion that at any solution $x\in (s+p)^{-1}(0)$
  the image of the differential $\rd_x(s+p)$ projects surjectively to
  the fiber $E_x$.
By the implicit function theorem, this equips $(s+p)^{-1}(0)$ with a
  smooth structure, and it is customary to refer to the existence of a
  class of such transverse perturbations $p$ as transversality.
However, transversality does not yet guarantee compactness of
  $(s+p)^{-1}(0)$ or its uniqueness up to cobordism.
It is this package -- the existence of a class of perturbations
  $p\in\cP$, whose compact smooth zero sets $\CM_p:=(s+p)^{-1}(0)$
  are unique up to cobordism -- which we call the regularization of the
  solution space $s^{-1}(0)$.
More precisely, this allows us to associates to a possibly rather
  singular space $s^{-1}(0)=\CM$ (in practice this is the moduli space)
  the more regular object of a cobordism class $[\CM]:=[\CM_p]$.
This regularization of $\CM$ is independent of the choice of perturbation
  $p\in\cP$ due to the existence of cobordisms $\CM_p \sim \CM_q$ for
  any other $q\in\cP$.
\end{remark}

The aim of this section can then be stated as the discussion of
  possible generalizations of Theorem~\ref{thm:model} that could provide
  an efficient regularization theory for moduli spaces.
Before doing this however, let us highlight two limitations of the
  finite dimensional regularization theorem.

\begin{itemlist}
\item
  Neither Theorem~\ref{thm:model}, nor any direct generalization of it
    provides equivariant transversality.
  That is, if the section $\sigma$ is equivariant under a group action,
    then one generally cannot require the transverse perturbation $p$
    to be equivariant as well.
  One notable exception is the case of a \emph{finite} group action,
    in which case one can generally find transverse equivariant
    multisections.
  For nondiscrete groups, equivariance and transversality are  -- except
    for rather special circumstances -- nearly contradictory requirements.
\item
  While transversality for perturbed sections can still be achieved if
    \(\sigma^{-1}(0)\) is non-compact, one cannot
    expect regularization.  
  More specifically, our notion of regularization demands not just
  transverality, but also \emph{uniqueness} of the cobordism class of
  the zero set of the perturbed moduli space, and such uniqueness is
  not obtained in general if the unperturbed solution set is not compact.
\end{itemlist}

For the application to moduli spaces, it is crucially important that
  the perturbed zero set be regularized in the above manner because
  the topological invariants arising from the moduli spaces are usually
  obtained by \emph{counting}\footnote
  { 
    More generally one seeks to pull back differential forms from a
      target manifold \(X\) and integrate them over the moduli space,
      which need not be well defined if the moduli space is not compact.
  } 
  elements in the perturbed solution space.  
For example, one counts gradient flow lines modulo translation to
  define the differential in Morse homology,  and a count of closed
  pseudoholomorphic curves (i.e. pseudoholomorphic maps modulo
  reparametrization) defines the Gromov-Witten invariants.
This also points to the significance of equivariance: A generalization of
  Theorem~\ref{thm:model} would have to provide equivariant transversality
  if it was to be applicable to the classical description of these moduli
  spaces 
  in terms of equivariant sections.
In order to demonstrate this in an example, we return to Morse theory
  as a common ground and recall its classical equivariant 
  setup.

\begin{example}[\bf 
Equivariant setup for Morse theory
]\label{ex:MorseFredholm} \rm
Let $X$ be a closed smooth manifold of positive dimension, let $f:X\to
  \R$ be a Morse function, and let $g$ be a Riemannian metric on $X$.
The flow lines of the gradient vector field $\nabla f$ on $X$ are the solutions $\gamma:\R\to X$ of 
$\dot\gamma - \nabla f(\gamma) = 0$. 
Since these solutions are automatically smooth, there are many choices of bundles 
$\mathcal{E}\to\mathcal{B}$ so that $\s(\gamma):=\dot\gamma - \nabla f(\gamma)$ defines a 
section $\sigma:\mathcal{B}\to\mathcal{E}$ whose zeros are the gradient flow lines. 
The regularization approaches discussed in \S\ref{s:roadmaps} all require a Fredholm setup -- i.e.\ a choice of Banach manifold $\mathcal{B}$ and Banach bundle $\mathcal{E}\to\mathcal{B}$ so that the linearizations $D_\gamma\sigma:\rT_\gamma\cB \to \cE_\gamma$ at solutions $\sigma(\gamma)=0$ are Fredholm operators. 
This is generally achieved by working in suitable Sobolev spaces, 
but the expository purposes of this section are better served by considering the simplified setup\footnote{
Note that the section in this simplified setup is generally not Fredholm since e.g.\ for $X=\R/\Z$ and $f=0$ the image of the linearized section 
$\{\xi\in \cC^1(\R,\R) \,|\, \lim_{s\to\pm\infty}\xi(s) = 0\}  \to \cC^0(\R,\R) , \xi \mapsto \dot\xi $ 
does not contain any $\cC^0$-function $\eta:\R\to\R$ with divergent indefinite integral $\int_0^\infty \eta(s) \, \rd s$ or $\int_{-\infty}^0 \eta(s) \, \rd s$.
}
\begin{align*}
 \cB=\{\gamma\in\cC^1(\R,X) \st \lim_{s\to\pm\infty}\gamma(s)\in{\rm
    Crit}(f) \} , \qquad  \cE={\textstyle \bigcup_{\gamma\in\cB}} \,\cE_\gamma, \qquad
    \cE_\gamma= \cC^0(\R,\gamma^*TX) . 
\end{align*}
Observe that if $\gamma\in \mathcal{B}$ and $\sigma(\gamma)=0$, then
  for each $s\in \R$ we also have $\sigma(\tau(s,\gamma))=0$, where $\tau$
  is the {\bf translation action} (often also called {\bf shift map})
  \begin{equation}\label{tau}
    \tau: \R\times \mathcal{C}^1(\R, X)\to \mathcal{C}^1(\R, X) \qquad
    \text{given by}\qquad\tau(s,\gamma):=\gamma(s+\cdot).
  \end{equation}
Since the automorphism group ${\rm Aut}=\R$ is non-compact, we must
  conclude that $\sigma^{-1}(0)$ is non-compact, unless it only consists
  of fixed points of the action, i.e.\ constant maps.
The moduli space of unbroken Morse trajectories is then defined as
  the quotient $\cM:=\s^{-1}(0)/{\rm Aut}$ of the zero set by this
  reparametrization action.
\end{example}

Similar to the above example, most (not yet compactified) moduli
  spaces of pseudoholomorphic curves have a description as quotient
  $\cM:=\s^{-1}(0)/{\rm Aut}$ of an ${\rm Aut}$-equivariant 
  section $\sigma:\mathcal{B}\to\mathcal{E}$ over a Banach manifold
  $\cB$ of maps (and often additional parameters describing a variation
  of domain or equation), on which a Lie group ${\rm Aut}$ acts by
  reparametrizations.
We cannot expect any general regularization theory such as
  Theorem~\ref{thm:model} to apply to this type of setup for two reasons
  related to the limitations discussed above:
\begin{itemlist}
  \item
    We are ultimately interested in the space \(\cM = \sigma^{-1}(0)/{\rm
      Aut}\) of solutions modulo reparametrization, so in order to
      be able to quotient the perturbed zero set by ${\rm Aut}$, the
      perturbation $p$ in Theorem~\ref{thm:model} would have to be ${\rm
      Aut}$-equivariant.
  \item
    The automorphism group ${\rm Aut}$, such as ${\rm Aut}=\mathbb{R}$ in
      the above example, is usually non-compact, and the moduli space does
      not just consist of fixed points of ${\rm Aut}$, hence $\s^{-1}(0)$
      must be non-compact.
    Furthermore, even if ${\rm Aut}$ was compact, then in all nontrivial
      examples the appearance of nodal (or broken) curves (or trajectories)
      is an additional source of non-compactness.
  \end{itemlist}
However, in any general setup, even the finite dimensional theory
  provides neither equivariant transverse perturbations nor a
  regularization of non-compact zero sets.
As such, approaches to regularize moduli spaces split into several
  basic types:
\begin{itemlist}
  \item
    The {\bf geometric approach}, discussed further in
      Section~\ref{ss:geometric}, makes use of special geometric
      properties of a given moduli problem to find transverse equivariant
      perturbations of a section with noncompact zero set.
    However, this \emph{only} yields transversality; that is, one
      still must construct a compactification and prove uniqueness up
      to cobordism, and this additional work may require new ideas and
      substantial effort.
    The only major abstract theorem used in this approach is the
    classical Sard-Smale theorem (where regular points
      yield transversality) applied to manifolds of
      maps of a fixed domain, and hence it cannot regularize moduli spaces
      in which the topology of the domain changes abruptly.
   Consequently, such geometric approaches are not analagous to Theorem ~\ref{thm:model}, since the latter
      simultaneously yields transversality, \emph{compactness}, and
      \emph{uniqueness}.
  \item 
    Any abstract approach via some type of generalization of
      Theorem~\ref{thm:model} must work in a setting where the unperturbed
      solution space is compact and no further nondiscrete symmetry of
      the perturbation is required.
    We roughly classify such approaches by the dimensionality of the
      bundles involved:
  \begin{itemize}
    \item
      Several types of {\bf virtual approaches}, which we discuss further
	in Section~\ref{ss:virtual}, work with a highly generalized version
	of Theorem~\ref{thm:model} for finite dimensional bundles over
	groupoid-like structures or topological spaces with merely local
	smooth structures.
    \item
      The {\bf polyfold approach} works with a direct generalization
	of Theorem~\ref{thm:model} to infinite dimensional bundle-like
	linear structures over infinite dimensional manifold-like spaces
	with a global smooth structure.
  \end{itemize}
\end{itemlist}

Since the polyfold approach aims to be a unified perturbation theory
  for a broad class of moduli problems, it must develop a regularization
  theory that directly applies to sections of a bundle over the space
  \(\mathcal{B}/{\rm Aut}\), and, in so doing, it removes the requirement
  that perturbations must be equivariant (since \({\rm Aut}\) does not
  act on \(\mathcal{B}/{\rm Aut}\)).
This is then one step closer to a setting in which the unperturbed
  solution space $\s^{-1}(0)$ is compact, and hence a full regularization
  theory can be hoped for, however doing analysis directly on the space
  \(\mathcal{B}/{\rm Aut}\) raises a serious difficulty.
We take a moment to highlight the {\bf failure of differentiability of
  the action of reparametrization} in the example of Morse theory.

\begin{example}[\bf Differentiability of translation action]
  \label{ex:MorseDiff} \rm
In the notation of Example~\ref{ex:MorseFredholm}, the development of a
  regularization theory would require some type of smooth structure on the
  space \(\mathcal{B}/{\rm Aut}\) of \(\mathcal{C}^1\)-paths $\g:\R\to X$
  between two critical points, modulo the reparametrization action of
  ${\rm Aut}=\R$.
However, the translation action of $\R$ on $\cC^1(\R,X)$, given by
  \(\tau\) in equation \eqref{tau}, is nowhere differentiable with respect
  to the \(\mathcal{C}^1\)-norms.
At first, one might think that $\tau$ is differentiable at points
  $(s_0,\g_0)\in \R\times \cC^2(\R)$; for example, at $(0,\gamma_0)$
  the differential, were it to exist, would necessarily be given by
  \begin{align*}
    ``\rD_{(s_0,\gamma_0)}\tau``: \; \R \times \cC^1(\R,\gamma_0^*\rT X)
    &\;\longrightarrow\; \cC^1(\R,\gamma_0^*\rT X) \\
    (S, \Gamma) \quad &\;\longmapsto\; S \tfrac \rd \dt \gamma_0 + \Gamma .
    \end{align*}
Note here that the right hand side takes values in $\cC^1$ only if
  $\gamma_0$ is $\cC^2$, so that this linear operator is not even defined
  for $\g_0\in\cC^1(\R)\setminus \cC^2(\R)$.
Moreover, the definition of the directional derivative in a fixed
  direction $(S,\G)\in\R\times \cC^1(\R)$ requires a linear approximation
  estimate, which holds only if $\max_{s\in \R} \bigl| \dot\G(s+h) -
  \dot\G(s) \bigr| \to 0$ as $h\to 0$.
Consequently, directional derivatives only exist in directions $\G$
  whose derivative is uniformly continuous, e.g.\ $\G\in\cC^2(\R)$.
Similarly, the linear estimate required for differentiability,
  $\max_{\|\G\|_{\cC^1}=1} \bigl\| \dot\G(\cdot+h) - \dot\G(\cdot)
  \bigr\|_\infty \to 0$ as $h\to 0$ fails at any $(s_0,\g_0)$, so that
  the above linear operator only provides directional derivatives in
  certain directions, and can never be viewed as differential of $\tau$.
Hence the best that can be said about differentiability of $\tau$ is that
  it is continuously differentiable as map $\R\times \cC^2 \to \cC^1$,
  and generally $k$-fold continuously differentiable as map $\R\times
  \cC^{k+\ell} \to \cC^\ell$.
For more details see Section~\ref{ss:sc}.

Another idea might be to restrict $\tau$ to the space of smooth paths
  and use a different Banach topology.
Note however that the restricted shift map is still not continuously
  differentiable in any standard Banach norm, since, for example, the
  potential differential
  \begin{align*}
    \R \times \cC^\infty(\R,X) &\;\longrightarrow\; \Hom \bigl( \R \times
    \cC^\infty(\R,\gamma_0^*\rT X) ,  \cC^\infty(\R,\gamma_0^*\rT X)
    \bigr) \\
    (s_0, \g_0) \quad &\;\longmapsto\; \quad ``\rD_{(s_0,\gamma_0)}\tau``
    \end{align*}
  is not continuous in the operator topology with respect to any
  fixed H\"older or Sobolev norms on the spaces $\cC^\infty(\R,X)$
  and $\cC^\infty(\R,\gamma_0^*\rT X)$.
In fact, this would in particular require continuity of the map
  $\g_0\mapsto \tfrac \rd \dt \gamma_0$, which -- with the Arzel\`a--Ascoli
  theorem in mind -- is plausible only on finite dimensional subspaces
  of $\cC^\infty(\R,X)$.
\end{example}

Other moduli problems share this same difficulty: The reparametrization
  action of a smooth family of automorphisms on a H\"older or Sobolev
  space of maps is not smooth in a classical sense.
The general consequence of this failure is that one cannot appeal to
  an abstract slice theorem to obtain a Banach manifold structure on
  \(\mathcal{B}/{\rm Aut}\).

\begin{remark}[\bf Local slices for maps modulo reparametrization]
  \label{rmk:slice} \rm
One may argue that, despite its differentiability failure, the
  translation action in Example~\ref{ex:MorseFredholm} of ${\rm Aut}=\R$
  on $\cB\subset\cC^1(\R,X)$ nevertheless has local slices: for any
  hypersurface $H\subset X$, let $\cU_H\subset\cB$ be the open set of
  maps $\g\in\cB$ which intersect \(H\) both transversely and exactly once.
Then $\cB_H=\{\gamma\in\cU_H \st \gamma(0)\in H\}$ is a Banach manifold
  homeomorphic to $\cU_H/{\rm Aut}$.
This yields Banach manifold charts for $\cB/{\rm Aut}$ in the Morse
  theory example\footnote
  {
    Strictly speaking, one has to restrict to a neighborhood of the
      Morse trajectories to ensure unique intersection points, or one can
      use a more subtle slicing for the space of all nonconstant maps.
    Moreover, Banach charts in the strict sense are obtained by
      composition with charts for \(\mathcal{B}_H\).
    See Example~\ref{ex:MorseTrajectorySpaces} for details.
  } 
  and similarly for all other reparametrization actions encountered in
  moduli spaces of holomorphic curves.
However, the transition maps between these charts are generally only
  continuous.
Indeed, for any other hypersurface $H'\subset X$, the transition map
  $\cB_H \cap \cU_{H'} \to \cB_{H'}$ is of the form $\gamma \mapsto
  \tau(s_\gamma, \gamma)$, where $s_\gamma\in\R$ is determined by
  $\gamma(s_\gamma)\in H'$.
Example~\ref{ex:MorseDiff} shows that maps of this type are not
  continuously differentiable unless $s_\gamma$ is constant.

For Morse theory, one can avoid transition maps by reducing $\cB$
  to a small neighborhood of the gradient flow lines.
Then a regular level set of the Morse function can serve as global
  hypersurface, since any map $\cC^1$-close to a gradient flow line will
  have a unique, transverse intersection with it.
In general, however, such global hypersurfaces are rare, and new
  methods would be needed to show that the resulting algebraic invariant
  is independent of their choice.
\end{remark}

We conclude from the preceeding discussion that \(\mathcal{B}/{\rm
  Aut}\) usually has geometrically constructed local slices, but the
  differentiability failure of the reparametrization action of ${\rm Aut}$
  obstructs the construction of a global smooth structure.
The manner in which polyfold theory resolves this difficulty constitutes
  one of the fundamentally new concepts of the theory: A {\bf scale
  calculus} of scale differentiable maps between scale Banach spaces;
  we introduce this notion in more detail in Section~\ref{ss:sc}.
It has several crucial properties:
  \begin{enumerate}
    \item In finite dimensions the scale calculus agrees with the
      classical calculus.
    \item The chain rule holds.
    \item It provides a framework in which reparametrization actions
      on infinite dimensional function spaces, such as the translation
      action \eqref{tau}, are scale smooth.
    \end{enumerate}
At this point polyfold theory gives \(\mathcal{B}/{\rm Aut}\) the
  structure of a scale manifold.
This is essentially achieved in two steps, the first of which is to
  enrich the smooth structure on the local slices $\cB_H$ to a scale
  structure.
Roughly speaking, this scale structure is a sequence of Banach spaces
  (e.g.\ Sobolev or H\"older spaces of increasing regularity) that are
  compactly and densely embedded into nested subspaces of $\cB_H$.
The second step is then to modify the notion of smoothness for
  the transition maps between the local slices by weakening it to
  scale smoothness, which requires only slightly more than $k$-fold
  differentiability between the Banach topologies in the scale sequence
  of distance $k$.
Nevertheless, the resulting scale calculus for scale manifolds is
  rich enough to establish a regularization theorem along the lines of
  Theorem~\ref{thm:model} for suitably defined scale smooth Fredholm
  sections with compact zero set.

We note however, that this scale regularization still does not even
  apply to our Morse theory example.
Indeed, the trouble is that the space of Morse trajectories is
  non-compact due to trajectory breaking.\footnote
  {
    For example, a sequence of trajectories between critical points
      of Morse indices 0 and 2 may converge, in the Gromov-Hausdorff
      topology on the images, to a broken trajectory comprised of one
      trajectory from the index 0 to an index 1 critical point, and
      another trajectory from this index 1 to the index 2 critical point.
  }  
Similarly, most pseudoholomorphic curve moduli spaces are compactified
  by adding nodal or broken curves.
In either case, the ambient space $\cB/{\rm Aut}$ has to be enlarged by
  fiber products of similar spaces in order to obtain an ambient space
  $\Ti\cB$ on which a generalized Cauchy-Riemann operator can provide a
  section $\Ti\s$ whose zero set $\Ti\s^{-1}(0)=\CM$ is the compactified
  moduli space. 
The topology on these enlarged ambient spaces is given by the images
  of open sets under a pregluing map, which roughly has the form
  $$
    \oplus \,:\; (R_0,\infty] \times \cB \times \cB \;\longrightarrow\;
    \Ti\cB \,.
  $$
In the Morse theory example this map joins the two domains $\R\sqcup\R$
  into a single domain $\R$, and it interpolates between shifts of the
  two maps that are determined by the gluing parameter in $(R_0,\infty]$;
  here a gluing parameter equal to $\infty$ corresponds to the broken
  trajectories in $\Ti\cB$.
At this point, the natural expectation is to also use this pregluing map
  (after fixing local slices $\cB_H\subset\cB$ of the $\rm Aut$-action)
  as a chart map for the ambient space $\Ti\cB$ near a broken trajectory.
However, such pregluing maps are never injective.
In fact, their kernel varies with the gluing parameter, and only the
  broken trajectories are parametrized uniquely.
Polyfold theory resolves this issue by the second fundamentally
  new concept of the theory: a differential geometry based on {\bf
  charts from retraction images}, which we introduce in more detail in
  Section~\ref{ss:ret}.
Roughly speaking, this allows one to view the pregluing map as a chart
  map for an M-polyfold by enriching it with a scale smooth retraction
  $\rho$ on its domain so that the pregluing map $\oplus|_{\im\rho}$
  restricted to the retraction image\footnote
  {
    Here the fact that this image of $\rho$ is a topological retract of
      the domain of $\rho$ has no significance; however the retraction
      property $\rho\circ\rho=\rho$ is crucial for the development of
      scale calculus on these images.
  }
  is a homeomorphism to an open subset of $\Ti\cB$. 
Diagrammatically we have
  \[
    \xymatrix{ (R_0,\infty] \times \cB_H \times
    \cB_H \; \ar[r]^{\qquad\qquad{\textstyle\oplus}}
    \ar@{->>}[d]_{\textstyle\rho}  &\; \Ti\cB   \\ \im\rho \;
    \ar@{^{(}->}[ur]_{\textstyle\oplus|_{\im\rho}} &  }
  \]
  where
  \begin{itemize}
    \item \((R_0, \infty]\) is the space in which the gluing parameter
      is allowed to vary;
    \item \(\mathcal{B}_H\) is a local model for the unbroken
      trajectories; i.e. \(\mathcal{B}/{\rm Aut}\);
    \item \(\widetilde{\mathcal{B}}\) is the space of broken and
      unbroken trajectories;
    \item \(\oplus\) is the pregluing map;
    \item \(\rho\) is the sc-smooth retraction mapping to and from
      \((R_0, \infty]\times\mathcal{B}_H\times\mathcal{B}_H\);
    \item \({\rm im} \rho\) is the image of \(\rho\) which is contained
      in \((R_0,\infty]\times\mathcal{B}_H\times\mathcal{B}_H\),
      and it serves as local model for an M-polyfold;
    \item \(\oplus\big|_{{\rm im }\rho}\) is the pregluing map restricted
      to the image of the retraction, and it serves as chart map; in
      other words it is a homeomorphism from the local M-polyfold model
      to an open subset of $\Ti\cB$.
    \end{itemize}

Note that this is a drastically weaker notion of chart than that of a
  Banach manifold chart.
The strength of the M-polyfold notion is in the requirements of
  transition maps, which involve the ambient space of the retraction and
  not just its image.
For example, the compatibility requirement for two charts, as above,
  which arise from different local slices (i.e. $\cB_H$ and $\cB_{H'}$
  of $\cB/\rm Aut$) is that the induced map $\iota_{\rho'} \circ
  (\oplus|_{\im\rho'})^{-1}\circ \oplus|_{\im\rho} \circ\rho$ (shown
  in the following diagram) is scale smooth between open subsets of the
  ambient scale manifolds.
\[
  \xymatrix{ (R_0,\infty] \times \cB_H \times  \cB_H
  \ar@{->>}[d]_{\textstyle \rho}  &  \Ti\cB  & \; (R_0,\infty] \times
  \cB_{H'} \times \cB_{H'} \ar@{->>}[d]_{\textstyle \rho'} \\ \im\rho
  \ar@{^{(}->}[ur]_{\textstyle\oplus|_{\im\rho}} &  & \; \im \rho'
  \ar@{^{(}->}[ul]^{\textstyle\oplus|_{\im\rho'}} \ar@/_1pc/[u]_{\textstyle
  \iota_{\rho'}} }
\]
This provides the notion of an M-polyfold atlas for a topological space
  such as $\Ti\cB$.
Given the notions of scale smoothness and M-polyfolds, HWZ then follow
  a relatively straightfoward path to defining compatible notions of
  bundles and Fredholm sections, and they then establish the following
  M-polyfold regularization theorem, which is a direct generalization of
  the finite dimensional regularization Theorem~\ref{thm:model}.

\begin{theorem}[\bf M-polyfold regularization]
  \label{thm:PolyfoldRegularization}
Let $\Ti{\cE}\to\Ti{\cB}$ be an M-polyfold bundle, and let
  $\ti\s:\Ti{\cB}\to\Ti{\cE}$ be a scale smooth Fredholm section such
  that $\ti\s^{-1}(0)\subset\Ti{\cB}$ is compact.
Then there exists a class of perturbation sections
  $p:\Ti{\cB}\to\Ti{\cE}$ supported near $\ti\s^{-1}(0)$ such that
  $\ti\s+p$ is transverse to the zero section and $(\ti\s+p)^{-1}(0)$
  carries the structure of a smooth compact manifold.
Moreover, for any other such perturbation $p':\Ti{\cB}\to\Ti{\cE}$
  there exists a smooth cobordism between $(\ti\s+p')^{-1}(0)$ and
  $(\ti\s+p)^{-1}(0)$.
\end{theorem}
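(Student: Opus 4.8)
The plan is to follow the blueprint of the finite-dimensional regularization Theorem~\ref{thm:model}, replacing each classical ingredient by its scale-calculus counterpart while tracking the genuinely new features: the local models for $\Ti\cB$ are retraction images rather than open subsets of Banach spaces, the section $\ti\s$ is only scale smooth, and it is Fredholm only in the weaker polyfold sense. The first step is to record the local normal form of a polyfold-Fredholm section near a zero $x\in\ti\s^{-1}(0)$. Combining the Fredholm property with the M-polyfold implicit function theorem, one obtains coordinates in which $\ti\s$ is conjugate to a section whose zero set is cut out by a scale smooth map $f:O\to\R^m$ on an open subset $O$ of a finite-dimensional model space, in such a way that $\ti\s$ is transverse at $x$ precisely when $0$ is a regular value of $f$. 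This reduces transversality, locally, to the familiar finite-dimensional picture.

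Next I would fix the admissible class of perturbations: scale smooth sections $p:\Ti\cB\to\Ti\cE$ whose local representatives carry each level of the scale into the next higher level of the bundle --- the polyfold analogue of a compact perturbation --- so that $\ti\s+p$ is again polyfold-Fredholm; in addition $p$ is supported in a small neighborhood of the compact set $\ti\s^{-1}(0)$ and is small with respect to an auxiliary norm on $\Ti\cE$. Using the finite-dimensional reduction together with the Sard--Smale theorem applied to a finite-parameter family of such $p$, one achieves transversality within a single chart. Since $\ti\s^{-1}(0)$ is compact it is covered by finitely many charts, so an inductive construction --- perturbing on one chart at a time, each new perturbation chosen small enough to preserve transversality already gained on earlier charts --- produces a global $p$ with $\ti\s+p$ transverse to the zero section. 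The M-polyfold implicit function theorem then makes $(\ti\s+p)^{-1}(0)$ a smooth manifold of dimension the Fredholm index.

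It then remains to arrange compactness and the cobordism statement. For compactness one uses that a polyfold-Fredholm section is proper near its zero set, so that together with the auxiliary-norm bound on $p$ the perturbed zero set is confined to a set with compact closure, while transversality prevents any boundary from appearing in the interior of $\supp p$; hence $(\ti\s+p)^{-1}(0)$ is a compact manifold. For cobordism I would repeat the construction over $\Ti\cB\times[0,1]$, starting from $\ti\s+(1-t)p+tp'$ and perturbing further by an admissible section supported in $\Ti\cB\times(0,1)$ to make the homotopy transverse while leaving the two boundary slices fixed; the resulting zero set is a compact manifold with boundary $(\ti\s+p)^{-1}(0)\sqcup(\ti\s+p')^{-1}(0)$.

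I expect the compactness of the perturbed zero set to be the main obstacle. Once the finite-dimensional reduction is in place, transversality is essentially Sard--Smale; but controlling where $(\ti\s+p)^{-1}(0)$ lives forces one to exploit the delicate interplay between the properness of scale-Fredholm sections and the precise smallness condition on $p$ measured in the auxiliary norm. This is where the scale structure, with its compact dense inclusions between consecutive levels, does essential work, and where the argument departs most sharply from the finite-dimensional template.
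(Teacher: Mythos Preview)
The paper does not actually prove this theorem; it is stated informally here in Part~I and then restated with full technical hypotheses as Theorems~\ref{thm:PolyfoldRegularization2} and~\ref{thm:PolyfoldRegularization3}, with the proof attributed to \cite[Theorem~5.22]{HoferWysockiZehnder2}. The paper's role is to assemble the definitions (strong bundle, sc$^+$-section, auxiliary norm, compactness-controlling neighbourhood, polyfold-Fredholm section via contraction germ) needed to make the statement precise.

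Your outline is a fair summary of the HWZ strategy as the paper presents it. Two small calibrations. First, the ``finite-dimensional reduction'' you invoke is not carried out as a separate preliminary step; it is already encoded in the contraction-germ normal form that \emph{defines} the polyfold-Fredholm property (Definition~\ref{def:HWZfred}). Once that normal form is in hand, the Newton iteration and hence the implicit function theorem (Theorem~\ref{iftnoboundary}) follow directly --- there is no independent Sard--Smale step on a finite-dimensional model. Second, the paper (following HWZ) singles out the sc-\emph{Hilbert} hypothesis as essential for the smooth cutoff functions used in your inductive patching, and the compactness mechanism is formalized exactly as the pair $(N,\cU)$ of auxiliary norm and neighbourhood ``controlling compactness'' that you anticipate; the existence of such a pair is itself a nontrivial input, supplied by \cite[Theorem~5.12]{HoferWysockiZehnder2}. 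So your sketch is on target; for the detailed execution the paper simply refers you to \cite{HoferWysockiZehnder2}.
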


\begin{remark}[\bf Regularization for moduli spaces with nontrivial
  isotropy] \label{rmk:isotropy} \rm
Beyond Morse theory, almost all moduli spaces that one may want to
  apply an abstract regularization scheme to -- in particular
  those consisting of pseudholomorphic curves in general
  symplectic manifolds -- require a further generalization of
  Theorem~\ref{thm:PolyfoldRegularization} to sections of a polyfold
  bundle.
This is because the analogue of the action in
  Example~\ref{ex:MorseFredholm} (in particular when functions on spheres
  are reparametrized) may have nontrivial discrete stablizers (also called
  isotropy groups).
Then local slices as in Remark~\ref{rmk:slice} still exist, but have
  to be viewed modulo an action of the local isotropy group.
This generalization is achieved by the same principles as the
  generalization of Theorem \ref{thm:model} to sections of orbi-bundles.
In particular, the notion of a polyfold is obtained from the notion of
  an M-polyfold just like the notion of an orbifold is obtained from the
  notion of a manifold.

More precisely, an atlas of a manifold can be described as a groupoid
  (a category with invertible morphisms) whose space of objects is given
  by the disjoint union of the charts, and with morphisms induced by the
  transition maps.
Some further properties are required in order for the realization of
  this category (the space of objects modulo morphisms) to form a manifold,
  in particular the isotropy groups (given by the morphisms from an object
  to itself) must be trivial.
Dropping this last condition yields the notion of an atlas for an
  orbifold; see e.g.\ \cite{moer}.
In complete analogy, a polyfold is the realization of a
  groupoid whose object and morphism spaces are M-polyfolds; see
  \cite[\S3]{HoferWysockiZehnder3}.

Next, a section of an orbi-bundle can be described as a functor between
  groupoids, i.e.\ a section of a vector bundle over the object space
  that is compatible with morphisms.
Since the object space is a manifold, the notions of smoothness and
  transversality directly transfer to sections of orbi-bundles.
It is only in the perturbation of sections that some new considerations
  are needed to achieve compatibility with morphisms.
In fact, transversality is generally only achieved by multi-valued
  perturbations, as described in e.g.\ \cite{cms,FO}.
However, this just adds an algebraic layer of more complicated
  book-keeping (best done in categorical terms) to the analysis of smooth
  sections of vector bundles.
Hence the same categorical constructions can be based on the analysis
  of scale-smooth Fredholm sections of M-polyfold bundles to yield a
  regularization theorem for scale-smooth Fredholm sections of polyfold
  bundles: There exists a class of multiperturbation sections so that
  the perturbed zero sets are compact weighted branched orbifolds,
  and unique up to cobordism.
In particular, they carry fundamental classes (with
  rational coefficients), whose inverse limit (constructed as in
  \cite[Thm.7.5.4]{mcduff-wehrheim}) provides a well defined virtual
  fundamental class on the moduli space.
\end{remark}

With this frame of reference in place, we now introduce the two core
  ideas of polyfold theory in more detail.

\subsection{Scale Calculus}  \label{ss:sc}
In order to motivate sc-Banach spaces and sc-calculus, we begin with a
  crucial observation: in almost all cases, the procedure to regularize
  a moduli space of Morse trajectories or pseudoholmorphic curves will,
  at some point, quotient by an action of a reparametrization group.
Furthermore, unless a geometric perturbation provides a smooth finite
  dimensional space of (smooth) solutions that is invariant under this
  action, the reparametrizations will need to be considered on an infinite
  dimensional space of maps.
However, as discussed in Example~\ref{ex:MorseDiff}, such actions are
  not continuously differentiable in the classical sense.
To explore this failure, we simplify the Morse theoretic example further
  to a compact domain\footnote
  { 
    For compact domains we have compact embeddings $\cC^\ell(S^1)\to
      \cC^k(S^1)$ for $\ell>k$, whereas the Morse setting with noncompact
      domain $\R$ will require the use of weighted Sobolev spaces to obtain
      scale Banach spaces as introduced below; see Lemma~\ref{ex:sobolev}
      for details.
  } 
  $S^1\cong\R/\Z$ and the target $\R$, so that we consider the modified
  shift map
  \begin{equation}\label{taus1}
    \tau: \R\times \mathcal{C}^1(S^1)\to \mathcal{C}^1(S^1) \qquad
    \text{given by}\qquad\tau(s,\gamma):=\gamma(s+\cdot).
  \end{equation}
The original motivation behind the development of scale calculus
  was to find a notion of differentiability in which the map given
  in \eqref{taus1} was smooth, and this was essentially be achieved by
  formalizing the weaker differentiability properties that the map \(\tau\)
  does satisfy.
To see this, we abbreviate $\cC^k:=\cC^k(S^1,\R)$, and note that one
  can verify the following:
  \begin{enumerate}
    \item the map \(\tau:\mathbb{R}\times\mathcal{C}^k\to \mathcal{C}^k\)
      is continuous for each \(k\in\mathbb{N}\);\label{en:sc diff 1}
    \item the map \(\tau:\mathbb{R}\times\mathcal{C}^{k+1}\to
      \mathcal{C}^k\) is differentiable for each \(k\in \mathbb{N}\),
      with differential
      \label{en:sc diff 2}
      \begin{equation*}
	\rD\tau:
	(\mathbb{R}\times\mathcal{C}^{k+1})\times(\mathbb{R}\times\mathcal{C}^{k+1})\to
	\mathcal{C}^k\quad\text{given by}\quad
	\rD_{(s, \gamma)}\tau\;( S, \Gamma) = S\tau(s, \gamma') + \tau(s, \Gamma);
	\end{equation*}
    \item for each \(k\in\mathbb{N}\) and \((s_0, \gamma_0)\in
      \mathbb{R}\times\mathcal{C}^{k+1}\), the differential \(\rD_{(s_0,
      \gamma_0)}\tau\) extends to a bounded linear operator \label{en:sc
      diff 3}
      \begin{equation*}
	\rD_{(s_0, \gamma_0)}\tau:\mathbb{R}\times\mathcal{C}^k\to \mathcal{C}^k ;
	\end{equation*}
    \item the map
      \((\mathbb{R}\times\mathcal{C}^{k+1})\times(\mathbb{R}\times\mathcal{C}^k)\to
      \mathcal{C}^k\), given by \((s, \gamma, S, \Gamma)\mapsto
      \rD_{(s,\gamma)}\tau(S, \Gamma)\) is continuous for each
      \(k\in\mathbb{N}\). \label{en:sc diff 4}
    \end{enumerate}
In particular, note that while the map
  \(\tau:\mathbb{R}\times\mathcal{C}^k\to\mathcal{C}^k\) fails to
  be differentiable for any \(k\in \mathbb{N}\), it nevertheless is
  continuous for each \(k\in \mathbb{N}\), and it gains regularity when
  we lower the regularity of the target space as in (ii).
This suggests that it is undesireable to consider \(\tau\) as a map to
  and from a fixed function space like \(\mathcal{C}^k\).
On the other hand, the various regularity properties of \(\tau\) and
  \(\rD\tau\)  hold for each \(k\in \mathbb{N}\).
This suggests that instead of thinking of \(\tau\) as a map
  \(\mathbb{R}\times\mathcal{C}^k\to \mathcal{C}^k\) for a fixed \(k\in
  \mathbb{N}\), we should instead regard it as a map between \emph{scales}
  of spaces \(\tau:(\mathbb{R}\times\mathcal{C}^k)_{k\in\mathbb{N}} \to
  (\mathcal{C}^k)_{k\in \mathbb{N}}\).

This collection of weaker differentiability properties then
  motivates the precise notion of a {\bf scale Banach space} (see
  Definition~\ref{def:scBanachSpace}) which consists of a nested sequence
  of Banach spaces, such as
  \begin{equation*}
    E_1=\mathcal{C}^1(S^1) \;\supset\; E_2=\mathcal{C}^2(S^1) 
    \;\supset\; E_3=\mathcal{C}^3(S^1) \;\supset\; \cdots  ,
    \end{equation*}
  which satisfy the following two properties:
  \begin{itemize}
    \item the inclusion of higher levels to lower levels
      is compact; e.g. for each \(\ell > k\), the inclusions
      \(E_\ell=\mathcal{C}^\ell(S^1)\to \mathcal{C}^k(S^1)=E_k\) are
      compact  (and hence continuous).
    \item the intersection of all spaces is dense in each level; e.g. the
      space of smooth functions \(E_\infty:=\mathcal{C}^\infty(S^1) =
      \cap_{\ell\in \mathbb{N}} \mathcal{C}^\ell(S^1)=\cap_{\ell\in
      \mathbb{N}} E_\ell\) is dense in each level
      \(\mathcal{C}^k(S^1)=E_k\).
    \end{itemize}

Given two scale Banach spaces, such as
  $(E_k=\mathbb{R}\times\mathcal{C}^k)_{k\in \mathbb{N}}$ and
  $(F_k=\mathcal{C}^k)_{k\in\mathbb{N}}$ as above, the notion of {\bf
  continuous scale differentiability} (sc$^1$) of a map $\tau:\E\to\F$
  is now given by formalizing the properties of the translation
  action \eqref{taus1} above.
More specifically, we require:
\begin{enumerate}
  \item the map \(\tau:E_k\to F_k\) is continuous for each
    \(k\in\mathbb{N}\);
  \item the map \(\tau:E_{k+1}\to F_k\) is differentiable for each
    \(k\in \mathbb{N}\);
  \item for each \(k\in\mathbb{N}\) and \(e\in E_{k+1}\), the
    differential \(\rD_e\tau\) extends to a bounded linear operator
    $\rD_e\tau:E_k\to F_k$;
  \item the map \(E_{k+1}\times E_k\to F_k\), given by \((e,h)\mapsto
    \rD_{e}\tau(h)\) is continuous for each \(k\in\mathbb{N}\).
  \end{enumerate}
In particular, property (i) is used as notion of scale continuity
  (sc$^0$) and properties (iii) and (iv) can be reformulated as scale
  continuity of the differential $\rD \tau$; for further details see
  Definition \ref{def:scTangentBundle}.

Taking the above as definition of sc\(^1\), the notions of higher scale
  regularity, namely sc\(^k\) for $k>1$, can be defined iteratively.
One can furthermore verify that the translation action $\tau$ is scale
  smooth; in other words \(\tau\) is  sc$^k$ for all $k\in\N_0$.
See Example~\ref{ex:transsmooth} for further details.
That \(\tau\) is scale smooth should not be surprising, since
  such regularity was exactly what motivated this new definition of
  differentiability.
A more surprising fact is that the chain rule holds for sc\(^1\) maps.
In other words, the composition of two maps of sc\(^1\)-regularity is
  again sc\(^1\), and the derivative of the composition is the composition
  of derivatives.
We note that this chain rule is not obvious from the above
  definition, and its validity is somewhat surprising since the classical
  differentiability in (ii) is achieved at only at the expense of a shift
  of 1 in scale level, and so it would seem that the composition of two
  such maps should only be classically differentiable with a shift of 2
  in scale level.
Nevertheless, the chain rule does hold; see Theorem \ref{thm:ChainRule}
  for further details.

Based on this new notion of differentiability which satisfies the
  chain rule, the further notions of calculus and differential geometry
  generalize more or less naturally to a scale calculus and scale
  differential geometry.
The next remark spells out why in finite dimensions these coincide with
  the classical notions and why they cannot coincide with Banach space
  notions in infinite dimensions.
\begin{remark} \hspace{2mm} \rm
\begin{enumlist}
  \item[(i)]
    The general definition of a scale Banach space requires
      compactness of the inclusions $E_{k+1} \subset E_k$ such as
      $\cC^{k+1}(S^1)\subset \cC^k(S^1)$, and this axiom is crucial for
      the proof of the chain rule.
  \item[(ii)] 
    Due to the compactness requirement, the only scale Banach spaces
      of the form \(E_0\supset E_0\supset \cdots \supset E_\infty=E_0\)
      (i.e.\ all levels are identical) are those for which \(E_0\) is a
      \emph{finite} dimensional vector space. 
    In such a case, all norms on $E_0$ are equivalent. 
    Hence the notion of scale differentiability differs from the notion
      of classical differentiability on any infinite dimensional Banach
      space.
  \item[(iii)]
    Due to the density requirement, the only scale structure on a finite
      dimensional vector space $E_0$ is the trivial sequence $E_0\supset
      E_0\supset \ldots \supset E_\infty=E_0$, and thus scale calculus in
      finite dimensions coincides with classical calculus; e.g. functions
      are $sc^k$ iff they are $\cC^k$.
  \item[(iv)]
    The density condition requires that the intersection of all scales
      (i.e. the infinity level $E_\infty$) is dense in each $E_k$.
    This means in particular that one can often make arguments on
      $E_\infty$ and use continuous extension to the completions $E_k$
      with respect to different norms.
    Moreover, this reflects the philosophy that we ultimately study the
      ``smooth'' points in $E_\infty$, whose topology is defined by a
      sequence of norms. 
    The scales $E_k$ then arise as completions in these norms.
  \end{enumlist}
\end{remark}

As previously noted, scale calculus is still insufficient to describe
  spaces of trajectories in which a sequence of unbroken gradient
  trajectories is allowed to converge to a broken gradient trajectory.
However, before moving on to the notion of sc-retracts and M-polyfolds,
  which deal with these issues,  we will first discuss how (uncompactified)
  moduli spaces of flow lines -- i.e.\ solutions of a flow ODE modulo
  reparametrizations -- can be described as the zero set of a scale smooth
  Fredholm section.
This will also exhibit the fact that the notions of scale Banach spaces
  and scale continuity are natural from yet another point of view, namely
  that of elliptic operators.
(In fact, scale structures did appear before in this context, e.g.\
  in \cite{Tr}, though not involving a new notion of differentiability.)

In the above simplification of the Morse example from paths to
  loops, let $\mathcal{C}^1(S^1,\R^n)^*$ be the subset of $\cC^1$-loops
  $\g:S^1\to\R^n$ such that $\g(s+\cdot)\neq \g$ for all $s\neq 0$; i.e.\
  $S^1$ acts freely on $\mathcal{C}^1(S^1,\R^n)^*$.
Then one can give $\mathcal{C}^1(S^1,\R^n)^*/S^1$, which is the space
  of non-constant loops in $\R^n$ modulo the reparametrization given in
  equation\eqref{taus1}, a scale smooth structure even though this action
  was not even classically differentiable.
Furthermore, given a vector field, denoted  $X:\R^n\to\R^n$, the flow
  lines (more precisely, the unparametrized orbits of period $1$) are
  the zeros of the scale smooth map
  \begin{equation} \label{sigma}
    \s \, :\;  \quo{\mathcal{C}^1(S^1,\R^n)^*}{S^1}
    \;\longrightarrow\; \quo{\mathcal{C}^1(S^1,\mathbb{R}^n)^*\times
    \mathcal{C}^0(S^1,\R^n)}{S^1} , \qquad \gamma \;\longmapsto\; \big(
    \gamma, \tfrac\rd{\rd t}\g - X(\g)\big) .
    \end{equation}

In the Morse theory case, we study \(\mathcal{C}^1(\mathbb{R},
  \mathbb{R}^n)^*/\mathbb{R}\) rather than \(\mathcal{C}^1(S^1,
  \mathbb{R}^n)^*/S^1\), and we consider a gradient vector field \(X =
  \nabla f\) induced by a Morse function \(f\) and metric on $\R^n$.
It is also necessary to restrict to a space of paths $\g:\R\to\R^n$ that 
converge to critical points of $f$ as $s\to\pm\infty$, and this necessitates 
a Fredholm setup in terms of Sobolev spaces. 
Also note that, strictly speaking, the map \(\sigma\) specified above
  should actually be regarded as a section of a bundle, which here we
  have canonically trivialized by $\g^*\rT\R^n\cong S^1\times\R^n$.
In either case, to discuss the analytic properties of this differential
  equation, we should now work in a local slice of the $S^1$-action;
  that is, we work in a codimension $1$ subspace of $\mathcal{C}^1(S^1,\R^n)$.
We will suppress this here since a finite dimensional condition does
  not affect the analytic behavior substantially; for example, it does
  not affect the whether or not operator is Fredholm.
In classical functional analysis, one would call $\s$ a Fredholm section
  if its linearizations are Fredholm operators\footnote
  {
    A linear map between vector spaces is called Fredholm if it has
      finite dimensional kernel and cokernel.
  }.
Indeed, the linearized operator at $\gamma\in \mathcal{C}^1(S^1,\R^n)$
  is $\tfrac\rd{\rd t} - \rD_\g X :  \mathcal{C}^1(S^1,\R^n) \to
  \mathcal{C}^0(S^1,\R^n)$, which is well known to be both Fredholm
  and elliptic.\footnote{
In the present setup, the Fredholm property crucially relies on compactness of the domain $S^1$. 
To obtain a Fredholm setup for Morse theory one has to work with Sobolev spaces on the noncompact domain $\R$; see Example~\ref{ex:MorseTrajectorySpaces}.  
}
The corresponding elliptic estimates and elliptic regularity are
  easily phrased in scale calculus terms by saying that $\tfrac\rd{\rd
  t} - \rD_\g X : \bigl(\mathcal{C}^{1+k}(S^1,\R^n)\bigr)_{k\in\N_0} \to
  \bigl(\mathcal{C}^{0+k}(S^1,\R^n)\bigr)_{k\in\N_0}$  is a regularizing
  scale operator, which is equivalent to the following properties:
  \begin{enumerate}
    \item 
      $\tfrac\rd{\rd t} - \rD_\g X: \mathcal{C}^{1+k}(S^1,\R^n) \to
      \mathcal{C}^{0+k}(S^1,\R^n)$ is a bounded operator for each
      ${k\in\N_0}$;
    \item 
      if $\tfrac\rd{\rd t}\xi - \rD_\g X \xi \in
      \mathcal{C}^{0+k}(S^1,\R^n)$ for any $k\in\N_0$ then
      $\xi\in\mathcal{C}^{1+k}(S^1,\R^n)$.
    \end{enumerate}
Moreover, the Fredholm property of $\tfrac\rd{\rd t} - \rD_\g X :
  \mathcal{C}^1(S^1,\R^n) \to \mathcal{C}^0(S^1,\R^n)$
  together with these scale regularity properties now abstractly
  imply the Fredholm property on every scale $k\in\N$ of
  $\tfrac\rd{\rd t} - \rD_\g X :  \mathcal{C}^{1+k}(S^1,\R^n) \to
  \mathcal{C}^{0+k}(S^1,\R^n)$; further details can be found in
  Lemma~\ref{le:scfredlin}.
We note, however, that this does not provide a satisfactory Fredholm property for
  the nonlinear section \eqref{sigma}, since the listed properties
  are not sufficient to establish an implicit function theorem --
  even assuming surjective linearizations.
Indeed, the difficulty is that such a theorem is proved by means of a
  contraction property of the section in a suitable reduction.
Since the contraction will be iterated to obtain convergence, it needs
  to act on a fixed Banach space like \(\mathcal{C}^k(S^1, \mathbb{R}^n)\)
  for a fixed \(k\in \mathbb{N}\), rather than between different scales.
HWZ solve this issue by making the contraction property a part of the
  definition of a Fredholm section, and thereby they effectively build
  an implicit function theorem into the definition of a scale Fredholm
  section.

In light of this somewhat contrived definition, the miraculous feature
  then is that standard differential equations are in fact scale Fredholm.
In practice, the desired contraction property can be proven by
  establishing the classical Fredholm property of the linearized section,
  a nonlinear version of the regularizing property (ii) above for the
  section itself, classical differentiability of the section in all but
  finitely many directions, and certain weak continuity properties of
  these partial derivatives (details are provided via
  Lemma~\ref{le:scfred}).
These differentiability properties hold in applications to Morse theory
  and pseudoholomorphic curve moduli spaces since differentiability fails
  only in the directions of the finitely many gluing parameters.

\subsection{Retractions, splicings, and M-polyfolds}
\label{ss:ret}

To discuss the second core idea of polyfold theory in more detail,
  we return to the Morse theory case.
For simplicity let us consider the manifold \(X=\mathbb{R}^n\)
  and assume that the Morse function \(f:\R^n\to \mathbb{R}\) has
  precisely three critical points, denoted \(\Crit f = \{a, b, c\}\),
  which satisfy \(f(c)>f(b)>f(a)\), so that \(b=0\in \mathbb{R}^n\).
Let \(\mathcal{B}_a^c\), \(\mathcal{B}_a^b\), and \(\mathcal{B}_b^c\)
  respectively be the spaces of parametrized paths $\g:\R\to\R^n$ from
  \(a\) to \(c\), from \(a\) to \(b\), and from \(b\) to \(c\).
As in Example \ref{ex:MorseFredholm}, these spaces are invariant under
  the translation action $\tau$ given in (\ref{tau}).
Letting \(\mathbb{R}={\rm Aut}\) denote the automorphism group that
  acts via \(\tau\), we then define the spaces of trajectories (but
  not necessarily gradient trajectories) between critical points to be
  \(\mathcal{B}_a^c/{\rm Aut}\),  \(\mathcal{B}_a^b/{\rm Aut}\), and
  \(\mathcal{B}_b^c/{\rm Aut}\).
These are topological spaces equipped with the quotient topologies
  induced from the $\mathcal{C}^1$-topology on the parametrized paths.

In order to describe the compactified moduli space $\CM$
  of broken and unbroken Morse trajectories from $a$ to $c$
  as the zero set $\CM=\tilde{\sigma}^{-1}(0)$ of a section
  $\tilde{\sigma}:\widetilde{\mathcal{B}}\to\widetilde{\mathcal{E}}$, we
  need to construct a topological space $\widetilde{\mathcal{B}}$ of broken
  and unbroken trajectories which contains $\CM$ as a compact subset.
Furthermore, we wish that a suitable notion of smooth structure
  on \(\widetilde{\mathcal{B}}\) induces a smooth structure
  on $\ti\sigma^{-1}(0)$ whenever the section is transverse in the
  appropriate sense.
In the following, the construction of local models for such a space
  near broken trajectories will naturally give rise to sc-retractions.

To begin, we equip the unbroken trajectory spaces with sc-structures
  by using local slices as in Remark \ref{rmk:slice}.
For example, for the pair $a,c$ we have Banach manifold charts
  \(\Phi: \cV_a^c\to \mathcal{B}_a^c/{\rm Aut}\) of the form \(u\mapsto
  [\phi_a^c + u]\), where \(\phi_a^c:\R\to\R^n\) is a fixed smooth path
  from $a$ to $c$ for which \(\frac{\rd}{\rd t}{\phi}_a^c(0)\neq 0\), and
  ${\cV_a^c\subset\{u\in\cC^1(\R,\R^n) \,|\, \la u(0) , \frac{\rd}{\rd
  t}\phi_a^c(0) \ra = 0 \}}$ is neighborhood of $u\equiv 0$.
This is a local slice because $\phi_a^c + \rT_0\cV_a^c\subset
  \rT_{\phi_a^c}\cB_a^c$ is a complement to tangent space of the ${\rm
  Aut}$-orbit through $\phi_a^c$, which is spanned by $\frac{\rd}{\rd
  t}\phi_a^c$.

While the transition maps between such charts are not differentiable in
  any known Banach norm,  they are scale smooth when $\cV_a^c\subset E_0$
  is considered as open subset of an appropriate scale Banach space.
Due to the noncompact domain, this needs a more complicated scale than
  just $E_k=\cC^{1+k}(\R,\R^n)$; indeed, one should use exponentially
  weighted Sobolev spaces as in Example \ref{ex:sobolev}.
However, to simplify the exposition here let us pretend that
  \((E_k=\cC^{1+k}(\R,\R^n))_{k\in \mathbb{N}_0}\) is an sc-Banach space.
Then a cover by  charts of the above type gives \(\mathcal{B}_a^c/{\rm
  Aut}\) the structure of a scale manifold.
By only varying the reference path $\phi_a^b$ (or  $\phi_b^c$), we
  can obtain an analogous scale structure on \(\mathcal{B}_a^b/{\rm Aut}\)
  (or\(\mathcal{B}_b^c/{\rm Aut}\)).
Now the set of unbroken and broken trajectories, without yet a topology,
  is given by
  \begin{equation*} 
    \widetilde{\mathcal{B}}\;\;=\;\;\quo{\mathcal{B}_a^c}{{\rm
    Aut}}\quad \sqcup \quad \quo{\mathcal{B}_a^b}{{\rm Aut}}\times
    \quo{\mathcal{B}_b^c}{{\rm Aut}},
  \end{equation*}  
and our first goal is to equip this set with a topology which allows
  unbroken paths in $\qu{\mathcal{B}_a^c}{{\rm Aut}}$ to converge to broken
  paths in $\qu{\mathcal{B}_a^b}{{\rm Aut}}\times \qu{\mathcal{B}_b^c}{{\rm
  Aut}}$.
Polyfold theory accomplishes this by building on the well-known pregluing
  construction, which constructs unbroken trajectories near a broken
  trajectory.
More precisely, we fix representatives $\g_a, \g_b$ for a broken
  trajectory
  \begin{equation*}
    ([\gamma_a], [\gamma_b])\in \; \quo{\mathcal{B}_a^b}{{\rm Aut}}\times
    \quo{\mathcal{B}_b^c}{{\rm Aut}} ,
  \end{equation*}
  and choose charts for \(\mathcal{B}_a^b/{\rm Aut}\) and
  \(\mathcal{B}_b^c/{\rm Aut}\) given by local slices: scale
  smooth submanifolds \(\mathcal{H}_a^b = \phi_a^b + \cV_a^b\subset
  \mathcal{B}_a^b\) and \(\mathcal{H}_b^c = \phi_b^c + \cV_b^c
  \subset\mathcal{B}_b^c\) that contain $\g_a$ and $\g_b$ respectively.
Then for all sufficiently large \(R>0\) we define the {\bf pregluing
  map} by
  \begin{align}
    \oplus: (R_0, \infty)\times \mathcal{H}_a^b\times \mathcal{H}_b^c
    &\;\to\; \mathcal{B}_a^c\label{eq:oplusSec23} \\
    (R,u_a,u_b) &\;\mapsto\; \oplus_R (u_a, u_b) := \beta u_a(\cdot
    + {\textstyle \frac{R}{2}}) + (1-\beta)u_b( \cdot - {\textstyle
    \frac{R}{2}}), \notag
  \end{align} 
  where $\beta:\R\to[0,1]$ is a smooth cutoff function with
  $\beta|_{(-\infty,-1]}\equiv 1$ and $\beta|_{[1,\infty)}\equiv 0$.
See Figure~\ref{fig:plus minus gluing} for an illustration of the
  pregluing (and anti-gluing) map.

\begin{figure}
  \includegraphics[scale=0.8]{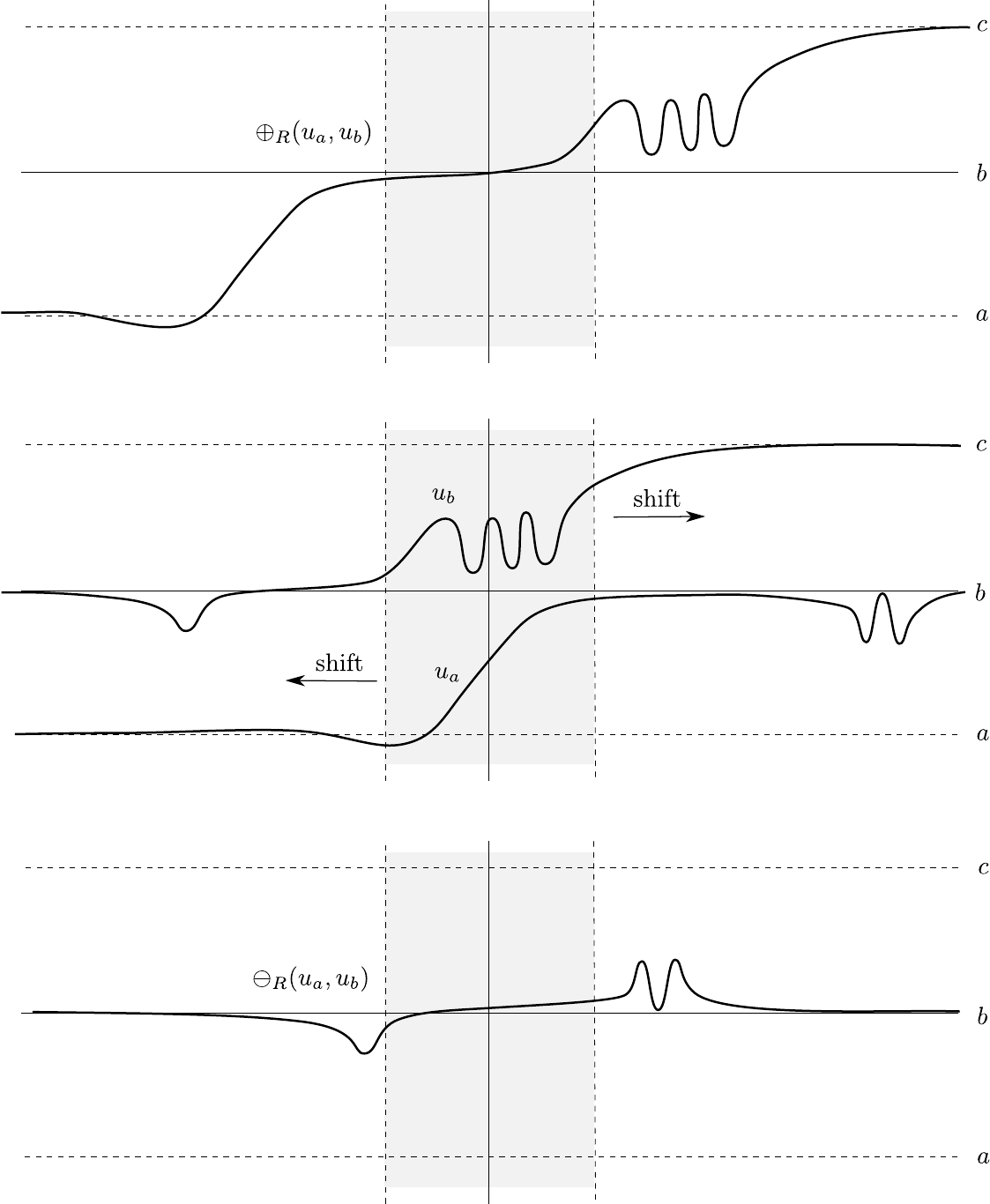}
  \caption{An example of plus gluing (i.e.\ pregluing) and minus gluing
    (i.e.\ anti-gluing) of two smooth paths $u_a,u_b$ from $a$ to $b$
    and  from $b$ to $c$.}
  \label{fig:plus minus gluing}
\end{figure}

The topology on the space of broken and unbroken trajectories $\Ti\cB$
  is now constructed by viewing the pregluing map as map to the quotient
  $\qu{\mathcal{B}_a^c}{ {\rm Aut}}$, extending this map to gluing
  parameter $R=\infty$ by $(\infty,u_a,u_b)\mapsto\bigl([u_a],[u_b]\bigr)$,
  and requiring this extended pregluing map to be open.
In other words, a basis of open sets in $\Ti\cB$ is given by images
  under the extended pregluing map of open subsets of product type
  $$
    \cU:= (R_0, \infty]\times \mathcal{H}_a^b\times \mathcal{H}_b^c
    \; \subset\;  (0, \infty]\times \bigl(\phi_a^b +
    \cC^1(\R,\R^n)\bigr)\times \bigl(\phi_b^c + \cC^1(\R,\R^n)\bigr) .
  $$
Here the ambient space on the right can be equipped with a scale
  smooth structure (with boundary) by replacing $\cC^1(\R,\R^n)$ with a
  scale of weighted Sobolev spaces, as mentioned above, and by fixing a
  homeomorphism $[0,1) \cong (0,\infty]$ that identifies the boundaries
  $0$ and $\infty$.
The latter is the notion of a {\bf gluing profile}, which in polyfold
  theory is usually chosen as the {\bf exponential profile} 
  \begin{equation}\label{expprofile}
    [0,1) \;\to\; (0,\infty], \qquad \tau \; \mapsto\; e^{1/\tau} - e.
    \end{equation}
The choice of the exponential gluing profile in particular ensures that
  the following constructions extend scale smoothly to the boundary.
One could also hope to obtain a chart for $\Ti\cB$ near the broken
  path \(([\gamma_a], [\gamma_b])\) from the map
  \begin{equation} \label{wishchart}
    \Phi:\mathcal{U}\to \Ti\cB \qquad(R, u_a, u_b)\mapsto \begin{cases}
    [\oplus_R(u_a, u_b)]  &; R <\infty ,\\
    ([u_a], [u_b]) &; R=\infty .
    \end{cases}
    \end{equation}
Although \(\Phi|_{\{R<\infty\}}\) is an sc-smooth map to
  $\qu{\cB_a^c}{\Aut}$, it is far from being a local homeomorphism since
  it is not even a bijection except for its restriction to $\{R=\infty\}$.
To see this, observe that for fixed \(R<\infty\), the two maps
  \(\oplus_R(u_a, u_b)\) and \(\oplus_R(u_a+v_+, u_b+v_-)\) are equal
  whenever \(v_\pm\) have support in a sufficiently small neighborhood
  of \(\pm \infty\).
At this point the core idea of polyfold theory arises: obtain a chart
  by restricting \(\Phi\) to an appropriate subset of $\cU$, which is
  then used as a local model for the scale smooth structure on $\Ti\cB$.
In other words, we aim to achieve the following: 
  \begin{enumerate}
    \item Find a subset \(\mathcal{K}\subset\mathcal{U}\) for
      which \(\Phi\big|_{\mathcal{K}}\) is a homeomorphism to its
      image.\label{en:retraction_req1}
    \item Equip sets \(\mathcal{K}\) of this type with a notion of
      scale smooth structure. \label{en:retraction_req2}
  \end{enumerate}
We will see that this can be achieved by describing $\cK$ as the image
  of a retraction on $\cU$.
Moreover, this retraction will appear naturally from the idea of 
  keeping track of the information lost during pregluing for $R<\infty$.
This is accomplishved via the so-called {\bf anti-gluing map}
  \(\ominus_R\), which is given by a complementary interpolation of the
  same shifts as in the pregluing map $\oplus_R$.
More specifically, the combination of both maps is given by a pair of
  reparametrizations together with multiplication by an invertible matrix
  of cutoff functions:
  \begin{equation*}
    \left(
    \begin{matrix}
    \oplus_{R}(u_a, u_b)\\
    \ominus_{R}(u_a, u_b)
    \end{matrix}
    \right)
    =
    \left(
    \begin{matrix}
    \beta & 1-\beta\\
    \beta-1 & \beta
    \end{matrix}
    \right)
    \left(
    \begin{matrix}
    u_a(\cdot +{\textstyle \frac{R}{2}})\\
    u_b(\cdot -{\textstyle \frac{R}{2}}) 
    \end{matrix}
    \right) .
    \end{equation*}
For each fixed \(R<\infty\), this is a bijection by invertibility of
  the matrix at every $t\in\R$.
In fact, one can check that it gives rise to an sc-smooth diffeomorphism
  \begin{align*}
    \boxplus: \{(R,u_a,u_b)\in \mathcal{U} \,|\, R<\infty\} &\;\to\;
    \quo{\mathcal{B}_a^c}{{\rm Aut}}\times \cC^1(\mathbb{R},
    \mathbb{R}^n) \\
    (R, u_a, u_b) &\;\mapsto\; \boxplus_R(u_a, u_b) :=\big([\oplus_R(u_a,
    u_b)], \ominus_R(u_a, u_b)\big) .
  \end{align*}
Moreover, in appropriate charts for domain and target, each $\boxplus_R$
  can be viewed as linear isomorphism $\rT_0\cV_a^b\times \rT_0\cV_b^c\to
  \rT_0\cV_a^c \times \cC^1(\mathbb{R}, \mathbb{R}^n)$,
  which shows that $\ker\ominus_R$ is a complement to $\ker\oplus_R$.
This achieves the first aim and gives an approach to the second:

\begin{enumerate}
  \item
  The map $\Phi|_\cK$ in \eqref{wishchart} restricts to a bijection on
    \begin{equation*}
      \mathcal{K}:=\{(R, u_a, u_b)\subset\mathcal{U} \,|\, \ominus_R(u_a,
      u_b)=0 \;\text{or}\; R=\infty\} .
    \end{equation*}
  To check that $\Phi|_\cK$ is a homeomorphism, one can use the
    observation that $(R, u_a, u_b)=\Phi^{-1}([v])$ is the unique solution
    of $\boxplus_R(u_a,u_b) = \bigl( [v] , 0\bigr)$.  \label{en:1}
  \item
  After possibly shrinking \(\mathcal{U}\), the latter gives rise to
    a description of the set \(\mathcal{K}\) as fixed point set of the
    sc-smooth map
    \[
    r: \cU\to \cU ,\qquad r(R,u_a,u_b) =\begin{cases}
    \boxplus_R^{-1}\bigl( [\oplus_R(u_a, u_b)] , 0\bigr)   &; R <\infty ,\\
    (R,u_a, u_b) &; R=\infty .
    \end{cases}
    \]
  In fact, this map satisfies the retraction property $r\circ r =
    r$ since for $R<\infty$ it is of the form $\boxplus_R^{-1}\circ\pr
    \circ\boxplus_R$, with $\pr(u,v):=(u,0)$ satisfying $\pr\circ\pr=\pr$.
In particular, \(\mathcal{K}=r(\mathcal{U})\) is an sc-retract; that is,
  it is the image of an sc-smooth retraction. \label{en:2}
\end{enumerate} 

To accomplish our aims, it remains to show that \(\mathcal{K}\) carries
  a meaningful notion of scale smoothness.
In other words, we need a notion of scale-differentiability for maps
  \(\Psi:\mathcal{K}\to \mathbb{F}\) to some other sc-Banach space
  \(\mathbb{F}\).
The notion of sc-continuity for such maps is naturally given since
  \(\mathcal{K}\) carries an sc-topology induced from \(\mathcal{U}\).
The notion of sc\(^1\) from scale calculus is also well defined if
  \(\mathcal{K}\) is an open subset of an sc-Banach space.
However, in our Morse theory example \(\mathcal{K}\) has empty interior.
Since  \(r\big|_{\mathcal{K}}=\id_\cK\), a natural extension of $\Psi$
  to a map from an open subset of an sc-Banach space is $\Psi\circ r :
  \cU \to \F$.
We can then define the map \(\Psi:\mathcal{K}\to \mathbb{F}\) to be
  sc\(^k\) if and only if the map \(\Psi\circ r:\mathcal{U}\to \mathbb{F}\)
  is sc\(^k\).
Similarly, we define the tangent spaces $\rT_k\cK$ as fixed point set
  of the linearized retraction $\rd_k r$.
These definitions makes sense (e.g.\ satisfy the chain rule and depend
  only on $\cK$, not the choice of $r$) due to the retraction property
  $r\circ r = r$.
In particular, the latter implies that the differential $\rd r=\rd r
  \circ \rd r$ is a retraction as well, so that the tangent bundle of an
  sc-retract is an sc-retract itself.
This establishes a notion of scale smooth structure on $\cK$, as aimed
  for in (\ref{en:retraction_req2}).
Further details can be found in Example \ref{ex:pregluing_retraction}.

From this Morse theory example, we see the utility of an sc-smooth
  retraction \(r:\mathcal{U}\to \mathcal{U}\), which both characterizes
  the subset \(\mathcal{K}=r(\cU)\) on which a homeomorphic chart map
  \(\Phi\) is defined, and provides a means to establish the notion of
  sc-differentiability on this subset.
Such sc-smooth maps satisfying the retraction property \(r\circ r=r\)
  are called {\bf sc-smooth retractions}, and their images are called
  {\bf sc-retracts}.
These sc-retracts, together with a homeomorphism \(\Phi:\cK \to\Ti\cB\),
  form the local models of M-polyfolds.
That is, an {\bf M-polyfold} is a topological space $\Ti\cB$ that is
  locally homeomorphic to sc-retracts, such that the transition maps
  $\Phi^{-1}\circ\Phi':\cK' \to \cK$ are sc-smooth in the sc-retract
  sense that $\Phi^{-1}\circ\Phi'\circ r': \cU' \to \cU$ is sc-smooth.
The above outline can be fleshed out to prove that $\Ti\cB$ is an
  M-polyfold.

In suitable coordinates, the sc-smooth retraction for Morse theory
  introduced above, and in fact all sc-retractions arising in applications
  to date, have a rather specific form, namely
  \begin{equation*}
    r:[0,1)^k\times \mathbb{E}\to [0, 1)^k\times
    \mathbb{E}\qquad\text{given by}\qquad r(v, e) = (v, \pi_v e),
    \end{equation*}
  where \(\mathbb{E}\) is an sc-Banach space, \(v\) is thought of as a
  gluing parameter, and \(\pi_v: \mathbb{E}\to \mathbb{E}\) is a family
  of linear projections.
Note that the sc-smoothness conditions on $r$ do not require
  $v\mapsto\pi_v$ to be continuous in the operator topology, but just
  ``pointwise'' as map $(v,e)\mapsto \pi_v e$. 
This allows the image $\pi_v\E$ to jump in dimension as $v$ varies.
Such retractions (given by a family of projections) are called
  {\bf splicings}; the induced sc-retracts are called {\bf splicing
  cores}; and they were used as local models for M-polyfolds in
  the early polyfold literature; c.f.\ \cite{HoferWysockiZehnder1,
  HoferWysockiZehnder2,HoferWysockiZehnder3}.

In order to achieve the ultimate goal of describing the
  compactified Morse moduli space $\CM$ as the zero set of a section
  $\ti\s:\Ti\cB\to\Ti\cE$ in a bundle that is sufficiently rich for a
  regularization theorem similar to Theorem~\ref{thm:model}, it remains
  to find a suitable notion of Fredholm sections in M-polyfold bundles.
Here a notion of finite dimensional kernels and cokernels with constant
  index is necessary in order to have any hope for the zero set of a
  transverse section to be a finite dimensional manifold.
However, note that in the Morse theory example, based on our expectation
  of what its zero set should be, the section in the pregluing chart must
  roughly have the form
  \[
    \ti\s(R,u_a,u_b) = 
    \begin{cases}
    \bigl( \frac\rd{\rd t} \bigl( \oplus_R(u_a,u_b) \bigr) - \nabla f\bigl(
    \oplus_R(u_a,u_b) \bigr)  \bigr) &; R<\infty \\
    \bigl( \frac\rd{\rd t} u_a - \nabla f(u_a)  , \frac\rd{\rd t} u_b -
    \nabla f(u_b) \bigr) &; R=\infty .
    \end{cases}
  \]
More specifically, the bundle \(\widetilde{\mathcal{E}}\to
  \Ti\cB\) must have fibers isomorphic to $\cC^0(\R,\R^n)$ over
  points such as $[\oplus_R(u_a,u_b)]$ in the interior of $\Ti\cB$, and
  it must have fibers isomorphic to $\cC^0(\R,\R^n)\times \cC^0(\R,\R^n)$
  over broken trajectories such as $\bigl([u_a],[u_b]\bigr)$.
This can be achieved by constructing $\Ti\cE$ from pregluing maps along
  the same lines as for $\Ti\cB$.
An important feature of this construction is that, roughly speaking, the
  fibers of $\Ti\cE$ jump in the same way as the tangent space $\rT\Ti\cB$.
In turn, this will allow for a meaningful Fredholm theory.

To define the notion of a {\bf scale Fredholm section}, one could try to
  proceed along the lines of the construction of a scale smooth structure
  on an sc-retract $\cK=r(\cU)$.
Note however that the linearization of $\ti\s\circ r$ has infinite
  dimensional kernel as soon as $\rd r$ does, which in the Morse theory
  example is the case whenever $R<\infty$.
At the same time, if $\cK'$ is the sc-retract modeling the bundle
  $\cE$, then $\rT\cK'$ has infinite codimension in each
  fiber over $R<\infty$.
Polyfold theory obtains a Fredholm theory by introducing the notion
  of a filled section, which in local charts is given as an sc-smooth
  extension $\overline\s:\cU\to\cU'$ of the section $\ti\s:\cK\to\cK'$
  to open subsets of sc-Banach spaces.
The filled section is required to have the same zero set
  $\overline\s^{-1}(0)=\ti\s^{-1}(0)$ as the original section, and to
  not contribute to the Fredholm index.
In the setting of splicings, this means that the bundle splicing has
  the form
  $$
    \rho:[0,1)^k\times \mathbb{E}\times \F \to [0, 1)^k\times \mathbb{E}
    \times \F , \qquad\rho (v, e, f) = (v, \pi_v e, \Pi_v f),
  $$
  so that the fibers of $\Ti\cE\to\Ti\cB$ are given by $\im\Pi_v$ over
  $\{v\}\times \im\pi_v$, and there is an sc-smooth family of isomorphisms
  $\ker\Pi_v\cong\ker\pi_v$ between the kernels of the two splicings,
  as the gluing parameter $v$ varies.
Such fillers can typically be constructed via the full gluing map
  \(\widetilde{\boxplus}=([\widetilde{\oplus}], \widetilde{\ominus})\),
  where the nonlinear PDE must naturally be applied in the first factor,
  and a linearized PDE provides an isomorphism that acts on the second
  factor.

Based on these Fredholm notions in the context of scale-calculus and
  sc-retracts, one can then develop a perturbation and stability theory
  for scale Fredholm sections, which culminates in the Regularization
  Theorem \ref{thm:PolyfoldRegularization} stated above.

%
\section{Road maps for regularization approaches} 
\label{s:roadmaps}

In this section we compare the polyfold approach to regularizing
  moduli spaces to the geometric and virtual approaches in order to
  exhibit how the classical ingredients (compactness, quotienting by
  reparametrizations, Fredholm theory, gluing, etc.) are present in each
  of the approaches but with changing order and significance.
We will outline the basic steps in each of these approaches via the
  example of Morse theory, and we do so using the setup from
  Examples~\ref{ex:Morse} and ~\ref{ex:MorseFredholm}.
In more general abstract terms, we are discussing the regularization of
  a compactification $\CM$ of a moduli space $\cM$, given by the solutions
  to a PDE modulo the reparametrization action of an automorphism group
  $\Aut$.
Here and throughout, we will assume that \(\Aut\) acts freely on the
  space of solutions, which we recall is always the case in Morse theory.

For a more detailed account of Morse theory along these lines, see
  \cite{schwarz:Morse,AD2010}.
Note however that the regularization of the Morse moduli spaces does
  not actually require their study as moduli spaces of a PDE.
Rather, an entirely finite dimensional setup as spaces of trajectories
  under a smooth flow map yields the regularization as manifolds with
  boundaries and corners most effectively, e.g.\ \cite{w:Morse}.

\subsection{The geometric approach} \label{ss:geometric}
In this section we describe techniques that obtain transversality by
  perturbing (or exploiting) geometric structures in the moduli problem;
  we call such techniques the ``geometric approach.''
In the case of Morse theory, the given moduli problem is the compactified
  Morse moduli space $\CM$ for a fixed Morse function $f:X\to\R$ and any
  Riemannian metric $g$ on $X$.
This moduli space decomposes into (not necessarily connected) components
  $\CM(x_-,x_+)$ of (possibly broken) Morse trajectories between pairs
  of critical points $x_\pm\in\Crit f$.
The goal of regularization is to replace $\CM$ by a regularized space
  $\CM\,\!'$, which is a manifold with boundary and corners with components
  $\CM\,\!'(x_-,x_+)$, whose first boundary stratum (excluding the higher
  corner strata) is a fiber product of its interior $\cM'\subset \CM\,\!'$
  with itself.  In particular, it should have the form:
  $$
    \partial\CM\,\!' \;=\;\cM' \underset{\scriptscriptstyle \Crit
    f}{\times}\cM'\;=\;{\textstyle \bigcup_{x_-,x,x_+\in\Crit f}}
    \cM'(x_-,x)\times \cM'(x,x_+).
  $$
The signed count of the $0$-dimensional component of $\CM\,\!'$ then
  defines the Morse differential $\partial$, and the boundary structure
  of the $1$-dimensional component establishes $\partial\circ\partial = 0$.
An additional step is then needed to prove independence of the induced
  Morse homology from both the choice of regularization $\CM\,\!'$
  and the choice of $(f,g)$.
For other moduli problems, we write $\cM' \tilde\times\cM'$ for analogous
  fiber products, even if we expect the regularized moduli space to have
  no boundary (which is the case in Gromov-Witten).
The basic order of constructions in geometric approaches is: {\bf 1)
  transversality, 2) quotient, 3) gluing}; where reduction to finite
  dimensions occurs after transversality is achieved.
Such constructions can be roughly broken down into the following eight
  steps -- with adjustments in the case of ``codimension 2 gluing''
  discussed later.
\begin{enumlist}
  \item {\bf Fredholm setup:}
    Set up the PDE (e.g.\ gradient flow equation $\frac\rd\dt \g - \nabla f
      = 0$) as smooth section $\s:\cB\to\cE$ of a Banach space bundle $\cE
      \to \cB$ over a Banach manifold $\cB$ of maps (e.g.\ $\g:\R\to X$
      with suitable convergence to critical points).
    This section should be Fredholm in the sense that the linearizations
      $\rD_b \s : \rT_b\cB \to \cE_b$ at zeros $b\in \s^{-1}(0)$ are Fredholm
      operators.
    Moreover, the section $\s$ will be equivariant under the action
      of the automorphism group ${\rm Aut}$ on $\cE\to\cB$, so that the
      uncompactified moduli space is given as quotient of the zero set $\cM=
      \s^{-1}(0)/{\rm Aut}$.
  \item {\bf Geometric perturbations:}
    Find a family of smooth sections $(p:\cB\to\cE)_{p\in\cP}$
      parametrized by a Banach manifold $\cP$, with the following
      properties.
    \begin{enumerate}
      \item[(P1)] 
	For each $p\in\cP$ the perturbed solution space
	  $(\sigma+p)^{-1}(0)$ is invariant under the action of $\Aut$.
	(Usually this is achieved by using $\Aut$-equivariant sections
	  $p$.)
      \item[(P2)]  
	For each $p\in\cP$ the perturbed solution space
	  $(\sigma+p)^{-1}(0)$ has the same compactification properties
	  as the unperturbed space $\sigma^{-1}(0)$.
      \item[(P3)] 
	The ``universal moduli space'' $\widehat\cM := \bigl\{ (b,p)
	  \in \cB\times\cP \,\big|\, s(b) + p(b) = 0 \bigr\}$ is cut out
	  transversely and has the structure of a Banach manifold.
	That is, for each $(b,p)\in\widehat\cM$ we have a surjective
	  linearized operator $\rT_b\cB \times \rT_p\cP \to \cE_b$,
	  given by $( \xi , \eta ) \mapsto \rD_b (s+p) (\xi)  + \eta(b)$.
      \end{enumerate}
    (For Morse theory, the perturbations could be $p(\g)=\nabla f(\g)
      - \nabla' f(\g)$, where $\nabla'$ is the gradient with respect to
      another metric $g'$ on $X$.)
  \item {\bf Sard-Smale Theorem (automatic):}
    Given a family of perturbations $\cP$ as described, the Sard-Smale
      theorem guarantees a comeagre\footnote
      {
	A subset of a topological space is said to be comeagre if it is
	  the countable intersection of sets with dense interior.
	In a Baire space (such as any complete metric space), this implies
	  density.
	Alternatively, the complement of a comeagre set is meagre, i.e.\
	  the countable union of sets that are nowhere dense.
	Note however, that the commonly used term ``second category''
	  only refers to sets that are not meagre, hence may fail to be dense.
      } 
  set $\cP^{\rm reg}\subset\cP$ of regular values of the canonical
  projection $\pr: \widehat{\mathcal{M}}\to \mathcal{P}$.
Moreover, a little functional analysis (see \cite[Lemma A.3.6]{MDSa})
  shows that for $p\in\cP^{\rm reg}$ the perturbed section $\s_p:=\s+p$
  is transverse to the zero section, yet it is still ${\rm
  Aut}$-equivariant.
Hence, by the implicit function theorem, $\s_p^{-1}(0)\subset\cB$ is a
  smooth submanifold of finite dimension on which $\Aut$ acts, and the
  dimension is given by the Fredholm index.
(For Morse theory, this would pick out the metrics that satisfies the
  Morse-Smale condition:  transversal intersection of stable and unstable
  manifolds.)

\item {\bf Quotient:}
Check that the action of ${\rm Aut}$ on $\s_p^{-1}(0)$ is smooth, free,
  and properly discontinuous.
Then the moduli space $\cM_p := \s_p^{-1}(0) / {\rm Aut}$ is a smooth
  manifold.

\item {\bf Gluing:}
Construct a gluing map $\tilde\oplus:  (R_0,\infty) \times \cM_p
  \tilde\times \cM_p  \hookrightarrow \cM_p$ that is an embedding
  (e.g.\ for fixed critical points it should map $\cM_p(x_-,x)
  \times \cM_p(x,x_+)$ to paths parametrized by the gluing parameter
  $(R_0,\infty)$ in $\cM_p(x_0,x_+)$).
The construction of $\tilde\oplus$ involves a pregluing map $\oplus:
  (R_0,\infty) \times \s_p^{-1}(0) \tilde{\times} \s_p^{-1}(0)  \to \cB$
  similar to \eqref{eq:oplusSec23}, and an implicit function theorem
  to determine exact solutions.

\noindent
{\small {\bf Small print on corners:}
This technique is usually only applied to glue $0$-dimensional components
  or compact subsets of the fiber product.
More generally, to give a higher dimensional moduli space
  the structure of a manifold with boundary \emph{and corners} one would
  have to construct higher gluing maps $\tilde\oplus: (R_0,\infty)^\ell
  \times \tilde{\times}^{\ell+1} \cM_p  \hookrightarrow \cM_p$ which cover
  the overlap of the basic gluing maps, and one would also need to check
  smoothness of transition maps and verify a cocycle condition.
  \footnote
  {
  An abstract manifold (without underlying topological space)
    can be constructed from a tuple of open subsets $U_i\subset\R^n$
    by specifying transition maps $\phi_{ij}:U_{ij}\to U_j$ on open
    subsets $U_{ij}\subset U_i$ that satisfy the cocycle conditions
    $\phi_{jk}\circ\phi_{ij}=\phi_{ik}$ on appropriate domains.
  Alternatively, rather than require cocycle conditions, one could
    instead work with a given compact space $\CM\,\!'$ and simply construct
    the gluing maps as embeddings into this.
  Then cocycle conditions for the transition maps hold automatically.
  Otherwise, this issue is known as constructing ``associative gluing
    maps.''
  }
}
\item {\bf Coherence:}
Ensure that the choice of perturbation $p$ can be made ``coherently'';
  that is, check that perturbations are compatible with the gluing map.
Consequently steps  2) - 5) are interwoven, and they are potentially
  organized by a hierarchy of connected components of $\CM$, such as
  by the difference in Morse indices of $x_\pm$ for the components
  $\CM(x_-,x_+)$.

\item {\bf Compactness:}
Check that the complement of the gluing image, $\cM_p \setminus
  \im\tilde\oplus$, is compact.
Then construct a  compactification of the perturbed moduli space as
  $\CM_p = \bigl( \cM_p \sqcup  (R_0,\infty] \times \cM_p \tilde{\times}
  \cM_p \bigr) / \tilde\oplus$.
After choosing a homeomorphism $(R_0,\infty]\cong [0,1)$, this yields
  a smooth manifold with boundary $\{\infty\}\times\cM_p \tilde{\times}
  \cM_p$.

\noindent
{\small {\bf Small print on corners:}
If the gluing maps have overlaps, e.g.\ due to higher gluing maps,
  then one would have to add their domains $(R_0,\infty]^\ell \times
  \tilde{\times}^{\ell+1} \cM_p$ to $\CM_p$ and take the quotient by all
  gluing maps.
However, this requires the cocycle condition.
If this can be satisfied, then $\{\infty\}^\ell \times
  \tilde{\times}^{\ell+1} \cM_p$ forms the $\ell$-th corner stratum
  of $\CM_p$.
}
 
\item {\bf Invariance:}
Prove that the algebraic structures (e.g.\ the Morse chain complex)
  arising from different choices in the previous steps, in particular the
  choice of perturbation, are equivalent in an appropriate sense (e.g.\
  chain homotopic).
This usually involves the construction of a cobordism from a moduli
  space involving a homotopy of choices.
\end{enumlist}

When applied to a moduli space of pseudoholomorphic curves, Steps 1 -- 3
  remain unchanged, with $\cB$ consisting of maps from a fixed Riemann
  surface $\Si$, possibly with additional marked points and possibly
  varying complex structure on $\Si$.
(Note that we cannot work with a Deligne-Mumford type space of Riemann
  surfaces modulo biholomorphisms, since the corresponding space of
  maps and surfaces does not have a natural Banach manifold structure;
  see \cite[\S 3.2]{mcduff-wehrheim}.)
Then the section $\s$ is given by the Cauchy-Riemann operator -- but
  possibly with further conditions on (for example) the evaluation map
  at the marked points.
Finally, $\Aut$ is the group of holomorphic automorphisms of the
  underlying complex curve $\Si$.
(In the case of varying complex structures, one usually reduces the
  space of complex structures so that there are no further automorphisms.)
Here the requirement that $\Aut$ acts freely on $\s_p^{-1}(0)$
  is rather restrictive since it excludes perturbed solutions $u$ with
  nontrivial isotropy, that is $\phi\ne\id_\Si$ such that $u\circ\phi=u$.
{\it If} $\Aut$-equivariant transversality can be achieved in Step 2,
  then nontrivial {\it finite} isotropy groups could be allowed in Step
  4 with the result of $\s_p^{-1}(0) / {\rm Aut}$ being an orbifold.
However, holomorphic curves with nontrivial isotropy also lack the
  injectivity properties that are needed for the common approaches to
  achieving transversality (P3); see Remark~\ref{rmk:injective}.
In special cases, it might be possible to overcome this transversality
  issue by enriching the geometric approach with ``groupoid'' or
  ``multivalued perturbation''\footnote
  {
    A sketch can e.g.\ be found in \cite[\S 5]{Salamon}, but note that
      the proof of the local slice theorem there requires more geometric
      methods -- e.g.\ slicing conditions -- rather than an implicit
      function theorem for the action.
  }
  methods.

More abstractly,  the existence of perturbations as required in Step
  2 is {\it not} a general fact for equivariant Fredholm sections,
  since many useful classes of perturbations (like the class of compact
  perturbations) need not preserve the compactness properties of the
  solution set for general non-linear Fredholm problem.
Furthermore, the equivariance and transversality properties (P1) and
 (P3) are often mutually exclusive requirements.

For the Cauchy-Riemann operator $\pbar$, the natural geometric structure
  to perturb is the given almost complex structure $J$.
This means that the perturbations $p\in\cP$ are of the form $p(u) = \frac
  12 (J-J') \rd u \circ j$ for some other almost complex structure $J'$.
From the abstract functional analytic point of view, this is a
  perturbation of the same order as the differential operator, so the
  Fredholm property is preserved only by a homotopy of semi-Fredholm
  operators (using the elliptic estimates for each Cauchy-Riemann
  operator together with the connectedness of the space of compatible
  almost complex structures).
For the compactness property (P2) we need to use our geometric
  understanding of $J$-holomorphic curves for any compatible $J$ to see
  that Gromov compactness persists.
However, comparing the requirements for equivariance (P1) and
  transversality (P3), as in the following remark, one sees that almost
  complex structures only provide the required set of perturbations if,
  roughly speaking, the pseudoholomorphic maps are somewhere injective
  along any orbit of a point in the domain $\Si$ under the automorphism
  action.
This follows from the invariance of $J$ along $\Aut$-orbits in $\Si$.
Further common geometric perturbations are Hamiltonian vector fields.
These are lower order (compact) perturbations, which otherwise are used
  in close analogy to the perturbations in the almost complex structure.

{\small
\begin{remark}[{\bf Small print on injectivity requirements}] \rm
  \label{rmk:injective}
Let us semi-formally unravel the equivariance property (P1) and the
  universal transversality property (P3) when we perturb by a space
  $\cJ$ of possibly domain dependent compatible almost complex structures
  $J:\Si\to \cJ(M,\omega)$.

\begin{enumerate}
  \item[(P1)]
  Invariance of the solution set $\{u:\Si\to M \,|\, \pbar u = 0\}$
    under reparametrization by an automorphism $\phi:\Si\to\Si$ requires
    $J:\Si \to \cJ(M,\omega)$ to satisfy $J\circ\phi = J$.
  In particular, $J(z)$ must be constant along orbits $z\in
    \{\phi(z_0)\,|\, \phi\in\Aut\}$ of the automorphism group, and the
    same holds for infinitesimal variations $Y\in \rT_J\cJ$.
  \item[(P3)] 
  Transversality of the universal moduli space at $\pbar u = 0$
    requires, roughly speaking, that the only element $\eta \in \ker
    (\rD_u\pbar)^*$ in the kernel of the dual linearized Cauchy-Riemann
    operator that satisfies $\int_\Si \la \eta(z)\circ j , Y(z,u(z))
    \rd_z u  \ra = 0$ for all $Y \in \rT_J \cJ$ is $\eta = 0$.
  \end{enumerate}
  Assuming $\eta(z_0)\neq 0$ in contradiction to (P3), linear algebra
    guarantees the existence of $Y\in \rT_J \cJ$ such that $\la \eta(z_0)
    \circ j , Y(z_0,u(z_0)) \rd_{z_0} u \ra >0$, as long as $\rd_{z_0}
    u \neq 0$.
We then wish to cut off $Y$ near $(z_0,u(z_0))\in \Si\times M$ so that
  the integrand $\la \eta(z) \circ j , Y(z,u(z)) \rd_z u  \ra$ remains
  positive for all $z\in\Si$.
However, $Y$ is forced by (P1) to be constant along the $\Aut$-orbit
  through $z_0$, so that we need to use cutoff in $M$ near $u(z_0)$.
The latter can only be guaranteed if we have $u(\phi(z_0))\neq u(z_0)$
  for all $\phi(z_0)\neq z_0$; in other words, the $J$-holomorphic map $u$ needs
  to be injective along the orbit through $z_0$, and additionally,
  $z_0$ can not be a singular point of $u$.

On the other hand, we usually have unique continuation for the
  Cauchy-Riemann equation along $\Aut$-orbits, due to the invariance of
  $J$ along these $\Aut$-orbits.
For the dual linearized operator this means that for $(\rD_u\pbar)^*\eta
  =0$ and $\eta|_V\equiv 0$ on some open subset $V\subset\Si$ we obtain
  $\eta_{{\rm Aut}\cdot V}\equiv 0$ on the orbit of $V$.
Hence it suffices to have injectivity of $u$ and nonvanishing of $\rd u$
  somewhere along almost every $\Aut$-orbit in $\Si$.
The most important cases are the following.
\begin{itemize}
  \item 
  For pseudoholomorphic spheres with zero, one, or two fixed marked
    points, the automorphism group acts transitively on $\Si=S^2$, so
    that it suffices to find some $z_0\in S^2$ with $\rd_{z_0} u \neq 0$
    and $u^{-1}(u(z_0))=u(z_0)$.
  In fact, by \cite{McD,MDSa} the set of such ``injective points''
    is dense unless $u$ is multiply covered.
  This is equivalent to the existence of some nontrivial M\"obius
    transformation $\phi:S^2\to S^2$ for which $u\circ\phi=u$, which can
    be stated more elegantly by saying $u$ has a nontrivial isotropy
    group.
  \item
  For pseudoholomorphic disks with zero or one marked points on the
    boundary, it similarly suffices to have one ``injective point''.
  However, there now exist nowhere injective disks that are not multiply
    covered, i.e.\ have trivial isotropy group.
  An example is the ``lantern'': a disc mapping to $M=S^2$ with boundary
    on the equator that wraps two and a half times around the sphere.
  \item
  For Floer trajectories, i.e.\ pseudoholomorphic strips (disks with
    two marked points) or cylinders (spheres with two marked points,
    but with a Hamiltonian perturbation that breaks the $S^1$-symmetry),
    the automorphism group is $\R$.
  So it suffices to find for almost every $t_0\in [0,1]$ (or 
    $t_0\in S^1$) a point $s_0\in\R$ with $\rd_{(s_0,t_0)} u \neq 0$
    and $u(s,t_0)\neq u(s_0,t_0)$ for all $s\neq s_0$.
  In fact, unless the trajectory is constant (i.e.\  $\partial_s
    u \equiv 0$), the set of such points $(s_0,t_0)$ is dense
    by \cite{floer-hofer-salamon}.
  \end{itemize}
\end{remark}
}

Further injectivity requirements for the transversality of
  pseudholomorphic maps arise, for example in SFT, from invariance
  conditions for the almost complex structures on the target $M$.
Apart from such cases, transversality can be obtained by this geometric
  Sard-Smale method for any stable domain $\Si$.
(This excludes tori and spheres or disks with less than 3 marked points;
  where points in the interior of a disk count double.)
However, any bubbling in a space of pseudholomorphic curves (i.e.\
  blow-up of the gradient) leads to unstable sphere or disk components,
  so that this basic version of the geometric regularization approach is
  firmly restricted to cases in which bubbling can be a priori excluded --
  or at least the dimension of spaces of nowhere injective bubbles is
  controlled by underlying injective curves.
The first prominent case considered aspherical symplectic manifolds,
  in which Floer \cite{floer} excluded bubbles by their nonzero energy.
This argument has a direct generalization to monotone settings \cite{oh},
  where a proportionality between energy and Fredholm index allows one to
  exclude sphere or disk bubbling in moduli spaces of small dimension.
Finally, in semi-positive symplectic manifolds, the multiply covered
  spheres have to be localized on simple spheres, whose codimension in
  the moduli space is at least $2$, so that, for example, Gromov-Witten
  moduli spaces can be regularized to pseudo-cycles; see \cite{MDSa}.

Moving on to the compactness properties of spaces of pseudoholomorphic
  maps, the common singularity formations are ``bubbling'', where energy
  concentrates, ``breaking'', where energy escapes into noncompact ends
  of the domain or target, and the formation of ``nodes'' which might be
  allowed in the underlying space of Riemannian surfaces.
With the exception of sphere bubbles and interior nodes, these can
  be compactified along the lines of Steps 4 - 6, leading to boundaries
  and corners, and thus invariance of solution counts only up to some
  algebraic equivalence as in Step 7.
Sphere bubbling and interior nodes can also be treated analogously,
  although they give rise to interior points (or codimension $2$ points
  that do not contribute to the pseudo-cycle) of the compactified moduli
  space as follows.

\begin{enumlist}
  \item[\bf 5')] {\bf Gluing:}
  Due to an extra rotation parameter at the node, the gluing map (for
    a single node) is of the form $\tilde\oplus:  (R_0,\infty) \times
    S^1 \times \cM_p \tilde\times \cM_p  \hookrightarrow \cM_p$.
  \item[\bf 7')] {\bf Compactness:}
  By choosing a homeomorphism from $\bigl((R_0,\infty)\times S^1\bigr)
    \cup \{\infty\}$ to the open unit disk, one could construct a smooth
    manifold in which sphere bubbles (or interior nodes) are interior
    points.
  However, smooth compatibility of the gluing maps is generally hard to
    achieve, so that this technique is mostly used to deduce compactness
    up to codimension $2$ singularities.
  \item[\bf 8')] {\bf Invariance:}
  With the perturbed and compactified moduli spaces being closed
    (or pseudo-cycles), one obtains \emph{well defined} counts of
    solutions (or more generally one obtains a well defined integral
    over the solution set) by regularizing moduli spaces that involve
    an interpolating $1$-parameter family of perturbations; in turn,
    such spaces provide a cobordism between a pair moduli spaces
    obtained from different choices of perturbation, and such a cobrodism
    then guarantees counts (or integrals) are independent of initial
    perturbation.  
\end{enumlist}

Finally, let us mention two more special cases of the geometric
  regularization approach.
The simplest is the case of pseudoholomorphic curves of small genus
  with positive index in a four dimensional symplectic manifold, for
  which \emph{automatic transversality} guarantees sujrectivity of the
  linearized Cauchy-Riemann operator for \emph{every} choice of almost
  complex structure.
This approach has been used successfully in a variety applications;
  see \cite{G,HLS97, W}.

An example with more general perturbations is the construction of
  spherical Gromov-Witten invariants developed in \cite{CM}.
(This approach was also used in \cite{fabert} and recently generalized
  to the positive genus case in \cite{Gerst}; Ionel lays the foundations
  for a similar approach in \cite{Io}.)
Here the idea is to fix a Donaldson hypersurface in such a way that the
  marked points given by intersections with the hypersurface stabilize
  every pseudoholomorphic map in a given homology class.
Letting $\cB$ be a sufficiently small neighborhood of the
  pseudoholomorphic maps, one then obtains an $\Aut$-invariant map to a
  Deligne-Mumford space of marked Riemann surfaces.
One can then work with a space of perturbations $\cP$ that is given by
  families of almost complex structures over the Deligne-Mumford space.
In other words, the almost complex structure $J(u)$ is no longer defined
  pointwise, but may depend on the position of the intersections of $u$
  with the Donaldson hypersurface.
This approach then yields regularizations in the form of pseudo-cycles,
  unique up to rational cobordism, and hence rational Gromov-Witten
  invariants.

\subsection{The virtual approach}
\label{ss:virtual}

The analytic starting point of the ``virtual approach'' is the
  observation that the solution set of the Cauchy-Riemann operator
  restricted to a local slice of the ${\rm Aut}$-action (as in
  Remark~\ref{rmk:slice}) is homeomorphic to an open subset of the
  moduli space.
Since this is a Fredholm section, one can find a finite dimensional
  reduction; in other words, one can find a section of a finite
  dimensional bundle and a homeomorphism from its zero set to an open
  subset of the moduli space.
Alternatively, one could view this procedure as finding a finite
  dimensional obstruction bundle over an open subset of $\cB/\Aut$ that
  covers the cokernel of the linearized Cauchy-Riemann operators.
Both versions of this approach then aim to work in a finite dimensional
  category (either just for the fibers of the obstruction bundle or for
  both fibers and base in finite dimensional reductions) to associate a
  ``virtual fundamental class'' to the compactified moduli space $\CM$;
  for example, sometimes one aims to find  a \v{C}ech homology class
  $[\CM]_\kappa\in \check{H}(\CM;\Q)$ induced by a special type of
  Kuranishi atlas $\kappa$ on $\CM$ as in \cite{mcduff-wehrheim}.
We base this exposition on the latter, and consequently we do not
  explicitly discuss obstruction bundle techniques, which would proceed
  along similar lines.

The overall structure of the virtual approach reorders the basic
  ingredients of the geometric approach from {\it 1) transversality, 2)
  quotient, 3) gluing} to {\bf 1) quotient, 2) transversality, 3) gluing},
  and it aims to reduce the problem to a finite dimensional setting as
  quickly as possible.
A main feature of this approach is that it provides a natural setting
  for dealing with nonfree actions.
Let us only note here that this introduces an additional finite group
  action, or groupoid structure, in the second of the following steps,
  and it additionally requires equivariance in the further steps.

\begin{enumlist}
  \item 
  {\bf Compactness:}
  Construct the compactified moduli space $\CM$ as a compact (usually
    metrizable) topological space containing $\cM$ as well as $\cM
    \tilde{\times} \cM$, and which possibly contains higher fiber products.
  \item 
  {\bf Quotient (local):}
  View the uncompactified moduli space as subset $\cM\subset\cB/{\rm
    Aut}$ of the quotient space of maps as in the geometric approach,
    and for any $[u]\in\cM$ find a local slice.
  That is, find a Banach submanifold $\cB_H\subset\cB$ such that
    $\Aut\times\cB_H \to \cB$ is a homeomorphism to an open subset.
  Since $\Aut$ generally does not act differentiably on infinite
    dimensional spaces of maps like $\cB$, this requires a geometric
    construction as in Remark~\ref{rmk:slice}, for example.
  \item 
  {\bf Fredholm setup and almost Transversality (local):}
  Set up the PDE as a smooth Fredholm section $\s:\cB_H\to\cE|_{\cB_H}$
    of a Banach space bundle such that $\s^{-1}(0)$ is homeomorphic to
    an open neighborhood of the center, $[u]\in\cM$, of the local slice.
  From this, and a choice of finite dimensional {\bf obstruction bundle}
    $\widehat E\to \cB_H$ that covers the cokernels of the linearized PDE,
    construct a finite dimensional reduction, which by definition is 
    a smooth section $s: B \to E$ of a finite dimensional $E\to B$
    over a manifold $B$ such that $s^{-1}(0)$ is homeomorphic to a
    neighborhood of $[u]\in\cM$.  
  \item 
    {\bf Gluing (local):} Construct finite dimensional reductions
    for the higher strata of $\CM$ from a gluing construction.
  The standard gluing analysis does not provide smooth sections $s:B\to
    E$ in this case, but an appropriate notion of stratified smoothness
    should suffice.
  \item 
  {\bf Semi-local Transversality and Quotient compatibility (transition
    data):}
  Establish compatibility of the local finite dimensional reductions by
    forming direct sums of the obstruction bundles near overlaps in $\CM$.
  This requires one to refine the choice of obstruction bundles in
    steps 3 and 4 such that they are transverse on the overlaps.
  The direct sum construction also involves pullbacks of the obstruction
    bundles by an action of ${\rm Aut}$, due to the changing local slices
    in step 2.
  To ensure smoothness and differentiability of the pullback bundles,
    specific geometric constructions of the obstruction bundles are needed.
  \item 
  {\bf Kuranishi regularization (automatic):}
  A general abstract theory associates a virtual fundamental class
    $[\CM]^{\rm vir}$ to any covering of $\CM$ by finite dimensional
    reductions that are suitably compatible.
  Roughly speaking, the Kuranishi charts and transition data form
    categories $\Ti B, \Ti E$ and a functor $\Ti s :\Ti B \to \Ti E$
    so that $\CM$ is identified with the realization of the subcategory
    $|\Ti s^{-1}(0)|$ (which by definition is the subspace of objects
    at which the section vanishes, modulo the equivalence relation
    generated by the morphisms).
  The abstract theory then aims\footnote
    {
    As stated, there exists no such general result in the literature.
    All current approaches struggle with ensuring the Hausdorff
      and compactness properties of the zero set, so at best they find
      the required perturbations in a smaller category whose realization
      still contains $\CM$.
    }
    to provide a class of perturbation functors $\Ti p :\Ti B \to \Ti
    E$ such that $|(\Ti s+\Ti p)^{-1}(0)|$ inherits the structure of a
    compact manifold, and that up to some type of cobordism is
    independent of $p$.
  \item 
  {\bf Coherence:}
  If $\CM$ consists of several components and an identification of
    the boundary $\partial [\CM]_\cK$ with a fiber product $[\CM]_\cK
    \tilde{\times} [\CM]_\cK$ is desired, then steps 2 - 6 need to choose
    the local slices, obstruction bundles, and abstract perturbations
    ``coherently''; in other words they much be chosen to be compatible
    with the gluing maps.
    (We note that, potentially, these interwoven steps can be organized
      by a hierarchy of connected components of $\CM$.)
  \item 
  {\bf Invariance:}
  Prove that the algebraic structures arising from different choices
    in the previous steps, in particular the choice of local slices and
    obstruction bundles, are equivalent. This involves the construction
    of a virtual fundamental chain on $[0,1]\times\CM$ from local
    finite dimensional reductions which reduce to two given choices on
    $\{0\}\times\CM$ and $\{1\}\times\CM$.
  \end{enumlist}

At present, the applicability of the virtual approach to pseudholomorphic
  curve spaces is being revisited.
The recent \cite{mcduff-wehrheim} discusses a number of fundamental
  analytic and topological issues in \cite{FO,FOOO,LiT,LiuT} (one of
  which is discussed in Example~\ref{ex:trivial_splicing}), while itself
  only providing a theory for severely limited cases in which geometric
  methods are known to apply.
Our hope is that a nontrivial convex span of all these publications
  should lead to a theory that is not only solid but also understood by
  more people than just the respective authors.

Assuming that a functional theory for the abstract regularization step
  6 is established, the virtual approach does allow one to regularize
  more moduli problems, yet does not seem to eliminate repetitive work
  in the other steps.
In particular, any application to a new moduli problem still requires
  some new geometric insight to find appropriate local slices in step 2
  and obstruction bundles in step 3 that transform appropriately under
  the automorphism action; this is similar to finding a special set of
  perturbations in the geometric approach.
The Fredholm setup in step 3 is also somewhat more complicated than
  in the geometric approach, since the local slice condition must be
  incorporated.
Next, the gluing analysis in step 4 is exactly the same as that in
  the geometric approach, but the smoothness requirements on the finite
  dimensional reductions in fact require a more refined analysis than in
  some geometric regularizations, which merely construct a pseudocycle.
Moreover, some additional technical work is required to obtain the
  transversality of obstruction bundles needed in step 5.
Finally, coherence and invariance in steps 7 and 8 again require the
  same amount of work and sometimes nontrivial ideas as in the geometric
  approach.

{\small
\begin{remark}[{\bf Relation between Kuranishi atlases and polyfold
  Fredholm sections}] \rm
A description of $\bM$ as the zero set of a Fredholm section in a
  polyfold bundle, as outlined in Section~\ref{ss:polyfold} below,
  is expected to induce an equivalence class of Kuranishi-type atlases
  for $\bM$.
The rough idea is that this setting allows one to perform steps 1--5 of
  the virtual regularization scheme abstractly (or rather, they are
  already part of the polyfold setup).
This requires choosing obstruction spaces that locally cover the
  cokernel; with different choices yielding equivalent atlases in the
  sense of having a common refinement (for details see e.g.\
  \cite{mcduff-wehrheim}).

In the case of trivial isotropy, the stabilization construction for
  classical Fredholm sections can be obtained, see for example in
  \cite{cms}, and it yields a global finite dimensional reduction
  (also referred to as an atlas with one chart); in other words, $\bM$
  can be described as the zero set of a single section of a finite
  dimensional bundle.  
In the case of nontrivial isotropy, and under the additional assumption that at
  every solution there exists a choice of obstruction space on which the
  isotropy group of the given point acts trivially, the above stabilization
  construction generalizes, and it yields a global finite dimensional
  reduction to a section of an orbi-bundle.
To the best of the last author's understanding, this assumption coincides with
  the notion of ``semi-effective d-orbifold"  for which regularization is
  discussed in \cite{joyce}.
In this case, however, the regularization in step 6 can be quoted directly
  from Theorem~\ref{thm:model} or its generalization to orbibundles.
Similarly, the Euler class approach of \cite{Siebert} seems to go
  through under this additional assumption, and it yields a similar global
  finite dimensional reduction.

In the general case, \cite{Dingyu} proposes that finite dimensional
  reduction of Fredholm sections of polyfold bundles yields more general
  Kuranishi atlases with several charts.
This resulting class of Kuranishi atlases should have well controlled
  transition maps which allow for a converse construction, which would
  yeild an equivalence between polyfold Fredholm sections and certain
  Kuranishi atlases.
\end{remark}
}

\subsection{The polyfold approach} \label{ss:polyfold}

The polyfold approach, just like the geometric one, aims to associate to
  a compactified moduli space $\CM$, a smooth compact manifold $\CMp$,
  possibly with boundary $\partial\CMp = \cM' \tilde{\times} \cM'$,
  which is unique up to the appropriate notion of cobordism.
In order to achieve this, and eliminate a lot of the repetitive work
  in the applications, this approach fundamentally changes the basic
  order of ingredients from {\it 1) transversality, 2) quotient, 3) gluing}
  in the geometric approach and  {\it 1) quotient, 2) transversality, 3)
  gluing} in the virtual approach to the order {\bf 1) quotient, 2) gluing, 3)
  transversality}, and remains in an infinite dimensional setting
  until transversality is achieved.
The following eight steps provide an outline of the regularization
  procedure for a given moduli problem offered by the polyfold approach.
{\it [Additionally, in italics, we will compare each step to related
  constructions in the other approaches to demonstrate how significant
  amounts of technical work are automatized  in the polyfold approach.]}

\begin{enumlist}
  \item 
  {\bf Compactness:}
  Construct a (metrizable) topological space $\Ti\cB$ that contains
    the compactified moduli space $\CM$ as compact subset.
  Roughly speaking, $\Ti\cB$ can be obtained from the quotient space
    $\Ti\cB[0]:=\cB/\Aut$, which contains the moduli space $\cM$ of
    smooth (i.e.\ non-nodal or unbroken) solutions of the PDE, by adding
    strata $\Ti\cB[\ell]$ of singular maps (e.g.\ $\ell$-fold broken,
    or with $\ell$ nodes) that need not satisfy the PDE, in the same
    way as $\CM$ is obtained from $\cM$ by adding strata of singular
    solutions.\footnote
      {
      More precisely, the pregluing map of step 3 defines the
	neighborhoods of broken or nodal maps.
      } 
  These higher strata consist of large function spaces which will not,
    in general, solve the given PDE but will contain the compactification
    points of the moduli space, \(\overline{\mathcal{M}}\setminus
    \mathcal{\mathcal{M}}\).
  {\it [This is the same starting point as in the obstruction bundle
    version of the virtual approach.
  It is only slightly more complicated than topologizing the compactified
    moduli space $\CM$ in step 1 of the virtual approach and step 7 of the
    geometric approach.] }
  \item 
  {\bf Quotient (global):}
  Give $\Ti\cB[0]=\cB/\Aut$ a scale smooth structure as a ``scale Banach
    manifold'' by finding local slices as in Remark~\ref{rmk:slice}.
  That is, find Banach submanifolds $\cB_H\subset\cB$ such that
    $\Aut\times\cB_H \to \cB$ is a homeomorphism to an open subset,
    and check that the transition maps are scale smooth.
  Do the same with each singular stratum \(\Ti\cB[\ell]\), which is
    given by a fiber product of two or more copies of the main stratum,
    e.g.\ $\Ti\cB[1]\cong\Ti\cB[0] \tilde{\times}\Ti\cB[0]$.
  {\it [The local slices are the same as those required in step 2 of
    the virtual approach.
  Their existence and scale smoothness follow from triviality of
    isotropy groups (which we assume throughout) and similar basic analytic
    properties of the action as those used to establish
    step 4 of the geometric approach.] }

  \item 
  {\bf Pregluing:} 
  Give the main stratum $\Ti\cB$ a generalized smooth structure near
    the strata of singular maps.
  In order to construct charts centered at once broken or nodal map in
    $\Ti\cB[1]$, use a pregluing map of the form
    $\oplus: \cG^* \times \cU_0 \tilde{\times} \cU_1  \to  \Ti\cB[0]$
    for open sets $\cU_i\subset\Ti\cB[0]$ (realized as local slices
    $\cU_i\hookrightarrow\cB$).
  Here the space of gluing parameters is $\cG^*=(R_0,\infty)$ in the case
    of a broken or boundary nodal map, whereas $\cG^*=(R_0,\infty)\times
    S^1$ for the case of an interior node.
  In either case, the pregluing map is extended by $\{\infty\} \times
    \cU_0 \tilde{\times} \cU_1$ mapping to the corresponding broken or
    nodal maps in the singular stratum $\Ti\cB[1]\subset\Ti\cB$.
  We then give $\cG:=\cG^* \cup\{\infty\}$ a smooth structure by ``a
    choice of gluing profile,'' which is a choice of identification with
    an interval $[0,1)\cong \cG$ with $\{0\}\cong\{\infty\}$ in the
    boundary case, and an open disk with $\infty$ at the center in the
    interior case.

  To make up for the lack of injectivity of these pregluing maps, follow
    a ``gluing and antigluing'' procedure outlined in section~\ref{ss:ret},
    to form an sc-retract $\cR\subset \cG \times \cU_0 \tilde{\times}
    \cU_1$, on which the restriction of $\oplus$ is a homeomorphism to
    an open subset of $\Ti\cB$.
  Analogously, construct such M-polyfold charts near the higher strata
    $\Ti\cB[\ell]$ of multiply broken or nodal maps in $\Ti\cB$ from
    pregluing maps $\oplus: \cG^\ell \times \tilde{\times}^{i=\ell}_{i=0}
    \cU_i \to \Ti\cB$ on multiple fiber products of local slices.
  In order to obtain scale smooth transition maps between these charts
    as well as the local slice charts arising from step 2, the safe choice
    is an exponential gluing profile as in \eqref{expprofile}.
  {\it [This is a mild extension of the pregluing construction that
    provides the basis for an intricate Newton iteration in the gluing
    analysis of step 5 (or 5') in the geometric approach and 
    step 4 in the virtual approach.
  The novelty is in the interpretation as chart maps.
  The construction of these charts and scale smoothness of transition
    maps should usually be obtained by combining basic local building
    blocks\footnote
    {
    At present, only the building blocks for smooth domains and interior
      nodes are readily available in \cite{HWZI_applications}.
    Work on the cases of breaking, Lagrangian boundary problems, and
      boundary nodes is in progress and discussed below.
    } 
    in the literature with a Deligne-Mumford theory for the space of
    underlying domains.] }
   
  \item 
  {\bf Fredholm setup:}
  After gathering the compatible charts constructed in steps 2 and 3
    to an M-polyfold structure on $\Ti{\cB}$, analogously construct an
    M-polyfold bundle $\Ti{\cE} \to \Ti{\cB}$ such that the PDE (e.g.\
    the gradient flow or Cauchy-Riemann operator) forms a section $\s:
    \Ti{\cB} \to \Ti{\cE}$ with $\s^{-1}(0)=\CM$.
  Check that the section $\s$ is a scale smooth polyfold Fredholm
    section.
  {\it [The bundle $\Ti\cE$ could be constructed in one stroke with the
    ambient space $\Ti\cB$ by adding fibers that are essentially given
    by the requirement of the PDE forming a section.
  This bundle as well as the regularity and Fredholm property of the
    section should again usually be obtained from patching together local
    building blocks for which Fredholm properties are established in
    the literature.
  For regular domains (smooth, connected Riemann surfaces), the Fredholm
    property is essentially the same as in step 1 of the geometric
    approach and step 3 of the virtual approach.
  For nodal or broken domains, the polyfold Fredholm property
    formalizes part of the gluing analysis, namely it essentially follows
    from the quadratic estimates that are required in the gluing analysis
    of the other approaches.] }
  \item 
  {\bf Transversality (automatic):}
  At this point the general transversality and implicit function
    theorem for M-polyfolds provides a class of perturbations
    $p:\Ti{\cB} \to \Ti{\cE}$ with the property that
    $\CM_p:=(\s+p)^{-1}(0)\subset\Ti{\cB}$ is a smooth {\bf finite
    dimensional} submanifold with boundary and corners, and for any
    other choice $p'$ in this class there is a suitable cobordism
    between $\CM_{p'}$ and $\CM_p$.
  The interior / boundary / corners of the perturbed moduli space $\CM_p$
    are given by its intersection with the interior $\partial^0\Ti\cB$
    / boundary $\partial^1\Ti\cB$ / corners $\partial^{k\ge 2}\Ti\cB$
    of the ambient space $\Ti\cB$.
  If there are no interior nodes, then each breaking or boundary node
    contributes $1$ to the corner index $k$; in other words, the $k$-th
    corner stratum is given by the fiber products $\partial^k\Ti\cB =
    \Ti\cB[k] \cong \ti\times^k \bigl(\qu\cB\Aut\bigr)$.
  If all nodes are interior, then $\Ti\cB$ has no boundary or corner
    strata, since the gluing parameters $S^1\times(R_0,\infty)$ are
    compactified to an open disk; here a gluing parameter equal to
    $\infty$ corresponds to a nodal map, which is still an interior
    point.
  In the case of mixed types of breakings and nodes, only those with
    gluing parameters $(R_0,\infty)$ (not those with an extra $S^1$ factor)
    affect the boundary and corner stratification (i.e.\ contribute to
    the corner index $k$).
  {\it [Contrary to step 2 of the geometric and steps 3 and 5 of the
    virtual approach, no special geometric class of perturbations or a
    priori transversality of obstruction bundles is required for this
    entirely abstract perturbation scheme.] }
  \item 
  {\bf Coherence (mostly automatic):}
  If the regularized moduli space is expected to have boundary given by
    fiber products of its connected components, then the corresponding
    coherent perturbations can be obtained from an extension of the
    polyfold transversality theorem to ``polyfold Fredholm sections with
    operations'' as outlined in \cite{hwzbook}.
  In this case the expected boundary stratification is reflected
    in the fact that the boundary of the M-polyfold $\Ti\cB$ can be
    identified with a fiber product $\partial^0\Ti{\cB} \tilde{\times}
    \partial^0\Ti{\cB}\cong \partial^1\Ti\cB$ of its interior.
  An ``operation'' is essentially a continuous extension of this
    identification to a (not necessarily injective or single valued)
    map $\Ti{\cB} \tilde{\times} \Ti{\cB} \to \Ti{\cB}\setminus
    \partial^0\Ti{\cB}=:\partial\Ti\cB$ with which the section $\s$
    is compatible -- roughly $\s|_{\partial\Ti\cB} = \s \tilde{\times} \s$.
  If one can now establish combinatorial properties, essentially
    amounting to a prime decomposition, for the operation on the level of
    connected components $\pi_0(\Ti{\cB}) \tilde{\times} \pi_0(\Ti{\cB})
    \to \pi_0(\Ti{\cB})$, then a refined abstract construction of the
    perturbations in step 5 yields a class of transverse perturbations that
    are additionally compatible with the operation on $\Ti{\cB}$.\footnote
      {
      This simple formulation holds in the absence of ``diagonal relators''
	-- connected components of $\Ti\cB$ that can be glued to themselves.
      Such ``self-gluing'' does occur in several instances of e.g.\
	general SFT.
      It can be dealt with by allowing a more general transversality to
	the boundary strata which still yields smooth perturbed moduli spaces
	with boundary and corners.
      However, it no longer ensures that the corner stratification is induced
	from the ambient one -- thus e.g.\ allowing boundaries of the moduli
	space to lie in corners of the ambient M-polyfold.
      The counts of such moduli spaces then yield more involved algebraic
	structures than the ``master equation'' mentioned here.
      }
  As direct consequence, the boundary (not including corners)
    $\partial^1\CM_p= \s|_{\partial^1\Ti{\cB}}^{-1}(0)
    = \s|_{\partial^0\Ti{\cB}}^{-1}(0) \tilde{\times}
    \s|_{\partial^0\Ti{\cB}}^{-1}(0)  = \cM_p \tilde{\times} \cM_p$
    is given by the fiber product of the interior.
  Algebraic structures induced by such perturbed moduli spaces then
    automatically satisfy a ``master equation'' of the type $\partial
    \underline m_p = \underline m_p \tilde{\times} \underline m_p$.
  {\it [This abstract coherent perturbation scheme is essentially just a
    formalization of iterative schemes that exist in various applications.
  The polyfold approach allows one to formulate this scheme abstractly
    since pullback to fiber products and extension to the interior
    automatically provides further abstract scale smooth perturbations,
    whereas in the geometric and virtual approach some care is required
    to preserve a specific geometric type of perturbations in such
    constructions.] }
  \item 
  {\bf Invariance (partially automatic):}
  The algebraic structures arising from different choices of
    perturbations in step 5 are automatically equivalent due to the
    cobordisms between different perturbations.
  Invariance for different choices in the setup of $\s$ still has to
    be proven independently, however this is accomplished via a similar
    M-polyfold setup for a family of sections.
  In particular, the variation of the almost complex structure has to
    be treated this way, since it does not fit into the class of abstract
    perturbations in step 5.
  {\it [Though formally similar to the essential invariance questions in
    the geometric and virtual approach, the polyfold approach has several
    readily available tools to obtain the required cobordisms with much
    less effort than the corresponding steps 8 of the other approaches.
  These are discussed further below.  ] }
  \end{enumlist}

The last step of this road map highlights two particular strengths of
  the polyfold approach.
Firstly, independence from the choice of perturbations is simply
  automatic, whereas it needs to be proven separately in the geometric
  approach.
Compared with the virtual approach, the abstract regularization step
  in the latter also provides some automatic invariance -- though at best
  for a fixed cobordism class of Kuranishi structures.
Here it is worth noting that the ambient M-polyfold for a given moduli
  problem can essentially be constructed naturally; in other words, the
  construction only depends on a few explicit choices such as the Sobolev
  completion and a ``gluing profile'' $[0,r_0)\cong (R_0,\infty]$.\footnote
  {
  At present, all known polyfold constructions use the same ``exponential
    gluing profile'' $v\mapsto e^{1/v}-e$, and the only choice in
    the $W^{2,3+k}_{\delta_k}$ Sobolev completions is a sequence
    $(\delta_k)_{k\in\N_0}$ of exponential decay parameters as in
    Lemma~\ref{ex:sobolev}.
  The question of comparing invariants resulting from different choices
    of such global data has not been addressed at this time.
  However, note that it could be reduced to cases in which there is a
    smooth injection from one into the other polyfold bundle.
  We then expect that this embedding could be used to pull back a large
    class of admissible perturbations (e.g.\ those supported away from
    nodes) that should suffice for achieving transversality and thus
    identifying the invariants.
  }
Stated differently, M-polyfold charts that arise from different choices
  of local slices or local coordinates in the pregluing are compatible.

On the other hand, a Kuranishi structure a priori depends more
  substantially on the inexplicit choice of local slices and obstruction
  bundles, so the virtual approach requires a nontrivial proof of cobordism
  between the Kuranishi structures arising from different sets of choices.

Secondly, the polyfold approach even provides a framework for proving
  invariance under further variations of the PDE.
Namely, if this variation can be described as scale smooth family
  of polyfold Fredholm sections $(\s_\l)_{\l\in[0,1]}$ of a fixed
  M-polyfold bundle $\Ti\cE\to\Ti\cB$, then $[0,1]\times\Ti\cB\to\Ti\cE$,
  $(\l,b)\mapsto \s_\l(b)$ is a polyfold Fredholm section whose
  abstractly given transverse perturbations provide cobordisms between
  the regularizations for $\l=0$ and $\l=1$.

Finally, the greatest benefit of polyfold theory is its ability to
  provide regularizations of a wide variety of moduli problems based on
  a relatively small amount of technical work that moreover is easily
  transferrable to related moduli problems.
The presently developing applications are all closely related to
  pseudoholomorphic curves, but further applications to gauge theoretic
  elliptic PDEs are easily imaginable.
For the moment, we restrict our attention to pseudoholomorphic curve
  moduli problems, and we briefly list those theories for which a polyfold
  framework has been developed, is under development, is expected to result
  from the same techniques, or is hoped for as nontrivial extension of
  existing techniques.

\begin{description}[leftmargin=1.5em, labelindent=0em, itemsep=0em]
  \item 
  [Morse theory] 
  An example in \cite{HWZ_lectures} sketches out the construction of
    a Fredholm section in an M-polyfold bundle whose zero set is the
    moduli spaces of (unbroken, broken, and multiply broken) gradient
    trajectories in a closed Riemannian manifold with Morse function.
  A more thorough construction is being developed in \cite{AW}.
  A description of Morse trajectory spaces as moduli spaces of solutions
    of a PDE (though really an ODE) and a geometric regularization of low
    index moduli spaces from this point of view is available in textbook
    format in \cite{schwarz:Morse}.
  \item 
  [Gromov-Witten theory]  
  Moduli spaces of closed (possibly nodal) pseudoholomorphic curves
    of arbitrary genus in any closed symplectic manifold are described
    as the zero set of a polyfold Fredholm section (in an orbifold type
    bundle modeled on M-polyfolds) in \cite{HWZI_applications}.
  Introductory material on genus zero Gromov-Witten moduli spaces and
    a geometric regularization in semipositive symplectic manifolds  is
    available in textbook format in \cite{MDSa}.
  \item 
  [Symplectic Field Theory]
  The primary motivation for the development of polyfold theory was
    the regularization issue for moduli spaces of pseudoholomorphic
    buildings in non-compact symplectic cobordims -- specifically curves
    in cylindrically-ended cobordisms between manifolds with non-degenerate
    stable Hamiltonian structures.
  These SFT moduli spaces were introduced in \cite{EGH}, and their
    description as the zero set of a polyfold Fredholm section is expected
    as the next publication in the program of Hofer-Wysocki-Zehnder
    \cite{HWZII_applications}.
  \item 
  [Hamiltonian Floer theory] 
  Moduli spaces of (possibly broken) Floer trajectories between periodic
    orbits of a nondegenerate Hamiltonian vector field in any closed
    symplectic manifold $M$ are special cases of SFT moduli spaces for
    the cobordism $\R\times S^1\times M$.
  Thus a description as the zero set of a Fredholm section in a polyfold
    bundle will arise from \cite{HWZII_applications}.
  Partial results on the Fredholm property near broken trajectories
    are available in \cite{w:fred}.
  This polyfold setup will specialize to a Fredholm section in an
    M-polyfold bundle if sphere bubbling can be excluded a priori.
  Hamiltonian Floer theory was first developed by Floer \cite{floer},
    and further introductory material can be found in \cite{Salamon}.
  \item 
  [Arnold conjecture via ${\mathbf S^1}$-equivariance] 
  Floer proved the Arnold conjecture for monotone symplectic manifolds in
    \cite{floer:Arnold} by constructing a moduli space cobordism between
    Hamiltonian Floer moduli spaces and Morse trajectory spaces.
  This proof was generalized to a variety of settings, with the main
    obstacle being the need for an $S^1$-equivariant regularization.
  In the polyfold framework, this approach to the Arnold conjecture would
    require a setup in which a transverse perturbation can be pulled back
    from a quotient by a scale smooth $S^1$-action.
  The analogous finite dimensional quotient theorems are expected to
    generalize to actions on polyfolds under suitable analytic conditions.
  A first rigorous study in a Morse theoretic model case is intended
    to follow after \cite{AW}.
  \item 
  [PSS morphism] 
  An alternative approach to proving the Arnold conjecture was proposed
    in \cite{PSS} based on a moduli space of pseudoholomorphic spheres with
    one Hamiltonian end and one marked point coupled to a Morse flow line.
  The direct approach again required an $S^1$-equivariant regularization
    and was not published in technical detail.
  However, this approach can be algebraically refined so that the
    regularization issues reduce to obtaining a polyfold Fredholm
    description for trees of pseudoholomorphic spheres with one or two
    Hamiltonian ends; these are again special cases of SFT moduli spaces
    (see \cite{AFW}  for further details).
  Given a polyfold setup for the latter and a manifold with boundary
    and corner structure on compactified spaces of finite or half infinite
    Morse trajectories from \cite{w:Morse}, a fiber product construction
    provides a polyfold Fredholm description for compactifications of all
    relevant PSS moduli spaces; these spaces involve a finite or half
    infinite Morse trajectory coupled to one or two trees of spheres
    with a Hamiltonian end.
  \item 
  [Pseudoholomorphic disks] 
  Moduli spaces of pseudoholmorphic disks with Lagrangian boundary
    condition can be compactified in different ways.
  One of the first such compactifications, involving nodal
    disks, was introduced in \cite{FOOO} with the aim of constructing an
    $A_\infty$-algebra on a certain completion of singular chains on the
    Lagrangian.
  Closely related moduli spaces, which in addition allow for Morse
    trajectories between the disks, was introduced in \cite{F,FOh,CL}
    and further developed in \cite{w:msri-talk} with the aim
    of constructing an $A_\infty$-algebra on the Morse complex of the
    Lagrangian.
  The corresponding building blocks of pseudoholomorphic curves with
    Lagrangian boundary conditions and boundary marked points connected
    by Morse trajectories are in the process of being described by an
    M-polyfold Fredholm section in \cite{jiayong-w}.
  Under the assumption of pseudoholomorphic spheres being a priori
    excluded, this should yield an $A_\infty$-algebra over $\Z$ or $\Z_2$.
  In the presence of pseudoholomorphic spheres these building blocks
    are expected to combine with the existing building blocks of
    pseudoholomorphic curves with interior nodes via a general patching
    technique that is being developed in \cite{HWZII_applications}.
  The combined Fredholm setup is expected to yield an $A_\infty$-algebra
    over $\Q$.
  \item 
  [Lagrangian Floer theory and Fukaya category] 
  By adding building blocks of striplike ends with Lagrangian boundary
    condition, one should obtain a polyfold setup for Lagrangian Floer
    theory, which was introduced in \cite{floer:Lagrangian}.
  By lifting this setup to domains given by more involved
    Deligne-Mumford-type spaces of punctured disks, one should moreover
    obtain a polyfold setup allowing one to define Fukaya categories as
    introduced in \cite{FOOO,Seidel}.
  \item 
  [Relative SFT] 
  Finally, the previous moduli spaces can be generalized from domains
    with striplike ends and Lagrangian boundary conditions to SFT-type
    holomorphic curves with boundary in cylindrically-ended symplectic
    cobordisms and boundary values on Lagrangian cobordisms between
    Legendrian submanifolds.
  While the general algebraic structure of such theories is unclear, the
    moduli spaces should have a relatively straight forward description
    as the zero sets of polyfold Fredholm sections, and the boundary
    stratifications are expected to govern the induced algebra.
  A special case of this setup would provide a polyfold framework for
    {\bf Legendrian contact homology}, which originated in \cite{Chekanov}
    and was generalized in \cite{EES}.
  \item 
  [Morse-Bott degeneracies] 
  The scope of \cite{HWZII_applications} is to provide a regularization
    of the moduli space of non-compact curves in cylindrically-ended
    cobordisms such as \(\mathbb{R}\times V\) where \((V, \xi = {\rm
    ker}\lambda) \) is a contact manifold.
  A crucial requirement here is a choice of contact form $\lambda$
    for which all Reeb orbits are non-degenerate.
  Similar nondegeneracy conditions are necessary in all previously
    mentioned moduli space setups.
  Though technically much more involved, it seems possible that analysis
    in \cite{HWZII_applications} may generalize and be applicable to the
    case in which the orbits are Morse-Bott degenerate.
  Morse-Bott contact homology would be a special case of such a theory;
    for introductory material see  \cite{Bourgeois}.
  \item 
  [Pseudoholomorphic quilts]
  The building blocks for Gromov-Witten, Lagrangian Floer theory,
    and pseudoholomorphic disks should also combine to give a polyfold
    setup for the moduli spaces of pseudoholomorphic quilts introduced
    in \cite{WW}. 
  Indeed, this is expected since seam conditions are locally equivalent
    to Lagrangian boundary conditions in a product.
  The novel figure eight bubble, however, has no description in terms of
    previous Cauchy-Riemann-type PDE's, since it involves tangential seams.
  The basic analysis towards a polyfold Fredholm description was established in \cite{Nate}.
  \end{description}

\part{Presenting Palatable Polyfolds}

In this mathematical part, we present the core definitions of polyfold
  theory in a streamlined fashion so that we may state a precise version
  of the abstract regularization result as quickly as possible.
For each of the new key concepts we present examples of their application
  to Morse trajectory spaces as in Example~\ref{ex:Morse}.

\section{Scale Calculus}\label{sec:scCat}

\subsection{Scale Topology and Scale Banach spaces}\label{sec:scTop}
We begin by introducing sc-topological spaces.
While this notion is not explicitly defined by HWZ, it is implicitly
  present in much of the theory.
(For instance, sc-Banach spaces, relatively open subsets in partial
  quadrants, sc-smooth retracts, (M-)polyfolds, and strong polyfold
  bundles all carry sc-topologies.)

\begin{definition}\label{def:scTopology}
Let $X$ be a metrizable topological space.
An {sc-topology} on $X$ consists of a sequence of subsets $(X_k\subset
  X)_{k\in\N_0}$, each equipped with a metrizable topology, such that
  the following hold.
  \begin{enumerate}
    \item $X=X_{0}$ as topological spaces.
    \item For each $k> j$ there is an inclusion of sets $X_k\subset X_j$, and
      the inclusion map $X_k\to X_j$ is continuous with respect to the $X_k$
      and $X_j$ topologies.
    \end{enumerate}
We will refer to $(X_k)_{k\in\N_0}$, or \(\mathbb{X}\), or sometimes
  simply to $X$, as an {\bf sc-topological space}.

An sc-topology $(X_k)_{k\in\N_0}$ is called {\bf dense}  if it has the
  following property.
\begin{enumerate}
  \item[(iii)] 
    The subset $X_\infty:=\bigcap_{k\in\N_0}X_k$ is dense in each $X_j$.
  \end{enumerate}

An sc-topology $(X_k)_{k\in\N_0}$ is called {\bf precompact} if it has
  the following property.
\begin{enumerate}
  \item[(iv)] 
    For each $p\in X_k$ and $j<k$, there exists a neighborhood
    $O_{jk}\subset X_k$ of \(p\), whose closure in $X_{j}$ is compact.
  \end{enumerate}
\end{definition}

Note that an sc-topological space $X$ is related to a multitude of
  topologies -- namely for every $k\in\N_0$ the $X_k$-topology is defined
  on the subset $X_k\subset X$, or any of its subsets.
So by standard topological terms, such as openness or compactness,
  we will always refer to the $X_0$ topology, which makes sense for
  all subsets of $X$ -- unless a different ambient space $X_k$ and its
  topology are specified.

\begin{remark} \label{rmk:sc-top} \rm \hspace{2mm} \\ \vspace{-5mm}

\begin{enumerate}
  \item
  Any topological space $X$ carries the {\bf trivial sc-topology}
    $(X_k=X)_{k\in\N_0}$.
  This is a dense sc-topology and satisfies the compactness property
    if and only if $X$ is locally compact\footnote
    {
      Recall, \(X\) is locally compact if for each point \(p\in X\)
	there exists a neighborhood of \(p\) which is compact.
    }.
  \item
  If $\mathbb{X}=(X_k)_{k\in\N_0}$ is an sc-topological space and
    $Y\subset X_0$ a subset, then $Y$ inherits an sc-topology $(Y_k:=
    Y\cap X_k)_{k\in\N_0}$.
  In general, if \(\mathbb{X}\) is dense and precompact, then
    \(\mathbb{Y}:=(Y_k)_{k\in\mathbb{N}_0}\) need not inherit either of
    these properties.
  However, open subsets $Y\subset X$ do inherit density
    and precompactness from \(\mathbb{X}\) by Lemma
    \ref{lem:sc_topOnOpenSubset} below.
  \end{enumerate}
\end{remark}

\begin{example}\label{ex:Ck} \rm
The collection of {\bf $\mathbf k$ times continuously differentiable
  functions} on the line, all of whose derivatives are bounded, forms an
  sc-topological space $(X_k:=\cC^k(\R, \R))_{k\in\N_0}$, where each $X_k$
  is equipped with the topology induced by the $\cC^k$-norm.
It satisfies the density axiom since $X_\infty=\cC^\infty(\R, \R)$.
However, it does not satisfy the precompactness property, due to the
  noncompactness of the domain $\R$.
Indeed, if \(f\in \mathcal{C}^\infty(\R,\R)\) has compact support,
  then the sequence \((f_n(\cdot):=f(\cdot +n) )_{n\in \mathbb{N}}\) is
  bounded on every scale, but does not contain a convergent subsequence
  on any scale.
Hence for $g_0\in X_k$ any $X_k$-neighborhood $\{g \,|\,
  \|g-g_0\|_{\cC^k}\leq \eps\}$ still contains a sequence
  $g_n=g_0+\frac{\eps}{\|f\|_{\cC^k}} f_n$ that has no $X_j$-convergent
  subsequence for $j<k$.

If we use the compact domain $S^1=\R/\mathbb{Z}$, the sc-topological
  space $(X_k:=C^k(S^1, \R))_{k\in\N_0}$
  is dense and satisfies the precompactness property by the
  Arzel\`{a}-Ascoli Theorem.
Due to its linear structure, this is also the first example of an
  sc-Banach space as discussed in Section \ref{ss:sc} and rigorously
  defined below.
\end{example}

\begin{lemma}\label{lem:sc_topOnOpenSubset}
Let \(\mathbb{X}=(X_k)_{k\in \mathbb{N}}\) be a dense, precompact
  sc-topological space.
Let \(Y\subset X_0\) be an open subset.
Then for \(Y_k:=Y\cap X_k\), with the relative topology induced by $X_k$,
  the collection \((Y_k)_{k\in \mathbb{N}}\) forms a dense, precompact
  sc-topological space.
\end{lemma}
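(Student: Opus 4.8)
The plan is to check the three required properties for $(Y_k)_{k\in\mathbb{N}_0}$ in turn, namely that it is an sc-topological space, that it is dense, and that it is precompact. The first of these is essentially automatic: since $Y$ is open in $X_0$, we have $Y = Y_0$ as topological spaces, and for $k>j$ the inclusion $Y_k = Y\cap X_k \hookrightarrow Y\cap X_j = Y_j$ is just the restriction of the continuous inclusion $X_k\to X_j$, hence continuous. So the bulk of the work lies in density and precompactness, and it is there I expect the one genuine subtlety.

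For precompactness, fix $p\in Y_k$ and $j<k$. By precompactness of $\mathbb{X}$ there is a neighborhood $O_{jk}\subset X_k$ of $p$ whose closure $\overline{O_{jk}}$ in $X_j$ is compact. Since $Y$ is open in $X_0$ and the inclusion $X_j\to X_0$ is continuous, $Y\cap X_j$ is open in $X_j$; however, this does not immediately give an $X_j$-neighborhood of $p$ with compact $X_j$-closure \emph{contained in $Y$}, which is what is needed since precompactness is an \emph{intrinsic} property of the sc-topological space $(Y_k)_{k\in\mathbb{N}_0}$ — the closures must be taken in $Y_j$, not in $X_j$. The remedy is to shrink: since $p\in Y\cap X_j$ which is open in $X_j$, and $X_j$ is metrizable hence regular, choose an $X_j$-open neighborhood $W$ of $p$ with $\overline{W}^{X_j}\subset Y\cap X_j = Y_j$. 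Then set $O'_{jk} := O_{jk}\cap W\cap X_k$, an $X_k$-open — hence $Y_k$-open — neighborhood of $p$. Its closure in $X_j$ satisfies $\overline{O'_{jk}}^{X_j}\subset \overline{O_{jk}}^{X_j}\cap \overline{W}^{X_j}$, which is a closed subset of the compact set $\overline{O_{jk}}^{X_j}$, hence compact, and it is contained in $Y_j$; therefore its closure in $Y_j$ coincides with its closure in $X_j$ and is compact. This verifies property (iv) for $\mathbb{Y}$.

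For density, we must show $Y_\infty := \bigcap_k Y_k = Y\cap X_\infty$ is dense in each $Y_j$. Fix $p\in Y_j$ and an $X_j$-neighborhood basis; we want points of $Y\cap X_\infty$ arbitrarily $X_j$-close to $p$. We know $X_\infty$ is $X_j$-dense in $X_j$, so there is a sequence $x_n\in X_\infty$ with $x_n\to p$ in $X_j$. Since $Y\cap X_j$ is $X_j$-open and contains $p$, we have $x_n\in Y$ for all large $n$, so $x_n\in Y\cap X_\infty = Y_\infty$ eventually; thus $Y_\infty$ is $X_j$-dense in $Y_j$, which is exactly the required density (the topology on $Y_j$ is the subspace topology induced from $X_j$). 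The main obstacle is the one flagged above — keeping the precompactness argument \emph{intrinsic} to $\mathbb{Y}$, i.e.\ ensuring the shrunken neighborhood's closure lands inside $Y_j$ so that closures in $Y_j$ and in $X_j$ agree — and this is precisely where openness of $Y$ (together with metrizability, hence regularity, of the $X_j$'s) is used. Density, by contrast, goes through with no real difficulty beyond the openness already exploited.
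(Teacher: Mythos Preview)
Your proof is correct and follows essentially the same approach as the paper: the sc-topology axioms transfer directly, density uses that $Y\cap X_j$ is open in $X_j$ so the tail of an approximating sequence from $X_\infty$ lands in $Y_\infty$, and precompactness is obtained by intersecting the neighborhood $O_{jk}$ from $\mathbb{X}$ with a set whose $X_j$-closure lies in $Y_j$ (you use regularity to produce such a set, the paper uses a closed metric ball in $X_j$ contained in $Y_j$). The only cosmetic difference is that the paper routes the density argument through $X_0$-convergence rather than staying at level $X_j$, but the content is identical.
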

\begin{proof}
The axioms for the sc-topology $\mathbb{X}$ transfer directly to
  \((Y_k)_{k\in \mathbb{N}}\), so it remains to verify the density and
  precompact conditions.
For that purpose first note that $Y_j\subset X_j$ is open for all
  $j\in\N$, since it is the preimage of the open set $Y\subset X_0$ under
  the continuous inclusion $X_j\to X_0$.

Density of $Y_\infty = X_\infty \cap Y$ in a fixed $Y_j$ then follows
  from the density of $X_\infty\subset X_j$, since any $X_j$-convergent
  sequence $X_\infty \ni x_n \to y\in Y_j$ has its tail contained in the
  open subset $Y_j$, so that the tail is a $Y_j$-convergent sequence in
  $X_\infty\cap Y=Y_\infty$.

To prove precompactness of $\mathbb Y$, we fix \(j<k\) and \(p\in Y_k\).
Then precompactness of $\mathbb{X}$ provides a neighborhood
  $O_{jk}\subset X_k$ of $p$, whose closure in $X_j$ is compact.
On the other hand, $p\in X_j$ has a closed neighborhood basis by
  metrizability of the $X_j$-topology.
In particular, we can find a closed $X_j$-neighborhood $B_j$ of $p$
  that is contained $B_j\subset Y_j$ in the open subset $Y_j\subset X_j$.
Since the inclusion $X_k\to X_j$ is continuous, the preimage $B_j\cap
  X_k$ is also a neighborhood of $p\in X_k$.
Now $B_j \cap O_{jk}\subset Y \cap X_k = Y_k$ is the required $X_k$ --
  and hence $Y_k$ -- neighborhood of $p$.
Indeed, it is an intersection of the $X_k$-neighborhoods $B_j\cap X_k$
  and $O_{jk}$.
Its $Y_j$-closure $B_j\cap {\rm cl}_{X_j}(O_{jk})$ is compact since
  it is a closed subset of the compact subset ${\rm cl}_{X_j}(O_{jk})$
  of a Hausdorff space.
\end{proof}

After this gentle introduction to the basic idea of 'scales' providing
  different topologies on dense subsets of the same space, we introduce
  the ambient spaces of scale calculus, which have a linear structure as
  well as a dense precompact sc-topology.

\begin{definition}\label{def:scBanachSpace}
An {\bf sc-Banach space} (sc-Hilbert\footnote
  {
  We will develop all of scale calculus in the general setting of scale
    Banach spaces.
  The regularization theorems (c.f. Theorems
    \ref{thm:PolyfoldRegularization2} and
    \ref{thm:PolyfoldRegularization3}) as stated below will require all
    scale structures to be sc-Hilbert spaces, since this guarantees the
    existence of smooth cutoff functions.
  Note however that it should be sufficient to have scale-smooth cutoff
    functions, which may well exist on the sc-Banach spaces arising from
    Sobolev spaces with $p\ne 2$ introduced in Example~\ref{ex:sc-Sob}
    below (while the existence of classically smooth cutoff functions is
    a highly nontrivial question).
  } 
  space) \(\mathbb{E}\) consists of a Banach (Hilbert) space \(E\),
  together with a {\bf linear scale structure}.
The latter is a sequence of linear subspaces \(E=E_0\supset E_1\supset
  E_2 \supset \cdots\), each equipped with a Banach norm \(\|\cdot\|_k\)
  (Hilbert inner product \(\langle\cdot, \cdot \rangle_k\)), so that
  the induced sequence of topological spaces forms a dense precompact
  sc-topology.
\end{definition}

In the context of scale manifolds and M-polyfolds, we will also use
  the notion of {\bf scale smooth structures}, which are given by
  local models in scale Banach spaces; in other words, the models
  only \emph{locally} have a linear scale structure.
We will usually refer to both as {\bf scale structures} with the precise
  meaning clarified by the context.

\begin{lemma}\label{lem:CompactInclusions}
  Let \(\mathbb{E}\) be a sc-Banach space.
  Then for each \(j<k\) the linear inclusions \(E_{k}\to E_j\) are
    compact (and hence bounded).
\end{lemma}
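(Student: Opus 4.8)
The claim is purely a matter of unwinding the definition of an sc-Banach space (Definition~\ref{def:scBanachSpace}) and noting that the ``precompact sc-topology'' condition, when combined with linearity, upgrades to a genuine compactness statement for the inclusion operators. The plan is to first establish boundedness, then compactness, and the main obstacle is promoting the local precompactness of a \emph{neighbourhood} of a single point to a statement about \emph{bounded} subsets of the whole space $E_k$, which uses the linear (in particular homogeneous) structure.

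First I would prove boundedness. Fix $j<k$. By property (ii) of Definition~\ref{def:scTopology}, the inclusion $E_k \to E_j$ is continuous with respect to the norm topologies $\|\cdot\|_k$ and $\|\cdot\|_j$. A linear map between Banach spaces that is continuous is bounded; more directly, continuity at $0$ gives a $\delta>0$ such that $\|x\|_k \le \delta$ implies $\|x\|_j \le 1$, and rescaling gives $\|x\|_j \le \delta^{-1}\|x\|_k$ for all $x\in E_k$. Hence the inclusion $E_k \to E_j$ is bounded. (It suffices to treat the case $j = k-1$ and then compose, but the argument above works directly for all $j<k$.)

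Next I would prove compactness, i.e.\ that the closed unit ball $B^k := \{x\in E_k : \|x\|_k \le 1\}$ has precompact image in $E_j$. Apply the precompactness axiom (iv) of Definition~\ref{def:scTopology} at the point $p = 0 \in E_k$: there is a neighbourhood $O \subset E_k$ of $0$ whose closure in $E_j$ is compact. Since $O$ is a neighbourhood of $0$ in the $\|\cdot\|_k$-topology, it contains a ball $\{x : \|x\|_k \le r\} = r\,B^k$ for some $r>0$. Its closure in $E_j$ is then a compact set containing $r\,B^k$, so $r\,B^k$ is precompact in $E_j$; by homogeneity of the norm, $B^k = r^{-1}(r\,B^k)$ is the image of a precompact set under the continuous scalar-multiplication map $x \mapsto r^{-1}x$ on $E_j$, hence precompact in $E_j$. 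Thus the inclusion $E_k \to E_j$ maps the unit ball to a precompact set, i.e.\ it is a compact operator. Since an arbitrary bounded subset of $E_k$ is contained in a multiple of $B^k$, the same conclusion holds for all bounded sets, which completes the proof. The only mildly delicate point — and the one I would be careful to state explicitly — is the passage from ``some neighbourhood of $0$'' in axiom (iv) to ``a norm ball,'' which is automatic because the $E_k$-topology is the norm topology.
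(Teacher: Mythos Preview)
Your proof is correct and follows essentially the same approach as the paper: both use continuity of the inclusion (from the sc-topology axioms) for boundedness, then apply the precompactness axiom at $0$ to obtain a norm ball with compact closure in $E_j$, and finish by rescaling. Your write-up is simply a bit more explicit about the homogeneity step.
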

\begin{proof}
First, since the \((E_k)_{k\in \mathbb{N}}\) form an sc-topology,
  the inclusion \(E_{k}\to E_j\) for each \(j<k\) is continuous, and
  hence bounded.
Next, the precompactness condition implies that there exists an open
  neighborhood \(O_{jk}\subset E_k\) of \(0\) which has compact closure
  in \(E_j\).
Thus we find $\eps>0$ so that \(\{x\in E_k: \|x\|_{k}<\eps\}\) has
  compact closure in \(E_j\).
By rescaling, this proves that any $E_k$-bounded subset has compact
  closure in $E_j$; in other words the inclusion  \(E_{k}\to E_j\)
  is compact.
\end{proof}

\begin{remark} \label{rmk:sc-Banach} \rm \hspace{2mm} \\ \vspace{-5mm}
\begin{enumerate}
  \item
  There exists a natural product $\E\times\F$ of sc-Banach spaces given by
    the scale structure \mbox{$(E\times F)_k:=E_k\times F_k$.}
  The analogous product for sc-topologies preserves density as well
    as precompactness.
  \item 
  An sc-Banach space induces an sc-topology on the
    space itself and on any of its open subsets, which is both dense
    and precompact.
  \item
  Any scale $E_j$ of an sc-Banach space $(E_k)_{k\in\N_0}$ inherits an
    sc-structure $(E_{j,k}:= E_{j+k})_{k\in\N_0}$.
  This is not the sc-topology induced on the subset $E_j\subset E_0$,
    but a new (dense, precompact) sc-topology on a dense subset, obtained
    by a shift which ensures precompactness.
  \end{enumerate}
\end{remark}

\begin{example} \label{ex:trivial} \rm
Any finite dimensional Banach space $E$ carries the {\bf trivial
  sc-structure} $(E_k=E)_{k\in\N_0}$.
Due to the density requirement for $E_{k+1}\subset E_k$, there are no
  nontrivial sc-structures on finite dimensional spaces.
Moreover, the compactness requirement (ii) implies that any sc-Banach
  space with $E_{k+1}=E_k$ must be locally compact and therefore finite
  dimensional.
For $n\in\N$ we will denote by $\R^n$ and $\C^n$ the real and complex
  {\bf Euclidean space} with standard norm and trivial sc-structure.
\end{example}

The moduli spaces of holomorphic curves, to which we wish to apply
  polyfold theory, usually work with domains that are either compact or
  have strip-like or cylindrical ends, which are conformally equivalent
  to $[0,1]\times\R^+$ or $S^1\times\R^+$ as appropriate.
The following are the prototypical examples for sc-Banach spaces (and
  sc-Hilbert spaces in case $p=2$) of maps on such domains.

\begin{example} \rm  \label{ex:sc-Sob}
Let $\Sigma$ be a compact Riemannian manifold, $\ell,n\in\N_0$, and
  $1\leq p <\infty$.
Then the {\bf Sobolev space} $W^{\ell,p}(\Sigma,\R^n)$ can be equipped
  with an sc-structure
  $$
    \bigl( E_k = W^{\ell+k,p}(\Sigma,\R^n) \bigr)_{k\in\N_0} .
  $$
Here the Sobolev spaces are defined as
  $$
    W^{m,p}(\Sigma,\R^n)
    := \bigl\{ u:\Sigma\to\R^n \st  |u|, |\rD u|, \ldots, |\rD^m u|
    \in L^p(\Sigma) \bigr\}
  $$
  with the norm $\| u \|_{W^{m,p}} = \bigl( \int_\Sigma |u|^p + |\rD
  u|^p + \ldots + |\rD^m u|^p \bigr)^{\frac 1p}$, where $\rD^m u$
  is the tensor denoting the $m$-th differential of the map \(u\). 
\end{example}

\begin{lemma} \label{ex:sobolev}
Let $n\in\N$, $\ell\in\N_0$, $1\leq p <\infty$, and $\delta_0\in\R$.
Then the {\bf weighted Sobolev space} $W^{\ell,p}_{\delta_0}(\R,\R^n)$
  can be equipped with sc-structures
  $$
    \bigl( E_k = W^{\ell+k,p}_{\delta_k}(\R,\R^n) \bigr)_{k\in\N_0}
  $$
  for any weight sequence $\ud=(\delta_k)_{k\in\N_0}$ with
  $k>j\Rightarrow\delta_k>\delta_j$.
Here
  $$
    W^{m,p}_{\delta}(\R,\R^n)
    := \bigl\{ u:\R\to\R^n \st s\mapsto e^{\delta s \beta(s)} u(s)
    \in W^{m,p} \bigr\}
  $$
  is the Sobolev space of weight $\delta\in\R$ given by the norm
  $ \| u \|_{W^{m,p}_{\delta}} = \| e^{\delta s \beta} u \|_{W^{m,p}} $,
  where $\beta\in\cC^\infty(\R,[-1,1])$ is a symmetric cutoff function
  with $\beta(-s)=-\beta(s)$, $\beta|_{\{s\geq 0\}}\geq 0$, and
  $\beta|_{\{s\geq 1\}}\equiv 1$.
(Different choices of $\beta$ yield the same space with equivalent
  norms.)
\end{lemma}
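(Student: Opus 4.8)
The plan is to verify directly that the nested sequence $(E_k = W^{\ell+k,p}_{\delta_k}(\R,\R^n))_{k\in\N_0}$ satisfies the axioms of a sc-Banach space in Definition~\ref{def:scBanachSpace}: each $E_k$ is a Banach space, the inclusions $E_k\subset E_j$ for $k>j$ are bounded, $E_\infty:=\bigcap_k E_k$ is dense in every $E_j$, and the induced sc-topology is precompact. The entire argument rests on one elementary device: for fixed $m$ and $\delta$ the multiplication operator $u\mapsto e^{\delta s\beta(s)}u$ is, by definition of the weighted norm, an isometric isomorphism $W^{m,p}_{\delta}(\R,\R^n)\to W^{m,p}(\R,\R^n)$ (inverse $v\mapsto e^{-\delta s\beta}v$); moreover the hypotheses on $\beta$ (odd, $\beta|_{\{s\ge 0\}}\ge 0$, $\beta|_{\{s\ge 1\}}\equiv 1$) give $s\beta(s)\ge 0$ for every $s$, with $s\beta(s)=|s|$ for $|s|\ge 1$. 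In particular each $E_k$ is a Banach space, and the parenthetical claim about different admissible $\beta$ follows at once, since $\beta_1-\beta_2$ has compact support, so $e^{\delta s(\beta_1-\beta_2)}$ is smooth and bounded together with all its derivatives, whence multiplication by it is a bounded invertible operator on $W^{m,p}$, giving equivalence of the two weighted norms.

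Next I would check nestedness with bounded inclusions. For $k>j$ one has $\ell+k\ge\ell+j$, so $W^{\ell+k,p}\subset W^{\ell+j,p}$ boundedly; and for the weights, setting $a:=\delta_k-\delta_j>0$, one writes
$$
e^{\delta_j s\beta}u \;=\; e^{-a s\beta}\,\bigl(e^{\delta_k s\beta}u\bigr),
$$
where the factor $e^{-as\beta}$ is bounded together with all of its derivatives (because $s\beta\ge 0$ and $a>0$). Hence $\|u\|_{E_j}\le C\|u\|_{E_k}$, so $E_k\subset E_j$ continuously. Density is equally quick: $C^\infty_c(\R,\R^n)\subset E_\infty$ (a compactly supported smooth function lies in $W^{m,p}_{\delta}$ for every $m,\delta$), and since $u\mapsto e^{\delta_j s\beta}u$ carries $C^\infty_c$ bijectively onto $C^\infty_c$, which is dense in $W^{\ell+j,p}(\R,\R^n)$, it follows that $E_\infty$ is dense in $E_j$ for every $j$.

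The heart of the matter — and the step I expect to be the only real obstacle — is precompactness, which I would establish by proving the stronger statement that the inclusion $E_k\hookrightarrow E_j$ is a \emph{compact} operator for $k>j$; precompactness of the sc-topology then follows by taking any bounded open ball about a point $p\in E_k$, and Lemma~\ref{lem:CompactInclusions} is recovered as a by-product. Given a bounded sequence $(u_n)$ in $E_k$, put $v_n:=e^{\delta_k s\beta}u_n$ (bounded in $W^{\ell+k,p}(\R)$) and $w_n:=e^{\delta_j s\beta}u_n=e^{-as\beta}v_n$; it suffices to extract a $W^{\ell+j,p}(\R)$-convergent subsequence of $(w_n)$. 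Two ingredients combine: (a) on any interval $[-M,M]$ the sequence $(w_n)$ is bounded in $W^{\ell+k,p}([-M,M])$, hence precompact in $W^{\ell+j,p}([-M,M])$ by Rellich--Kondrachov (using $k>j$); and (b) for $M\ge 1$ one has $e^{-as\beta(s)}=e^{-a|s|}$ on $\{|s|\ge M\}$, with all derivatives of this factor bounded there by $Ce^{-aM}$, so
$$
\|w_n\|_{W^{\ell+j,p}(\{|s|\ge M\})} \;\le\; C e^{-aM}\,\|v_n\|_{W^{\ell+k,p}(\R)} \;\le\; C'e^{-aM}
$$
uniformly in $n$. Equi-decay at infinity together with local precompactness yields, by a routine cutoff-and-diagonal argument, a subsequence of $(w_n)$ that is Cauchy in $W^{\ell+j,p}(\R)$; pushing it back through the isometry $w\mapsto e^{-\delta_j s\beta}w$ produces a subsequence of $(u_n)$ convergent in $E_j$. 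The genuinely nonroutine point is (b): this is exactly where the strict monotonicity $\delta_k>\delta_j$ is indispensable, since without the extra exponential decay gained from the weights the inclusion $W^{\ell+k,p}(\R)\hookrightarrow W^{\ell+j,p}(\R)$ fails to be compact (translation of a fixed bump to infinity). Finally $E_0=W^{\ell,p}_{\delta_0}(\R,\R^n)$ is the $k=0$ term by construction, completing the verification that $(E_k)_{k\in\N_0}$ is a sc-Banach space, for each admissible weight sequence $\ud$.
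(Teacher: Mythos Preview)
Your proof is correct and follows essentially the same approach as the paper: both obtain compactness of $E_k\hookrightarrow E_j$ by combining Rellich--Kondrachov on compact intervals $[-R,R]$ with the uniform exponential tail control coming from the weight gap $\delta_k-\delta_j>0$, and both deduce density from $\cC^\infty_c\subset E_\infty$. The only cosmetic difference is that the paper phrases the compactness as ``restriction to the complement of $[-R,R]$ tends to zero in operator norm as $R\to\infty$'' (hence the inclusion is a norm-limit of compact operators), whereas you run the equivalent sequential diagonal argument.
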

\begin{proof}
The inclusion $E_k=W^{\ell+ k,p}_{\delta_k}(\R,\R^n) \subset
  W^{\ell+j,p}_{\delta_m}(\R,\R^n)=E_m$ for $k>j$ exists since $e^{\delta_k
  s \beta} \geq e^{\delta_j s \beta}$.
It is compact since the restriction $W^{\ell+k,p}_{\delta_k}(\R,\R^n)
  \to W^{\ell+j,p}_{\delta_k}([-R,R],\R^n)$ is a compact Sobolev imbedding
  for any finite $R\geq 1$ (due to the loss of derivatives $k>j$, see
  \cite{Adams}) and the restriction $W^{\ell+k,p}_{\delta_k}(\R,\R^n)
  \to W^{\ell+k,p}_{\delta_j}((\R\setminus[-R,R]),\R^n)$ converges to $0$
  in the operator norm as $R\to\infty$ (due to the exponential weight
  $\sup_{|s|\geq R} e^{\delta_j s \beta(s)} e^{- \delta_k s \beta(s)}
  = e^{-(\delta_k-\delta_j)R}$).

The smooth points $u\in E_\infty$ are those smooth maps
  $u\in\cC^\infty(\R,\R^n)$ whose derivatives decay exponentially,
  $\sup_{s\in\R} e^{\delta s \beta(s)} | \partial_s^{N} u (s) | < \infty$
  for all $N\in\N_0$ and every submaximal weight $\delta < \sup_{k\in\N_0}
  \delta_k$.
(In case of an unbounded weight sequence $\ul\delta$, this means that
  the maps decay faster than any linear exponential.)
In particular, the compactly supported smooth functions are a subset
  $\cC^\infty_0(\R,\R^n)\subset E_\infty$; and these are dense in any
  weighted Sobolev space (for ${p<\infty}$).
\end{proof}

Note that in typical applications, sc-Banach spaces must be chosen so
  that an elliptic regularity result will hold between scales; see the
  regularization property of scale operators as discussed at the end of
  Section \ref{ss:sc} above and Definition \ref{def:sc+} below.
Consequently, it should not be surprising that certain Sobolev spaces
  arise as sc-Banach spaces.
Another natural candidate is the collection of H\"{o}lder spaces
  \((\mathcal{C}^{k, \alpha})_{k\in \mathbb{N}}\) for \(\alpha\in (0,1)\), 
  however such spaces do not form an sc-Banach space because the
  infinity level, \(\mathcal{C}^\infty\), is not dense in any given finite
  scale.\footnote
  {
    For example, the function \(x\mapsto |x|^\alpha\) cannot be
      approximated by  differentiable functions in the \(\mathcal{C}^{0,
      \alpha}\) norm.
    Indeed, convergence \(f_n(x)-|x|^\alpha \to 0\) in
      \(\mathcal{C}^{0, \alpha}\) would imply $\lim_{n\to \infty}
      \lim_{x\to0} |x|^{-\alpha}\bigl|(f_n(x)-|x|^\alpha) - (f_n(0) -
      |0|^\alpha)\bigr|=0$.
    On the other hand, if the functions \(f_n\) are differentiable at
      $0$, then the limit for $\pm x>0$ can be rewritten as $\lim_{n\to
      \infty}\lim_{x\to 0} \bigl|\pm\frac{f_n(x)-f_n(0)}{x} |x|^{1-\alpha}
      -1\bigr| = \lim_{n\to \infty} |\pm f_n'(0) \, 0^{1-\alpha} -1| =1$
      since $1-\alpha>0$; a contradiction.
  } 
This difficulty can be resolved simply by defining the
  levels of an sc-Banach space to be the closure of the smooth functions
  in each level; in other words, define \(E_k:={\rm cl}_{\mathcal{C}^{k,
  \alpha}} (\mathcal{C}^\infty)\).
This idea holds more generally, as the following lemma illustrates.  

\begin{lemma}
Let \(E_0\) be a Banach space, and let \(E_0\supset E_1 \supset
  E_2\supset \cdots\) be a nested sequence of linear subspaces, each
  equipped with a Banach norm \(\|\cdot \|_k\).
Suppose further that the inclusion maps \(E_k\to E_j\) are compact for
  each \(j<k\), but also assume that \(E_\infty:=\cap_{k\in \mathbb{N}}
  E_k\) is \emph{not} dense so \((E_k)_{k\in \mathbb{N}}\) is not an
  sc-Banach space.
Define \(\widehat{E}_k:={\rm cl}_{E_k}(E_\infty)\); then
  \((\widehat{E}_k)_{k\in \mathbb{N}}\) (equipped with the norms
  \(\|\cdot\|_k\)) is an sc-Banach space.
\end{lemma}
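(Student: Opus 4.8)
The plan is to verify, directly from the construction $\widehat E_k := \mathrm{cl}_{E_k}(E_\infty)$, each clause of Definition~\ref{def:scBanachSpace}: that $(\widehat E_k)_{k\in\mathbb{N}}$ is a nested sequence of Banach spaces with bounded, compact inclusions whose intersection is dense in every level. The one observation that should be recorded first, since the density axiom only becomes meaningful afterwards, is that the construction does not change the infinity level: $\bigcap_k\widehat E_k = E_\infty$. Indeed $E_\infty$ is a linear subspace of each $E_k$, hence contained in its own $\|\cdot\|_k$-closure $\widehat E_k$, so $E_\infty\subseteq\bigcap_k\widehat E_k$; and conversely $\widehat E_k\subseteq E_k$ forces $\bigcap_k\widehat E_k\subseteq\bigcap_k E_k = E_\infty$. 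Note also that the resulting sc-Banach space lives over $\widehat E_0 = \mathrm{cl}_{E_0}(E_\infty)$, which may be a proper closed subspace of $E_0$; this is harmless, as Definition~\ref{def:scBanachSpace} only asks for \emph{some} Banach space at level $0$.

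First I would note that each $\widehat E_k$ is a Banach space, being a closed linear subspace of the Banach space $(E_k,\|\cdot\|_k)$ (the closure of a linear subspace is a linear subspace). Next, for $j<k$ the inclusion $E_k\to E_j$ is bounded and maps $E_\infty$ into $E_\infty$, hence maps $\mathrm{cl}_{E_k}(E_\infty)$ into $\mathrm{cl}_{E_j}(E_\infty)$; thus $\widehat E_k\subseteq\widehat E_j$ and the inclusion $\widehat E_k\to\widehat E_j$ is bounded (a restriction of a bounded map). Since each $\widehat E_k$ is normed, hence metrizable, $(\widehat E_k)_{k\in\mathbb{N}}$ is an sc-topology in the sense of Definition~\ref{def:scTopology}. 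Density is then immediate: by the observation above the infinity level of the new scale is $\widehat E_\infty = E_\infty$, and $E_\infty$ is dense in $\widehat E_j = \mathrm{cl}_{E_j}(E_\infty)$ in the $\|\cdot\|_j$-topology by construction.

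It remains to check compactness of the inclusions $\widehat E_k\to\widehat E_j$ for $j<k$, which also yields the precompactness axiom. Let $B\subseteq\widehat E_k$ be $\|\cdot\|_k$-bounded. Then $B$ is bounded in $E_k$, so by compactness of $E_k\to E_j$ its closure in $E_j$ is compact. Because $\widehat E_j$ is closed in $E_j$ and $B\subseteq\widehat E_k\subseteq\widehat E_j$, this $E_j$-closure is contained in $\widehat E_j$ and therefore coincides with the $\widehat E_j$-closure of $B$; hence $B$ is precompact in $\widehat E_j$, i.e.\ $\widehat E_k\to\widehat E_j$ is compact. Applying this to a $\|\cdot\|_k$-ball around an arbitrary point $p\in\widehat E_k$ produces the neighborhood demanded by Definition~\ref{def:scTopology}(iv). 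Together with density, this shows $(\widehat E_k)_{k\in\mathbb{N}}$ is a dense, precompact sc-topology carried by the Banach space $\widehat E_0$, i.e.\ an sc-Banach space.

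I do not expect a real obstacle; the statement is essentially bookkeeping. The two places to be a little careful are the identification $\bigcap_k\widehat E_k = E_\infty$ (so that the density clause asserts what it should) and the elementary point that for a set already sitting inside a closed subspace, its closure in the ambient Banach space equals its closure in that subspace — this is precisely what lets compactness of $E_k\to E_j$ descend to $\widehat E_k\to\widehat E_j$.
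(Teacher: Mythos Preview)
Your proof is correct and follows essentially the same approach as the paper's: both verify nesting via continuity of the ambient inclusions, deduce compactness of $\widehat E_k\hookrightarrow\widehat E_j$ by noting that a bounded set has compact $E_j$-closure which lands in the closed subspace $\widehat E_j$, and establish density by identifying $\bigcap_k\widehat E_k=E_\infty$ through the sandwich $E_\infty\subseteq\bigcap_k\widehat E_k\subseteq\bigcap_k E_k=E_\infty$. Your write-up is in fact a bit more explicit on a couple of points the paper leaves implicit (that each $\widehat E_k$ is a Banach space, and that the level-$0$ space of the new scale is $\widehat E_0$ rather than $E_0$).
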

\begin{proof}
We begin by observing that by continuity of the inclusion
  \(E_k\hookrightarrow E_j\), the closure \(\widehat{E}_k={\rm
  cl}_{E_k}(E_\infty)\) is a subset of $\widehat{E}_j={\rm
  cl}_{E_k}(E_\infty)$ for any \(j<k\).
Moreover, the inclusion map \(\widehat{E}_k\hookrightarrow
  \widehat{E}_j\) is compact since it is the restriction of a continuous
  compact map.
(For compactness note that any bounded set $\Om\subset \widehat E_k$
  is bounded in $E_k$ as well, and hence ${\rm cl}_{E_j}(\Om)\subset E_j$
  is compact.
However, this closure is also a subset of $\widehat E_j$ by construction,
  so that $\Om$ is precompact in $\widehat E_j$.)
Finally, \(E_\infty\subset \widehat E_k\) is dense for each \(k\in\N_0\)
  by construction.
In fact, we have $\bigcap_{k\in\N_0} \widehat E_k = E_\infty$ since
  this intersection is nested between $E_\infty$ and $\bigcap_{k\in\N_0}
  E_k = E_\infty$.
\end{proof}

Finally, we can define scale continuity for maps between open subsets
  of sc-Banach spaces by the same notion as for general sc-topological
  spaces, namely requiring continuity on every scale.

\begin{definition}\label{def:scContinuity}
Let $X$ and $Y$ be equipped with sc-topologies.
A map $f:X \to Y$ is called {\bf sc-continuous}, abbreviated {\bf
  sc}$\mathbf{^0}$, if for each $k\in\N_0$ the restriction $f|_{X_k} :
  X_k\to Y_k$ is continuous.
\end{definition}

\subsection{Scale differentiability and scale
  smoothness}\label{sec:scCalc}

The differences between standard and scale calculus in infinite
  dimensions stems exclusively from the following novel notion of scale
  differentiability and its implications.
This notion is chosen such that, on the one hand reparametrizations act
  differentiably on spaces of functions as in Example~\ref{ex:trans},
  and on the other hand the chain rule is satisfied, see
  Theorem~\ref{thm:ChainRule}.

\begin{definition} \label{def:sc1}
An sc$^0$ map $f:\E\to \F$ between sc-Banach spaces is {\bf continuously
  scale differentiable}, abbreviated  {\bf sc}$\mathbf{^1}$, if for every
  $x\in E_1$ there exists a bounded linear operator $\rD_x f:E_0\to F_0$
  such that
  $$
    \frac{ \bigl\| f ( x + h ) - f ( x ) - \rD_x f ( h ) \bigr\|_{F_0}
    }{\| h \|_{E_1} } \; \; \xrightarrow[\| h\|_{E_1} \to 0] \; \; 0
  $$
  and the map $E_1 \times E_0 \to F_0$ given by $(x, h) \mapsto  \rD_x f
  (h)$ is sc$^0$ with respect to the sc-structure \mbox{$(E_{k+1}\times
  E_k)_{k\in\N_0}$.}
\end{definition}

While this notion is structurally similar to the classical definition
  of continuous differentiability, in that it contains the existence of a
  bounded linear operator $\rD_x f$ and a notion of continuous variation
  with $x$, it differs in two essential ways:
Firstly, the classical pointwise differentiability uses $\|h\|_{E_0}$
  in the difference quotient, rather than $\|h\|_{E_1}$, and requires
  differentiability at every point $x\in E_0$, rather than just on $E_1$.
In other words, it looks like we are just requiring $f$ to restrict to
  a differentiable map $E_1\to F_0$.
 
Secondly, classical continuous differentiability from $E_1$ to $F_0$
  requires the continuity of the differential $E_1 \to \cL(E_1,F_0),
  x \mapsto \rD_x f$ with respect to the operator norm.\footnote
  {
  The space of bounded linear operators $\cL(H,K)=\{ D:H\to K \;
    \text{linear} \st \|D\|_\cL <\infty \}$ between Banach spaces $H,K$
    is itself a Banach space with norm $\|D\|_\cL:=\sup_{h\neq 0}
    \frac{\|Dh\|_K}{\|h\|_H} <\infty $.
  }
On the other hand, classical continuous differentiability from $E_0$
  to $F_0$ requires continuity of the differential as map $E_0 \to
  \cL(E_0,F_0)$.
Thus we see that scale differentiability is an intermediate notion,
  in which the differential is required to exist as a bounded operator
  $\rD_x f\in\cL(E_0,F_0)$, but only for $x\in E_1$, and the continuity
  requirement is weaker in that it only requires pointwise convergence
  $\|\rD_{x^\nu} f(h) - \rD_{x} f(h)\|_{F_0} \to 0$ for fixed $h\in E_0$
  as $\|x^\nu-x\|_{E_1}\to 0$, rather than convergence of operators
  $\sup_{\|h\|_{E_0}=1}\|\rD_{x^\nu} f(h) - \rD_{x} f(h)\|_{F_0} \to 0$
  as $\|x^\nu-x\|_{E_0}\to 0$.
However, at this point scale differentiabiliy adds requirements at every
  scale: The restrictions $\rD_x f|_{E_k}$ of the differential have to
  induce a map $E_{k+1} \to \cL(E_k, F_k)$, which is continuous in the
  pointwise sense as above.
(Equivalently, this map is continuous with respect to the compact open
  topology on $\cL(E_0,F_0)$.)
These considerations lead to the following comparison between classical
  and scale differentiability.

\begin{remark} \rm  \label{rmk:sc1}\hspace{2mm} \\ \vspace{-5mm}
\begin{enumerate}
  \item
  On a finite dimensional vector space with trivial sc-structure,
    the notion of scale differentiability is the same as classical
    differentiability.
  \item
  Assume that the restricted maps $f|_{E_{k}} : E_{k}\to F_k$ are
    classically $\cC^{1}$ for every $k\in\N_0$.
  Then $f$ is $sc^{1}$ by \cite[Prop.1.9]{HWZscSmooth}.
  \item
  Assume that $f:\E\to \F$ is sc$^1$, then the induced maps $f|_{E_{k+1}}
    : E_{k+1}\to F_k$ are classically $\cC^1$ for every $k\in\N_0$
    by \cite[Prop.1.10]{HWZscSmooth}.
  \item
  By \cite[Prop.2.1]{HWZscSmooth} an sc$^0$ map $f$ is sc$^1$ if and
    only if the following holds for every $k\in\N_0$.
  \begin{enumerate}
    \item 
    The restricted map $f|_{E_{k+1}}:E_{k+1}\to F_{k}$ is classically
      $\cC^1$.
    In particular, the differential $\rD f:E_{k+1}\to L(E_{k+1},F_{k}),
      x\mapsto \rD_x f$ is continuous.
    \item 
    The differentials $\rD_x f:E_{k+1}\to F_k$ for $x\in E_{k+1}$ extend
      to a continuous map ${E_{k+1}\times E_k\to F_k, (x,h)\mapsto \rD_x f
      (h)}$.
    In particular, each extended differential $\rD_x f: E_k \to F_k$
      is bounded.
    \end{enumerate}
  \end{enumerate}
\end{remark}

The motivating example for the development of scale calculus is the
  action of reparametrizations on map spaces, which we give here in
  the simplest form of real valued functions on $S^1$.

\begin{example} \label{ex:trans} \rm 
Recall that the {\bf translation action} on $S^1:=\R/\Z$, which is
  similar to Example~\ref{ex:MorseDiff} and given by
  $$
    \tau: \R \times \cC^0(S^1) \to \cC^0(S^1), \quad (s, \gamma)
    \mapsto \gamma(s + \cdot) ,
  $$
  has directional derivatives only at points $(s_0,\g_0)\in \R\times
  \cC^1(S^1)$ and is in fact nowhere classically differentiable.
However, $\tau$ is sc$^1$ if we equip $\cC^0(S^1)$ with the sc-structure
  $(\cC^k(S^1))_{k\in\N_0}$ of Example~\ref{ex:Ck}.
Indeed, the differential is
  \begin{align*}
    \rD_{(s_0,\gamma_0)}\tau (S, \Gamma) \;= \; S \, \dot\gamma_0(s_0 +
    \cdot) \;+\; \Gamma(s_0 + \cdot)
    \;=\; S\tau(s_0, \dot{\gamma}_0) +\tau(s_0, \Gamma) ,
    \end{align*}
  which for fixed $(s_0,\gamma_0)\in \R\times \cC^{k+1}(S^1)$ is a
  bounded operator $\R \times \cC^k(S^1) \to  \cC^k(S^1)$, and for
  varying base point is a continuous map $\R \times \cC^{k+1}(S^1)
  \times \R \times \cC^k(S^1) \to  \cC^k(S^1)$.
\end{example}

More conceptually, the notion of scale differentiability can equivalently
  be phrased as the existence and scale continuity of a tangent map.

\begin{definition}\label{def:scTangentBundle}
The {\bf sc-tangent bundle} of a Banach space $\E=(E_k)_{k\in\N_0}$ is
  $$
    \rT \E := E_1 \times E_0  \quad\text{with sc-structure} \quad
    (E_{k+1}\times E_k)_{k\in\N_0}.
  $$
The {\bf tangent map} of an sc$^1$ map $f:\E\to \F$ is
  $$
    \rT f : \rT \E \to \rT \F , \qquad (x, h) \mapsto \bigl( f(x) ,
    \rD_x f (h) \bigr) .
  $$
\end{definition}

Here a point $(p,v)\in \rT\E$ in the sc-tangent space is viewed as
  tangent vector $v\in E_0$ at the base point $p\in E_1$.
Hence, the sc-tangent bundle of $\E$ is a bundle $\rT\E \to E_1$ over
  the dense subspace $E_1\subset E$ whose fiber at each point is the
  entire vector space $E_0=E$.
We can now give a brief defininition of scale differentiability and
  extend it naturally to notions of $k$ times sc-differentiable and
  the notion of sc-smoothness.

\begin{definition}\label{def:scDiff}
Let $f:\E\to \F$ be a  sc$^0$ map between sc-Banach spaces.
\begin{enumerate}
  \item $f$ is sc$^1$ if the tangent map $\rT f: \rT\E \to \rT\F$
    exists and is sc$^0$.
  \item $f$ is  {\bf sc}$\mathbf{^k}$ for $k\geq 2$ if the tangent map
    $\rT f$ is sc$^{k-1}$.
  \item $f$ is {\bf scale smooth}, abbreviated {\bf
    sc}$\mathbf{^\infty}$, if the tangent map $\rT f$ is sc$^k$ for
    all $k\in\N_0$.
  \end{enumerate}
\end{definition}

\begin{remark}
[{\bf Scale calculus with boundary and corners}]
 \label{rmk:corners} \rm
The notions of tangent bundle, $sc^0$ map, tangent map, $sc^k$,
  and $sc^\infty$ extend naturally to maps defined on open sets
  $\cU\subset\E$ of sc-Banach spaces and relatively open sets
  $\cU\subset[0,\infty)^k\times\E$ in {\bf sectors} (special cases of the
  ``partial quadrants'' defined by HWZ).
Indeed, scale continuity is defined with respect to the induced topology
  on the subset $\cU$;  the differential \(D_xf\) must still satisfy
  the limiting property as displayed in Definition \ref{def:sc1} however
  only under the slightly weaker condition that \(x+h\in \mathcal{U}_1\)
  as \(\|h\|_{E_1}\to 0\); and the differential \((x, h)\mapsto D_xf(h)\)
  maps \(\mathcal{U}_1\times E_0\to F_0\) and is scale continuous.
See Definition 1.14 of \cite{HWZnew}.
\end{remark}

Note that in order to build a new sc-differential geometry based on
  the notion of scale differentiability, it is crucially important that
  the chain rule holds.
Indeed, we state this as a sample from the large body of work in
  which HWZ reprove the standard calculus theorems in the framework
  of sc-calculus.
The proof in \cite[Thm.2.16]{HoferWysockiZehnder1} makes crucial
  use of the compactness assumption on the scale structure in
  Definition~\ref{def:scBanachSpace}~(ii).

\begin{theorem}[{\bf Chain Rule}] \label{thm:ChainRule}
Let $\E,\F,\mathbb{G}$ be sc-Banach spaces, and suppose that $f:\E\to
  \F$ and $g:\F\to \mathbb{G}$ are sc$^1$ maps.
Then $g\circ f:\E\to \mathbb{G}$ is sc$^1$ and $\rT(g\circ f)=\rT g\circ
  \rT f $.
\end{theorem}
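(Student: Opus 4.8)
The plan is to reduce the chain rule for sc$^1$ maps to the classical chain rule applied at a shifted scale, combined with the scale continuity of tangent maps. Recall that by Remark~\ref{rmk:sc1}(iii), an sc$^1$ map $f:\E\to\F$ restricts to a classically $\cC^1$ map $f|_{E_{k+1}}:E_{k+1}\to F_k$ for each $k\in\N_0$, with classical differential agreeing with the sc-differential $\rD_x f$. First I would fix $x\in E_1$ and verify that $g\circ f$ has $\rD_x(g\circ f) = \rD_{f(x)}g\circ\rD_x f$ as a bounded linear operator $E_0\to G_0$: since $f(x)\in F_1$ (as $f|_{E_1}:E_1\to F_1$ is continuous by the definition of sc$^1$, indeed it restricts to a continuous map into every $F_k$), the composition $\rD_{f(x)}g\circ\rD_x f$ makes sense as a composition of bounded operators on the zero-th level. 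The linearization estimate
$$
\frac{\bigl\|(g\circ f)(x+h)-(g\circ f)(x)-\rD_{f(x)}g\,\rD_x f(h)\bigr\|_{G_0}}{\|h\|_{E_1}}\to 0
$$
as $\|h\|_{E_1}\to 0$ is then obtained by the usual splitting into $\bigl(g(f(x+h))-g(f(x))-\rD_{f(x)}g(f(x+h)-f(x))\bigr)$ plus $\rD_{f(x)}g\bigl(f(x+h)-f(x)-\rD_x f(h)\bigr)$; the second term is controlled directly by the sc$^1$ estimate for $f$ and boundedness of $\rD_{f(x)}g$ on $F_0$, while the first term requires knowing that $f(x+h)-f(x)$ is small in $F_1$ (not just $F_0$) so that the $\cC^1$ differentiability of $g|_{F_1}:F_1\to F_0$ at $f(x)$ applies — and this smallness in $F_1$ follows from continuity of $f|_{E_1}:E_1\to F_1$. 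Here is where the compactness axiom enters indirectly, since it is already baked into the fact (Remark~\ref{rmk:sc1}(iii), proved in \cite{HWZscSmooth}) that sc$^1$ forces this level-shifted $\cC^1$ regularity.

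Next I would establish that the candidate tangent map $\rT g\circ\rT f:\rT\E\to\rT\mathbb G$, given by $(x,h)\mapsto\bigl(g(f(x)),\rD_{f(x)}g(\rD_x f(h))\bigr)$, is sc$^0$ with respect to the sc-structure $(E_{k+1}\times E_k)_{k\in\N_0}$ on $\rT\E$. This is where one uses that $\rT f:\rT\E\to\rT\F$ and $\rT g:\rT\F\to\rT\mathbb G$ are each sc$^0$ by hypothesis, plus the observation that $\rT f$ maps the scale $E_{k+1}\times E_k$ continuously into $F_{k+1}\times F_k$ — which is precisely the sc$^0$ property of $\rT f$ — so that $\rT g\circ\rT f$ is a composition of sc$^0$ maps and hence sc$^0$ by the (trivial) fact that composition of level-wise continuous maps is level-wise continuous. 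Combining this with the pointwise linearization estimate from the previous paragraph shows that $\rT(g\circ f)$ exists, equals $\rT g\circ\rT f$, and is sc$^0$; by Definition~\ref{def:scDiff}(i) this means $g\circ f$ is sc$^1$.

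The main obstacle I anticipate is the point flagged above: proving that the ``first term'' in the linearization difference is $o(\|h\|_{E_1})$. The subtlety is that one cannot simply invoke differentiability of $g$ at $f(x)$ on level zero, because $g$ is not classically differentiable on $F_0$ in general; one must invoke $\cC^1$-ness of $g|_{F_1}:F_1\to F_0$ at the point $f(x)\in F_1$, and for that the increment $f(x+h)-f(x)$ must tend to zero \emph{in the $F_1$-norm}, with a quantitative handle relating $\|f(x+h)-f(x)\|_{F_1}$ to $\|h\|_{E_1}$. Continuity of $f|_{E_1}:E_1\to F_1$ gives qualitative smallness; to make the estimate go through one also uses that $f|_{E_2}:E_2\to F_1$ is $\cC^1$ (hence locally Lipschitz on $E_2$), together with a density/approximation argument on $E_\infty$ exploiting the density axiom, or alternatively one follows the route of \cite[Thm.2.16]{HoferWysockiZehnder1} which organizes exactly this bookkeeping using the compactness of the inclusions $E_{k+1}\hookrightarrow E_k$ to pass between the uniform estimates on different scales. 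I would cite that argument for the delicate estimate rather than reproduce it, and present the rest of the proof in the streamlined form above.
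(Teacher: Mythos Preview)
The paper does not prove this theorem; it cites \cite[Thm.2.16]{HoferWysockiZehnder1} with the remark that the compactness axiom is crucial. Your sketch therefore goes well beyond the paper's treatment, and its architecture---the two-term splitting, control of the second term via boundedness of $\rD_{f(x)}g:F_0\to G_0$, and the sc$^0$ check for $\rT g\circ\rT f$ as a composition of sc$^0$ maps---is correct and matches HWZ.

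You have also correctly isolated the genuine difficulty: the first term is $o(\|k\|_{F_1})$ with $k:=f(x+h)-f(x)$, but continuity of $f|_{E_1}:E_1\to F_1$ gives no bound on $\|k\|_{F_1}/\|h\|_{E_1}$. Your proposed density route, however, is a dead end: the estimate is needed at a fixed basepoint $x\in E_1$ which need not lie in $E_2$, so $x+h\notin E_2$ in general and local Lipschitzness of $f|_{E_2}:E_2\to F_1$ is simply unavailable; no approximation argument rescues this. The HWZ mechanism bypasses any Lipschitz control entirely. One writes the first term as $\int_0^1\bigl[\rD_{f(x)+tk}g-\rD_{f(x)}g\bigr](k)\,dt$, decomposes $k=\rD_xf(h)+r(h)$ with $\|r(h)\|_{F_0}=o(\|h\|_{E_1})$, and argues by contradiction along a sequence $h_n\to 0$ in $E_1$. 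Compactness of $E_1\hookrightarrow E_0$ extracts a subsequence with $h_n/\|h_n\|_{E_1}\to v$ in $E_0$, so $\rD_xf(h_n)/\|h_n\|_{E_1}\to\rD_xf(v)$ in $F_0$; meanwhile $f(x)+tk_n\to f(x)$ in $F_1$, and the joint continuity of $(y,w)\mapsto\rD_yg(w):F_1\times F_0\to G_0$ then drives the normalized integrand to zero. This direct subsequence use of compactness is the heart of the proof; the density axiom plays no role, and Remark~\ref{rmk:sc1}(iii) is not actually needed along the way.
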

Finally, we can use the chain rule to prove scale smoothness of the
  translation action.

\begin{example}\rm \label{ex:transsmooth}
The tangent map of Example~\ref{ex:trans},
  \[
    \begin{aligned}
    \rT\tau : \;\; \R\times\cC^1(S^1)\times \R\times\cC^0(S^1) & \; \to \;
    \quad \cC^1(S^1)\times\cC^0(S^1)  \\
    (s_0,\gamma_0,S, \Gamma) \phantom{\cC^0(S^1)} &\;\mapsto\;  \bigl(
    \tau (s_0, \gamma_0) , S \cdot \tau(s_0, \dot\gamma_0) + \tau(s_0,
    \Gamma) \bigr)
    \end{aligned}
  \]
  can be expressed as composition of sum, multiplication, derivative
  $\cC^1(S^1)\to \cC^0(S^1), \gamma\mapsto\dot\gamma$, and the
  translation $\tau:\R\times\cC^0(S^1)\to \cC^0(S^1)$ itself.
All of these are sc-continuous, and, by linearity, the first three are
  in fact sc\(^\infty\).
Hence, by the chain rule (stated above as Theorem~\ref{thm:ChainRule}),
  $\rT\tau$ is as scale differentiable as $\tau$.
This proves that the translation $\tau$ is in fact sc$^\infty$.
\end{example}

\subsection{Scale manifolds} \label{ss:scmfd}

The scale calculus on Banach spaces can now be used to obtain a variation
  of the notion of a Banach manifold by replacing Banach spaces with
  scale Banach spaces and by replacing smoothness requirements with
  scale smoothness.
This new notion of scale manifold coincides with the classical notion
  of manifold in finite dimensions by Example~\ref{ex:trivial} and
  Remark~\ref{rmk:sc1}~(i); for a precise definition of scale manifold,
  see \cite[\S 2.4]{HoferWysockiZehnder1}.
In infinite dimensions, neither notion is stronger than the other,
  however in applications most Banach manifolds could be equipped with
  an additional scale structure.

In practice, scale manifolds are of limited utility, since they are
  not general enough for moduli problems involving broken trajectories
  or nodal curves, and they are a rather special case of the more general
  notion of an M-polyfold.
Nevertheless, they serve as a useful stepping stone between Banach
  manifolds and M-polyfolds, and we will use them here to illuminate the
  concept of scale smoothness by outlining how the space of maps, modulo
  reparametrization, is given the structure of a scale manifold; that is,
  it has metrizable topology, it is locally homeomorphic to open subsets of
  scale Banach spaces, and the induced transition maps are scale smooth.
In order to prevent isotropy, we restrict ourselves to maps from $S^1$
  to $S^1$ of degree~$1$,
  $$
    \cB := \bigl\{ \g \in \cC^1( S^1 , S^1 ) \,\big|\, \deg \g =
    1 \bigr\} .
  $$
By identifying $S^1=\R/\Z$, we observe that the translation action $\tau$
  from Example~\ref{ex:trans} descends to an action $S^1\times\cB\to \cB$,
  which by the degree restriction is free.
Next, we will sketch how to construct local slices for the action of
  ${\rm Aut}=S^1$ on $\cB$ along the lines of Remark~\ref{rmk:slice},
  and, from these observations, we will obtain sc-manifold charts for the
  quotient space $\cB/{\rm Aut}$.
\begin{itemlist}
  \item
  For any fixed $a\in S^1$, one can check that the space of maps that
    transversely intersect $a$ at $0\in S^1$, denoted
    $$
      \cB_a := \bigl\{ \g \in \cB \,\big|\, \g(0)=a , \rd_0 \g \neq 0
      \bigr\} ,
    $$
    is a local slice; in other words, the map $\cB_a \to \cB/{\rm Aut}$, $\g
    \mapsto [\g]$ is a local homeomorphism.
  \item
  Each $\cB_a$ is locally homeomorphic to an open set in the model
    Banach space
    $$
      E_0 :=  \bigl\{ \xi \in \cC^1( S^1 , \R ) \,\big|\, \xi(0)=0
      \bigr\} ,
    $$
    via the map $E_0 \to \cB_a$, $\xi \mapsto \g+\xi \;(\text{mod}\;
    \Z)$ centered at a fixed $\g\in\cB_a$.
  \item
  The Banach space $E_0$ can be equipped with the scale structure
    $$
      E_k:=  \bigl\{ \xi \in \cC^{1+k}( S^1 , \R ) \,\big|\, \xi(0)=0
      \bigr\} .
    $$
  \item
  For any $a\in S^1$ and $\g\in\cB_a$ there exists a sufficiently
    small open ball $N_{a,\g}\subset E_0$ such that the composition of
    maps $E_0\to\cB_a\to \cB/{\rm Aut}$ restricts to a homeomorphism
    $\Phi_{a,\g}:N_{a,\g} \overset{\sim}{\to} \cU_{a,\g}$ to a
    neighborhood of $[\g]\in \cB/{\rm Aut}$.
  \item
  Thus $\cB/{\rm Aut}$ is covered by (topological) Banach manifold
    charts, whose domain $E_0$ is enriched with a scale structure.
  \end{itemlist}

In order to equip $\cB/{\rm Aut}$ with the structure of a scale manifold,
  it remains to check scale smoothness of the transition maps, given by
  \begin{align*}
    \Phi_{a_2,\g_2}^{-1} \circ \Phi_{a_1,\g_1}  \,:\; \quad E_0
    \;\supset\; \Phi_{a_1,\g_1}^{-1}(\cU_{a_2,\g_2}) &\;\longrightarrow\;
    E_0 \\ \xi \qquad &\;\longmapsto\; \tau(s_\xi ,  \g_1 +  \xi )  -
    \g_2 ,
  \end{align*}
  where $s_\xi\in\R$ is determined\footnote
  { 
  In general, there may be several solutions to the equation
    \(\g_1(s_\xi)+\xi(s_\xi)=a_2\), however a unique solution can be
    determined by the \(\cC^1\)-smallness condition \(\tau(s_\xi, \g_1
    +\xi)\approx \g_2\).
  See Remark~\ref{small print} for further details.
  }
  by $\g_1(s_\xi)+\xi(s_\xi) = a_2$.
These transition maps are not classically differentiable but we can
  check that they are scale smooth by the following steps; c.f.\ \cite[\S
  4.1]{HWZscSmooth}.

\begin{itemlist}
  \item
  The map $\g \mapsto s_\g$ from a $\cC^1$ neighborhood of $\g_2$ to
    a neighborhood $I_2\subset\R$ of $0$, given by solving $\g(s_\g) =
    a_2$ for $s_\g\in I_2$, is well defined for sufficiently small choices
    of the neighborhoods.
  It is $\cC^1$, by the implicit function theorem, if the neighborhoods
    are also chosen to guarantee transversality.
  Next, one can differentiate the implicit equation for $s_\g$ to
    check that the variation of $s_\g$ with $\g\in\cC^k$ is $k$ times
    continuously differentiable.
  This proves property a) of Remark~\ref{rmk:sc1}~(iv).
  To check the refined continuity required in b) one inspects the
    expression for the differential that arises from the implicit equation.
  After employing the classically smooth map \(\xi\mapsto \g_1+\xi\)
    to model the problem on a Banach space, this shows that the map
    $\xi\mapsto s_\xi$ is sc$^\infty$.
  \item
  Note that $\Phi_{a_2,\g_2}^{-1} \circ \Phi_{a_1,\g_1}$ is a composition
    of the above map with addition and translation.
  The latter was shown to be sc$^\infty$ in Example~\ref{ex:transsmooth}.
  Addition is classically smooth on each level, hence scale smooth.
  Now the chain rule for composition of scale smooth maps,
    Theorem~\ref{thm:ChainRule}, implies scale smoothness of the
    transition map.
  \end{itemlist}

In order to conclude that $\cB/\Aut$ is a scale manifold, it now remains
  to check that its quotient topology (in which the chart maps are local
  homeomorphisms) is Hausdorff and paracompact.
The latter follows if we can cover $\cB/\Aut$ with finitely many charts,
  and the Hausdorff property holds if the equivalence relation induced
  by $\Aut$ is closed (preserved in limits).

{\small
\begin{remark}[{\bf Small print on uniqueness in the slicing conditions}]
  \rm \label{small print}
In general, the implicit equation $\g_1(s_\xi)+\xi(s_\xi) = a_2$ for
  $s_\xi$ may have a large irregular set of solutions, but the formula
  for the transition maps $\Phi_{a_2,\g_2}^{-1} \circ \Phi_{a_1,\g_1}$,
  and similarly the proof of injectivity of each chart $\Phi_{a,\g}$,
  requires a unique solution.
Since we guaranteed trivial isotropy, this uniqueness can be achieved
  by solving for $s_\xi$ in a small subdomain of $\R$.

More precisely, one can construct the local slice near $[\gamma_2]$
  in a neighborhood $\cU_{a_2,\g_2}$ that is given as quotient of an
  $\eps$-neighborhood around $\gamma_2\in\cC^1(S^1,S^1)$ so that for
  given $\delta>0$ the following holds:
For each equivalence class $[\g_0]\in \cU_{a_2,\g_2}$, there exists a
  (not necessarily unique) \(s_0\in S^1\) so that $
  d_{\cC^1}(\g_0(s_0+\cdot),\g_2) <\eps $ and
  \begin{equation*}
    d_{\cC^1}(\g_0(s+\cdot),\g_2)\le \eps  \quad\Rightarrow\quad
    |s-s_0| < \delta .
    \end{equation*}
In other words, the set of shifts of \(\g_0\) which are
  \(\epsilon\)-close to \(\g_2\) in \(\mathcal{C}^1\) is a $2\delta$-small
  interval in $S^1$.
Moreover, the constants $\eps,\d>0$ can be chosen so that for each $\g$
  in the $\eps$-neighborhood of $\g_2$ there exists a unique
  \(|s_2|<\d\) for which \(\g(s_2) = a_2\) and \(\g'(s_2)\neq 0\).
Consequently, for any choice of \(\xi_{12}\in N_{a_1, \g_1}\)
  with the property that \(\Phi_{a_1, \g_1}(\xi_{12})\in
  \cU_{a_1, \g_1}\cap \cU_{a_2, \g_2}\), one can find a shift
  value \(s_{12}\in \mathbb{R}/\mathbb{Z}\) with the property
  that \(\gamma_1(s_{12})+\xi_{12}(s_{12})=a_2\); furthermore
  for each \(\xi\approx \xi_{12}\) there exists a unique
  \(s_\xi\) satisfying \(|s_{12}-s_\xi|<\delta\) which solves
  \(\gamma_1(s_\xi)+\xi(s_\xi)=a_2\).
For a more detailed construction of $\eps,\de$ see e.g.\
  \cite{AW,jiayong, HWZ_lectures}.

For a more general quotient of nonconstant, continuously differentiable
  functions modulo translation, denoted by $\cC^1_{\rm nc}(S^1)/S^1$,
  the above constructions will just provide a scale orbifold structure
  due to the possible finite stabilizers $G\subset S^1$, fixing a map
  $\tau(G,\gamma) = \gamma$.
This can be seen above as the lifts from $\cB/\Aut$ to a $\cC^1$
  neighborhood of the center of the chart $\g_2$ being unique only up to
  shift by a tuple of intervals $G+(s_0-\d,s_0+\d)$, where $G\subset S^1$
  is the isotropy group of $\g_2$.
\end{remark}
}

We end this section by transferring the previous slicing construction
  to maps with noncompact domain $\R$, as required for the application
  to Morse theory.

\begin{example}[{\bf 
Scale smooth structure on trajectory
  spaces}]\label{ex:MorseTrajectorySpaces} \rm
For simplicity we will consider a Morse function \(f:X\to \mathbb{R}\)
  where \(X=\mathbb{R}^n\).
In order to construct the space of (not necessarily Morse) trajectories
  between two critical points $a\neq b$, we begin by fixing a reference
  path \(\psi_a^b\in \mathcal{C}^\infty(\mathbb{R}, X)\) from \(\lim_{t\to
  -\infty} \psi_a^b(t) = a\) to \(\lim_{t\to\infty}\psi_a^b(t)=b\),
  whose derivative has compact support.
Then we define a metric space of paths from $a$ to $b$ by\footnote
  {
  One can check that \(\mathcal{B}_a^b\) does not depend on the choice
    of reference path \(\psi_a^b\) as specified above.
  }
  \begin{equation*}
    \mathcal{B}_a^b:=\bigl\{\g\in W_{loc}^{2, 2}(\mathbb{R}, X)\,\big|\,
    \; \exists\; v\in W^{2, 2}(\mathbb{R}, X) \text{ s.t. } \g =
    \psi_a^b + v  \bigr\}.
    \end{equation*}
Now let  the automorphism group \({\rm Aut}:=\mathbb{R}\)
  act on \(\mathcal{B}_a^b\)  by the translation action as in
  Example~\ref{ex:MorseFredholm},
  \begin{equation}\label{2tau}
    \tau: \R\times W^{2,2}(\R, X)\to W^{2,2}(\R, X) \qquad \text{given
    by}\qquad\tau(s,\gamma):=\gamma(s+\cdot).
    \end{equation}
Then we define the {\bf space of trajectories} from $a$ to $b$ as the
  metric space
  $$
    \widetilde{\mathcal{B}}_a^b\;:=\; \quo{\mathcal{B}_a^b}{{\rm Aut}}
    , \qquad d([\g_1], [\g_2]):= \inf_{t\in \mathbb{R}} \|\g_1(t+\cdot)
    - \g_2(\cdot)\|_{W^{2, 2}}.
  $$
This space can be given the structure of an sc-manifold in the following
  manner.
For any given point \([\psi]\in \widetilde{\mathcal{B}}_a^b\), we pick
  a representative \(\psi \in \mathcal{B}_a^b \) such that \(\psi'(0)\neq
  0\). 
(For simplicity we also assume that $\psi$ is constant near $\pm
  \infty$.)
Then the following open subsets of Banach spaces will provide local
  models for \(\mathcal{B}_a^b\) and \(\widetilde{\mathcal{B}}_a^b\),
  \begin{align*}
    &U^\psi:=\bigl\{u\in W^{2, 2}(\mathbb{R}, X) \,\big|\, \|u\|_{W^{2,
    2}}<\epsilon \bigr\}
    &V^\psi:=\bigl\{u\in U^\psi \,\big|\, \langle \psi'(0), u(0)\rangle =
    0 \bigr\}.
  \end{align*}
Here \(\epsilon, \delta>0\) are chosen so that
  \begin{enumerate}
  \item the map \(\Psi:V^\psi\to \widetilde{\mathcal{B}}_a^b\) given by
    \(\Psi(u) = [\psi+u]\) is injective,
  \item for each \(u\in U^\psi\), the restricted map \(\psi+u:(-\delta,
    \delta)\to X\) has unique and transverse intersection with the
    hyperplane $H^\psi:=\{p\in X \,|\, \langle p-\psi(0), \psi'(0)\rangle
    =0\}$.
  \end{enumerate}
Then the fact that \(v\in V^\psi\) implies \((\psi+v)(0)\in H^\psi\),
  together with the above two conditions, guarantees that \(\Psi:V^\psi\to
  \widetilde{\mathcal{B}}_a^b\) given by \(u\mapsto [u+\psi]\) is a local
  chart for $\widetilde{\mathcal{B}}_a^b$; in other words, it is a
  homeomorphism to an open subset.

In order to give the trajectory space \(\widetilde{\mathcal{B}}_a^b\)
  the structure of an sc-manifold, it remains to exhibit \(V^\psi\)
  as an open subset of an sc-Banach space and to verify that the transition
  maps induced by different choices of centers $[\psi]$, or representatives	
  $\psi$, are sc\(^\infty\)-diffeomorphisms.
For the first step, recall the sc-structure $W_{\delta_k}^{2+k,
  2}(\R,\R^n)$ from Lemma \ref{ex:sobolev}, where we fix a weight
  sequence\footnote{In order to capture all Morse-trajectories,
  it will be important to choose this sequence so that \(\sup
  \delta_k<\inf_{\|x\|=1}{\rm min}_{p\in \{ a, b, c\}}|  D^2f_p(x, x)|\).
We do not make use of this condition in the present example however.}
  \(0=\delta_0< \delta_1< \delta_2<\cdots\;\).
The slicing condition cuts out closed codimension $1$ subspaces from
  each scale, which then yields an sc-Banach space with scales $E_k:=\bigl\{u\in
  W_{\delta_k}^{2+k, 2}(\R,\R^n) \,\big|\, \langle \psi'(0), u(0)\rangle =
  0 \bigr\}$ so that $V^\psi\subset E_0$ is an open subset.
Finally, scale smoothness of the transition maps is proven by arguments
  similar to those above for the case of trajectories parametrized
  by $S^1$.
\end{example}

{\small
\begin{remark}[{\bf Small print on covering by charts with smooth
  center}]
There is a subtle but important point to be made about the above example,
  namely that our local chart is centered at the point
  \([\psi_a^b]\in\widetilde{\mathcal{B}}_a^b\) which is represented by
  a \(\mathcal{C}^\infty\) map $\psi_a^b$.
For all standard applications like gradient flow lines, Floer
  trajectories, or pseudoholomorphic curves, it is essential that this
  base map be \(\mathcal{C}^\infty\).
This is due in part to the fact that (equivalence classes of) maps of
  any regularity near this base point need to be obtained by exponentiating
  vector fields along the base map.
However, there is no well-defined notion of a \(\mathcal{C}^{k+\ell}\)
  vector field along a \(\mathcal{C}^k\) map for \(\ell>0\), and even if
  there was, then the map resulting from exponentiation (e.g.\ addition
  in the simplest case) would only be \(\mathcal{C}^k\).
This also points to the second issue of transition maps between different
  charts centered at  \(\mathcal{C}^k\), which is that maps generally
  do not preserve \(\mathcal{C}^{k+\ell}\) regularity for \(\ell>0\),
  and hence will not be scale continuous, let alone scale smooth.

Note however, that by constructing only charts with
  \(\mathcal{C}^\infty\) centers, we run the risk of failing to cover the
  given topological space despite the fact that (equivalence classes of)
  \(\mathcal{C}^\infty\) maps are dense in this total space.
Indeed, it is worth recalling that the rational numbers are dense in the
  reals and can be covered by a countable collection of open intervals,
  the union of which can be made to have arbitrarily small measure.
Consequently this collection of charts (given by the open sets) does
  not cover all of $\R$.
This is a general issue in constructing atlases for a scale manifold,
  or more generally M-polyfolds.
There are two approaches for dealing with this issue.
First, in the standard applications, all elements of the compactified
  moduli space are in fact represented by \(\mathcal{C}^\infty\) maps.
Hence one could redefine the scale manifold (or M-polyfold) as the
  subset that is covered by the charts with \(\mathcal{C}^\infty\) centers.
This possibly smaller set still contains the compactified moduli space,
  and if it is the zero set of an appropriate Fredholm section, then an
  M-polyfold perturbation scheme and implicit function theorem can be
  used to regularize it.
Note that the invariance part of the regularization would now also have
  to address changes in the cover used for perturbation, and thus in the
  ambient M-polyfold used.
Roughly speaking, any two such covers should cover a common open
  neighborhood of the compactified moduli space, which itself is an
  M-polyfold within which the moduli space can be regularized.

As a second approach, one could try to control the size of the charts
  with \(\mathcal{C}^\infty\) centers in such a way that density of the
  \(\mathcal{C}^\infty\) points guarantees that the whole space is covered
  by charts.
Note that in fact a local lower bound on the chart size would suffice.
This is the argument by which usual Sobolev completions of maps
  are given the structure of a Banach manifold, but it is complicated by 
  slicing conditions in the polyfold applications. 
To prove such a bound, \cite{HWZI_applications} develop the following
  technique that can also be employed in other standard applications:
  To show that a given point, which is represented by a map of less
  than \(\mathcal{C}^\infty\) regularity, lies within a chart with
  \(\mathcal{C}^\infty\) center, we build a ``tentative chart''
  centered at the given representative in the same way as if it was
  \(\mathcal{C}^\infty\).
This involves geometric constructions like choosing a transverse
  hypersurface, which are possible at general centers if we choose the
  basic regularity of maps in the total space sufficiently high.
For pseudoholomorphic curves, this (again) motivates using spaces of
  $W^{3,2}$ maps, since the Sobolev embedding to $\cC^1$ ensures that
  the notion of transversality to a hypersurface is meaningful.
The resulting tentative chart is a homeomorphism to an open subset of
  the total space, just with less smooth structure on its domain (e.g.\
  an open subset in a Banach space instead of a scale Banach space).
Then, by density of the \(\mathcal{C}^\infty\) points in the total
  space, the corresponding points will also be dense in the domain of
  the tentative chart, where they correspond to \(\mathcal{C}^\infty\)
  maps near the given representative.
One can now ``re-center'' the tentative chart at these
  \(\mathcal{C}^\infty\) maps to obtain new charts whose size is controlled
  analogously to the radii of balls of varying center that are contained
  in a given ball.
More concretely, one uses the geometric choices (e.g.\ of transverse
  hypersurface) of the tentative chart in the construction of charts
  centered at the \(\mathcal{C}^\infty\) maps within the tentative chart.
Then the size of the new scale-smooth charts (which is related to
  injectivity radii and preserving the transversality) is bounded
  below for centers in a neighborhood of the original center.  This,
  in turn, ensures that this original center is contained in the new
  charts whose \(\mathcal{C}^\infty\)  centers are sufficiently close.
\end{remark}
}

\section{M-Polyfolds}\label{s:Mpolyfolds}

This section defines the notion of an M-polyfold, which is something akin
  to a manifold locally modeled on scale smooth retractions.
In order to provide a roadmap, we begin by stating the definition of an
  M-polyfold, which is obtained by simply replacing the notion of charts
  and smooth transition maps in the definition of a classical manifold
  with the generalized concepts that will be the topic of discussion
  in this section.
As a running application, we will consider examples from Morse theory
  to illuminate the definitions and theorems of this section.

\setcounter{theorem}{0}

\begin{definition} \label{def:Mpolyfold}
An {\bf M-polyfold} is a metrizable space $\cX$ together with an
  open covering by the images of M-polyfold charts (see Definition
  \ref{def:chart}), which are compatible in the sense that the transition
  map induced by the intersection of the images of any two charts is
  scale smooth (see Definition \ref{def:sc smooth map from retract}).
\end{definition}

The notions of M-polyfold charts and scale smoothness between their local
  models will be developed in Sections~\ref{ss:charts} and~\ref{ss:rcalc}.
As for manifolds, we will then see in Section~\ref{ss:corners} that a
  notion of M-polyfold with boundary (and corners) can be obtained by
  allowing M-polyfold charts with boundary (and corners) and by making
  sense of scale smoothness on their underlying local models.

{\small
\begin{remark}[{\bf Topological small print}] \hspace{2mm} \\ \vspace{-5mm} \rm
\begin{enumerate}
  \item 
  Just as for finite dimensional manifolds, any covering by compatible
    charts induces a maximal atlas of compatible charts, which is more
    commonly viewed as manifold or M-polyfold structure on a given space.
  \item
  One could weaken the assumption of metrizability in
    Definition~\ref{def:Mpolyfold} to the assumption that the
    topological space $\cX$ be Hausdorff and paracompact.
  Then, because $\cX$ is covered by M-polyfold charts, which (just
    like manifold charts) provide local homeomorphisms to a metrizable
    space, it immediately follows that $\cX$ is locally metrizable;
    indeed, any point has a neighborhood on which the subspace topology
    is metrizable.
  Thus $\cX$ will automatically be metrizable by the Smirnoff metrization
    theorem \cite[Thm.42.1]{Munkres}.
  We note that, conversely, metric spaces are automatically paracompact
    (by e.g.\ \cite[Thm.41.4]{Munkres}), and hence allow partitions of
    unity subordinate to any open cover.

  \item
  The definition of M-polyfolds in \cite{HoferWysockiZehnder2} works
    under the assumption of second countability instead of the
    assumption of paracompactness; this ensures that the zero set
    $s^{-1}(0)\subset\cX$ of a transverse section $s$ over $\cX$
    inherits the structure of a manifold, which is commonly defined to be 
    second countable, Hausdorff (which follows from being a subset of a
    Hausdorff space), and locally homeomorphic to Euclidean space (which
    follows from an implicit function theorem).
  This was updated in \cite{HWZnew} thanks to two observations.
  Firstly, since the theory is limited to compact zero sets $s^{-1}(0)$,
    second countability follows from metrizability.
  Secondly, paracompactness suffices for the existence of partitions
    of unity, as mentioned above.
  \item
  We will define the notion of an M-polyfold modeled on sc-retracts in
    scale Banach spaces.
  However, the regularization Theorem~\ref{thm:PolyfoldRegularization2}
    will require M-polyfolds modeled on sc-retracts in scale Hilbert
    spaces.
  This guarantees the existence of scale smooth cutoff functions.
  \end{enumerate}
\end{remark}
}

\begin{example}[{\bf 
Space of broken and unbroken
  trajectories}]\label{ex:TrajectoryTopology}\rm
The simplest example of Morse trajectory breaking can be discussed by
  considering a Morse function \(f:\R^n\to \mathbb{R}\) with critical
  points \(\Crit f =\{a, b, c\}\) so that \(b=0\) and $\inf_{\|x\|>R}
  f(x) < f(a)<f(b)<f(c)$ for some $R>>1$.\footnote
  {
  As example of such a Morse function one could take the $2$-sphere
    $\R^2\cup\{\infty\}$ with one maximum $c$, one saddle point $b$,
    and two minima at $a$ and $\infty$.
  Then $\CM_a^c$ is given by a single one-parameter family of unbroken
    trajectories converging to two different broken trajectories at
    the ends.
  }
The constructions of Example~\ref{ex:MorseTrajectorySpaces} equip
  the spaces of unbroken trajectories \(\widetilde{\mathcal{B}}_a^b\),
  \(\widetilde{\mathcal{B}}_b^c\), and \(\widetilde{\mathcal{B}}_a^c\)
  with unique scale topologies and scale smooth structures for any fixed
  weight sequence, and in particular induce a natural $W^{2,2}$-topology.
Given any metric on $\R^n$, the assumption $\inf_{\|x\|>R} f(x) <
  f(\Crit f)$ guarantees that the space of Morse trajectories $\cM_a^c=
  \{[\g]\in \widetilde{\mathcal{B}}_a^c \,|\, \dot \g - \nabla f(\g) =
  0 \}$ is compact up to breaking at $b$.
Here the space of broken trajectories from $a$ to $c$, broken at $b$,
  is given by the Cartesian product \(\widetilde{\mathcal{B}}_a^b\times
  \widetilde{\mathcal{B}}_b^c\) and hence also inherits a natural
  $W^{2,2}$-topology and structure of an sc-manifold.
In order to build an M-polyfold $\cX_a^c$ which contains the compactified
  Morse trajectory space $\CM_a^c$ as compact zero set of a Fredholm
  section, we need to equip the union of the spaces of broken and unbroken
  trajectories
  \begin{equation*}
    \cX_a^c\;:=\; \widetilde{\mathcal{B}}_a^c \;\sqcup\;
    \widetilde{\mathcal{B}}_a^b \times \widetilde{\mathcal{B}}_b^c
    \;\;=\;\; \quo{\mathcal{B}_a^c}{\rm Aut} \;\sqcup\;
    \quo{\mathcal{B}_a^b}{\rm Aut} \times \quo{\mathcal{B}_b^c}{\rm Aut}
    \end{equation*}
  with a topology so that a sequence of gradient trajectories may converge
  to a broken trajectory.
We achieve this by defining the notion of convergence in $\cX_a^c$
  as follows:
For \(p_\infty=[\g]\in \widetilde{\mathcal{B}}_a^c\), we say \(p_n\to
  p_\infty\) if and only if the tail of the sequence is contained
  in \(\widetilde{\mathcal{B}}_a^c\) and \(p_n\to [\g]\) in the
  $W^{2,2}$-topology.
For \(p_\infty=([\g_1], [\g_2])\in \widetilde{\mathcal{B}}_a^b \times
  \widetilde{\mathcal{B}}_b^c\), we say \(p_n\to p_\infty\) if and only if
  there exist local charts \(\Phi:V^\phi\to \widetilde{\mathcal{B}}_a^b\)
  and \(\Psi: V^\psi\to \widetilde{\mathcal{B}}_b^c\) and convergent
  sequences \((0,\infty]\ni R_n\to \infty\), \(V^\psi \ni v_n^\psi\to
  v_\infty^\psi\), and \(V^\phi \ni v_n^\phi\to v_\infty^\phi\) for which
  the tail satisfies
  \begin{equation*}
    p_n = \begin{cases} 
    \bigl[\oplus_{R_n}\big(\phi + v_n^\phi, \psi + v_n^\psi \big) \bigr]
    &; R_n<\infty \\
    \bigl([\phi + v_n^\phi], [\psi + v_n^\psi ]\bigr) &; R_n=\infty 
    \end{cases}
    \qquad \text{and}\qquad  p_\infty =\bigl( [\phi+v_\infty^\phi]
    ,[\psi+v_\infty^\psi]\bigr) .
    \end{equation*}
Here \(\oplus\) is the pregluing map given in Section~\ref{ss:ret},
\begin{align}
  \oplus: (R_0, \infty)\times \bigl( \phi+ V^\phi\bigr)\times \bigl(
  \psi+ V^\psi \bigr) &\;\to\; \mathcal{B}_a^c\label{2oplus} \\
  (R,\g^\phi,\g^\psi) \;\mapsto\; \oplus_R (\g^\phi, \g^\psi) &:=
  \beta \g^\phi(\cdot + {\textstyle \frac{R}{2}}) + (1-\beta)\g^\psi(
  \cdot - {\textstyle \frac{R}{2}}), \notag
  \end{align} 
  where $\beta:\R\to[0,1]$ is a smooth cutoff function with
  $\beta|_{(-\infty,-1]}\equiv 1$ and $\beta|_{[1,\infty)}\equiv 0$.

In other words, a sequence of unbroken or broken trajectories converges
  to a broken trajectory if and only if the sequence and limit are
  the image of a convergent triple \((R_n, v_n^\phi, v_n^\psi) \) with
  \(R_n\to \infty\) under the prospective chart map resulting from the
  pregluing map,
  \[
    (R,\g^\phi,\g^\psi) \;\mapsto\;  \begin{cases}
    \bigl[ \oplus_R(\g^\phi,\g^\psi) \bigr] &; R<\infty, \\
    \bigl( [\g^\phi],[\g^\psi] \bigr) &; R=\infty.
    \end{cases}
  \]
Note that the topologies induced on the subsets of unbroken
  trajectories \(\widetilde{\mathcal{B}}_a^c\) and broken trajectories
  \(\widetilde{\mathcal{B}}_a^b \times \widetilde{\mathcal{B}}_b^c\)
  agree with the $W^{2,2}$-topologies constructed in Example
  \ref{ex:MorseTrajectorySpaces}.
\end{example}

\subsection{M-polyfold charts}\label{ss:charts}

To introduce the notion of charts for M-polyfolds, let us again
  move backwards and start with the main definition, which is a direct
  generalization of a (scale) Banach manifold chart.

\begin{definition}\label{def:chart}
An {\bf M-polyfold chart} for a second countable and metrizable
  topological space $\cX$ is a triple $(U,\phi,\cO)$ consisting of an
  open subset $U\subset \cX$, an sc-retract $\cO\subset\E$ (see Definition
  \ref{def:scRetraction}) in an sc-Banach space $\E$, and a homeomorphism
  $\phi:U\to\cO$.
\end{definition}

A scale manifold chart is the special case of the above definition in the
  case that the sc-retracts \(\mathcal{O}\) are all open subsets in $\E$.
Due to the scale structure, a scale Banach manifold chart has a slightly
  richer structure than a Banach manifold chart, which is obtained by
  replacing open subsets in \emph{Banach spaces} with open subsets in
  \emph{scale Banach spaces}.
The notion of an M-polyfold chart, however, will be much more general
  in the sense that the sets $\cO$ will no longer need to be open (in
  fact, as subsets they  may have empty interior), however they will have
  the structure of being the image of a scale smooth retraction on $\E$.
In particular, this allows a single neighborhood \(U\) in \(\mathcal{X}\)
  to have two M-polyfold charts \(\phi:U \to \mathcal{O}\subset
  \mathbb{E}\) and \(\phi':U \to \mathcal{O}'\subset \mathbb{E}'\) in which
  \(\mathbb{E}\) and \(\mathbb{E}'\) are not isomorphic, but nevertheless
  \(\phi'\circ \phi^{-1}:\mathcal{O}\to \mathcal{O}'\) is sc-smooth.

\begin{definition}\label{def:scRetraction}
A {\bf scale smooth retraction} (for short {\bf sc-retraction})
  on an sc-Banach space $\E$ is an sc$^\infty$ map \(r:\mathcal{U}\to
  \mathcal{U}\subset\E\) defined on an open subset $\mathcal{U}\subset \E$,
  such that $r\circ r = r$, and hence $r|_{r(\cU)} = \id|_{r(\cU)}$.
A {\bf sc-retract} in $\E$ is a subset $\cO\subset\E$ that is the
  image $r(\cU)=\cO$ of an sc-retraction on $\E$. (We will see that most
  subsequent notions are independent of the choice of $r$.)
\end{definition}

Comparing the above definition with the classical notion of retract,
  we note that an sc-retraction is a retraction of the open set $\cU$
  and not the ambient space $\E$.
The latter is relevant only for the notion of smoothness on $\cU$.
Hence, in particular, an sc-retract in $\E$ is not a retract of $\E$, but
  could have nontrivial topology, though such topological considerations
  are of little importance to M-polyfolds.

Next, we present a special case of sc-retracts, namely sc-smooth
  splicing cores, which were introduced as basic models for M-polyfolds
  in \cite{HWZ_lectures,HoferWysockiZehnder1, HoferWysockiZehnder2}
  and later got generalized to sc-retracts in \cite{Hofer,HWZscSmooth,hwzbook}.
Since this notion of splicing will likely no longer be used, we allow
  ourselves to change the notation and restrict to a further special case
  (using a finite dimensional parameter space $V$).
All sc-retractions relevant for Morse theory and holomorphic curve
  moduli spaces can be put into this setup of ``splicing with finitely
  many gluing parameters,'' which is also helpful for developing a
  simplified notion of Fredholm sections; see Section~\ref{ss:fred}.

\begin{definition} \label{def:splicing}
A {\bf sc-smooth splicing} on an sc-Banach space $\E'$ is a family of
  linear projections $\bigl(\pi_v : \E'\to\E'\bigr)_{v\in U}$, 
  which then necessarily satisfy $\pi_v\circ \pi_v = \pi_v$, that
  furthermore are parametrized by an open subset $U\subset\R^d$ in a
  finite dimensional space in such a way that the associated map
  \begin{equation*}
    \pi \,:\; U\times \E' \to \E' , \qquad (v,f) \mapsto \pi_v(f)  
    \end{equation*}
  is sc$^\infty$. 
In particular, each projection restricts to a bounded linear operator
  $\pi_v|_{E'_m} \in L(E'_m,E'_m)$ on each scale, but these may not vary
  continuously in the operator topology with $v\in U$.

The {\bf splicing core} of a splicing $(\pi_v)_{v\in U}$ is the subset
  of $\R^d\times\E'$ given by the images of the projections,
  \begin{equation*}
    K^\pi := \{ (v,e)\in U \times \E' \,|\, \pi_v e=e \} \;=\;
    \bigcup_{v\in U} \{v\}\times \im\pi_v \;\subset\; \R^d\times\E' .
    \end{equation*}
\end{definition}

\begin{remark}\rm
Any sc-smooth splicing $\bigl(\pi_v : \E'\to\E'\bigr)_{v\in U}$
  for $U\subset\R^d$ induces an sc-retraction on $\R^d\times\E'$,
  which is given by the open set $\cU:=U\times \E'$ and the map
  \begin{equation*}
    r_\pi \,:\; U \times \E' \to U\times \E' ,\qquad (v,e) \mapsto
    (v,\pi_v e) .
    \end{equation*}
The image of this retraction is the splicing core $K^\pi = r_\pi(U\times
  \E')$.
\end{remark}

Here we may observe that splicings on a finite dimensional space
  $\E'=(E')_{m\in\N_0}$ have splicing cores that are homeomorphic to
  open subsets in Euclidean spaces because the pointwise continuity
  automatically implies continuity in the operator topology $L(E',E')$, and
  hence the dimension of the images $\pi_v(E')$ must be locally constant.
Thus, the notion of an M-polyfold modeled on open subsets of splicing
  cores in finite dimensional spaces will reproduce the definition of a
  finite dimensional manifold.

We end this subsection by presenting two examples of sc-smooth
  retractions:
Example \ref{ex:trivial_splicing} can also be found in
  \cite{HWZ_lectures} and \cite[Ex.1.22]{HWZscSmooth}.
Although it has exceedingly little to do with polyfolds for moduli
  problems, it does serve as an important visual reminder that -- unlike
  their classical counterparts -- sc-smooth retracts may have locally
  varying dimension and yet simultaneously support an sc-smooth structure.
It also has a fascinating connection to Kuranishi structures.
Example \ref{ex:pregluing_retraction} introduces the retraction which
  can be used in Morse theory to  glue the space of broken trajectories
  to the space of unbroken trajectories.
   
\begin{example}[\bf a ``finite dimensional'' retract]
  \label{ex:trivial_splicing}\rm
Fix a non-negative function  \(\beta\in \mathcal{C}_0^\infty\) for
  which \(\|\beta\|_{E_0} = \|\beta\|_{L^2}=1\).
We consider the sc-Banach space \(\E= \bigl(W_{\delta_k}^{k,
  2}(\mathbb{R}, \mathbb{R}) \bigr)_{k\in\N_0}\) as in Lemma
  \ref{ex:sobolev} with \(\delta_0 = 0\).
Define a family of linear \emph{projections} \( \pi_t: E_0\to E_0\)
  for $t\in\R$ by $L^2$-projection onto the subspace spanned by $\b_t:=
  \beta(e^{1/t}+\cdot)$ for $t>0$ and $\b_t:=0$ for $t\le 0$.
The corresponding retraction 
  \begin{equation*}
    \mathbb{R}\times \mathbb{E} \to \mathbb{R}\times \mathbb{E}, \qquad
    (t, e)\mapsto (t, \pi_t(e)) =
    \begin{cases}
    \bigl( t, \langle f, \beta_t \rangle_{L^2} \beta_t \bigr)&; t>0\\
    (t, 0 ) &; t\leq 0
    \end{cases}
  \end{equation*}
  is sc\(^\infty\) (see \cite[Lemma~1.23]{HWZscSmooth}), and it is a retraction
  (in fact, a splicing).
The sc-retract (i.e.\ the splicing core) is given by
  \begin{equation*}
    \{(t, 0) \,|\, t\leq 0\}\; \cup\; \{(t, s\beta_t) \,|\, t>0,
    s\in \mathbb{R}\},
    \end{equation*}
  which is (in the topology of $\R\times E_0$) homeomorphic to the
  subset of \(\mathbb{R}^2\) given by $(-\infty, 0]\times \{0\}\;
  \cup\; (0, \infty) \times \mathbb{R}$ and depicted in Figure
  \ref{fig:trivial_splicing}.

A similar topological space appears in the theory of Kuranishi
  structures, where a moduli space is covered by finitely many charts
  $\CM=\bigcup_{i=1,\ldots,N} \psi_i(s_i^{-1}(0)/G_i)$, each of which is
  homeomorphic to a finite group quotient of the zero set $s_i^{-1}(0)$
  of a section $s_i:U_i\to E_i$ in a finite dimensional bundle.
Here the regularization approach (simplified to the case of trivial
  isotropy groups $G_i$) is to find compatible perturbations $\nu_i$
  of these sections so that one obtains a compact manifold from the
  resulting quotient space $\bigsqcup_{i=1,\ldots,N} (s_i+\nu_i)^{-1}(0)
  /\sim$ of perturbed zero sets modulo transition maps.
One might hope to achieve the compactness from local compactness of an
  ambient space such as  $\bigsqcup_{i=1,\ldots,N} U_i /\sim$.
However, the basic nontrivial example with domains $U_i$ of varying
  dimensions is given by $U_1=\R$ and $U_2=(0, \infty) \times \mathbb{R}$
  with equivalence relation $U_1\ni x\sim (x,0)\in U_2$ for $x>0$.
The quotient space $(\R \sqcup (0, \infty) \times \mathbb{R})/\sim$
  has a natural bijection with the splicing core $\cK$ obtained above,
  but the natural quotient topology on this space is very different from
  the relative topology on $\cK$ induced from the ambient sc-Banach space.
While both of these spaces fail to be locally compact, $\cK$
  nevertheless carries a natural metric, whereas the Kuranishi quotient space
  fails to be first countable, and thus it cannot be metrizable; see
  \cite[Ex.6.1.14]{mcduff-wehrheim}.
\end{example} 

\begin{figure}
  \includegraphics[scale=.5]{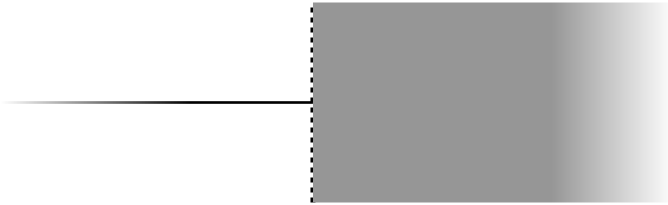}
  \caption{A subset of \(\mathbb{R}^2\) homeomorphic to an sc-smooth
  retract.}
  \label{fig:trivial_splicing}
\end{figure}

\begin{example}[\bf retraction arising from pregluing]
  \label{ex:pregluing_retraction} \rm
Let us more rigorously construct the sc-retract outlined in Section
  \ref{ss:ret}, where we motivated it by the need of a chart that covers
  both broken and unbroken trajectories.
Building on the notation and spaces introduced in Example
  \ref{ex:MorseTrajectorySpaces}, the pregluing and antipregluing maps
  \begin{equation*}
    \oplus:(0, v_0)\times V^\phi\times V^\psi\to \mathcal{B}_a^c
    \qquad\qquad \ominus:(0, v_0)\times V^\phi\times V^\psi\to W^{2,
    2}(\mathbb{R}, X)
  \end{equation*}
  are given by
  \begin{align*}
    \oplus_v(u, w) &:= \beta \cdot \tau\bigl({\textstyle \frac{R_v}{2}},
    u+\phi\bigr) + (1-\beta) \cdot \tau\bigl({\textstyle \frac{-R_v}{2}},
    w+\psi\bigr)\\
    \ominus_v(u, w) &:= (\beta-1)\cdot \tau\bigl({\textstyle
    \frac{R_v}{2}}, u+\phi\bigr)  + \beta \cdot\tau\bigl({\textstyle
    \frac{-R_v}{2}}, w+\psi\bigr),
  \end{align*}
  where \(\beta:\R\to[0,1]\) is a smooth cut-off function with
  \(\beta\big|_{(-\infty, -1]}=1\) and  \(\beta\big|_{[1, \infty)}=0\).
Moreover, we use the gluing profile $\mu:(0,1)\mapsto (0,\infty),
  v\mapsto R_v:=e^{1/v}-e$ restricted to $(0,v_0)\subset(0,1)$ so that the
  antigluing contributions $(\beta-1)\phi(\cdot{\textstyle +\frac{R}{2}})$
  and $\beta\psi(\cdot -{\textstyle \frac{R}{2}})$ vanish for $R>R_{v_0}$.
As in Section \ref{ss:ret}, this gives rise to a retraction $r:
  [0,v_0)\times V^\phi\times V^\psi \to [0,v_0)\times V^\phi\times V^\psi$
  given by
  \begin{equation*}
    r(v, u, w):= 
    \begin{cases}
    \boxplus^{-1}\circ {\rm pr} \circ \boxplus (v, u, w) & \text{if } v> 0,\\
    (v, u, w) & \text{if }v=0,
    \end{cases}
  \end{equation*}
  where \(\boxplus=(\oplus, \ominus)\), and \({\rm pr}\) is the canonical
  projection to the first factor.
For each fixed gluing parameter \(v\in [0,v_0)\), we see that \(r(v,
  \cdot, \cdot)\) is given by the unpleasant formula
 \begin{align*}
  \left(
  \begin{matrix}
  u\\
  w
  \end{matrix}
  \right)
  &\mapsto
  -
  \left(
  \begin{matrix}
  \phi \\ \psi
  \end{matrix}
  \right)
  +\left(
  \begin{matrix} 
  \tau(\frac{-R_v}{2}, \cdot )& 0\\
  0 & \tau(\frac{R_v}{2}, \cdot )  
  \end{matrix}
  \right)
  \left(
  \begin{matrix}
  \beta & 1-\beta\\
  \beta-1 & \beta
  \end{matrix}
  \right)^{-1}
  \left(
  \begin{matrix}
  1 & 0\\
  0 & 0
  \end{matrix}
  \right)\\
  &\qquad\qquad\qquad\qquad
 \cdot\left(
  \begin{matrix}
  \beta & 1-\beta\\
  \beta-1 & \beta
  \end{matrix}
  \right)
  \left(
  \begin{matrix} 
  \tau(\frac{R_v}{2},\cdot) & 0\\
  0 & \tau(\frac{-R_v}{2}, \cdot )
  \end{matrix}
  \right)
  \left(
  \begin{matrix}
  u + \phi\\
  w + \psi
  \end{matrix}
  \right).
  \end{align*}
The upshot of such an unsightly formulation is that it is then elementary
  to show that the map \(r\)
  will be sc-smooth provided that the following two maps are sc-smooth:
  \begin{align}
    &\mathbb{R}\times W^{2, 2}(\mathbb{R}, X) \to W^{2, 2}(\mathbb{R},
    X)\qquad(v, u)\mapsto
    \begin{cases}
    \tau\big(\frac{-R_v}{2}, \tilde{\beta}\big)\cdot u & \text{if } v>
    0 , \label{eq:toolbox1}\\
    u & \text{if } v=0 ,
    \end{cases} \\
    &\mathbb{R}\times W^{2, 2}(\mathbb{R}, X) \to W^{2, 2}(\mathbb{R},
    X)\qquad(v, u)\mapsto
    \begin{cases}
    \tau\big(\frac{R_v}{2}, \hat{\beta} \big) \cdot \tau\big(R_v,
    u  \big)&\text{if } v> 0 , \\
    0&\text{if } v=0,\label{eq:toolbox2}
    \end{cases}
    \end{align}
  where \(\tilde{\beta}\) is a smooth function with support near
  \(\{-\infty\}\) and \(\hat{\beta}\) is a smooth function with compact
  support.
This is essentially the content of \cite[Prop.2.8]{HWZscSmooth};
  consequently the map \(r\) defined above is in fact an sc-smooth
  retraction.
\end{example}

\subsection{Scale calculus for sc-retracts}\label{ss:rcalc}
Sc-retracts and splicing cores are naturally equipped with the
  sc-topology induced from the ambient sc-Banach space, so we already
  have a well-defined notion of scale continuous maps between them.
Moving towards the notion of scale smooth maps between sc-retracts, we
  next note that, somewhat surprisingly, sc-retracts have a well-defined
  notion of a tangent bundle.
Indeed, observe that since $r\circ r = r$, it follows by the chain
  rule that the associated tangent map $\rT r: \rT\mathcal{U} \to
  \rT\mathcal{U}$ satisfies $\rT r \circ \rT r = \rT r$ on the open subset
  $\rT \cU := (E_1\cap \cU) \times E_0 \subset \rT \E$ of the sc-tangent
  bundle $\rT\E=(E_k\times E_{k+1})_{k\in\N_0}$.
In other words $\rT r$ is an sc-retraction.
Consequently, we simply \emph{define} the sc-tangent bundle of a retract
  as the image of an associated sc-retraction.

\begin{definition}\label{def:Tretract}
The {\bf sc-tangent bundle} of an sc-retract $\cO\subset\E$ is the image
  $\rT\cO:= \rT r (\rT \cU) \subset \rT\E$ of the tangent map for any
  choice of retraction \(r:\mathcal{U}\to\mathcal{U}\subset\mathbb{E}\)
  with $r(\cU)=\cO$.
In particular, its fibers are the tangent spaces\footnote
  {
  Here we used the fact that each differential $\rD_p r : \E \to \E$ at
    $p\in\cO \cap E_1$ is a retraction as well, and, since it is
    linear, it is a projection whose image $\im \rD_p r = \ker(\id_{E_0}
    - \rD_p r)$ is the kernel of the complementary projection $\id_{E_0}
    - \rD_p r$.
  } 
  at $p\in\cO\cap E_1$, 
  $$
    \rT_p\cO \,:=\; \rT r (\{p\}\times E_0) \;=\; \{p\}\times \im \rD_p
    r  \;\subset\; \{p\}\times E_0.
  $$
\end{definition}

Of course, at first the definition of sc-tangent bundle looks entirely
  ad hoc, however it reproduces Definition~\ref{def:scTangentBundle} of
  $\rT\E=E_1\times E_0$ (arising from the retraction $r={\rm id}_\E$),
  it is generally well defined, and it coincides with the
  tangents of paths in the retract as follows; see \cite[Prop.2.4,
  Lemma~2.29]{HWZnew}.
\begin{lemma}\label{lem:TangentRetract}
Let \(r:\mathcal{U}\to\mathcal{U}\subset\mathbb{E}\) be an sc-retraction
  with $r(\cU)=\cO$.
\begin{enumerate}
  \item
  Let \(r':\mathcal{U}'\to\mathcal{U}'\subset\mathbb{E}\) be another
    sc-retraction with $r'(\cU')=\cO$. 
  Then $\rT r(\rT\mathcal{U}) = \rT r'(\rT\mathcal{U}')$, hence $\rT\cO$
    is well defined.
  \item 
  The $E_0$-closure of the set of tangent vectors to scale smooth
    paths in $\cO$ through a given smooth point $p\in \cO\cap E_\infty$
    coincides with the tangent space of the retract at $p$,
    $$
      {\rm cl}_{E_0} \bigl\{ \bigl(\g(0),\g'(0)\bigr) \,\big|\,
      \gamma:(-\epsilon,\epsilon) \to \E \;\; \text{sc}^\infty,
      \g((-\eps,\eps))\subset\cO , \g(0)=p \bigr\} \;=\; \rT_p \cO .
    $$
  \end{enumerate}
\end{lemma}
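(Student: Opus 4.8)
The plan is to prove the two assertions of Lemma~\ref{lem:TangentRetract} in order, using the retraction property $r\circ r = r$ and the chain rule (Theorem~\ref{thm:ChainRule}) as the main tools. For part (i), the key observation is that if $r$ and $r'$ are sc-retractions with common image $\cO$, then on $\cO$ both restrict to the identity, so any point $p\in\cO$ can be written as $p = r(p) = r'(p)$. To compare $\rT_p\cO$ as computed via $r$ versus via $r'$, I would use the fact that $\im\rD_p r$ consists exactly of those $v\in E_0$ with $\rD_p r(v) = v$, since $\rD_p r$ is a projection (differentiating $r\circ r = r$ at $p$ gives $\rD_p r\circ\rD_p r = \rD_p r$ because $r(p)=p$). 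Now given $v\in\im\rD_p r$, I want to show $v\in\im\rD_p r'$. The trick is to approximate $v$ by tangents of paths that stay in $\cO$: since $\cO = \im r$, the path $t\mapsto r(p+tv)$ lies in $\cO$, is sc$^1$ into $\E$ (at least near $t=0$, with $p\in E_1$), passes through $p$ at $t=0$, and has tangent vector $\rD_p r(v) = v$. Applying $r'$ (which fixes $\cO$ pointwise) leaves this path unchanged, so differentiating $r'\circ (t\mapsto r(p+tv))$ at $t=0$ yields $\rD_p r'(v) = v$, hence $v\in\im\rD_p r'$. By symmetry the two images agree, so $\rT_p\cO$ is well defined; since this holds fiberwise and the base sets agree, $\rT r(\rT\cU) = \rT r'(\rT\cU')$.

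For part (ii), I would prove the two inclusions separately. For ``$\subseteq$'': given an sc$^1$ path $\g:(-\eps,\eps)\to\E$ with image in $\cO$ and $\g(0)=p$, we have $r\circ\g = \g$ since $r$ fixes $\cO$; differentiating at $t=0$ via the chain rule gives $\rD_p r(\dot\g(0)) = \dot\g(0)$, so $\rT\g(0,1) = (p,\dot\g(0)) \in \{p\}\times\im\rD_p r = \rT_p\cO$. For ``$\supseteq$'': given $v\in\im\rD_p r$, use exactly the path $\g(t) := r(p+tv)$ constructed above. One must check this is sc$^1$ on a small interval and has image in $\cO$: image in $\cO$ is immediate since $\g = r(\cdots)$; sc$^1$ follows because $t\mapsto p+tv$ is (classically, hence scale-) smooth as a map into $\E$ — with the caveat that one needs $p\in E_1$ for $r$ to even be differentiable at $p$ in the sc sense, and indeed $p$ ranges over $\cO\subset E_0$ but the retract tangent space is formed using $E_1$-points, which matches the sc$^1$-path requirement. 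Then $\dot\g(0) = \rD_p r(v) = v$ by the projection property, and $\g(0) = r(p) = p$, so $v$ is realized as the tangent of a scale smooth path in $\cO$ through $p$.

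The main obstacle I anticipate is handling the domain/regularity bookkeeping carefully: the sc-retraction $r$ is only defined and sc$^\infty$ on an open subset $\cU\subset\E$, and sc-differentiability of $r$ at a base point requires that point to lie in $E_1$ (or more precisely, the tangent map $\rT r$ is defined on $\rT\cU = (E_1\cap\cU)\times E_0$). So the path $t\mapsto p+tv$ must be verified to stay in $\cU$ for small $|t|$ (true by openness) and, for the chain rule to apply to $r\circ\g$, one needs $\g$ to take values appropriately with respect to the scale structure — i.e., one should check that $p+tv\in E_1$ when $p\in E_1$ and $v\in E_0$, which fails in general, so a slightly more careful argument is needed: either restrict attention to the statement as made (where $\rT_p\cO$ is defined for $p\in\cO$ via $\rD_p r$, which implicitly requires $p\in E_1$), or approximate $v\in E_0$ by vectors in $E_1$ using density and pass to the limit using sc-continuity of $(x,h)\mapsto\rD_x r(h)$. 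I would adopt the convention (consistent with Definition~\ref{def:Tretract}, which writes $\rT_p\cO = \{p\}\times\im\rD_p r$) that tangent spaces are taken at $E_1$-points, and note that for $p\in E_1$ and $v\in\im\rD_p r\subset E_0$ the path $\g(t)=r(p+tv)$ is sc$^1$ into $\E$ by Remark~\ref{rmk:sc1}(iv) applied to $r$, since $t\mapsto p+tv$ is $\cC^1$ into $E_0$ and maps $0$ to the $E_1$-point $p$. This resolves the regularity issue and completes both inclusions.
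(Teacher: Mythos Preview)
The paper states this lemma without proof, so there is no argument to compare against; I will assess your proposal on its own merits.

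For part~(i) your reasoning is correct but unnecessarily routed through paths, which drags in the very regularity bookkeeping you later struggle with. A cleaner argument avoids paths entirely: since $r(\cU)=\cO\subset\cU'$ and $r'|_\cO=\id_\cO$, the composite $r'\circ r$ is defined on all of $\cU$ and equals $r$ there. Differentiating at any $p\in\cO\cap E_1$ via the chain rule gives $\rD_p r'\circ\rD_p r=\rD_p r$, hence $\im\rD_p r\subset\ker(\id_{E_0}-\rD_p r')=\im\rD_p r'$; by symmetry the images coincide. This is exactly the mechanism behind Lemma~\ref{lem:scRegIndepOfr} and needs no curve construction.

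For part~(ii), the inclusion ``$\subseteq$'' is fine. The gap is in ``$\supseteq$'', and it is more serious than you acknowledge. An sc$^0$ map from an interval (trivial sc-structure) into $\E$ must, by definition, map continuously into \emph{every} $E_k$, hence take values in $E_\infty$; applying the same reasoning to the tangent map $\rT\g$ forces $\dot\g(0)\in E_\infty$ as well. Consequently, for $v\in\im\rD_p r\setminus E_\infty$ the curve $t\mapsto p+tv$ is not even sc$^0$ into $\E$, and neither is $\g(t)=r(p+tv)$. Your appeal to Remark~\ref{rmk:sc1}(iv) cannot repair this: that remark characterizes sc$^1$ assuming sc$^0$, which already fails here. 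In fact the equality in~(ii), read literally, cannot hold in general; it is only correct if one restricts to smooth base points $p\in E_\infty$ and replaces the right-hand side by $\{p\}\times\rD_p r(E_\infty)$, which is merely dense in $\rT_p\cO$. The statement in this survey should be understood as a heuristic identification rather than a precise claim, and your proof correctly establishes the piece that is actually true.
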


Guided by this notion (but not explicitly using it), the notions of
  scale differentiability and scale smoothness for maps between open
  subsets of sc-Banach spaces can be generalized to sc-retracts.
This notion will in particular be used in the compatibility condition on
  the transition maps between different M-polyfold charts $\phi_i: U_i \to
  \cO_i$ for $i=1,2$ with overlap $\cX\supset U_1\cap U_2 \neq \emptyset$.
Here $\cO_i\subset\E_i$ are sc-retracts in possibly different sc-Banach
  spaces, so we need a notion of scale smoothness of the transition map
  $$
    \phi_2 \circ \phi_1^{-1} \,:\; \cO_1\supset \phi_1 (U_1\cap U_2)
    \;\longrightarrow\; \cO_2 .
  $$
Since $\phi_1$ is a homeomorphism, it maps the overlap $\phi_1 (U_1\cap
  U_2)\subset \cO_1$ to an open subset of the sc-retract $\cO_1=r_1(\cU_1)$
  given by some choice of retraction $r_1:\cU_1\to\E_1$.
Since the latter is continuous, its preimage $\cU_{12}:=
  r_1^{-1}\bigl(\phi_1 (U_1\cap U_2)\bigr)\subset \E_1$ is open, so
  the retraction $r_1|_{\cU_{12}}$ is an sc-retraction on $\E_1$
  with image $r_1(\cU_{12})=\phi_1 (U_1\cap U_2)$.
Thus it remains to define the notion of scale smoothness for maps
  between sc-retracts in different sc-Banach spaces.

\begin{definition}\label{def:sc smooth map from retract}
Let $f:\cO \to \cR$ be a map between sc-retracts $\cO\subset \E$ and
  $\cR\subset\F$, and let $\io_\cR:\cR\to\cF$ denote the inclusion map.
Then we say that $f$ is sc$^k$ for $k\in\N$ or $k=\infty$ if $\io_\cR
  \circ f\circ r : \cU \to \F$ is sc$^k$ for some choice of sc-retraction
  \(r:\mathcal{U}\to\mathcal{U}\subset\mathbb{E}\) with $r(\cU)=\cO$.
In particular, a bijection $f:\cO \to \cR$ is called {\bf
  sc-diffeomorphism} if both $f$ and $f^{-1}$ are sc$^\infty$.
\end{definition}

The definition of the regularity of a map $f:\cO\to \cR$ is independent
  of the choice of the sc-retraction with $r(\cU)=\cO$ by the following
  lemma. 
We provide a proof of this result since it seems so unlikely and yet
  elementary.

\begin{lemma}\label{lem:scRegIndepOfr}
Let $f:\cO \to \cR$ be a map between
  sc-retracts $\cO\subset \E$ and $\cR\subset\F$, let
  \(r_i:\mathcal{U}_i\to\mathcal{U}_i\subset\mathbb{E}\) for $i=1,2$ be
  two retractions with $r_i(\cU_i)=\cO$, and set $k\in\N_0$ or $k=\infty$.
Then $\io_\cR \circ f\circ r :\cU \to \F$ is sc$^k$ if an only if
  $\io_\cR \circ f\circ r':\cU' \to \F$ is sc$^k$.
\end{lemma}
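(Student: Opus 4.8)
The plan is to prove the symmetric statement by establishing just one implication: if $\io_\cR \circ f\circ r_1 :\cU_1 \to \F$ is sc$^k$, then $\io_\cR \circ f\circ r_2 :\cU_2 \to \F$ is sc$^k$. The reverse implication then follows by exchanging the roles of $r_1$ and $r_2$. So from now on assume $\io_\cR \circ f\circ r_1$ is sc$^k$ on $\cU_1$.

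The key observation is that on the common image $\cO = r_1(\cU_1) = r_2(\cU_2)$ both retractions act as the identity, i.e. $r_1|_\cO = \id_\cO = r_2|_\cO$. Consequently, for $x \in \cU_2$ we have $r_2(x) \in \cO$, hence $r_1(r_2(x)) = r_2(x)$, so that
\begin{equation*}
\io_\cR \circ f \circ r_2 \;=\; \bigl( \io_\cR \circ f \circ r_1 \bigr) \circ r_2 \qquad \text{on } \cU_2 .
\end{equation*}
Here there is a minor domain subtlety: the right-hand side is only defined where $r_2(x)$ lands in the domain $\cU_1$ of $r_1$. But $r_2(x) \in \cO \subset \cU_1$ automatically, since $\cO = r_1(\cU_1)$ is contained in $\cU_1$ (every element of the image of a retraction lies in its domain). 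So $r_2(\cU_2) = \cO \subset \cU_1$ and the composition $(\io_\cR \circ f \circ r_1) \circ r_2$ makes sense on all of $\cU_2$, and it agrees with $\io_\cR \circ f \circ r_2$ there.

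Now I would finish by invoking the scale chain rule, Theorem~\ref{thm:ChainRule} (iterated to sc$^k$ as in Definition~\ref{def:scDiff}): the map $r_2 : \cU_2 \to \cU_1$ is sc$^\infty$ by hypothesis (it is an sc-retraction), and $\io_\cR \circ f \circ r_1 : \cU_1 \to \F$ is sc$^k$ by our assumption. Hence their composition is sc$^k$. Since this composition equals $\io_\cR \circ f \circ r_2$, we conclude that $\io_\cR \circ f \circ r_2$ is sc$^k$ on $\cU_2$, as desired. Swapping $r_1 \leftrightarrow r_2$ gives the converse, completing the proof. I do not anticipate a genuine obstacle here — the only point requiring a moment's care is the bookkeeping of domains (making sure $r_2(\cU_2) \subset \cU_1$ so the composition is globally defined on $\cU_2$, not just on some open subset), together with the clean identity $r_1 \circ r_2 = r_2$ that follows purely from $r_1|_\cO = \id_\cO$; after that the scale chain rule does all the work.
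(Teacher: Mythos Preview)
Your proof is correct and follows essentially the same approach as the paper: both establish the identity $r_1\circ r_2 = r_2$ on $\cU_2$ from $r_1|_\cO = \id_\cO$, factor $\io_\cR\circ f\circ r_2 = (\io_\cR\circ f\circ r_1)\circ r_2$, and invoke the scale chain rule. You are slightly more careful than the paper in spelling out the domain bookkeeping $r_2(\cU_2)=\cO\subset\cU_1$, which the paper handles with the single remark that $\cO\subset\cU\cap\cU'$.
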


\begin{proof}
Since $\cO\subset\cU\cap\cU'$ is the fixed point set of both $r$
  and $r'$, we have the identities $r'\circ r = r$ on $\cU$ as well as
  $r\circ r' = r'$ on $\cU'$.
Thus we have $\io_\cR \circ f\circ r = \io_\cR \circ f\circ r' \circ r$,
  so that the sc$^k$ regularity of $\io_\cR \circ f\circ r' $ implies that
  of $\io_\cR \circ f\circ r$ by the chain rule theorem~\ref{thm:ChainRule}
  for composition with the sc$^\infty$ map $r$.
The reverse implication holds analogously.
\end{proof}

\begin{example}[{\bf M-polyfold charts and transition maps in Morse
  theory}]\label{ex:localModelInMorseTheory}\rm
In Example \ref{ex:pregluing_retraction}, we constructed a retraction,
  which arises in Morse theory from the pregluing map \(\oplus\).
We now build on that example, and we indicate how such retracts provide
  local models for the space of broken and unbroken
  trajectories $\cX_a^c = \widetilde{\mathcal{B}}_a^c \sqcup
  \widetilde{\mathcal{B}}_a^b\times\widetilde{\mathcal{B}}_b^c$ defined
  in Example~\ref{ex:TrajectoryTopology}.
Recall that \(\widetilde{\mathcal{B}}_a^c \) and
  \(\widetilde{\mathcal{B}}_a^b\times\widetilde{\mathcal{B}}_b^c
  \) were given the structure of an sc-manifold in
Example~\ref{ex:MorseTrajectorySpaces}.
Using the previous notation, the local charts are given by
  $$ 
  \Phi:V^\phi\to \Ti\cB_a^b,\quad u\mapsto [\phi+u]
  \qquad\text{and}\qquad
  \Psi:V^\psi\to \Ti\cB_b^c,\quad v\mapsto [\psi+w] .
  $$
To obtain a local chart centered at a broken trajectory
  $([\phi],[\psi])$, we use pregluing, as in Example
  \ref{ex:pregluing_retraction}, to obtain a retraction \(r^{\phi, \psi}:[0,
  1)\times V^\phi\times V^\psi\to [0, 1)\times V^\phi\times V^\psi\),
  whose image is an sc-retract \(\mathcal{O}^{\phi, \psi}\).
Then an M-polyfold chart for $\cX_a^c$ is given by 
  \begin{equation*}
    \Xi: \mathcal{O}^{\phi, \psi}\to \cX,\qquad
    \Xi(v, u, w)=
    \begin{cases}
    [\oplus_{R_v}(u + \phi, w+\psi)]&\text{if } v\neq 0\\
    ([u + \phi], [w+\psi])&\text{if } v=0.
    \end{cases}
  \end{equation*} 
The restricted maps \(\Xi:\mathcal{O}^{\phi, \psi}\cap \{v=0\}\to
  \widetilde{\mathcal{B}}_a^b\times\widetilde{\mathcal{B}}_b^c\)
  and \(\Xi:\mathcal{O}^{\phi, \psi}\cap\{v\neq 0\}\to
  \widetilde{\mathcal{B}}_a^c\) are in fact sc-diffeomorphisms.
In particular one can check injectivity with respect to $v$ by
  observing that $R_v$ governs the distance between the intersection
  points of $\oplus_{R_v}(u + \phi, w+\psi)$ with the hyperplanes $H^\phi$
  and $H^\psi$.
In order to show that the sc-manifold charts for
  \(\widetilde{\mathcal{B}}_a^c\), together with charts \((\Xi,
  \mathcal{O}^{\phi, \psi})\) arising from pregluing, indeed yield an
  M-polyfold structure for $\cX_a^c$, we must verify that the induced
  transition maps  are sc-smooth.
To that end, we can write, for example, the transition map between two
  pregluing charts \(\Xi'\,\!^{-1}\circ \Xi: \mathcal{O}^{\phi, \psi}\to
  \mathcal{O}^{\phi', \psi'}\), where they are defined, as
  $$
    \big( \oplus_{\mu^{-1}(\mu(v) - s(u) +
    t(w))}\big)^{-1}\Big(\tau\big({\textstyle \frac{s(u)+t(w)}{2}},
    \oplus_v(u + \phi, w +\psi)\big)\Big).
  $$
Here $\mu:(0,1)\mapsto (0,\infty)$ is the gluing profile, \(\tau\)
  is the translation map (\ref{2tau}), and the functions \(u\mapsto
  s(u)\),  \(w\mapsto t(w)\) are determined by the equation \((u +
  \phi)(s(u))\in H^{\phi'}\) and \((w + \psi)(t(w))\in H^{\psi'}\),
  where \(H^{\phi'},H^{\psi'}\subset X\) are the hyperplanes used as
  slicing conditions, as in Example \ref{ex:MorseTrajectorySpaces}.
After expanding this expression, one can see that the
  sc-smoothness of the transition map \(\Xi'\,\!^{-1}\circ \Xi\)
  follows from the sc-smoothness of the functions \(s,t\), proven
  as in Section~\ref{ss:scmfd}, and maps \eqref{eq:toolbox1},
  \eqref{eq:toolbox2}.
Compatibility of pregluing charts with ``interior charts'' for
  $\Ti\cB_a^c$ is checked similarly, so that one indeed obtains an
  M-polyfold structure on \(\cX_a^c\).
\end{example}

\subsection{M-polyfolds with boundaries and corners} \label{ss:corners}

The notion of M-polyfolds with boundary and corners is central for
  applications.
For instance, in Morse theory the broken trajectories form the boundary
  of an M-polyfold whose interior are the unbroken trajectories.
More precisely, the once broken trajectories are the smooth part of
  the boundary (the codimension $1$ part of the boundary strata), and
  the $k$-fold broken trajectories are the codimension $k$ part of the
  boundary strata; here, corners are understood as $k\ge 2$.
We will develop this notion by introducing boundaries and corners into
  the notions of sc-retracts (where it requires a nontrivial modification
  to allow for an implicit function theorem later on) and then introducing
  sc-smoothness, following Remark~\ref{rmk:corners}.
We begin by considering a special case of the notion of a partial
  quadrant,\footnote
  {
    For a general definition of partial quadrants, see
    \cite{HoferWysockiZehnder1}.
  } 
  which we call an sc-sector, and we introduce the degeneracy index which
  will be used to define the boundary and corner strata.

\begin{definition} \label{def:DegenIndex}
A {\bf sc-sector} $C$ is the subset $C=[0,\infty)^k \times \E \subset
  \R^k\times \E$ in the product of a finite dimensional space $\R^k$
  and an sc-Banach space $\E$.
Its {\bf degeneracy index} $d_C : C \to \N_0$ is given by counting the
  number of coordinates in $\R^k$ that equal to $0$; in other words,
  $$
    d_C\bigl( (x_i)_{i=1,\ldots,k},e\bigr)  = \# \bigl\{ i \in
    \{1,\ldots, k\} \,|\, x_i = 0 \bigr\} .
  $$
\end{definition}

\begin{remark} [{\bf Degeneracy index vs.\ gluing parameters}]
  \label{rk:sectors and degeneracy} \rm
In practice, sc-sectors are usually of the form \([0, \infty)^k\times
  \mathbb{R}^\ell \times\mathbb{E}\), where $\E$ is a function space and
  the first two factors are gluing parameters.
For example, for charts near a once-broken Morse-trajectory we would
  have \(k=1\) and \(\ell=0\); near a twice-broken Morse-trajectory we
  would have \(k=2\) and \(\ell=0\).
In this way, we think of the degeneracy index as a means of measuring
  in which ``corner-stratum'' a point lies: a point with degeneracy index
  of zero, one, or two is respectively an interior point, boundary point,
  or corner point.
However, the degeneracy index does not necessarily measure the number of
  regular components of a curve or trajectory (whose domains are smooth,
  connected Riemann surfaces).
For instance, near a nodal curve (or cusp curve) in Gromov-Witten theory,
  the pregluing construction involves two shift parameters $(R,\theta)
  \in (R_0,\infty)\times S^1$.
These can be encoded in a single complex gluing parameter $c\approx
  0 \in \C$ by $R=e^{1/|c|}$ and $\theta=\arg(c)$, which is naturally
  extended by $c=0\in\C$ corresponding to the nodal curves.
Hence a chart near a curve with one nodal point will involve an sc-sector
  with \(k=0\) and \(\ell=2\), and near a curve with two nodal points the
  sc-sector has \(k=0\) and \(\ell=4\); that is, all of these sc-sectors
  are in fact sc-Banach spaces.
This indicates the important point that \emph{nodal curves in
  Gromov-Witten theory have degeneracy index zero;} in other words, all
  such nodal curves are \emph{interior} points of the ambient M-polyfold
  as well as the regularized moduli space.
\end{remark}

Unfortunately, scale smooth bijections between open subsets of sc-sectors
  do not generally preserve the degeneracy index.
However, the following refined notion of an sc-retract in an sc-sector
  will guarantee ``corner recognition'' as stated in the subsequent
  theorem.
First, however, we need to introduce the notion of direct sums in
  sc-Banach spaces.

\begin{definition} 
Let $\E$ be an sc-Banach space.
Two linear subspaces $X,Y \subset E_0$ split $\E$ as a {\bf sc-direct
  sum} $\E = X \oplus_{sc} Y$ if
  \begin{enumerate}
    \item 
    both $X,Y\subset E_0$ are closed and $(X\cap E_m)_{m\in\N_0},
      (Y\cap E_m)_{m\in\N_0}$ are scale Banach spaces;
    \item
    on every level $m\in\N_0$ we have the direct sum $E_m = (X\cap E_m)
      \oplus (Y\cap E_m)$.
    \end{enumerate}
We call $Y$ the sc-complement of $X$.
\end{definition}

\begin{definition}\label{def:neat}
Let $\cU\subset [0,\infty)^k\times\E$ be a relatively open set in
  an sc-sector.
Then $r:\cU\to \cU$ is a {\bf neat sc-retraction} if it satisfies
  $r\circ r = r$ and the following regularity and neatness conditions.
\begin{enumerate}
  \item 
  $r$ is sc$^\infty$; that is, the restriction $r|_{\cU^{\rm
    int}}$ to the open subset $\cU^{\rm int}:=\cU\cap
    (0,\infty)^k\times\E\subset \R^k\times\E$ is sc$^\infty$ in the sense
    of Definition~\ref{def:scDiff}, and the iterated tangent map $\rT^\ell
    r$ on $\rT\ldots\rT\cU^{\rm int} = \bigl(\cU \cap (0,\infty)^k\times
    E_\ell \bigr) \times \diamondsuit$ extends\footnote
    {
      Here $\diamondsuit$ is a complicated product of sc-Banach spaces,
	arising from iterating Definition~\ref{def:scTangentBundle}
	of the sc-tangent bundle.
      For example, $\diamondsuit$ is trivial for $\ell=0$, for $\ell=1$
	we have $\diamondsuit = \R^k \times E_1$, and for $\ell=2$ it
	is $\diamondsuit = \R^k \times E_1 \times \R^k \times E_1 \times
	\R^k \times E_0$.
      The point is that an extension to the boundary only appears in
	the first factor.
    }
    to an sc$^0$ map on $\rT\ldots\rT\cU :=  \bigl(\cU \cap
    [0,\infty)^k\times E_\ell\bigr) \times \diamondsuit$ for all
    $\ell\in\N_0$.
  \item 
  For every ``smooth point'' $p\in r(\cU)\cap (\R^k \times E_\infty)$ in
    the retract, the tangent space $\rT_p \cO \cong\im \rD_p r \subset
    \R^k\times \E$ is {\bf sc-neat with respect to the sc-sector}
    $[0,\infty)^k\times\E$, that is it has an sc-complement $Y\subset
    \{0\}\times \E$ so that $\R^k\times \E = \im\rD_p r \oplus Y$.
  \item 
  Every point in the retract $p\in r(\cU)$ has an approximating sequence
    $p_n\to p$ of ``smooth points'' $(p_n)_{n\in \mathbb{N}}\subset
    r(\cU)\cap E_\infty$ in the same corner stratum, that is with
    $d_C(p_n)=d_C(p)$.
  \end{enumerate}

A {\bf sc-retract with corners} in the sc-sector $[0,\infty)^k\times\E$
  is a subset $\cO\subset[0,\infty)^k\times\E$ that is the
  image $r(\cU)=\cO$ of a neat sc-retraction $r:\cU\to\cU\subset
  [0,\infty)^k\times\E$.
\end{definition}

The neatness condition is phrased by HWZ as having a sc-complement
  $Y\subset C$ in the partial quadrant $C$.
For the sc-sector $C=[0,\infty)^k\times \E$ this is equivalent to
  $Y\subset\{0\}\times\E$ and implies that $\im \rD_p r$ projects
  surjectively to the $\R^k$ factor.
It is our understanding that this condition a weaker notion of neatness
  that has been introduced in \cite{HWZnew} and is still sufficient for
  regularization.

The neatness conditions (ii) and (iii) were added in the generalization
  from splicings to retracts, since splicings satisfy them automatically,
  as we show in the following.

\begin{remark} \label{rmk:SplicingCorners} \rm
An {\bf sc-splicing with corners} is a family of linear
  projections $\bigl(\pi_v : \E'\to\E'\bigr)_{v\in U}$ as in
  Definition~\ref{def:splicing}, with the exception that we allow splicings
  parametrized by open subsets $U\subset [0,\infty)^k\times\R^{d-k}$
  in finite dimensional sectors.
The corresponding sc-retraction $r_\pi: U \times \E' \to U\times
  \E'$, $(v,e) \mapsto (v,\pi_v e)$ then is a neat sc-retraction on
  $[0,\infty)^k\times\R^{d-k}\times\E'$, as can be seen by checking
  conditions (ii) and (iii).
\begin{itemlist}
  \item[(ii)]
  The ``smooth points'' are $(v,e) \in U \times E'_\infty$, and the
    differential of the retraction is $\rD_{(v,e)} r_\pi : (X,Y) \mapsto
    \bigl(X, \rD_{(v,e)} \pi (X,Y)\bigr)$, so that the tangent space to
    the retract $\cO=\im r_\pi$ at $(v,e=\pi_v e)$ is
    $$
      \rT_{(v,e)} \cO \;=\; \im \rD_{(v,e)} r_\pi \;=\; \bigl( X,
      \rD_{(v,e)} \pi (X,0) + \pi_v Y \bigr).
    $$
  We claim that it has an sc-complement $\R^d\times\E'=\im \rD_{(v,e)}
    r_\pi \oplus \im L$ given by the image of the sc$^0$ operator
    $L:\R^d\times\E' \to \R^d\times \E'$, $(X,Y)\mapsto\bigl( 0 , Y- \pi_vY
    \bigr)$, which is contained in $\{0_{\R^k}\}\times \R^{d-k}\times E_1$
    (in fact, in $\{0_{\R^d}\}\times E_1$).
  Indeed, the decomposition is given by an sc$^0$ isomorphism where
    we abbreviate $Z_{X,Y} = Y - \rD_{(v,e)} \pi (X,0)$,
    \begin{align*}
      \R^d\times \E' &\;\longrightarrow\; \im \rD_{(v,e)} r_\pi \times
      \im L \\
      (X,Y) &\;\longmapsto\; \bigl( (X , \rD_{(v,e)} \pi (X,0) + \pi_v
      Z_{X,Y}) , (0, (\id - \pi_v) Z_{X,Y}) \bigr).
    \end{align*}
  \item[(iii)] 
  For any point in the splicing core $(v,e)\in K^\pi$ we obtain a
    ``smooth'' approximating sequence by picking $E'_\infty \ni e_i
    \to e$, since then $(v,\pi_v(e_i))\to (v,\pi_v(e)) = (v,e)$, and
    the degeneracy index is preserved since it is determined by $v\in
    [0,\infty)^k\times\R^{d-k}$.
  \end{itemlist}
\end{remark}

Observe that, if given an sc-retract (or splicing core) with corners
  $\cO\subset [0,\infty)^k\times\E$, we can restrict the degeneracy index
  from the ambient sector (where $\E=\R^{d-k}\times\E'$ in the case of
  a splicing) to a well defined map $d_\cO : \cO \to \N_0$.
That this is well defined also under ``sc$^\infty$ diffeomorphisms''
  between retracts is proven in \cite[Prop.2.24]{HWZnew}.

\begin{proposition} \label{prop:CornerRecognition}
Let $f:\cO \to \cO'$ be an sc$^\infty$ diffeomorphism between open
  subsets of splicing cores with corners -- that is a sc$^\infty$ bijection
  with sc$^\infty$ inverse $f^{-1}:\cO' \to \cO$.
Then it intertwines the degeneracy indices; in other words $d_\cO =
  d_{\cO'}\circ f$.
\end{proposition}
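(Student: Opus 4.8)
The plan is to reduce the statement to the local, pointwise question: at a given smooth point $p\in\cO$, the degeneracy index is an intrinsic invariant of the germ of $\cO$ at $p$ together with its sc-retract structure, and hence is preserved under any sc$^\infty$ diffeomorphism. The key structural fact is that a neat sc-retraction straightens the sc-sector: near a smooth point $p$ with $d_\cO(p)=j$, after an sc$^\infty$ change of coordinates we may assume that the retract $\cO$ is, locally, a neat sc-retract inside $[0,\infty)^j\times\R^{k-j}\times\E'$ whose tangent space at $p$ projects surjectively onto the $\R^j$ factor and contains the full positive directions of that factor in its boundary. Condition (ii) (sc-neatness) of Definition~\ref{def:neat} is exactly what guarantees that this $\R^j$ factor genuinely records "how many boundary walls pass through $p$", and condition (iii) guarantees that non-smooth points of $\cO$ are limits of smooth points in the same stratum, so it suffices to work with smooth points.

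First I would set up the local picture. Let $f:\cO\to\cO'$ be the given sc$^\infty$ diffeomorphism with retractions $r:\cU\to\cU\subset[0,\infty)^k\times\E$ and $r':\cU'\to\cU'\subset[0,\infty)^{k'}\times\E'$. Pick $p\in\cO$; by condition (iii) we may take $p$ smooth (approximate and use continuity of $d_\cO$ on each stratum, which is the statement we are proving on an open dense subset, then pass to limits). Set $j:=d_\cO(p)$, $j':=d_{\cO'}(f(p))$. The tangent map $\rT f = \rT r'\circ\rT(\io_{\cO'}\circ f\circ r)|_{\rT\cO}$ restricts to an sc-isomorphism $\rT_p\cO\to\rT_{f(p)}\cO'$ by the chain rule (Theorem~\ref{thm:ChainRule}) applied to $f$ and $f^{-1}$. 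Because both tangent spaces are sc-neat, $\rT_p\cO$ has an sc-complement inside $\{0\}\times\E$ and projects \emph{surjectively} onto $\R^k$; intersecting with the closed positive cone, the number of independent boundary inequalities that are tight at $p$ equals $j$, and this number is an algebraic invariant of the pair $(\R^k$-cone, $\rT_p\cO)$ that transforms correctly under the linear sc-isomorphism $\rT_pf$ once we know $\rT_pf$ sends the "cone of admissible variations of $\cO$ at $p$" to that of $\cO'$ at $f(p)$.

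The heart of the argument is to show that $\rT_pf$ maps the tangent cone $\mathrm{C}_p\cO:=\{\rT\gamma(0,1): \gamma \text{ sc}^1 \text{ path in }\cO,\ \gamma(0)=p\}$ onto $\mathrm{C}_{f(p)}\cO'$, and then to read off $j$ from this cone. By Lemma~\ref{lem:TangentRetract}(ii) this set of path-tangents equals $\rT_p\cO$ as a \emph{linear} space, but the relevant finer object is the subcone of tangents to paths that actually stay in $\cO\cap([0,\infty)^k\times\E)$ — i.e.\ respecting the boundary inequalities. Since $f\circ\gamma$ is again an sc$^1$ path in $\cO'$ (the composition $\io_{\cO'}\circ f\circ r\circ\gamma$ is sc$^1$ by the chain rule, and its image lies in $\cO'$), $\rT_pf$ carries path-tangents to path-tangents, and applying the same to $f^{-1}$ shows it is a bijection of tangent cones. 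The degeneracy index is then recovered as $j = \dim\R^k - \dim(\mathrm{C}_p\cO - \mathrm{C}_p\cO)|_{\R^k\text{-part}} + (\text{number of tight walls})$; more cleanly, following HWZ's argument for splicing cores (\cite[Thm.~3.11]{HoferWysockiZehnder1}), one observes that in straightened coordinates the tangent cone is $[0,\infty)^j\times(\text{linear})$, whose maximal linear subspace has codimension exactly $j$ inside the smallest linear space containing it; this codimension is an isomorphism invariant, hence $j=j'$.

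The main obstacle I anticipate is handling the \emph{non-smooth} points and, relatedly, making the "straightening" rigorous in the sc-retract (as opposed to splicing-core) setting. For splicing cores the family of projections $\pi_v$ gives an explicit normal form — this is why \cite[Thm.~3.11]{HoferWysockiZehnder1} works — but for a general neat retraction one must argue that conditions (i)–(iii) of Definition~\ref{def:neat} suffice: condition (ii) provides, at each smooth $p$, an sc-complement which one uses to build local sc$^\infty$ coordinates in which $\rD_pr$ becomes a standard projection splitting off the cone directions, and condition (iii) lets one extend the resulting corner-index statement from the dense set of smooth points to all of $\cO$ by continuity of $d_\cO$ on strata. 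Since the proposition as stated is restricted to splicing cores, I would in fact invoke \cite[Thm.~3.11]{HoferWysockiZehnder1} for the smooth-point normal form directly, and spend the real work on: (a) verifying $\rT_pf$ restricts to an sc-isomorphism of tangent cones including their cone structure, via the chain rule for $f$ and $f^{-1}$; and (b) the limiting argument from smooth to arbitrary points, using that $f$ and $f^{-1}$ are homeomorphisms and hence preserve the closure relations among corner strata.
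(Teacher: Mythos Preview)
The paper does not actually prove this proposition: the text immediately preceding it states that the result ``is proven for the special case of splicing cores in \cite[Thm.~3.11]{HoferWysockiZehnder1}, and announced for general neat retracts in \cite{hwzbook},'' and the proposition is simply stated without proof as an imported fact. So there is no ``paper's own proof'' to compare against; the paper's entire treatment is the citation you yourself invoke.

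Your proposal goes beyond this by sketching an independent argument via tangent cones. The strategy --- identify $d_\cO(p)$ as the codimension of the maximal linear subspace of the tangent cone inside its linear span, then transport this under $\rT_pf$ --- is the standard one and is essentially how \cite[Thm.~3.11]{HoferWysockiZehnder1} proceeds in the splicing setting. Two points of caution: first, your limiting argument from smooth to non-smooth points is circular as written (you want to use ``continuity of $d_\cO$ on each stratum, which is the statement we are proving''); the correct version uses that $d_\cO$ is locally constant on each corner stratum \emph{a priori} from its definition in the ambient sector, together with condition~(iii) and the fact that $f$, being a homeomorphism, preserves stratum closures. Second, the identification of the tangent cone with $[0,\infty)^j\times(\text{linear})$ in straightened coordinates requires care in the sc-setting because sc$^1$ paths need not be $\cC^1$ on the $0$-level; in the splicing case this is handled because the retraction acts as the identity on the $\R^d$-factor containing the corner coordinates, so one can work with the finite-dimensional projection directly.

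Since the proposition as stated is restricted to splicing cores and you explicitly fall back on \cite[Thm.~3.11]{HoferWysockiZehnder1}, your proposal is adequate for the paper's purposes --- indeed it is more than the paper itself offers.
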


With this language in place, we define the notion of an M-polyfold with
  boundary and corners in more technical detail than previously outlined.
Here $I$ can be any index set.

\begin{definition} \label{def:MpolyfoldCorners}
An {\bf M-polyfold with corners} is a second countable and metrizable
  space $\cX$ together with an open covering $\cX=\bigcup_{i\in I} U_i$
  by the images under homeomorphisms $\phi_i:U_i\to\cO_i$ from sc-retracts
  with boundary and corners $\cO_i\subset[0,\infty)^{k_i}\times\E_i$.
These chart maps are required to be compatible in the sense that
  the transition map is sc$^\infty$ for any $i,j\in I$ with $U_i\cap
  U_j\neq\emptyset$; in other words, this requires sc$^\infty$ regularity
  of the map
  $$
    \io_j \circ \phi_j \circ \phi_i^{-1} \circ r_i \,:\;
    [0,\infty)^{k_i} \times \E_i \;\supset\;  r_i^{-1} \bigl( \phi_i
    (U_i\cap U_j) \bigr) \;\longrightarrow\; [0,\infty)^{k_j} \times
    \E_j ,
  $$
  where $r_i$ is any sc-retract with boundary on  $[0,\infty)^{k_i}
  \times \E_i$ with image $\cO_i$.

An {\bf M-polyfold with corners modeled on sc-Hilbert spaces}
  is a metrizable space with compatible charts as above, such
  that each $\E_i$ is an sc-Hilbert space in the sense of
  Definition~\ref{def:scBanachSpace}.
\end{definition}

Taking $k_i=0$ for all $i\in I$ in the above definition reproduces the
  notion of an M-polyfold without boundary.
Restricting to $k_i=0$ or $1$ provides the definition of an M-polyfold
  with boundary (but no corners).
Unfortunately, such a notion of ``cornerless'' M-polyfold is
  not applicable to general moduli spaces of Morse trajectories or
  pseudoholomorphic curves with Lagrangian boundary values, even if their
  ``expected dimension'' does not allow for corners.
This is because the M-polyfold must contain all -- however nongeneric --
  unperturbed solutions.

Due to Proposition~\ref{prop:CornerRecognition} and the sc$^0$ regularity
  of transition maps, we now obtain two stratifications of an M-polyfold
  with corners.
Neither of these will be a stratification in the sense of Whitney;
  they are just sequences of subsets of $\cX$.
To obtain a stratification by ``regularity'' we denote the scales of
  the sc-Banach spaces $\E_i$ in the domain of the chart maps $\phi_i$ by
  $\E_i=(E_{i,m})_{m\in\N_0}$, and the dense subset by $E_{i,\infty}\subset
  E_{i,m}$.

\begin{definition} \label{def:MpolyfoldStrata}
Let $\cX$ be an M-polyfold with corners.
For $k\in\N_0$ the $k$-th {\bf corner stratum} $\cX^{(k)}\subset
  \cX$ is the set of all $x\in \cX$ such that in some chart
  $d_{\cO_i}(\phi_i(x))=k$.

For $m\in\N_0$  the $m$-th {\bf regularity stratum} $\cX_{m}\subset
  \cX$ is the set of all $x\in \cX$ such that for some chart (and hence
  for all charts) we have $\phi_i(x)\in [0,\infty)^{k_i}\times E_{i,m}$.
In particular, the {\bf smooth points} of $\cX$ are those $x\in\cX$
  with $\phi_i(x)\in [0,\infty)^{k_i}\times E_{i,\infty}$ for all charts;
  in other words, the smooth points are those in the intersection
  $\bigcap_{m\in\N_0} \cX_m$.
  
\end{definition}

Observe that ``corner strata''  are disjoint, with one dense stratum,
  whereas the ``regularity strata'' are nested and all dense in $\cX$.

\begin{example}[\bf corner and regularity strata in Morse theory]\rm
To see examples of the above strata in an M-polyfold, we again consider
  the Morse trajectory spaces of Example \ref{ex:MorseTrajectorySpaces}.
Using notation of Definition \ref{def:MpolyfoldStrata} we see that
  the \(m\)-th regularity stratum of \(\cX=\cX_a^c\), denoted \(\cX_m\),
  is given by union of two sets:
\begin{enumerate}
  \item 
  equivalence classes of the form \([\chi + u^\chi]\in
    \widetilde{\mathcal{B}}_a^c\) for which \(\chi\in \mathcal{C}^\infty\)
    is constant outside of a compact domain and \(u^\chi\in
    W_{\delta_m}^{m+2, 2}\),
  \item 
  pairs of equivalence classses of the form \(([\phi + u^\phi],
    [\psi + u^\psi])\in \widetilde{\mathcal{B}}_a^b\times
    \widetilde{\mathcal{B}}_b^c\) for which \(\phi, \psi\in
    \mathcal{C}^\infty\) are constant outside of a compact domain and
    \(u^\phi, u^\psi\in W_{\delta_m}^{m+2, 2}\).
\end{enumerate}
This demonstrates that the regularity strata are determined by the degree
  of differentiability (i.e.\ regularity) of the maps (or pairs thereof)
  representing points in our M-polyfold.
This is further justification for calling the infinity level the space of
  ``smooth points''.

To identify the corner strata in our Morse theory example, we employ
  Example \ref{ex:localModelInMorseTheory} which provides local models
  and shows that
  \begin{equation*}
    \cX^{(0)} =  \widetilde{\mathcal{B}}_a^c \qquad\text{and}\qquad
    \cX^{(1)} =  \widetilde{\mathcal{B}}_a^b \times
    \widetilde{\mathcal{B}}_b^c.
    \end{equation*}
If the Morse function had additional critical points, say \(d\in
  \mathbb{R}^n\) with \(f(a)< f(b)< f(c) < f(d)\), then one could build
  an M-polyfold $\cX=\cX_a^d$  which contains all broken and unbroken
  trajectories between \(a\) and \(d\). Its corner strata would be given by
  \begin{align*}
    \cX^{(0)} = \widetilde{\mathcal{B}}_a^d, \qquad\quad
    \cX^{(1)} =  \widetilde{\mathcal{B}}_a^b\times
    \widetilde{\mathcal{B}}_b^d \;\sqcup\;
    \widetilde{\mathcal{B}}_a^c\times \widetilde{\mathcal{B}}_c^d,
    \qquad\quad \cX^{(2)} = \widetilde{\mathcal{B}}_a^b\times
    \widetilde{\mathcal{B}}_a^b \times\widetilde{\mathcal{B}}_a^c.
    \end{align*}
As before, the unbroken trajectories comprise the ``interior points''
  \(\cX^{(0)}\), and the once broken trajectories comprise the
  ``boundary points'' \(\cX^{(1)}\) essentially because there exist
  local charts given by pregluing maps of the form given in Example
  \ref{ex:localModelInMorseTheory} which attach each of the spaces
  \(\widetilde{\mathcal{B}}_a^b\times \widetilde{\mathcal{B}}_b^d\) and
  \(\widetilde{\mathcal{B}}_a^c\times \widetilde{\mathcal{B}}_c^d \) to
  \(\widetilde{\mathcal{B}}_a^d\) using a single gluing parameter \(v\in
  [0, 1)\).
To establish that \(\cX^{(2)} = \widetilde{\mathcal{B}}_a^b\times
  \widetilde{\mathcal{B}}_a^b \times\widetilde{\mathcal{B}}_a^c \) one must
  construct an sc-retract on \([0, 1)\times [0, 1) \times W^{2, 2} \times
  W^{2, 2}\times W^{2, 2}\) and a pregluing map \((v_1, v_1, u_a, u_b,
  u_c)\mapsto \oplus_{R_{v_1}, R_{v_1}}(u_a, u_b, u_c)\) which attaches the
  twice broken trajectories to the once broken and unbroken trajectories.
By doing so, one shows that the twice broken trajectories are ``corner
  points'' in \(\cX^{(2)}\).

Finally, we note that it is tempting to think of the corner stratum as
  measuring complexity of broken or nodal objects (for example,
  as a count of number of components, or as a count of the number
  of non-vanishing gluing parameters needed to construct a smooth,
  i.e.\ non-nodal and unbroken, map or trajectory), however this is
  completely incorrect.
Indeed, as mentioned in Remark~\ref{rk:sectors and degeneracy}, the
  closed curves arising in Gromov-Witten theory may have many nodal
  components, which then requires many gluing parameters to be
  attached to the space of non-nodal curves; however each of these
  gluing parameters lies in an open disk rather than in a neighborhood
  of $0$ in  \([0, 1)\) (or more generally  \([0, 1)^k\)).
Consequently, all nodal curves in Gromov-Witten theory have degeneracy
  index zero, or equivalently all boundary and corner strata are empty.
\end{example}

\section{Strong bundles and Fredholm sections}\label{sec:bundles}

With the notion of scale smoothness and M-polyfolds in place, the
  purpose of this section is to introduce the remaining notions of bundles
  and Fredholm sections that are used in the statement of the polyfold
  regularization theorem.
Recall that this result uses M-polyfolds as ambient spaces and
  associates a unique cobordism class of smooth compact manifolds to each
  suitable Fredholm section.
Here and throughout we will discuss neither isotropy (which requires
  a generalization to groupoids modeled on M-polyfolds with orbifolds as
  perturbed zero sets) nor orientations (which require determinant line
  bundles of the Fredholm sections).
Boundaries and corners are discussed further in
  Remark~\ref{rmk:RegularizationCorners}.
Let us moreover mention that, while we introduce the notion of bundles
  and Fredholm sections in the general framework of retractions, the
  implicit function and regularization theorems are presently published
  only in the more restrictive setting of splicings.
To guide the presentation we begin with the statement and vague
  introduction of the new notions, which will then be made precise in
  a step by step manner in the following sections.

\setcounter{theorem}{0}

\begin{theorem}[\bf Polyfold regularization, \cite{HoferWysockiZehnder2}
  Thm.5.22] \label{thm:PolyfoldRegularization2}
Let ${\rm pr}:\cY\to\cX$ be a strong M-polyfold bundle with corners
  (see Definition \ref{def:sBundle}) modeled on sc-Hilbert spaces,
  and let $s:\cX\to\cY$ be a proper Fredholm section (see Definition
  \ref{def:scFredholmSection}).
Then there exists a class of sc$^+$-sections $\nu:\cX\to\cY$ (see
  Definition \ref{def:sc+}) supported near $s^{-1}(0)$ such that $s+\nu$
  is transverse to the zero section and $(s+\nu)^{-1}(0)$ carries the
  structure of a smooth compact manifold with corners.

Moreover, for any other such perturbation $\nu':\cX\to\cY$ there
  exists a smooth compact cobordism between $(s+\nu')^{-1}(0)$ and
  $(s+\nu)^{-1}(0)$.
\end{theorem}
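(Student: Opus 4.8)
The plan is to reduce Theorem~\ref{thm:PolyfoldRegularization2} to an abstract statement entirely parallel to the proof strategy of the finite dimensional regularization Theorem~\ref{thm:model}, with the three new ingredients (scale calculus, sc-retracts, and the built-in contraction property of polyfold-Fredholm sections) each playing the role of their classical counterpart. First I would fix, once and for all, the perturbation class: say a perturbation $\nu:\cX\to\cY$ is \emph{admissible} if it is an sc$^+$-section (see Definition~\ref{def:sc+}), is supported in a precompact neighbourhood $\cU$ of $s^{-1}(0)$, and has sufficiently small pointwise norm relative to an auxiliary metric on $\cY$. The existence of enough admissible $\nu$ making $s+\nu$ transverse to the zero section is the content of the first half of the theorem; I would invoke it as already established (it is proved by covering the compact set $s^{-1}(0)$ by finitely many Fredholm charts, using the local sc$^+$-perturbations supplied by the implicit function theorem / Sard--Smale machinery together with the existence of sc-smooth cutoff functions — which is exactly why the sc-Hilbert hypothesis is needed — and patching). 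Then $(s+\nu)^{-1}(0)$ is cut out transversally, hence by the M-polyfold implicit function theorem it is a smooth manifold with corners, of dimension equal to the Fredholm index of $s$; properness of $s$ together with the small-support condition forces $(s+\nu)^{-1}(0)$ to be compact.

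For the cobordism statement, I would run the standard homotopy-of-sections argument one dimension up. Given two admissible perturbations $\nu_0=\nu$ and $\nu_1=\nu'$, consider the M-polyfold with corners $[0,1]\times\cX$ (the product of a compact interval with an M-polyfold is again an M-polyfold with corners, with $\{0,1\}\times\cX$ contributing one extra unit of corner index; this is a routine verification using the product sc-retracts), and the pulled-back strong bundle $\mathrm{pr}_\cX^*\cY \to [0,1]\times\cX$. Over it, define the section $S(t,x) := s(x) + (1-t)\nu_0(x) + t\nu_1(x)$. One checks that $S$ is again a proper polyfold-Fredholm section: the Fredholm and contraction properties are inherited from $s$ since adding the sc$^+$-section $(1-t)\nu_0+t\nu_1$ and the finite-dimensional parameter $t$ changes the linearization only by a compact, respectively finite-rank, operator (sc$^+$-sections are compact perturbations by Definition~\ref{def:sc+}), and properness of $S$ follows from properness of $s$ together with compactness of $[0,1]$ and the uniform support control. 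Applying the already-invoked transversality part of the theorem (now for the section $S$ over $[0,1]\times\cX$, relative to the boundary pieces $\{0,1\}\times\cX$ on which $S$ is already transverse by construction), I obtain an admissible sc$^+$-perturbation $N(t,x)$, which I may take to vanish near $\{0,1\}\times\cX$, such that $S+N$ is transverse to the zero section. Its zero set $W:=(S+N)^{-1}(0)$ is then a smooth compact manifold with corners, of dimension $\dim(s^{-1}(0))+1$, whose boundary faces meeting $\{t=0\}$ and $\{t=1\}$ are exactly $(s+\nu_0)^{-1}(0)$ and $(s+\nu_1)^{-1}(0)$ — giving the desired cobordism. (If one insists on a boundaryless cobordism in the interior, one extends $\nu_0,\nu_1$ collar-wise so that $N$ can be chosen with $N|_{t\le\varepsilon}\equiv 0\equiv N|_{t\ge 1-\varepsilon}$; this is a purely bookkeeping step.)

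The step I expect to be the genuine obstacle is verifying that the product section $S$ over $[0,1]\times\cX$ is still a \emph{polyfold-Fredholm} section in the precise technical sense (Definition~\ref{def:scFredholmSection}), in particular that its associated filled section inherits the required contraction estimate uniformly in the extra parameter $t$, and that the boundary/corner stratification of $[0,1]\times\cX$ interacts correctly with transversality — i.e. that one can choose the perturbation $N$ \emph{neat} with respect to the corners so that $W$ is a manifold with corners and not something worse. In the cornerless case this is essentially immediate, but the statement as given allows corners, so one must track the degeneracy index (Definition~\ref{def:DegenIndex}) through the product and ensure Proposition~\ref{prop:CornerRecognition}-type corner-recognition still applies; this is where I would be most careful and would cite the corresponding boundary version of the implicit function theorem, which is what makes the neat-retract condition of Definition~\ref{def:neat} pay off. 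Everything else — smoothness and compactness of $W$, the dimension count, and the identification of the two ends — is a direct transcription of the finite-dimensional cobordism argument, now legitimate because scale calculus satisfies the chain rule (Theorem~\ref{thm:ChainRule}) and the polyfold regularization package has been set up precisely to make such transcriptions work.
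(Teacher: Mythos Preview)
The paper does not itself prove Theorem~\ref{thm:PolyfoldRegularization2}. This is an expository article: after stating the theorem, the paper develops the definitions (strong bundles, sc$^+$-sections, Fredholm fillings, auxiliary norms) in Sections~\ref{ss:Bundle}--\ref{ss:reg} and then restates the result in technically precise form as Theorem~\ref{thm:PolyfoldRegularization3}, citing \cite[Theorem~5.22]{HoferWysockiZehnder2}; the corner case is addressed by citation in Remark~\ref{rmk:RegularizationCorners}. So there is no ``paper's own proof'' to compare against beyond this reduction-to-citation.

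Your sketch is nevertheless a faithful outline of the HWZ argument and matches the structure the paper gestures at: auxiliary norm plus controlling neighbourhood to guarantee compactness (the paper's \cite[Theorem~5.12]{HoferWysockiZehnder2}), local sc$^+$-perturbations patched via sc-Hilbert cutoff functions, the M-polyfold implicit function theorem (Theorems~\ref{iftnoboundary} and~\ref{iftwithboundary}), and a parametrized version over $[0,1]\times\cX$ for the cobordism. Two small refinements relative to how the paper packages things: first, the paper's precise statement (Theorem~\ref{thm:PolyfoldRegularization3}(ii)) allows the two perturbations $\nu_0,\nu_1$ to be controlled by \emph{different} auxiliary norms and neighbourhoods $(N_i,\cU_i)$, so before your linear interpolation you would need to pass to a common pair controlling compactness --- this is where the equivalence of auxiliary norms near $s^{-1}(0)$ (\cite[Lemma~5.8]{HoferWysockiZehnder2}) enters. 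Second, your identification of the genuine technical point is accurate: in the HWZ framework the stability of the polyfold-Fredholm property under sc$^+$-perturbation is a theorem (\cite[Theorem~3.9]{HoferWysockiZehnder2}, stated in the paper just before Section~\ref{ss:reg}), and the neat-transversality relative to the corner stratification of $[0,1]\times\cX$ is exactly what Remark~\ref{rmk:RegularizationCorners} is flagging.
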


Some of the notions here can be easily defined by copying the notions
  from classical differential geometry. In particular we introduce a first,
  rather weak, notion of bundle.
  
\begin{definition}\hspace{2mm} \\ \vspace{-5mm}  \label{def:prelim}
\begin{enumerate}
  \item
  A map $f:\cX \to \cY$ between two M-polyfolds is sc$^\infty$ if
    it pulls back to sc$^\infty$ maps $\psi\circ f \circ \phi^{-1}:
    \cO\supset \phi(U\cap V) \to \cR$ in any pair of charts $\phi:\cX
    \supset U\to \cO\subset \E$, $\psi:\cY\supset V \to \cR \subset \F$.
  In particular, a bijection $f:\cX \supset U \to V\subset \cY$ between
    open subsets of M-polyfolds is called {\bf sc-diffeomorphism} if it
    pulls back to sc-diffeomorphisms between open subsets of any pair
    of charts.
  \item
  A {\bf topological M-polyfold bundle} is an sc$^\infty$
    surjection ${\rm pr}:\cY\to\cX$ between two M-polyfolds together
    with a real vector space structure on each fiber $\cY_x:={\rm
    pr}^{-1}(x)\subset\cY$ over $x\in \cX$.
  (That is, each $\cY_x$ is equipped with compatible multiplication by
    $\R$ and addition, and hence a unique zero vector $0_x\in\cY_x$.)
\footnote
  { 
  The analogous classical notion of topological bundle also requires
    local trivializations.
  We avoid this condition here since the polyfold notion of trivializations
    in Definition~\ref{def:Bundle} will not yield identifications of
    the fibers.
  }
  
\item
  A {\bf section} of ${\rm pr}:\cY\to\cX$ is an sc$^\infty$ map
    $s:\cX\to\cY$ such that ${\rm pr}\circ s =\id_\cX$.
  It is called {\bf proper}\footnote
    {
    For applications, for example to Gromov-Witten moduli spaces, one
      should think of $\cX$ as consisting of equivalence classes of
      maps of a fixed homology class.
    The related notion of ``component-properness'' would allow one to
      consider an M-polyfold that contains maps of any homology class,
      where compactness of $s^{-1}(0)$ is only required in each fixed
      connected component.
    } 
  if its zero set $s^{-1}(0)$ is compact in the relative topology
  of $\cX$,
  $$
    s^{-1}(0) := \bigl\{ x\in\cX \,\big|\, s(x) = 0_x \in \cY_x \bigr\}
    \subset\cX .
  $$
  \end{enumerate}
\end{definition}

The notion of an M-polyfold bundle, introduced in Section~\ref{ss:Bundle},
  will be a vast strengthening of this notion of a surjection with linear
  structure on the fiber, in which the local models for the total space
  $\cY$ are generalized splicing cores, and which are given by families
  of projections that are parametrized by the retract; the latter is
  the local model for the base $\cX$.
When it comes to Fredholm theory, the notion of a Fredholm section will
  implicitly require a ``fillability'' property of the local models for the
  bundle  -- namely an even closer relationship between the retractions
  modeling $\cY$ and $\cX$. 
Here the idea is that there is a scale smooth family of isomorphisms
  between the fibers of the complementary splicing modeling $\cY$ and a
  ``normal bundle'' to the retract that models the base $\cX$.
This ensures that the ``virtual vector bundle $\cY_x -\rT_x\cX$''
  has isomorphic fibers so that a nonlinear Fredholm theory is possible.

Furthermore, an M-polyfold bundle is ``strong'' essentially if it allows
  for a dense set of compact sections -- 
  whose linearizations are
  compact operators, which thus can be used to perturb Fredholm sections
  to achieve transversality.
The corresponding sections will be called sc$^+$, and are more formally
  introduced at the end of Section~\ref{ss:Bundle}.
Finally, the notion of a Fredholm section is discussed in
  Section~\ref{ss:fred}, and Section~\ref{ss:reg} gives a more technical
  description of the admissible class of perturbations (which, in particular,
  are required to preserve the compactness of the resulting zero set).

\subsection{M-polyfold bundles} \label{ss:Bundle}

The preliminary notion of a bundle over an M-polyfold in Definition
  \ref{def:prelim}~(ii) is refined by the restriction to the following
  local models. 
These models generalize the classical notion of a local model for a
  Banach bundle, which we recall are trivial bundles over open subsets
  in a Banach space.

\begin{definition} \label{def:BundleRetract}
Let $\cO\subset [0,\infty)^k\times\E$ be an sc-retract with corners in
  the sense of Definition~\ref{def:neat}, and let $\F$ be an sc-Banach
  space.
Then a {\bf sc-bundle retract} over $\cO$ in $\F$ is a family of
  subspaces $(\cR_p \subset \F)_{p\in\cO}$ that are scale smoothly
  parametrized by $p\in\cO$ in the following sense:
There exists a {\bf sc-retraction of bundle type}, 
  \begin{equation}\label{eq:BunRet}
    \cU\times\F \;\longrightarrow\; [0,\infty)^k\times\E\times\F , \qquad
    (v,e,f) \;\longmapsto\; \bigl( r(v,e) , \Pi_{(v,e)} f \bigr) ,
    \end{equation}
  given by a neat sc-retraction $r:\cU\to  [0,\infty)^k\times\E$
  with image $r(\cU)=\cO$ and a family of linear projections
  $\Pi_{(v,e)}:\F\to\F$ that are parametrized by $(v,e)\in\cU$, and whose
  images for $p=(v,e)\in\cO$ are the given subspaces $\Pi_p(\F)=\cR_p$.

To any such retract we associate the {\bf M-polyfold bundle model}
  $$
    \pr_{\cO} \,:\;  \cR = {\textstyle \bigcup_{p\in\cO}} \{p\}\times
    \cR_p \;\longrightarrow\; \cO , \qquad (p,f) \;\longmapsto\; p .
  $$
\end{definition}

Retractions of bundle-type are retractions themselves, and hence support
  sc-calculus as before.
In particular, and also as before, the local model is given by the retract
  and ambient space, whereas the choice of projections $\Pi_{(v,e)}$
  is auxiliary.

\begin{remark}\rm \label{rmk:BunSpli}
Continuing the comparison with the notion of splicings from
  Remark~\ref{rmk:SplicingCorners}, a special case of an sc-bundle retract
  is the splicing core associated to an {\bf sc-bundle splicing}
  $$
    U \times \E' \times \F \;\longrightarrow\; \E'\times\F , \qquad
    (v,e,f) \mapsto (\pi_v e , \Pi_v f)
  $$
  given by two families of projections $\pi_v$ and  $\Pi_v$ on $\E'$
  and $\F$ respectively.
Of interest is the fact that they are parametrized by the same
  open subset $U\subset [0,\infty)^k \times \R^{d-k}$ in a finite
  dimensional sector,  and they are scale smooth in the sense of
  Definitions~\ref{def:splicing} and \ref{def:neat}.
In the notation of \cite{HoferWysockiZehnder1}, these are models for
  M-polyfolds of type 0 in that we do not allow the ``projections in
  the fiber'' $\Pi$ to be parametrized by the splicing core $K^\pi$,
  but just by its gluing parameters $U$.
This appears to be sufficient for applications to Morse theory and
  holomorphic curve moduli spaces.
In this setting, the M-polyfold bundle model 
  $$
    \pr_{K^\pi} \,:\; \bigcup_{v\in U} \{v\}\times \pi_v(\E') \times
    \Pi_{v}(\F) \;\longrightarrow\;  K^\pi = \bigcup_{v\in U} \{v\}\times
    \pi_v(\E')
  $$
  is {\bf fillable} if there exists a family of isomorphisms
  $f^{\scriptscriptstyle C}_v : \ker \pi_v \overset{\cong}{\to} \ker\Pi_v$
  such that $U \times \E' \to \F$, $(v,e) \mapsto f^{\scriptscriptstyle
  C}_v (e - \pi_v e)$ is sc$^\infty$.
\end{remark}

\begin{example}\rm
The construction of a bundle splicing for Morse theory is briefly
  discussed in Section~\ref{ss:ret}.
When the ambient space of the Morse trajectories is $X=\R^n$, then the
  splicing in the fiber is essentially the same as for the base with the
  following modifications: Firstly, the fiber does not require hypersurface
  slicing conditions; secondly, the regularity of functions in the fiber
  is one less than that in the base, so the section $\gamma \mapsto
  (\gamma,\dot\gamma)$ is scale continuous.
Finally, the maps in the fiber converge to $0$ on both ends.
\end{example}

Now we can refine the notion of a topological M-polyfold bundle from
  Definition~\ref{def:prelim}~(ii) by requiring the bundle to be locally
  sc-diffeomorphic to an M-polyfold bundle model.

\begin{definition}\label{def:Bundle}
An {\bf M-polyfold bundle} is an sc$^\infty$ surjection ${\rm
  pr}:\cY\to\cX$ between two M-polyfolds together with a real vector space
  structure on each fiber $\cY_x:={\rm pr}^{-1}(x)\subset\cY$ over $x\in
  \cX$ such that, for a sufficiently small neighborhood $U\subset\cX$
  of any point in $\cX$, there exists a {\bf local sc-trivialization}
  $\Phi: \cY \supset {\rm pr}^{-1}(U) \to \cR$.
The latter is an sc$^\infty$ diffeomorphism to an sc-bundle retract
  $\cR = \bigcup_{p\in\cO} \{p\}\times \cR_p \subset \E\times\F$ that
  covers an M-polyfold chart $\phi:U\to\cO\subset\E$ in the sense that
  $\pr_\cO \circ\Phi = \phi\circ {\rm pr}$, and preserves the linear
  structure in the sense that $\Phi|_{\cY_x} :\cY_x \to \{\phi(x)\}
  \times \cR_{\phi(x)}$ is an isomorphism in every fiber over $x\in U$.
\end{definition}

To obtain a good set of perturbations for Fredholm sections, we refine
  this notion further by requiring the existence of a ``subbundle of higher
  regularity'', analogous to the fibers $W^{1,p}(S^2,u^*\rT M)$ $\subset
  L^p(S^2,u^*\rT M)$ of a bundle over $W^{1,p}$-regular maps $u:S^2 \to M$.
These ``higher regularity fibers'' will be the target spaces for
  ``lower order perturbations'' of the section -- in this case the
  Cauchy-Riemann operator 
$\pbar: W^{1,p}(S^2,M) \to\bigcup_u  \{u\} \times L^p(S^2,\Lambda^{0,1}\otimes u^*\rT M)$.
In \cite{HoferWysockiZehnder1} this is formalized by introducing double
  filtrations and new notions of scale smoothness with respect to
  these scales.
We have chosen a more minimalist, yet equivalent, route.
Note here that in our notation, one should think of the ambient space
  for the base retract $\E$ and the ambient space for the fibers $\F$ as
  sc-Banach spaces such as $\E=\bigl(W^{1+m}(S^2,\C^n)\bigr)_{m\in\N_0}$
  and $\F=\bigl(W^{m}(S^2,\C^n)\bigr)_{m\in\N_0}$ whose scales are shifted
  by the order of the differential operator that we wish to encode as
  section of the bundle.
For that purpose we introduce the notation $\F_1:=(F_{m+1})_{m\in\N_0}$
  for the scale structure induced by $\F$ on its subspace $F_1$ as
  mentioned in Remark~\ref{rmk:sc-Banach}.
   
\begin{definition}\label{def:sBundle}
An M-polyfold bundle ${\rm pr}:\cY\to\cX$ is called {\bf strong} if it
  has trivializations in strong M-polyfold bundle models that are strongly
  compatible in the following sense.
\begin{enumlist}
  \item[(i)]
  A {\bf strong sc-retraction of bundle type} is a retraction $R:
    \cU\times\F \to [0,\infty)^k\times\E\times\F$, $(v,e,f)\mapsto
    \bigl( r(v,e) , \Pi_{(v,e)} f \bigr)$ as in \eqref{eq:BunRet}
    that restricts to an sc$^\infty$ map $\cU\times\F_1 \to
    [0,\infty)^k\times\E\times\F_1$, i.e.\ a retraction in the sc-Banach
    space $\bigl(\R^k\times E_m\times F_{m+1}\bigr)_{m\in\N_0}$.
  \item[(ii)]
  A {\bf strong M-polyfold bundle model} is the projection $\pr_{\cO}
    : \cR = {\textstyle \bigcup_{p\in\cO}} \{p\}\times \cR_p \to
    \cO$ from the total space of a {\bf strong sc-bundle retract}
    $(\cR_p\subset\F)_{p\in\cO}$ to its base retract $\cO$ as in
    Definition~\ref{def:BundleRetract}, where $\cR$ is the image of a
    strong retraction of bundle type.
  \item[(iii)]
Two local sc-trivializations $\Phi:  {\rm pr}^{-1}(U) \to \cR\subset
  [0,\infty)^k\times\E \times \F$, and $\Phi': {\rm pr}^{-1}(U') \to
  \cR'\subset [0,\infty)^{k'}\times\E' \times \F' $ to strong M-polyfold
  bundle models $\cR\to\cO$ and $\cR'\to\cO'$ are {\bf strongly compatible}
  if their transition map restricts to a scale smooth map with respect
  to the ambient sc-sectors $[0,\infty)^{k}\times \E\times \F_1$ and
  $[0,\infty)^{k'}\times \E' \times \F'_1$.
That is, we require sc$^\infty$ regularity of the map between these
  sectors in sc-Banach spaces of
  $$
    \io_{\cR'} \circ \Phi' \circ \Phi^{-1} \circ R  \,:\;
    R^{-1}\bigl( \Phi\bigl( {\rm pr}^{-1}(U\cap U') \bigr) \bigr)
    \cap [0,\infty)^k\times \E\times \F_1  \;\longrightarrow\;
    [0,\infty)^{k'}\times\E' \times \F'_1
  $$
  for any strong sc-retraction of bundle type with $R(\cU\times \F)=\cR$
  (and hence $R(\cU\times F_1)=\cR\cap (\cU\times F_1)$) and the
  inclusion $\io_{\cR'}:\cR'\cap (\cU'\times F'_1) \hookrightarrow
  [0,\infty)^{k'}\times E_0' \times  F'_1$.
  \end{enumlist}

\noindent
For a strong M-polyfold bundle $:\cY\to\cX$ we denote by ${\rm
  pr}|_{\cY^1}:\cY^1\to\cX$ the subbundle of vectors $Y\in \cY$ such
  that for some (and hence any) trivialization $\Phi:  {\rm pr}^{-1}(U)
  \to \cR\subset [0,\infty)^k\times\E \times \F$ to a strong M-polyfold
  bundle model we have $\Phi(Y)\in [0,\infty)^k\times E_0 \times F_1$.
\end{definition}

\begin{remark}\rm
Note that sc-bundle splicings in our simplified version of
  Remark~\ref{rmk:BunSpli} are automatically strong.
Indeed, scale smoothness of a family of projections $U\times\F \to
  \F, (v,f)\mapsto \Pi_v f$ directly implies scale smoothness of the
  restriction $U\times\F_1 \to \F_1$, since the dependence on $f$ is linear
  -- hence smooth once sc$^0$ -- and the scale structure on $U\subset\R^k$
  is trivial, hence it is oblivious to the shift in scales.
\end{remark}

\begin{example}\rm
In the example of Morse theory, the total space of the bundle over a
  space of unbroken trajectories $\widetilde\cB_a^c$ is $\widetilde\cE_a^c
  = \bigl(\cB_a^c\times W^{1,2}(\R,\R^n)\bigr)/\R$, where $\R$ acts by
  simultaneous shift on both factors.
This explains why the construction of charts only requires slicing
  conditions for the base.
The total space of the M-polyfold bundle over the space of broken and
  unbroken trajectories $\cX_a^c$ is then $\cY_a^c = \widetilde\cE_a^c
  \sqcup \widetilde\cE_a^b \times \widetilde\cE_b^c$, with the topology
  given by pregluing similar to Example~\ref{ex:TrajectoryTopology}.

In the bundle over unbroken trajectories, the ``higher
  regularity fibers'' discussed below are $\{\g\} \times
  W^{1+\ell,2}_{\delta_{\ell}}(\R,\g^*X)$ for $\g\in W^{2+k,2}_{\rm
  loc}\cap \widetilde\cB_a^c$ and $\ell=k+1$.
Whereas in case $X=\R^n$ with $\g^*X\cong\R^n$, these fibers are well
  defined for any $\ell>k$, and the general case of a nonlinear ambient
  space $X$ only allows for $\ell=k+1$.
This is because the $W^{1+\ell,2}_{\delta_{\ell}}$-completion of sections
  of $\g^*X$ requires a connection on $\g^*X$, or local trivializations,
  whose parallel transport and transition maps can only be as regular
  as $\g$, so a $W^{1+\ell,2}_{\delta_{\ell}}$-norm is well defined
  only for $1+\ell\leq k+2$.
\end{example}

The restriction to ``higher regularity fibers'' ${\rm
  pr}|_{\cY^1}:\cY^1\to\cX$ of any strong M-polyfold bundle is an
  M-polyfold bundle in its own right, since $\cY^1$ is an M-polyfold
  with local models in strong sc-retractions of bundle type in
  $[0,\infty)^k\times\E \times \F_1$, which are compatible by restriction
  of the strong compatibility requirement for the trivializations of
  $\cY\to\cX$.
The construction of these bundles uses the strongness assumption
  crucially; so, for example, the topological subbundle $\cY^2=\{
  Y\in\cY \,|\,  \Phi(Y)\in [0,\infty)^k\times E_0\times F_2 \}$
  over $\cX$ does not inherit a scale smooth structure, unless, for
  example, one additionally knows that all sc-bundle retracts are given
  by families of projections $\Pi_p:F_2\to F_2$ that are scale smooth
  as a map $(E_m\times F_{2+m})_{m\in\N} \to  (F_{2+m})_{m\in\N}$, which
  has no direct implication to or from regularity as a map $(E_m\times
  F_{1+m})_{m\in\N} \to  (F_{1+m})_{m\in\N}$.

Note that we still obtain more useful M-polyfold bundles from the
regularity stratifications on the M-polyfolds $\cY$ and $\cY^1$ that
are given by Definition~\ref{def:MpolyfoldStrata} (and which induce
different stratifications on $\cY^1\subset\cY$).
The regularity strata of $\cY$ resp.\ $\cY^1$ are
  \begin{align*}
    \cY_m
    &=
      \bigl\{Y\in\cY \,|\, \Phi(Y)\in [0,\infty)^k\times E_m\times F_m
      \;\text{in some chart}\; \Phi \bigr\} , \\
      \cY^1_m
    &=
      \bigl\{Y\in\cY \,|\, \Phi(Y)\in [0,\infty)^k\times E_m\times
      F_{m+1} \;\text{in some chart}\; \Phi \bigr\}.
    \end{align*}
Note that the restriction ${\rm pr}|_{\cY_m}:\cY_m\to\cX_m$ is
  another M-polyfold bundle since ${\rm pr}(\cY_m)\subset \cX_m$ by
  scale continuity, ${\rm pr}|_{\cY_m}$ locally surjects onto $\cX_m$
  in the M-polyfold bundle models, and the local trivializations are
  given by restriction of those for $p$.  Similarly, the restriction
  ${\rm pr}|_{\cY^1_m}:\cY^1_m\to\cX_m$ is another M-polyfold bundle
  for each $m\in\N_0$, so each regularity stratum $\cX_m$ of the
  base supports two bundles $\cY_m$ and $\cY^1_m$.
The fibers of the latter embed compactly and densely into the fibers
  of the former.
In fact, the motivation for introducing strong bundles is the need for
  ``compact perturbations,'' which we can now define rigorously as sections
  of $\cY_1$.
In addition, we introduce an abstract notion that encodes elliptic
  regularity for differential operators.
To begin, we recall the notion of scale smooth section from
  Definition~\ref{def:prelim}~(iii).

\begin{definition} \label{def:sc+}
Let ${\rm pr}:\cY\to\cX$ be a strong M-polyfold bundle. We denote the
  space of sc$^\infty$ sections by
  $$
    \G({\rm pr}) := \bigl\{ s: \cX\to \cY \;\text{sc}^\infty \,\big|\,
    {\rm pr}\circ s = {\rm Id}_{\cX} \bigr\} .
  $$
The subset of {\bf sc$^+$ sections} $\G^+({\rm pr})\subset\G({\rm pr})$
  consists of the sections $s:\cX\to\cY^1$ with values in $\cY_1$, that
  are in fact scale smooth as sections of $\cY_1\to \cX$, or equivalently
  $\G^+({\rm pr})\cong \G({\rm pr}|_{\cY_1})$.

Moreover, we call a section $s\in\G({\rm pr})$ {\bf regularizing}
  if the following implication holds:
  $$
    m\in\N_0, x\in \cX_m, s(x)\in \cY^1_m \;\Longrightarrow\;
    x\in\cX_{m+1} .
  $$
The space of regularizing sections is equivalently defined and denoted by
  $$
    \Gamma^{reg}({\rm pr}) :=  \bigl\{ s\in\G({\rm pr}) \,\big|\,
    \forall m\in\N_0 : s^{-1}(\cY^1_m) \subset \cX_{m+1} \bigr\} .
  $$
\end{definition}

Finally, we can phrase the fact that compact perturbations preserve
  elliptic regularity as some property of the appropriate sections,
  $$
    s \in \Gamma^{reg}({\rm pr}) , \nu \in \Gamma^{+}({\rm pr})
    \;\Longrightarrow\; f+\nu\in \Gamma^{reg}({\rm pr}) .
  $$

\begin{example}\rm
In the case of Morse theory, a change in the metric from $g$ to
  $g'$ corresponds to an sc$^+$ perturbation $\nu(\gamma)=\bigl(\g,
  \nabla^{g} f(\gamma) - \nabla^{g'} f(\gamma)\bigr)$ of the
  section $s(\gamma)=\bigl(\g,  \frac\rd{\rd t}\gamma - \nabla^{g}
  f(\gamma)\bigr)$.
However, in the case of Cauchy-Riemann operators, a perturbation
  of the almost complex structure from $J$ to $J'$ fails to be sc$^+$
  since the principal part of $\nu(u)=\bigl(u, (J'-J)\partial_t u\bigr)$
  is a differential operator of the same order as the principal part $u
  \mapsto \partial_s u + J \partial_t u$ of the section.
\end{example}

\subsection{Fredholm sections in M-polyfold bundles} \label{ss:fred}

Contrary to previous sections, we will work our way up towards the most
  general notion of Fredholm sections, starting with linear Fredholm
  operators and then proceeding via nonlinear Fredholm maps on sc-Banach
  spaces. 
Once this is accomplished, we will introduce the useful alternative
  notion of Fredholm maps with respect to a splitting into (finitely many)
  gluing parameters and an sc-Banach space.
The discussion in these stages is essentially copied from \cite{w:fred}.

We begin with \cite[Def.2.8]{HoferWysockiZehnder1} of an sc-Fredholm
  operator in terms of sc-direct sums $\E=X\oplus_{sc} Y$, which are
  defined in general as the splitting inducing an sc$^0$ isomorphism $\E
  \to (X\cap E_m)_{m\in\N_0} \times (Y\cap E_m)_{m\in\N_0}$.
This includes the nontrivial requirement that each sequence in the
  latter sc-product is in fact a scale structure on $X$ and $Y$
  respectively.
In particular, this implies that finite dimensional factors of an
  sc-direct sum must be contained in $E_\infty$.
We spell out the sc-direct sum requirements in detail below, though
  they will subsequently be simplified.

\begin{definition}  \label{def:FredOp}
Let $\E, \F$ be sc-Banach spaces.
A {\bf sc-Fredholm operator} $L:\E\to\F$ is a linear map $L: E_0\to F_0$
  that satisfies the following.
  \begin{enumerate}
  \item
  The kernel $\ker L$ is finite dimensional and has a $sc$-complement $
    \E = \ker L \oplus_{sc} X$ in the sense that $\ker L\subset E_\infty$
    and $X\subset E_0$ is a subspace on which $X_m:=(X\cap E_m)_{m\in\N_0}$
    induces a scale structure such that $E_m=\ker L \oplus X_m$ is a
    direct sum on every scale $m\in\N_0$.
  \item
  The image $L(E_0)$ has a finite dimensional $sc$-complement $\F =
    L(E_0) \oplus_{sc} C$ in the sense that $(L(E_0)\cap F_m)_{m\in\N_0}$
    induces a scale structure on $L(E_0)$ and $C\subset E_\infty$ is a
    finite dimensional subspace such that $F_m=(L(E_0)\cap F_m) \oplus C$
    is a direct sum on every scale $m\in\N_0$.
  \item
  The operator restricts to a $sc$-isomorphism $L|_X : X \to L(E_0)$
    in the sense that $L|_{X_m} : X_m \to L(E_0)\cap F_m$ is a bounded
    isomorphism on every scale $m\in\N_0$.
  \end{enumerate}
The {\bf Fredholm index} of  $L$ is $\ind(L) := \dim\ker L -
  \dim(\qu{F_0}{\im L})$.
\end{definition}

In practice one can prove the linear Fredholm property by checking the
  following simplified list of properties.

\begin{lemma}[\cite{w:fred} Lemma~3.6] \label{le:scfredlin}
Let $\E, \F$ be sc-Banach spaces.
Then a linear map $L: E_0\to F_0$ is  an sc-Fredholm operator if and
  only if it satisfies the following.
\begin{enumerate}
  \item
  $L$ is sc$^0$; that is, all restrictions $L|_{E_m} : E_m \to F_m$
    for $m\in\N_0$ are bounded linear operators.
  \item
  $L$ is {\bf regularizing}; that is, $e\in E_0$ and $L e\in F_m$ for
    any $m\in\N$ implies $e\in E_m$.
  \item
  $L: E_0 \to F_0$ is a Fredholm operator, that is, it has finite
    dimensional kernel $\ker L$ and cokernel $\qu{F_0}{L(E_0)}$.
  \end{enumerate}
\end{lemma}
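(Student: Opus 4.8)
The plan is to prove both directions of the equivalence in Lemma~\ref{le:scfredlin}, using Definition~\ref{def:FredOp} of an sc-Fredholm operator and the axioms of sc-Banach spaces (in particular the compact inclusions $E_{m+1}\hookrightarrow E_m$ from Lemma~\ref{lem:CompactInclusions}). The forward implication is essentially bookkeeping: if $L$ is sc-Fredholm in the sense of Definition~\ref{def:FredOp}, then (i) is immediate from part (iii) of that definition together with boundedness of $L$ on each scale (which follows since $L$ restricted to the finite-dimensional $\ker L\subset E_\infty$ is automatically bounded, and $L|_{X_m}$ is a bounded isomorphism onto a closed subspace of $F_m$); property (iii) of the Lemma is immediate from parts (i) and (ii) of Definition~\ref{def:FredOp}; and the regularizing property (ii) of the Lemma follows from the sc-isomorphism statement: if $e\in E_0$ with $Le\in F_m$, decompose $e = e_0 + x$ with $e_0\in\ker L\subset E_\infty$ and $x\in X$, so $Lx = Le - 0 \in F_m\cap L(E_0)$, and since $L|_X: X\to L(E_0)$ is an sc-isomorphism, $x\in X_m\subset E_m$, whence $e\in E_m$.

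The substantive direction is the converse. Here I would start from the classical Fredholm property (iii) on the base level $E_0 \to F_0$, giving a finite-dimensional $\ker L \subset E_0$ and a finite-dimensional complement $C_0$ to $L(E_0)$ in $F_0$, together with an algebraic splitting $E_0 = \ker L \oplus X$ with $X$ closed. The first task is to show $\ker L\subset E_\infty$: pick $e\in\ker L$; then $Le = 0 \in F_m$ for every $m$, so the regularizing hypothesis (ii) forces $e\in E_m$ for all $m$, hence $e\in E_\infty$. Thus $\ker L$ is a finite-dimensional subspace of $E_\infty$, and its intersection with each $E_m$ is itself (all norms on it being equivalent), so it trivially carries an sc-structure. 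Next I would arrange the cokernel complement $C$: since $E_\infty$ is dense in $F_0$ by the density axiom and $C_0$ is finite-dimensional, a standard perturbation argument (approximating a basis of $C_0$ by elements of $E_\infty$ and using openness of the transversality/complement condition) produces a finite-dimensional $C\subset E_\infty$ with $F_0 = L(E_0)\oplus C$. One then has to check $F_m = (L(E_0)\cap F_m)\oplus C$ on every scale: the sum is direct because $C\cap L(E_0) = \{0\}$ already in $F_0$; for spanning, given $f\in F_m\subset F_0$ write $f = Lx + c$ with $x\in X$, $c\in C$, and use the regularizing property again — $Lx = f - c\in F_m$ (since $c\in E_\infty\subset F_m$), so $x\in E_m$, hence $x\in X_m$ and $Lx\in L(E_0)\cap F_m$.

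It remains to verify that $X_m := X\cap E_m$ is a scale structure on $X$ and that $L|_{X_m}: X_m \to L(E_0)\cap F_m$ is a bounded isomorphism, as required by Definition~\ref{def:FredOp}(i) and (iii), and to check that $L(E_0)$ with the induced filtration is a scale structure. The inclusions $X_{m+1}\hookrightarrow X_m$ are bounded and compact by restriction of those for $\E$, and density of $X_\infty$ in $X_m$ follows since $E_\infty = \ker L \oplus X_\infty$ is dense in $E_m = \ker L \oplus X_m$ and $\ker L$ is finite-dimensional. That $L|_X : X \to L(E_0)$ is a bounded bijection on $E_0$ is just the open mapping theorem applied to $L: X\to L(E_0)$ (with $L(E_0)$ closed, as the range of a Fredholm operator). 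For the scale statement, given $y\in L(E_0)\cap F_m$ the unique preimage $x\in X$ satisfies $x\in E_m$ by the regularizing property, so $L|_X$ maps $X_m$ bijectively onto $L(E_0)\cap F_m$; boundedness of the inverse on each scale follows from the open mapping theorem on $E_m$ (noting $L|_{X_m}$ is bounded by (i) and injective with closed range). I expect the main obstacle to be the perturbation argument constructing the \emph{smooth} cokernel complement $C\subset E_\infty$ and verifying it splits $F$ compatibly at every scale — one must be careful that approximating $C_0$ inside $E_\infty$ does not destroy transversality to $L(E_0)$, and this is where the density axiom of sc-Banach spaces is used essentially; all the remaining verifications are routine functional analysis.
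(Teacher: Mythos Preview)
Your argument is correct and matches the strategy the paper sketches immediately after the statement (establish that $L$ is Fredholm on every scale with the same kernel and cokernel, then produce the sc-complements by elementary functional analysis); you have simply carried out the details that the paper defers to \cite{w:fred}. One slip to fix: in the construction of the smooth cokernel complement you repeatedly write $E_\infty$ where you mean $F_\infty$ (density of $F_\infty\subset F_0$, approximating a basis of $C_0$ by elements of $F_\infty$, and $c\in F_\infty\subset F_m$); also, when you invoke the open mapping theorem on scale $m$, make explicit that $L(E_0)\cap F_m$ is closed in $F_m$ because $L(E_0)$ is closed in $F_0$ and the inclusion $F_m\hookrightarrow F_0$ is continuous.
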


Indeed, \cite[\S 3.5]{w:fred} shows that regularizing sc$^0$ operators,
  which are Fredholm on the $0$-scale restrict to Fredholm operators
  $L|_{E_m}: E_m \to F_m$ on every scale, and have isomorphic kernel and
  cokernel.
Then, a little more functional analysis provides the sc-complements
  required by the more complicated notion of sc-Fredholm operator.

\begin{example} \rm
The prototypical examples of sc-Fredholm operators are the following
  elliptic operators:
  \begin{itemlist}
  \item
    $\frac\rd\dt : \cC^1(S^1) \to \cC^0(S^1)$ is an sc-Fredholm
      operator from $\bigl( \cC^{1+k}(S^1) \bigr)_{k\in\N_0}$ to $\bigl(
      \cC^{k}(S^1) \bigr)_{k\in\N_0}$.
  \item
  The Cauchy--Riemann operator $\overline\partial_J : W^{1,p}(S^2,\C^n)
    \to L^p(S^2,\Lambda^{0,1}\otimes_J\C^n)$ with respect to $J=i$
    on $\C^n$ and $j=i$ on $S^2=\CP^1$ is given by $u\mapsto \frac 12(
    J \circ \rd u \circ j + \rd u )$.
  (Its target is the $L^p$-closure of the smooth, $(J,j)$-antilinear
    $\C^n$-valued $1$-forms on $S^2$.)
  It is an sc-Fredholm operator from $\bigl( W^{1+k,p}(S^2,\C^n)
    \bigr)_{k\in\N_0}$ to $\bigl( W^{k,p}(S^2,\C^n) \bigr)_{k\in\N_0}$
    for any  ${1<p<\infty}$.
  \end{itemlist}

\noindent
Indeed, the sc$^0$-property of these operators is a formalization of
  the fact that linear differential operators of degree $d$ are bounded
  as operators between appropriate function spaces (e.g.\ H\"older or
  Sobolev spaces), with a difference of $d$ in the differentiability index.
The regularizing property, in this context, is simply the statement of
  elliptic regularity.
Finally, the elliptic estimates for an operator and its dual generally
  hold on all scales similar to the boundedness above, and this implies
  the Fredholm property on all scales.
\end{example}

Next, we need a notion of a nonlinear Fredholm map on sc-Banach spaces
  that allows for an implicit function theorem for sc$^1$ maps with
  surjective linearization.
This cannot simply be obtained by adding ``sc-'' in appropriate places to
  the classical definition of Fredholm maps since the implicit function
  theorem is usually proven by means of a contraction property in a
  suitable reduction.
Since the contraction will be iterated to obtain convergence, it needs
  to act on a fixed Banach space rather than between different levels of
  an sc-Banach space.
In classical nonlinear Fredholm theory, this contraction form follows
  from the continuity of the differential in the operator norm, and, in
  particular, continuity of the differential in the operator topology is
  indeed necessary to obtain the contraction property which allows one to
  use Banach's fixed point theorem.
However, for the generalized Cauchy-Riemann operators involved in
  the description of holomorphic curve moduli spaces, this stronger
  differentiability will not hold as soon as their domain contains gluing
  parameters which act on functions by reparametrization.
This issue is resolved in \cite[Def.3.6]{HoferWysockiZehnder2}\footnote
  {
  The following definition is not explicitly given in the
    current work of HWZ.
  It is obtained from the definition of a polyfold Fredholm section of
    a strong bundle as the special case of a section in a trivial bundle
    with trivial splicing.
  }
  by making the contraction property a part of the definition of
  Fredholm maps.\footnote
  {
  Note that a classical ``contraction property'' would be an estimate
    such as \eqref{contraction} for some $\theta<1$.
  However, Fredholm stability (preservation of the contraction property
    under appropriate perturbations) turns out to require this kind of
    estimate for arbitrarily small contraction factors $\theta>0$, just
    allowing for $\theta$-dependent domains.
  }

\begin{definition} \label{def:HWZfred}
Let $\Phi:\E\to\F$ be a $sc^\infty$ map between sc-Banach spaces $\E,
  \F$.
Then $\Phi$ is {\bf sc-Fredholm at $\mathbf{0}$} if the following holds:
  \begin{enumerate}
    \item
    $\Phi$ is {\bf regularizing as germ}: For every $m\in\N$ there exists
      $\epsilon_m>0$ such that $\Phi(e)\in F_{m+1}$ and $\|e\|_{E_m}\leq
      \epsilon_m$ implies $e\in E_{m+1}$.
    \item
    There exists an sc-Banach space $\W$ and sc-isomorphisms (i.e.\
      linear $sc^0$ bijections) $h:\E \to \R^k\times\W$ and $g:\F \to
      \R^\ell\times\W$ for some $k,\ell\in\N_0$ such that
      $$
	g \circ \Phi \circ h^{-1} \,: \; (v,w) \; \mapsto\;  g(\Phi(0))
	+ \bigl( A(v,w) , w - B(v,w) \bigr) ,
      $$
      where $A:\R^k\times\W \to \R^\ell$ is any $sc^\infty$ map and
      $B:\R^k\times\W \to \W$ is a {\bf contraction germ}:
    For every $m\in\N_0$ and 
    $\theta>0$
    there exists $\epsilon_m>0$
      such that for all $v\in\R^k$ and $w_1,w_2\in\W$ with $\|v\|_{\R^k},
      \|w_1\|_{W_m}, \|w_2\|_{W_m} \leq \epsilon_m$ we have
      \begin{equation}\label{contraction}
	\bigl\| B(v,w_1) - B(v,w_2) \bigr\|_{W_m} \leq \theta \| w_1 -
	w_2 \|_{W_m} .
	\end{equation}
    \end{enumerate}
\end{definition}

This definition, however, raises the question of how this ``contraction
  germ normal form'' is established in practice.
Recall that in the example of Cauchy-Riemann operators, it
  was the presence of gluing that motivated the development of an
  alternative nonlinear Fredholm notion in \cite{w:fred}, based on the
  observation that the gluing parameters usually are the only source of
  non-differentiability, and after splitting off a finite dimensional
  space of gluing parameters one deals with classical $\cC^1$-maps on
  all scale levels.
The resulting notion of a Fredholm property with respect to a splitting
  $\E\cong\R^d\times\E'$ is just slightly stronger than the definition
  via contraction germs, but should be more intuitive for applications
  to Morse theory as well as holomorphic curve moduli spaces.
In fact, in practice the Fredholm property in \cite[Thm.8.26]{hwzbook}
  and \cite[Prop.4.8]{HWZI_applications} is proven implicitly via this
  stronger differentiability.
We formalize this approach in the following Lemma where we denote open
  balls centered at $0$ in a level $E_m$ of a scale space by
$$
B_r^{E_m} := \bigl\{ e \in E_m \,\big|\, \| e\|_m < r \bigr\} \qquad \text{for}\; r>0.
$$

\begin{lemma}[\cite{w:fred} Thm.4.4]  \label{le:scfred}
Let $\Phi:\E\to\F$ be a $sc^\infty$ map between sc-Banach spaces $\E,
  \F$ such that the following holds.
\begin{enumlist}
  \item[(i)]
    $\Phi$ is regularizing as germ in the sense of
    Definition~\ref{def:HWZfred}~(i).
  \item[(ii)]
    $\E\cong\R^d\times\E'$ is an sc-isomorphism and for every $m\in\N_0$
    there exists $\eps_m>0$ such that $\Phi(r,\cdot) : B_{\eps_m}^{E'_m}
    \to F_m$ is differentiable for all $|r|_{\R^d}<\eps_m$, and its
    differential $\rD_{\E'}\Phi(r_0,e_0) : \E' \to \F$, $e \mapsto
    \frac{\rm d}{{\rm d}t} \Phi(r_0,e_0+ te)|_{t=0}$ in the direction
    of $\E'$ has the following continuity properties:
    \begin{enumerate}
      \item[a)]
      For fixed $m\in\N_0$ and $r\in B_{\eps_m}^{\R^d}$ the differential
	operator $B_{\eps_m}^{E'_m} \to \mathcal{L}(E'_m,F_m)$, $e \mapsto
	\rD_{\E'}\Phi(r,e)$ is continuous, and the continuity is uniform
	in a neighborhood of $(r,e)=(0,0)$.
      That is, for any $\d>0$ there exists $0<\eps_{m,\d}\le\eps_m$
	such that for all $(r,e)\in B_{\eps_{m,\d}}^{\R^d}\times
	B_{\eps_{m,\d}}^{E'_m}$ we have
	$$
	  \qquad\qquad \bigl\| \rD_{\E'}\Phi(r,e) h  -
	  \rD_{\E'}\Phi(r,e') h \bigr\|_{F_m}  \le \d \| h \|_{E'_m}
	  \qquad \forall \|e'-e\|_{E'_m} \le \eps_{m,\d} , h \in E'_m .
	$$
      \item[b)]
      For any sequences $\R^d \ni r^\nu\to 0$ and $e^\nu\in B^{E'_m}_1$
	with $\bigl\| \rD_{\E'}\Phi(r^\nu,0) e^\nu \bigr\|_{F_m}
	\underset{\scriptscriptstyle \nu\to\infty}{\longrightarrow} 0$
	we also have $\bigl\| \rD_{\E'}\Phi(0,0) e^\nu \bigr\|_{F_m}
	\underset{\scriptscriptstyle \nu\to\infty}{\longrightarrow} 0$.
      \end{enumerate}
  \item[(iii)]
    The differential $\rD_{\E'}\Phi(0,0) : \E' \to \F$ is sc-Fredholm.
    Moreover $\rD_{\E'}\Phi(r,0) : E_0 \to F_0$ is Fredholm for all
      $|r|_{\R^d} < \eps_0$, with Fredholm index equal to that for
      $r=0$, and it is weakly regularizing; that is, $\ker\rD_{\E'}\Phi(r,0)
      \subset E_1$. 
  \end{enumlist}
Then $\Phi$ is sc-Fredholm at $0$ in the sense of
  Definition~\ref{def:HWZfred}.
\end{lemma}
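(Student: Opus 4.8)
The plan is to show that hypotheses (i)--(iii) of Lemma~\ref{le:scfred} imply that $\Phi$ satisfies the two conditions of Definition~\ref{def:HWZfred}. Condition~(i) (regularizing as germ) is literally hypothesis~(i), so the whole content is establishing the contraction germ normal form of Definition~\ref{def:HWZfred}~(ii). First I would fix the sc-isomorphism $\E\cong\R^d\times\E'$ from hypothesis~(ii), so that $\Phi$ is, after translation, a map $\R^d\times\E'\to\F$ with $\Phi(0,0)=0$ (we may assume this after composing with a translation in $\F$, which does not affect any of the listed properties). By hypothesis~(iii), $L:=\rD_{\E'}\Phi(0,0):\E'\to\F$ is sc-Fredholm, so by Definition~\ref{def:FredOp} there are sc-direct sum splittings $\E'=K\oplus_{sc}X$ with $K=\ker L\subset E'_\infty$ finite dimensional, and $\F=L(E'_0)\oplus_{sc}C$ with $C\subset F_\infty$ finite dimensional, and $L|_X:X\to L(E'_0)$ is an sc-isomorphism. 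Then I would set $\W:=X$, $k:=d+\dim K$, $\ell:=\dim C$, and build the sc-isomorphisms $h:\E\to\R^k\times\W$ (using $\R^d\times\E'\to\R^d\times K\times X\cong\R^k\times\W$) and $g:\F\to\R^\ell\times\W$ (using $\F\cong C\times L(E'_0)$ and the sc-isomorphism $(L|_X)^{-1}:L(E'_0)\to X=\W$, which identifies the $\W$-component of $g\circ\Phi\circ h^{-1}$ with $(L|_X)^{-1}$ applied to the $L(E'_0)$-part of $\Phi$).

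Next, with these identifications in place, write a point of $\R^k\times\W$ as $(v,w)$ where $v=(r,\kappa)\in\R^d\times K$ and $w\in X$. Composing with $g$ and $h^{-1}$, the $\R^\ell$-component of $g\circ\Phi\circ h^{-1}$ is $A(v,w):=\pr_C\bigl(\Phi(r,\kappa+w)\bigr)$, which is sc$^\infty$ as a composition of the sc$^\infty$ map $\Phi$ with bounded linear projections, so no further work is needed there. The $\W$-component is $w-B(v,w)$ where, after solving out the linear term, $B(v,w):=w-(L|_X)^{-1}\pr_{L(E'_0)}\bigl(\Phi(r,\kappa+w)\bigr)$. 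Since $\rD_w\bigl[(L|_X)^{-1}\pr_{L(E'_0)}\Phi\bigr](0,0)=(L|_X)^{-1}\pr_{L(E'_0)}L|_X=\id_\W$, the differential $\rD_w B(0,0)=0$ vanishes. The remaining task is to promote this infinitesimal vanishing, together with the uniform-continuity-of-the-differential hypothesis~(ii)(a), into the quantitative contraction estimate \eqref{contraction}: for fixed $m$ and $\theta>0$ write
$$
B(v,w_1)-B(v,w_2)=\int_0^1 \rD_w B\bigl(v,w_2+t(w_1-w_2)\bigr)(w_1-w_2)\,\dt ,
$$
and bound $\|\rD_w B(v,w')\|_{\mathcal L(W_m,W_m)}$ by combining $\rD_w B(0,0)=0$, the operator-norm continuity of $e\mapsto\rD_{\E'}\Phi(r,e)$ in a neighbourhood of $(0,0)$ from hypothesis~(ii)(a) (noting that $(L|_X)^{-1}$ and $\pr_{L(E'_0)}$ are bounded on each scale), and the fact that the finite-dimensional $K$-directions lie in $E'_\infty$ so contribute continuously on every scale. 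Shrinking $\eps_m$ makes this operator norm $\le\theta$, giving \eqref{contraction}. Here hypotheses~(ii)(b) and~(iii) are used to guarantee that the Fredholm data of $L=\rD_{\E'}\Phi(0,0)$ (its kernel, cokernel complement, and the isomorphism $L|_X$) genuinely control the nearby differentials $\rD_{\E'}\Phi(r,0)$—i.e.\ that the normal form built from the $r=0$ data stays valid for small $r$—so that $B$ is well defined as a germ and the index is constant.

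The main obstacle I anticipate is \emph{scale bookkeeping in the construction of $g$ and in verifying that $B$ is a contraction germ on \emph{every} level $m$ simultaneously with constants behaving correctly. The subtlety is that the sc-splittings $\F=L(E'_0)\oplus_{sc}C$ and $\E'=K\oplus_{sc}X$ are only sc-direct sums (each summand carries an induced scale structure and the splitting is an sc$^0$ isomorphism, not just a level-$0$ one), so one must check that $\pr_{L(E'_0)}$, $\pr_C$, and $(L|_X)^{-1}$ are sc$^0$—equivalently bounded on each $F_m$ resp.\ $E'_m$ with the induced norms—which is exactly what Definition~\ref{def:FredOp} guarantees, but it must be invoked carefully rather than taken for granted. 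A secondary technical point is that hypothesis~(ii) only provides differentiability of $\Phi(r,\cdot)$ on balls $B^{E'_m}_{\eps_m}$ with $m$-dependent radii, so all the estimates above are genuinely germ-level statements and the $\eps_m$ from Definition~\ref{def:HWZfred} must be chosen as the minimum of the radius from hypothesis~(i), the differentiability radius from hypothesis~(ii), and the contraction radius produced above; I would make this choice explicit. Everything else—the sc$^\infty$ regularity of $A$, the algebraic identity $\rD_w B(0,0)=0$, the mean value estimate—is routine once the splittings are in hand.
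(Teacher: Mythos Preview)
The paper does not actually prove this lemma; it is quoted from \cite{w:fred}, Theorem~4.4, so there is no in-paper argument to compare against. That said, your overall architecture---use the sc-Fredholm splitting of $L=\rD_{\E'}\Phi(0,0)$ to build $h$ and $g$, then verify the contraction estimate for $B$ via the mean value theorem---is the natural one and is indeed how the argument in \cite{w:fred} begins.

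There is, however, a genuine gap at the heart of your contraction argument. Your mean value estimate reduces to showing that $\|\rD_w B(v,w')\|_{\mathcal L(W_m,W_m)}$ is small for small $(v,w')$, which unwinds to
\[
\bigl\|(L|_X)^{-1}\pr_{L(E'_0)}\bigl[\rD_{\E'}\Phi(r,\kappa+w')-L\bigr]\big|_X\bigr\|_{\mathcal L(X_m,X_m)}\le\theta.
\]
Hypothesis~(ii)(a) controls $\rD_{\E'}\Phi(r,\kappa+w')-\rD_{\E'}\Phi(r,0)$ in operator norm (uniformly for small $r$), but it says \emph{nothing} about $\rD_{\E'}\Phi(r,0)-\rD_{\E'}\Phi(0,0)$ in operator norm. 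Indeed, operator-norm continuity in the $r$-direction is precisely what fails in the motivating applications (gluing parameters act by reparametrization), and is deliberately \emph{not} assumed. Your sentence ``hypotheses~(ii)(b) and~(iii) are used to guarantee that the Fredholm data \ldots\ genuinely control the nearby differentials'' is where the real work hides: you need to show that the much weaker pointwise condition~(ii)(b), together with the constant-index and weak-regularizing assumptions in~(iii), forces $(L|_X)^{-1}\pr_{L(E'_0)}\rD_{\E'}\Phi(r,0)|_X$ to approach $\id_X$ in operator norm on each level as $r\to 0$. This is not routine---it is an argument by contradiction that exploits the Fredholm property and the compact embeddings of the scale structure to upgrade~(ii)(b) to the required operator estimate, and it constitutes the technical core of \cite{w:fred}, Theorem~4.4. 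Without it your mean value bound does not close, so the contraction germ property remains unproven. You should consult \cite{w:fred} directly for this step; the rest of your outline (the sc-bookkeeping for the splittings, the germ-level choice of $\eps_m$) is fine.
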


Note here that condition (ii-a) requires the differential
  $\rD_{\E'}\Phi(r,e)$ to be continuous in the operator topology under
  variations of $e$, but not of $r$.
(It is only the first continuity that is uniform in $r$.)
This is why the second part of condition (iii) does not simply follow
  from Fredholm stability.

\begin{example} \rm
For unbroken Morse trajectories, the principal part of the section
  roughly takes the form $\Phi(\gamma)= \frac\rd{\rd t}\gamma - \nabla
  f(\gamma)$ in local charts.
It satisfies conditions (ii) and (iii) of the above Lemma
  since it is in fact classically smooth as map $\bigl(\phi_a^c +
  W^{k+2,2}_{\delta_k}(\R,\R^n)\bigr) \to W^{k+1,2}_{\delta_k}(\R,\R^n)$,
  where $\phi_a^c$ is a smooth reference path from $a$ to $c$. 
\end{example}

In order to move on to a Fredholm notion for sections of M-polyfold
  bundles, we need to introduce the notion of a filling.
This is a device that turns the local study of the section, possibly
  defined only as map between nontrivial retracts with tangent bundles
  of locally varying dimensions, into the equivalent local study of a
  ``filled'' sc-Fredholm map from an open set of an sc-Banach space to
  another fixed sc-Banach space.

\begin{definition} \label{def:filling}
Let $s:\cO \to \cR$, $s(p)=(p,f(p))$ be an sc$^\infty$ section
  of an M-polyfold bundle model $\pr_{\cO} : \cR \to \cO$
  as in Definition~\ref{def:BundleRetract}, whose base is an
  sc-retract $\cO\subset [0,\infty)^k\times\E$ containing $0 \in
  [0,\infty)^k\times\E$, and with fibers $\cR_p\subset\F$ for $p\in\cO$.
Then a {\bf Fredholm filling at $0$} for $s$ over $\cO$ consists of
  \begin{itemlist}
    \item
      a neat sc-retraction of bundle type $R: \cU\times\F \to
      \cU\times\F$, $R(p,h) =  \bigl( r(p) , \Pi_{p} h \bigr)$ on an
      open subset $\cU\subset[0,\infty)^k\times\E$ such that $r(\cU)=\cO$
      and $\Pi_{p}\F=\cR_p$ for all $p\in\cO$,
    \item
      an sc$^\infty$ map $\overline{f}:\cU \to \F$ that is sc-Fredholm
      at $0$ in the sense of Definition~\ref{def:HWZfred},
    \end{itemlist}
  with the following properties:
  \begin{enumerate}
    \item  
      $\bar{f}|_\cO=f$;
    \item
      if $p\in\cU$ such that $\bar{f}(p)\in\cR_{r(p)}$ then $p=r(p)$,
      that is $p\in\cO$;
    \item
      The linearization of the map $[0,\infty)^k\times\E \to \F$, $p
      \mapsto (\id_\F -\Pi_{r(p)} )\bar{f}(p)$ at $0$ restricts to an
      isomorphism from $\ker \rD_0 r$ to $\ker \Pi_0$.
    \end{enumerate}
\end{definition}

Note that conditions (i) and (ii) imply equality of the zero sets
  $\bar f^{-1}(0)=f^{-1}(0)$, since the vector $0$ lies in every fiber
  $\cR_{r(p)}$.
Condition (iii) ensures that the Fredholm index of any two fillers is
  the same.
In particular, if $f(p)=0$, then the linearization $\rD_p f:\rT_{p}\cO
  \to \cR_p$ has the same kernel as $\rD_p\bar{f}: \rT_{p}\cU\to \F$,
  and the cokernels of both maps are identified by the inclusion $\cR_p
  \subset \F$.

\begin{definition}\label{def:scFredholmSection}
An sc$^\infty$ section $s:\cX\to\cY$ of an M-polyfold bundle is an
  {\bf sc-Fredholm section} if $s$ is regularizing in the sense of
  Definition~\ref{def:sc+} and for each $x\in\cX_\infty$ there is a
  local sc-trivialization $\Phi:{\rm pr}^{-1}(U)\to \cR$ in the sense of
  Definition~\ref{def:Bundle} over a neighborhood $U\subset\cX$ of $x$
  with $\Phi(x,0)=0$, such that $\Phi_*s$ has a Fredholm filling in the
  sense of Definition~\ref{def:filling}.
 \end{definition}

Now the \emph{Fredholm index} of an sc-Fredholm section $s:\cX\to\cY$
  at a point $p\in s^{-1}(0)$ can be defined as the Fredholm index of
  its linearization $\rD_p s:\rT_p\cX \to \cY_p$.
This linearization is in any local trivialization given by the
  linearization of the fiber part $\rD_p f$, and has the same Fredholm
  index as the linearization $\rD_p\bar{f}$ of any filler $\bar{f}$.
In fact, [\cite{HWZnew}, Prop. 6.2] shows that this index is constant
  on path-connected components of $\cX$.

\begin{example}\rm
In applications to splicings obtained from pregluing constructions as
  in Example~\ref{ex:pregluing_retraction}, a canonical candidate for a
  Fredholm filling is given by applying the linearized operator on the
  image of the antigluing, while the nonlinear operator (the gradient
  flow or Cauchy-Riemann operator) acts only on the image of the gluing.
In the case of Morse theory, this is being worked out in \cite{AW}.
For an analogous simplified case of Hamiltonian Floer theory see
  \cite{w:fred}.
\end{example}

Recall that sc$^{+}$-sections play the role of perturbations.
The following stability result (which was first proven in
  \cite{HoferWysockiZehnder2} under slightly different assumptions)
  is extremely important for the perturbation theory.
It is the polyfold analogue of the classical Fredholm theory fact that
  the sum of a Fredholm operator and a compact operator is again Fredholm.

\begin{theorem}[\cite{HWZnew}, Prop.3.10, Thm.3.15]
Let ${\rm pr}:\cY\to \cX$ be a strong M-polyfold bundle.
Then for any sc-Fredholm section $s:\cX\to\cY$ and sc$^+$ section
  $\nu:\cX\to\cY^1$ the section $s+\nu: \cX\to\cY$ is again sc-Fredholm,
  and has the same Fredholm index as $s$ on each path-connected component
  of $\cX$.
\end{theorem}

\subsection{Transverse perturbations and the implicit function theorem}
  \label{ss:reg}

Finally, with the notions of bundles and Fredholm sections
  in place, we can introduce the polyfold regularization theorem
  \ref{thm:PolyfoldRegularization2} more rigorously, beginning with the
  notion of transversality and an implicit function theorem for transverse
  Fredholm sections.
Here and throughout, we fix an M-polyfold bundle $\pr:\cY\to\cX$, which
  mainly is assumed to have no boundary or corners (i.e.\ $\cX=\cX^{(0)}$
  and $\cX^{(\ell)}=\emptyset$ for $\ell\ge 1$ in the notation of
  Definition~\ref{def:MpolyfoldStrata}).
The case of Fredholm sections over M-polyfolds with boundaries and
  corners is discussed separately.

\begin{definition} \label{def:trans}
A scale smooth section $s:\cX\to\cY$ is called {\bf transverse (to the
  zero section)} if for every $x\in s^{-1}(0)$ the linearization $\rD_x
  s : \rT_x\cX \to \cY_x$ is surjective.
Here the {\bf linearization} $\rD_x s$ is represented by the
  differential $\rD_{\phi(x)} (\Pi\circ f\circ r)|_{\rT_{\phi(x)}\cO} :
  \rT_{\phi(x)}\cO \to \Pi_{\phi(x)}(\F)$ in any local sc-trivialization
  ${\rm pr}^{-1}(U)\overset{\sim}{\to} \bigcup_{p\in\cO}\Pi_p(\F)$ which
  covers $\phi:\cX\supset U \overset{\sim}{\to} \cO=r(\cU)\subset \E$
  and transforms $s$ to $p\mapsto (p,f(p))$.
\end{definition}

\begin{theorem}[\cite{HoferWysockiZehnder2}, Thm.5.14]
  \label{iftnoboundary}
Let $s:\cX\to\cY$ be a transverse sc-Fredholm section. 
Then the solution set $\cM:=s^{-1}(0)$ inherits from its ambient space
  $\cX$ a smooth structure as finite dimensional manifold.
Its dimension is given by the Fredholm index of $s$ and the tangent
  bundle is given by the kernel of the linearized section, $\rT_x
  \cM=\ker\rD_x s$.
\end{theorem}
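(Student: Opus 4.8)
The plan is to reduce the statement to the M-polyfold version of the classical implicit function theorem via the Fredholm filling, so that the proof proceeds chart by chart and then glues. First I would fix a zero $x\in\cM=s^{-1}(0)$; since $s$ is regularizing and $s(x)=0\in\cY^1_m$ for all $m$, we get $x\in\cX_\infty$, so $x$ lies in the domain of a local sc-trivialization $\Phi:p^{-1}(U)\to\cR$ with $\Phi(x,0)=0$ in which $\Phi_*s$ admits a Fredholm filling $(\,R,\overline f\,)$ over $\cO=r(\cU)\subset[0,\infty)^k\times\E$, here with $k=0$ since $\cX$ has no boundary or corners (Definition~\ref{def:scFredholmSection}, Definition~\ref{def:filling}). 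The key observation is that, by conditions (i)--(ii) of Definition~\ref{def:filling}, the zero sets agree, $\overline f^{-1}(0)=f^{-1}(0)\cong s^{-1}(0)\cap U$ near $x$, and by condition (iii) together with transversality of $s$, the differential $\rD_0\overline f:\E\to\F$ is surjective: indeed $\rD_0 s=\rD_0(\Pi\circ f\circ r)|_{\rT_0\cO}$ is onto $\cR_0\subset\F$ by hypothesis, and the filler condition (iii) supplies an isomorphism $\ker\rD_0 r\to\ker\Pi_0$ complementing the image, so $\rD_0\overline f$ hits all of $\F$.

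Next I would invoke the sc-Fredholm implicit function theorem for maps between sc-Banach spaces with surjective sc-Fredholm linearization — this is exactly the content packaged into Definition~\ref{def:HWZfred} (the contraction germ normal form), and it is the $k=0$ case of \cite[Thm.~5.14]{HoferWysockiZehnder2} applied in the trivial bundle over the retract's ambient space. Writing $\overline f$ in its contraction-germ normal form $g\circ\overline f\circ h^{-1}:(v,w)\mapsto \overline f(0)+(A(v,w),\,w-B(v,w))$ with $B$ a contraction germ, the equation $\overline f=0$ becomes $w=B(v,w)$ (after absorbing $\overline f(0)=0$ and noting $A$ vanishes on the solution set by surjectivity forcing $\ell=0$ or $A$ trivial on the relevant factor), which by Banach's fixed point theorem on each fixed scale $W_m$ is uniquely solved by an sc$^\infty$ map $w=w(v)$; the solution set is the graph $\{(v,w(v))\}$, a finite-dimensional sc$^\infty$ manifold of dimension $\dim\R^k = \ind\rD_0\overline f = \ind(s)$. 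Restricting back through the filler conditions (i)--(ii) identifies this with $\overline f^{-1}(0)=\cO\cap\overline f^{-1}(0)$, hence with $s^{-1}(0)\cap U$ via $\Phi^{-1}$, and the chart $v\mapsto \Phi^{-1}(h^{-1}(v,w(v)))$ is a smooth coordinate chart on $\cM$ near $x$. The tangent space computation $\rT_x\cM=\ker\rD_x s$ follows because the graph's tangent at $v=0$ is $\{(V,0)\}=\ker\rD_0\overline f$, and $\ker\rD_0\overline f=\ker\rD_0 f|_{\rT_0\cO}=\ker\rD_x s$ by the remark after Definition~\ref{def:filling}.

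Finally I would check that these charts are compatible: for two overlapping trivializations, the transition map on the base retracts is sc$^\infty$ (M-polyfold compatibility, Definition~\ref{def:MpolyfoldCorners}), it intertwines the two Fredholm fillings up to an sc$^\infty$ change of coordinates in the fibers (strong compatibility, Definition~\ref{def:sBundle}(iii)), and hence the induced transition between the finite-dimensional solution charts is a composition of sc$^\infty$, i.e.\ classically smooth in finite dimensions by Remark~\ref{rmk:sc1}(i), maps — so the smooth structure on $\cM$ is well defined and independent of choices. The main obstacle I expect is not any single step in isolation but verifying that the Fredholm filling really does deliver a surjective sc-Fredholm linearization in the contraction-germ normal form: one must carefully track how transversality of the abstract section $\rD_x s:\rT_x\cX\to\cY_x$ (surjectivity onto the \emph{retract fiber} $\cR_x$ only) combines with filler condition (iii) (an isomorphism onto the \emph{complementary} projection's image) to yield surjectivity onto the \emph{full} ambient fiber $\F$, which is what the sc-Fredholm implicit function theorem needs; this is where the design of the "filling" device does its essential work, and where the proof would spend most of its technical effort, essentially reducing everything to \cite[Thm.~5.14]{HoferWysockiZehnder2}.
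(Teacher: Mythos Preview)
The paper does not prove this theorem; it is stated as a citation to \cite[Thm.~5.14]{HoferWysockiZehnder2} with no proof given, so there is no in-paper argument to compare your proposal against. Your outline is broadly the right strategy (reduce via filling to the contraction-germ normal form, solve the $\W$-component by Banach's fixed point theorem, then read off a finite-dimensional solution set), and the use of filler condition (iii) to upgrade surjectivity onto $\cR_0$ to surjectivity onto $\F$ is exactly the point of the filling device.

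There is, however, a genuine gap in your second paragraph. In the normal form $(v,w)\mapsto(A(v,w),\,w-B(v,w))$ with $A:\R^k\times\W\to\R^\ell$, surjectivity of $\rD_0\overline f$ does \emph{not} force $\ell=0$ nor make $A$ ``trivial on the relevant factor''; the Fredholm index is $k-\ell$, not $k$. The correct argument is two-step: first solve $w=B(v,w)$ by the contraction to obtain an sc$^\infty$ function $w=w(v)$ for \emph{all} $v$ near $0$, not just those on the solution set; then substitute to obtain the finite-dimensional reduced equation $\widetilde A(v):=A(v,w(v))=0$ from $\R^k$ to $\R^\ell$. Transversality of $s$ (hence surjectivity of $\rD_0\overline f$) implies $\rD_0\widetilde A:\R^k\to\R^\ell$ is surjective, so the classical implicit function theorem in finite dimensions gives a $(k-\ell)$-dimensional manifold, which equals $\ind(s)$. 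Your dimension claim ``$\dim\R^k=\ind\rD_0\overline f$'' is wrong in general and your parenthetical justification is where the argument breaks; once you insert the reduced finite-dimensional step and correct the dimension count to $k-\ell$, the rest of your outline goes through.
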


If $\cX$ has boundary and corners then the charts $\phi:\cX\supset U
  \overset{\sim}{\to} \cO=r(\cU)\subset C$ take values in an sc-sector
  $C=[0,\infty)^k\times\E$ and the implicit function theorem, in addition
  to surjectivity of the linearization $\rD_x s$, also requires some type
  of transversality of the kernel $K_x:= \ker \rD_{\phi(x)} (\Pi\circ
  f\circ r)|_{\rT_{\phi(x)}\cO} \subset \R^k\times \E$ at any solution
  $x\in s^{-1}(0)$ to the boundary strata.
Since by the regularization property the solution set
  $s^{-1}(0)\subset\cX_\infty$ consists of smooth points, we can choose
  the chart so that $\phi(x)=0\in C$ is the point with highest degeneracy
  index in the sc-sector.
Then the classical transversality notion is the following.\footnote
  {
    \cite[Def.4.10]{HoferWysockiZehnder2} requires neatness with respect
      to the partial quadrant $\rD_0 r([0,\infty)^k\times \E)\subset \rT_0
      \cO \cong \rT_x\cX$.
    This is equivalent to our simplified notion by linear algebra using
      the fact that $K_x$ is finite dimensional by the Fredholm property
      of the section (hence under the above neatness condition one finds an
      sc-complement of $K_x\subset\R^k\times\E$ in $\{0\}\times\E$) and that
      $\im\rD_0 r=\rT_0\cO$ projects onto $\R_k$ by the neatness condition
      on the sc-retraction~$r$.
  }

\begin{definition}
A subset $K\subset \R^k\times \E$ is {\bf neat with respect to the
  sector} $[0,\infty)^k\times\E$ if the projection $\Pr_{\R^k}:K \to \R^k$
  is surjective.

A section $s:\cX\to\cY$ over an M-polyfold $\cX$ with nonempty
  boundary $\partial\cX=\bigcup_{\ell\ge 1} \cX^{(\ell)}$ is called
  {\bf neatly transverse} if it is transverse in the sense of
  Definition~\ref{def:trans} and each kernel $K_x$ of the linearized
  operators at solutions $x\in s^{-1}(0)$ is neat with respect to an
  M-polyfold chart with maximally degenerate sc-sector.
\end{definition}

In particular, neatness requires sufficiently high dimension $\dim
  K_x\ge k$, so that solution sets of transverse sections with neat
  kernels cannot intersect boundary strata of degeneracy index higher
  than the Fredholm index.
The corresponding implicit function theorem is the following.

\begin{theorem}[\cite{HoferWysockiZehnder2},
  Thm.5.22]\label{iftwithboundary}
Let $s:\cX\to\cY$ be a neatly transverse sc-Fredholm section over an
  M-polyfold $\cX$ with nonempty boundary.
Then the solution set $\cM:=s^{-1}(0)$ inherits from its ambient space
  $\cX$ a smooth structure as a finite dimensional manifold with boundary
  and corner stratification $\cM^{(\ell)} = s^{-1}(0) \cap \cX^{(\ell)}$.
\end{theorem}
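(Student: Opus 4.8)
The plan is to establish the statement locally over $\cM=s^{-1}(0)$ and then patch, exactly as for ordinary manifolds with corners. First I would fix a solution $x\in s^{-1}(0)$. By the regularizing property of a Fredholm section (Definition~\ref{def:sc+}), $x$ is automatically a smooth point of $\cX$, so I may pick an M-polyfold chart $\phi:U\overset{\sim}{\to}\cO=r(\cU)\subset C$ in which $C=[0,\infty)^k\times\E$ is an sc-sector, $\phi(x)=0$, and $0$ has the maximal degeneracy index $k_0:=d_C(x)$ in $\cO$ (discarding the coordinates of $\R^k$ that are strictly positive at $x$). Transporting $s$ through a local sc-trivialization to a section $p\mapsto(p,f(p))$ of an M-polyfold bundle model, and invoking the sc-Fredholm hypothesis, I pass to a Fredholm filling $\bar f:\cU\to\F$ at $0$ in the sense of Definition~\ref{def:filling}. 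Conditions (i)--(ii) of the filling give $\bar f^{-1}(0)=\phi(\cM\cap U)$ near $0$, and condition (iii) together with $f(0)=0$ shows the linearizations $\rD_0\bar f$ and $\rD_x s$ have the same finite-dimensional kernel $K_x$ and the same cokernel. Since $s$ is transverse, $\rD_0\bar f$ is surjective, hence $\bar f$ has vanishing cokernel and $\dim K_x=\ind(s)$.

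Next I would reduce to finite dimensions using the contraction-germ normal form built into the definition of an sc-Fredholm map. By Definition~\ref{def:HWZfred}~(ii) there are sc-isomorphisms $h,g$ with $g\circ\bar f\circ h^{-1}(v,w)=(A(v,w),\,w-B(v,w))$, where $B$ is a contraction germ (using $\bar f(0)=0$, so $g(\bar f(0))=0$). Surjectivity of $\rD_0\bar f$ implies the $A$-component imposes no constraint on the zero set, which is therefore the solution set of $w=B(v,w)$: for each small $v$ in the finite-dimensional factor, Banach's fixed point theorem --- iterated on a single fixed scale, which is precisely why the contraction-germ form is part of the definition --- yields a unique $w=\beta(v)$, and the standard argument upgrades $v\mapsto\beta(v)$ to an sc-smooth map. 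Thus near $0$ the zero set is the graph of $\beta$ over a neighbourhood of $0$ in a finite-dimensional space $V$ of dimension $\ind(s)$, and via $h$ this identifies $\phi(\cM\cap U)$ with a neighbourhood of $0$ in $K_x\cap C$. For $k=0$ this recovers Theorem~\ref{iftnoboundary}; the new point is the analysis of $K_x\cap C$.

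The decisive step, which I expect to be the main obstacle, is to realize $K_x\cap C$ as an honest manifold-with-corners chart, and this is where \emph{neat} transversality enters. By hypothesis $K_x\subset\R^k\times\E$ is neat with respect to $[0,\infty)^k\times\E$, i.e.\ it projects surjectively onto $\R^k$; combined with the neatness condition built into the sc-retraction with corners (Definition~\ref{def:neat}), I would choose a linear sc-complement $Y\subset\{0\}\times\E$ of $K_x$ and a basis of $K_x$ whose first $k_0$ members project onto the degenerate standard coordinate directions of $\R^k$. A linear change of coordinates adapted to the splitting $\R^k\times\E=K_x\oplus Y$ and preserving the sector on the $K_x$-side then identifies $K_x\cap C$ with a relatively open neighbourhood of $0$ in $[0,\infty)^{k_0}\times\R^{\,\ind(s)-k_0}$ --- a chart for a manifold with corners --- on which the restriction of $d_C$ counts vanishing coordinates; in particular $\dim K_x\ge k_0$, so solutions cannot meet boundary strata of degeneracy index exceeding $\ind(s)$. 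The subtlety here is making the finite-dimensional reduction genuinely respect the quadrant structure and not merely the linear structure: without neatness the zero set could be some non-sector subset of $C$, and most of the careful bookkeeping goes into this point.

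Finally I would patch the charts. Transition maps of $\cX$ are sc$^\infty$ diffeomorphisms between sc-retracts with corners, hence preserve the degeneracy index by the corner-recognition Proposition~\ref{prop:CornerRecognition}; restricting them to the zero sets yields classically smooth transition maps for the charts above, compatible with the stratification. This equips $\cM$ with a smooth structure as a finite-dimensional manifold with corners of dimension $\ind(s)$, with $\rT_x\cM=\ker\rD_x s$ read off from the graph description and $\cM^{(\ell)}=s^{-1}(0)\cap\cX^{(\ell)}$ holding chartwise and hence globally; Hausdorffness and second countability are inherited from $\cX$.
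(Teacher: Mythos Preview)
The paper does not supply a proof of this theorem; it is stated with a citation to \cite{HoferWysockiZehnder2}, Theorem~5.22, where the argument is carried out. Your overall architecture --- pass to a filling, invoke the contraction-germ normal form, reduce to finite dimensions, use neatness to recover the corner structure, then patch via corner recognition (Proposition~\ref{prop:CornerRecognition}) --- is the right one and matches that reference.

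There is, however, a genuine gap in your reduction step. You write that surjectivity of $\rD_0\bar f$ ``implies the $A$-component imposes no constraint on the zero set, which is therefore the solution set of $w=B(v,w)$.'' This is false as stated: the zero set of $g\circ\bar f\circ h^{-1}$ is $\{A(v,w)=0\}\cap\{w=B(v,w)\}$, and nothing in the normal form of Definition~\ref{def:HWZfred} forces $\ell=0$. After solving the contraction equation for $w=\beta(v)$ you are left with the finite-dimensional equation $A(v,\beta(v))=0$ on $\R^k$ with values in $\R^\ell$; surjectivity of $\rD_0\bar f$ then makes the \emph{classical} implicit function theorem applicable to this residual problem, yielding a $(k-\ell)$-dimensional solution set --- but the $A$-constraint is not vacuous, and your graph description over ``a finite-dimensional space $V$ of dimension $\ind(s)$'' only emerges after this second reduction.

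A related and more serious issue is the interaction with the sector. The integer $k$ in the contraction-germ splitting $\R^k\times\W$ of Definition~\ref{def:HWZfred} is a priori unrelated to the degeneracy index $k_0$ of the sector $C=[0,\infty)^{k_0}\times\E$, and the sc-isomorphism $h$ carries no built-in compatibility with the partial quadrant. Your sentence ``via $h$ this identifies $\phi(\cM\cap U)$ with a neighbourhood of $0$ in $K_x\cap C$'' presupposes that the graph $\{(v,\beta(v))\}$ pulls back under $h$ to something that can be written as a graph over $K_x$ along a complement contained in $\{0\}\times\E$, and that the fixed-point iteration itself stays inside $h(C)$. Neatness of $K_x$ is the right linear hypothesis for the \emph{tangent} picture, but the nonlinear statement --- that the contraction iteration and the subsequent finite-dimensional implicit function theorem can be run so as to respect the quadrant --- is exactly where the technical work in \cite{HoferWysockiZehnder2} lies. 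You flag this as ``the subtlety'' but do not indicate a mechanism; without it the argument does not yet show that the local model is $[0,\infty)^{k_0}\times\R^{\ind(s)-k_0}$ rather than some more complicated intersection with $C$.
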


\begin{remark} \label{rmk:diag} {\rm For purposes beyond the scope of
  this exposition\footnote
  {
  The construction of coherent perturbations does not always allow one
    to achieve neatness by perturbations.
  Roughly speaking, if a moduli problem can be glued to itself, then
    the negative index solutions in a family occur in arbitrarily high
    degeneracy indices.
  In the operations formalism of HWZ, this is reflected in the occurrence
    of ``diagonal relators''; it also appears in geometric regularizations
    such as \cite[\S 10e]{Seidel}, where transversality is achieved by a
    ``delay function method''.
  }
  HWZ also introduce the following weaker notion\footnote
  {
  \cite[Def.4.10]{HoferWysockiZehnder2} again works in the partial quadrant
    $\rD_0 r([0,\infty)^k\times \E)\subset \rT_0 \cO \cong \rT_x\cX$,
    but we may simplify this to a condition in $C=[0,\infty)^k\times\E$
    since $\rD_0 r|_{K_x}=\id_{K_x}$ by the retraction property of $r$. With
    that in mind, we rephrased the conditions of $K_x\cap C\subset C$ having
    open interior and an sc-direct sum $\R^k\times\E=K_x \oplus N$ such that
    $k+n\in C \Leftrightarrow k\in C$ for $\|n\|/\|k\|$ sufficiently small.
  Indeed, in the case of $\pr_{\R_k}(K_x)\neq \R^k$ our simplified notion
    clearly implies these conditions.
  On the other hand, the first condition implies that $K_x$ has a basis
    of vectors in $C$, and in fact in $(0,\infty)^k\times\E$, unless $K_x$
    is entirely contained in a boundary face of $C$.
  The latter is excluded by the second condition which must hold for some
    vectors $n$ transverse to that face.
  }  
  of transversality to the boundary strata:}

\medskip

The subset $K_x\subset \R^k\times \E$ is {\bf in good position to the
  sector} $[0,\infty)^k\times\E$ if either the projection $\Pr_{\R^k}:K_x
  \to \R^k$ is surjective or if $\Pr_{\R^k}:K_x \to \R^k$ is injective
  and $K_x$ is spanned by vectors in $(0,\infty)^k\times\E$.
A section $s:\cX\to\cY$ over an M-polyfold $\cX$ with nonempty boundary
  $\partial\cX=\bigcup_{\ell\ge 1} \cX^{(\ell)}$ is said to have {\bf
  kernels in good position} if each kernel $K_x$ of the linearized
  operators at solutions $x\in s^{-1}(0)$ is in good position w.r.t.\
  an M-polyfold chart with maximally degenerate sc-sector.

\medskip

{\rm This notion of boundary transversality still provides an implicit
  function theorem, in which just the control of boundary strata is less
  precise, see \cite[Thm.5.22]{HoferWysockiZehnder2}.  }

\medskip 

Let $s:\cX\to\cY$ be a transverse sc-Fredholm section over an M-polyfold
  $\cX$ with nonempty boundary, and suppose that it has kernels in good
  position.
Then the solution set $\cM:=s^{-1}(0)$ inherits from its ambient
  space $\cX$ a smooth structure as finite dimensional manifold with
  boundary and corner stratification $\cM^{(\ell)} \subset s^{-1}(0)
  \cap \bigcup_{k\ge\ell}\cX^{(k)}$.
\end{remark}

As in the classical situation, an sc-Fredholm section need not generally be transverse,
in which case the above implicit function theorems do not apply.
However, one can achieve transversality by perturbation with
  sc$^{+}$-sections, which are essentially compact perturbations of the
  Fredholm section and were introduced in Definition~\ref{def:sc+}; they
  exist if $\cY\to\cX$ is a {\bf strong} M-polyfold bundle in the sense
  of Definition~\ref{def:sBundle}.
In order to construct appropriate perturbations from these, one
  moreover needs to work with smooth cutoff functions, which will
  be provided by assuming one works with ambient {\bf sc-Hilbert
  structures}, rather than sc-Banach structures, as introduced in
  Definition~\ref{def:scBanachSpace}.
(See the discussion there for a possible extension to sc-Banach
  structures with scale smooth cutoff functions.)

Additionally, we now need to be concerned with preserving the compactness
  of the unperturbed solution set $s^{-1}(0)$.
Recall from Definition~\ref{def:prelim}~(iii) that a section
  $s:\cX\to\cY$ is called {\bf proper} if $s^{-1}(0)$ is compact.
In order to preserve compactness one can make use of the compactness
  of the embedding $F_1\hookrightarrow F_0$ in the scale structure of
  the ambient space of the fibers of the bundle $\pr:\cY\to\cX$.
More precisely, recall that the fibers $\cY_x$ for $x\in\cX$ are
  locally isomorphic to subspaces $\bigl(\cR_p\subset\F\bigr)_{p\in\cO}$
  parametrized by an sc-retract $\cO$, and the transition maps preserve
  the fibers $\cR_p\cap F_1$, so they form another M-polyfold bundle
  $\cY_1\to\cX$.
By restricting the $F_1$-norm to the fibers and patching these
  local fiber-wise norms together with smooth cutoff functions on $\cX$,
  one now obtains an auxiliary norm on the dense subset $\cY_1\subset
  \cY$ in the following sense.

\begin{definition}
An {\bf auxiliary norm} $N$ for the strong M-polyfold bundle
  $\pr:\cY\to\cX$ is a continuous map $N:\cY_1\to [0,\infty)$ such that
  the restriction to each fiber $\pr^{-1}(x)\cap \cY_1$ for $x\in\cX$
  is a complete norm.

Moreover, if $s:\cX\to\cY$ is a proper section, then a pair of
  an auxiliary norm $N$ and an open neighborhood $\cU\subset\cX$
  of $s^{-1}(0)$ is said to {\bf control compactness} if for any
  sc$^+$-section $\nu:\cX\to\cY_1$ with $\supp \nu \subset \cU$
  and $\sup_{x\in\cX} N(\nu(x)) \le 1$ the perturbed solution set
  $(s+\nu)^{-1}(0)\subset\cX$ is compact.
\end{definition}

Any two auxiliary norms are equivalent in a neighborhood of the compact
  solution set $s^{-1}(0)$ by \cite[Lemma~5.8]{HoferWysockiZehnder2}.
Moreover, \cite[Thm.5.12]{HoferWysockiZehnder2} proves that
  neighborhoods controlling compactness exist for any given auxiliary
  norm.
Here the compactness holds with respect to the basic $\cX_0$ topology,
  but by \cite[Thm.5.11]{HoferWysockiZehnder2} can be strengthened to
  the topology on $\cX_\infty$ (given by simultaneous convergence in all
  topologies on $\cX_\infty\subset\cX_m$) if the section $s:\cX\to\cY$
  (and hence also $s+\nu$) is assumed to be sc-Fredholm.
With these notions in place we can finally state a
  technically complete version of the M-polyfold regularization
  theorem~\ref{thm:PolyfoldRegularization2}, which -- in the case without
  boundary -- simultaneously achieves compactness and
  transversality of the perturbed solution space, as well as a uniqueness
  up to cobordism.

\begin{theorem} (\cite{HoferWysockiZehnder2},Theorem~5.22)
  \label{thm:PolyfoldRegularization3}
Let $\pr:\cY\to\cX$ be a strong M-polyfold bundle modeled on sc-Hilbert
  spaces, and let $s:\cX\to\cY$ be a proper Fredholm section.
\begin{enumlist}
  \item[(i)]
    For any auxiliary norm $N:\cY_1\to[0,\infty)$ and neighborhood
      $s^{-1}(0)\subset\cU\subset\cX$ controlling compactness, there exists
      an sc$^+$-section $\nu:\cX\to\cY_1$ with $\supp\nu\subset\cU$ and
      $\sup_{x\in\cX} N(\nu(x)) < 1$, and such that $s+\nu$ is transverse
      to the zero section.
    In particular, $(s+\nu)^{-1}(0)$ carries the structure of a smooth
      compact manifold.
  \item[(ii)]
    Given two transverse perturbations $\nu_i:\cX\to\cY_1$ for
      $i=0,1$ as in (i), controlled by auxiliary norms and neighborhoods
      $(N_i,\cU_i)$ controlling compactness, there exists an sc$^+$-section
      $\Ti\nu:\cX\times[0,1]\to\cY_1$ such that $\{(x,t)\in\cX\times[0,1]
      \,|\, s(x)+\Ti\nu(x,t) \}$ is a smooth compact cobordism from
      $(s+\nu_0)^{-1}(0)$ to $(s+\nu_1)^{-1}(0)$.
    For details, see Remark 5.16 of \cite{HWZnew}.
  \end{enumlist}
\end{theorem}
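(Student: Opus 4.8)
The plan is to carry out a Sard--Smale construction adapted to the M-polyfold setting, using the implicit function theorem (Theorem~\ref{iftnoboundary}) as the source of smooth manifold structures, the stability of the Fredholm property under sc$^+$ perturbations, and the given compactness-controlling pair $(N,\cU)$ to keep the perturbed zero set compact. Note first that since $s$ is regularizing its zero set $s^{-1}(0)$ already lies in $\cX_\infty$, so Theorem~\ref{iftnoboundary} will be applicable to any transverse perturbation of $s$. The first step is to build a finite-dimensional family of sc$^+$ perturbations that is universally transverse near $s^{-1}(0)$: for each $x\in s^{-1}(0)$, a Fredholm filling of $s$ in a local sc-trivialization presents $\rD_x s:\rT_x\cX\to\cY_x$ as a linear sc-Fredholm operator, so its cokernel is finite dimensional and represented by smooth vectors; composing the associated (fibrewise projected, hence sc$^\infty$) sections with a smooth cutoff supported in a small neighbourhood $U_x\subset\cU$ of $x$ -- here the sc-Hilbert hypothesis guarantees smooth cutoffs -- produces finitely many sc$^+$ sections whose values at $x$ span a complement of $\im\rD_x s$. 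Covering the compact set $s^{-1}(0)$ by finitely many such $U_x$ and collecting all the resulting sections yields a finite list $\nu_1,\dots,\nu_K$ of sc$^+$ sections, supported in $\cU$, and an open set $\cV\subset\cU$ containing $s^{-1}(0)$ on which the universal section $S(y,\lambda):=s(y)+\sum_k\lambda_k\nu_k(y)$ over $\cV\times\R^K$ is transverse to the zero section at every zero with $y\in s^{-1}(0)$, $\lambda=0$, hence on a neighbourhood of that set.

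Next I would restrict the parameter to a small ball $B\subset\R^K$ chosen so that every $s_\lambda:=s+\sum_k\lambda_k\nu_k$ with $\lambda\in B$ satisfies $\supp(s_\lambda-s)\subset\cU$ and $\sup_x N(s_\lambda(x)-s(x))<1$; by the compactness-control property $s_\lambda^{-1}(0)$ is then compact and contained in a fixed neighbourhood of $s^{-1}(0)$, so after shrinking $\cV$ and $B$ one may assume $S$ is transverse at all of its zeros in $\cV\times B$. By the stability theorem each $s_\lambda$ is again sc-Fredholm, so Theorem~\ref{iftnoboundary} makes $S^{-1}(0)$ a smooth finite-dimensional manifold and the projection $\pi:S^{-1}(0)\to B$ a proper Fredholm map of index $\ind(s)$. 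The Sard--Smale theorem then gives a comeagre set of regular values $\lambda\in B$, and a standard functional-analytic argument identifies these with the parameters for which $s_\lambda$ is transverse to the zero section. Choosing such a $\lambda$ with $\sup_x N\bigl(\sum_k\lambda_k\nu_k(x)\bigr)<1$ and setting $\nu:=\sum_k\lambda_k\nu_k$, we obtain that $s+\nu$ is transverse and, since $(N,\cU)$ controls compactness, that $(s+\nu)^{-1}(0)$ is compact; Theorem~\ref{iftnoboundary} endows it with the structure of a smooth -- necessarily closed -- finite-dimensional manifold. This proves part~(i).

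For part~(ii) I would repeat the argument one dimension higher, over the M-polyfold with boundary $\cX\times[0,1]$ carrying the section $(x,t)\mapsto s(x)$ and the sc$^+$ interpolation $(x,t)\mapsto(1-t)\nu_0(x)+t\nu_1(x)$, adding further cutoff-localized sc$^+$ perturbations supported in the interior $\cX\times(0,1)$ and extending the auxiliary norms and compactness-controlling neighbourhoods over the product. At $t\in\{0,1\}$ the resulting section agrees with the already transverse sections $s+\nu_0$ and $s+\nu_1$, whose linearizations are surjective; hence the kernel at such a point surjects onto the one-dimensional normal direction of $\cX\times\{0,1\}$ and is neat with respect to the boundary. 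A Sard--Smale argument as before produces a small interior perturbation for which the section over $\cX\times[0,1]$ is (neatly) transverse, and Theorem~\ref{iftwithboundary} then exhibits its zero set as a smooth manifold with boundary $(s+\nu_0)^{-1}(0)\sqcup(s+\nu_1)^{-1}(0)$; the extended compactness control makes it compact, giving the desired cobordism.

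The main obstacle -- and the point at which the polyfold machinery is genuinely needed -- is the tension between transversality and compactness: in an infinite-dimensional setting even an sc$^+$ (compact) perturbation can destroy the compactness of a Fredholm zero set, so one cannot perturb freely. The resolution is to fix the compactness-controlling pair $(N,\cU)$ at the outset and carry out the whole Sard--Smale construction inside the regime $\supp\nu\subset\cU$, $\sup_x N(\nu(x))\le 1$ in which the cited compactness theorems apply, while retaining enough freedom for genericity; this is precisely what forces the localization of all perturbations by cutoffs supported in $\cU$, the restriction of the parameter to a small ball, and the repeated shrinking of $\cV$ and $B$. The strengthened compactness in the $\cX_\infty$-topology -- available because $s+\nu$ remains sc-Fredholm -- is what ultimately guarantees that the perturbed zero set consists of smooth points, so that the implicit function theorems genuinely apply.
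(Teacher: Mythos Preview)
This paper is an expository survey and does not give its own proof of Theorem~\ref{thm:PolyfoldRegularization3}; the theorem is simply quoted from \cite{HoferWysockiZehnder2} (Theorem~5.22), so there is no proof in the paper to compare your attempt against. Your sketch follows the standard Sard--Smale strategy that is indeed the route taken in \cite{HoferWysockiZehnder2}: build a finite-dimensional universal family of sc$^+$ perturbations spanning the cokernels over the compact zero set, use the stability theorem and the implicit function theorem to make the universal zero set a smooth finite-dimensional manifold, then apply Sard's theorem to the projection onto the parameter space, all while staying inside the compactness-controlling regime $(N,\cU)$. The outline is correct in its broad architecture, and your closing paragraph accurately identifies the genuine tension the construction must navigate.

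A few points would need tightening in a full proof. First, once $S^{-1}(0)$ is a smooth \emph{finite}-dimensional manifold and $B\subset\R^K$ is finite dimensional, the projection $\pi$ is an ordinary smooth map and you invoke the classical Sard theorem, not Sard--Smale; the Fredholm language is superfluous at that stage. Second, the step ``after shrinking $\cV$ and $B$ one may assume $S$ is transverse at all of its zeros in $\cV\times B$'' hides a nontrivial compactness argument: you must rule out sequences of zeros $(y_n,\lambda_n)$ with $\lambda_n\to 0$ escaping every neighbourhood of $s^{-1}(0)\times\{0\}$, and this is where the compactness control via $(N,\cU)$ and the strengthened $\cX_\infty$-compactness of \cite[Thm.~5.11]{HoferWysockiZehnder2} are actually used. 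Third, in part~(ii) the neatness claim at the boundary requires a word: surjectivity of $\rD_x(s+\nu_i)$ alone does not automatically give surjectivity of the projection of the kernel of the product linearization onto the $[0,1]$-factor; one needs that the $\partial_t$-direction contributes only through the perturbation term, which does lie in the image since the base linearization is already onto. These are standard refinements rather than gaps, and they are precisely the details worked out in \cite{HoferWysockiZehnder2}.
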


Note here that one can choose the perturbations in part (i) ``small''
  in the following ways: Given a pair $(N,\cU)$ that controls
  compactness, we can apply Theorem~\ref{thm:PolyfoldRegularization3}
  with the auxiliary norm $\delta^{-1} N$ scaled by any $\delta>0$ and
  any neighbourhood $\cU'\subset \cU$ of the zero set $s^{-1}(0)$. 
In fact, it suffices to have $\cU'$ contain the part of the zero set
  where $s$ is not transverse.
As a result, we obtain a perturbation $\nu$ of small norm
  $\sup_{x\in\cX}N(\nu(x))< \delta$ and -- more importantly -- small
  support near the nontransverse part of $s^{-1}(0)$.
The latter -- very much unlike any geometric perturbation scheme --
  allows us to preserve parts of the solution space that are already cut
  out transversely.
Moreover, the second smallness control on perturbations is useful when
  solutions in $s^{-1}(0)$ satisfy a desirable property (e.g.\ positivity
  of intersections).
If this property is open with respect to the topology on $\cX$ (e.g.\ the
  $H^3$-topology, which is stronger than $\cC^1$), then the perturbation
  $\nu$ can be chosen with support sufficiently close to $s^{-1}(0)$
  so that the perturbed solutions in $(s+\nu)^{-1}(0)\subset s^{-1}(0)
  \cup \supp\nu$ still have the same property.

\begin{remark} [\bf Regularization with boundary and corners]
  \label{rmk:RegularizationCorners} \rm
The regularization theorem~\ref{thm:PolyfoldRegularization3} generalizes
  directly to strong bundles $\cY\to\cX$ over M-polyfolds with boundary
  and corners in two versions corresponding to the notion of transversality
  to the boundary strata.

Firstly, (i) holds with $s+\nu$ neatly transverse, and hence
  $(s+\nu)^{-1}(0)$ a compact manifold with boundary and corners, whose
  corner strata are given by its intersection with the corresponding
  boundary strata of $\cX$.
Moreover, (ii) provides a cobordism with boundary and corners in the
  sense that its intersection with each stratum $\cX^{(\ell)}\times[0,1]$
  is a cobordism between $(s+\nu_0)^{-1}(0)\cap \cX^{(\ell)}$ and
  $(s+\nu_1)^{-1}(0)\cap \cX^{(\ell)}$.

Secondly, under additional conditions on the perturbations discussed
  in Remark~\ref{rmk:diag}, the transverse perturbations $s+\nu$ in (i)
  can still be constructed to have kernels in good position, and hence
  $(s+\nu)^{-1}(0)$ is a compact manifold with boundary and corners.
Then (ii) provides a cobordism with boundary and corners in the sense
  that its corner strata are cobordisms between the corner strata of
  $(s+\nu_0)^{-1}(0)$ and $(s+\nu_1)^{-1}(0)$.
\end{remark}

\end{document}